\newtheorem{theorem}{Theorem}[section]
\newtheorem{lemma}[theorem]{Lemma}
\newtheorem{proposition}[theorem]{Proposition}
\newtheorem{corollary}[theorem]{Corollary}
\theoremstyle{definition}
\newtheorem{definition}[theorem]{Definition}
\newtheorem{remark}[theorem]{Remark}
\newtheorem{problem}{Problem}
\DeclareMathOperator{\Op}{\mathbf{Op}}
\DeclareMathOperator{\diff}{d}
\numberwithin{equation}{section}
\newcommand{\T}{\mathsf{T}}
\newcommand{\PM}{\mathrm{PM}}
\newcommand{\PL}{\mathrm{PL}}
\newcommand{\PLR}{\mathrm{PLR}}
\newcommand{\CM}{\mathrm{CM}}
\newcommand{\Con}{\mathrm{Con}}
\newcommand{\R}{\mathcal{R}}
\newcommand{\even}{\mathrm{even}}
\newcommand{\odd}{\mathrm{odd}}
\def\be{\begin{equation}}
\def\ee{\end{equation}}
\def\le{\leqslant}
\def\xZ{\mathbb{Z}}
\def\xC{\mathbb{C}}
\def\xR{\mathbb{R}}
\def\xN{\mathbb{N}}
\def\xT{\mathbb{T}}
\def\D#1{({#1}\cdot\partial)}
\def\Id{\mathrm{Id}}
\def\Avg{\mathrm{Avg}}
\def\dx{\diff \! x}
\def\dt{\diff \! t}
\def\dtau{\diff \! \tau}
\def\be{\begin{equation}}
\def\ee{\end{equation}}
\def\e{\eqref}
\def\defn{\mathrel{:=}}
\def\la{\left\vert}
\def\lA{\left\Vert}
\def\bla{\big\vert}
\def\le{\leq}
\def\les{\lesssim}
\def\ra{\right\vert}
\def\rA{\right\Vert}
\def\bra{\big\vert}
\def\xC{\mathbb{C}}
\def\xT{\mathbb{T}}
\def\xZ{\mathbb{Z}}
\def\gau{\mathrm{left}}
\def\droi{\mathrm{right}}
\title{Paracomposition Operators and Paradifferential Reducibility}
\author{Thomas Alazard}
\address{CNRS - Centre de Mathématiques Laurent Schwartz, {\'E}cole Polytechnique, 
Institut Polytechnique de Paris}
\email{thomas.alazard@polytechnique.edu}
\author{Chengyang Shao}
\address{Department of Mathematics, University of Chicago}
\email{shaoc@uchicago.edu}
\begin{document}

\begin{spacing}{1.1}
\begin{abstract}
Reducibility methods, aiming to simplify systems by conjugating them to those with constant coefficients, are crucial for studying the existence of quasiperiodic solutions. In KAM theory for PDEs, these methods help address the invertibility of linearized operators that arise in a Nash-Moser/KAM type scheme. The goal of this paper is to prove paradifferential reducibility results, enabling the reduction of nonlinear equations themselves, rather than just their linearizations, to constant coefficient form, modulo smoothing terms. As an initial application, we demonstrate the existence of quasiperiodic solutions for certain hyperbolic systems. Despite the small denominator problem, our proof does not rely on traditional Nash-Moser/KAM-type schemes, but instead on the Banach fixed point theorem. To achieve this, we develop two key toolsets. The first focuses on the calculus of paracomposition operators introduced by Alinhac, interpreted as the flow map of a paraproduct vector field. We refine this approach to establish new estimates that precisely capture the dependence on the diffeomorphism in question. The second toolset addresses two classical reducibility problems, one for matrix differential operators and the other for nearly parallel vector fields on torus. We resolve these problems by paralinearizing the conjugacy equation and exploiting, at the paradifferential level, the specific algebraic structure of conjugacy problems, akin to Zehnder's approximate Nash-Moser approach.
\end{abstract}

\maketitle

\section{Introduction}
\subsection{Main Results and Organization of the Paper}
In this paper, we aim to demonstrate how paradifferential calculus can be used to construct quasiperiodic solutions to partial differential equations (``KAM for PDEs") without relying on Nash-Moser/KAM-type iterative schemes, extending the ``KAM via standard fixed point theorems" argument in \cite{AS2023}. The theorem that we prove to realize this general idea is the following:
\begin{theorem}\label{1NLPHyp}
Fix natural numbers $n,N$. Let $f:\xT^n\to\xR^N$ be a smooth odd function. Let $X:\xT_x^n\times\xR_z^N\to\xR^n$ be a smooth function that is even in $x\in\xT^n$. Let $F:\xT_x^n\times\xR_z^N\to\xR^N$ be a smooth function, odd in $x\in\xT^n$ and vanishing when $z=0$. Then there exists an index $s>n/2+1$ and a constant $\varepsilon_*=\varepsilon_*(f,X,F)>0$ with the following properties. For $\varepsilon<\varepsilon_*$, there is a Cantor type subset $\mathfrak{O}_\varepsilon\subset\xR^n$, so that given $\omega\in\mathfrak{O}_\varepsilon$, the quasilinear hyperbolic system
\begin{equation}\label{1NLPEQ}
(\omega\cdot\nabla_x+\varepsilon X(x,u)\cdot\nabla_x)u
+\varepsilon F(x,u)=\varepsilon f
\end{equation}
has an $H^s$ even solution $u=u(\omega):\xT^n\to\xR^N$. Moreover, the set $\mathfrak{O}_\varepsilon$ is asymptotically ample in measure when $\varepsilon\to0$: fixing any $R>0$ and $0<a<1/3$, there holds
$$
|\bar{B}(0,R)\setminus\mathfrak{O}_\varepsilon|\leq C_{n,a}R^n\varepsilon^a.
$$
\end{theorem}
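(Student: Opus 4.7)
The plan is to reduce the quasilinear transport system \eqref{1NLPEQ} to a constant-coefficient transport equation, modulo a smoothing perturbation, by paracomposition with a suitable diffeomorphism of $\xT^n$, and then to resolve the reduced equation by the Banach fixed point theorem in an $H^s$-ball with $s>n/2+1$. The symmetry hypotheses (oddness of $f$ and of $F(\cdot,z)$, evenness of $X(\cdot,z)$) are used throughout as an ansatz: I search for $u$ even, which makes every term of \eqref{1NLPEQ} odd in $x$, and this ultimately kills the average, so that the cohomological inversion of $\omega_\star\cdot\nabla_x$ on the complement of its kernel becomes available.

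First I would paralinearize \eqref{1NLPEQ} using Bony's decomposition, writing
$$\varepsilon X(x,u)\cdot\nabla_x u = T_{\varepsilon X(x,u)}\cdot\nabla_x u + \R_1(u),\qquad \varepsilon F(x,u) = T_{\varepsilon\partial_z F(x,u)}u + \R_2(u),$$
with $\R_1,\R_2$ smoothing and tame in $\|u\|_{H^s}$, so that the equation schematically becomes
$$L(u)\,u \defn \bigl(\omega\cdot\nabla_x + T_{\varepsilon X(x,u)}\cdot\nabla_x + T_{\varepsilon\partial_z F(x,u)}\bigr)u = \varepsilon f + \R(u).$$
The core task is then to conjugate $L(u)$ to a constant-coefficient transport operator $\omega_\star(u)\cdot\nabla_y$, modulo smoothing, for a modified frequency $\omega_\star(u)=\omega+O(\varepsilon)$.

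Second, I would invoke the paradifferential reducibility of nearly parallel vector fields on $\xT^n$ announced in the abstract. The classical conjugacy equation
$$d\chi(x)\cdot\bigl(\omega+\varepsilon X(\chi(x),u(\chi(x)))\bigr) = \omega_\star(u),\qquad \int_{\xT^n}(\chi(x)-x)\,\dx=0,$$
is a cohomological equation for the diffeomorphism $\chi=\chi(u)$ and the vector $\omega_\star=\omega_\star(u)$. Paralinearizing this equation and solving it by a Zehnder-type direct iteration \emph{at the paradifferential level} — rather than a Nash--Moser scheme — produces $\chi(u)$ and $\omega_\star(u)$ as tame Lipschitz functionals of $u$, under the assumption that $\omega_\star(u)$ is Diophantine. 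The refined paracomposition calculus developed earlier in the paper is what allows the conjugation $(\chi(u))^{*}L(u)(\chi(u)^{-1})^{*}=\omega_\star(u)\cdot\nabla_y + \Sigma(u)$ with a smoothing remainder $\Sigma(u)$ whose norm is controlled by $\varepsilon$ in a fixed Sobolev scale, without derivative loss.

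Third, I would introduce the Cantor set
$$\mathfrak{O}_\varepsilon \defn \bigl\{\omega\in\bar{B}(0,R): |\omega_\star(u(\omega))\cdot k|\ge \gamma|k|^{-\tau}\ \text{for all}\ k\in\xZ^n\setminus\{0\}\bigr\}, \quad \gamma=\varepsilon^a,\ \tau\ \text{large},$$
on which $\omega_\star(u)\cdot\nabla_y$ is invertible on odd mean-zero functions with a loss of $\tau$ derivatives. Conjugating back by $\chi(u)^{-1}$, the equation is equivalent to a fixed point problem $u=\Phi_\varepsilon(u)$ on a small ball of $H^s$-odd functions of radius $O(\varepsilon)$; the map $\Phi_\varepsilon$ is a contraction because (i) the forcing carries the factor $\varepsilon$, (ii) the smoothing nature of $\Sigma(u)$ compensates the $\tau$-derivative loss of $(\omega_\star\cdot\nabla_y)^{-1}$, and (iii) the Lipschitz constants of $u\mapsto\chi(u)$ and $u\mapsto\omega_\star(u)$ are $O(\varepsilon)$. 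The measure estimate for $\mathfrak{O}_\varepsilon$ follows from a Rüssmann-type argument: since $\omega\mapsto\omega_\star(u(\omega))$ is a bi-Lipschitz perturbation of the identity at scale $\varepsilon$, the pullback of the complement of the $(\gamma,\tau)$-Diophantine set has measure $\lesssim R^n\gamma = R^n\varepsilon^a$ for $a<1/3$, where the exponent $1/3$ reflects the interplay between $\gamma$, the $\varepsilon$-loss of Lipschitz inversion, and the tameness threshold.

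The main obstacle is the second step: closing the Zehnder-style iteration for the paralinearized conjugacy equation inside a \emph{fixed} Sobolev scale, while simultaneously tracking the dependence of $\chi(u),\omega_\star(u)$ on $u$ tightly enough (tame Lipschitz, with small constant) that the resulting $\Phi_\varepsilon$ remains a genuine contraction on an $\varepsilon$-ball. Here the two toolsets of the paper — the refined paracomposition calculus capturing the precise diffeomorphism dependence, and the paradifferential reducibility of nearly parallel vector fields — must interlock, and the latter cannot tolerate any derivative loss that the former would incur with naive bounds.
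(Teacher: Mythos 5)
Your plan shares the overall architecture of the paper's argument — paralinearize, straighten the transport vector field by a paracomposition, extend the small-denominator multiplier to all of $\xR^n$ so as to run a parameter-dependent Banach fixed point, and bound the Cantor set via a Lipschitz-distortion measure argument. However, there is a genuine gap in the central reduction step, and it concerns the zero-order part of the paralinearized operator.

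After paralinearization, $L(u) = (\omega + \varepsilon T_{X(x,u)})\cdot\nabla_x + \varepsilon T_{A}$ where $A$ is an $N\times N$ matrix symbol built not only from $\partial_z F(x,u)$ but also from the cross term $\sum_j(\partial_j u)\otimes\partial_z X_j(x,u)$ (your $\R_1,\R_2$ absorb the Bony remainders, but not this zero-order paradifferential piece). Conjugating by the paracomposition operator $\chi^\star$ of the straightening diffeomorphism reduces the \emph{principal} symbol to $\omega_\star\cdot\xi$, but the zero-order term survives as $\varepsilon T_{A\circ\chi}$. A change of variable cannot kill a zero-order symbol, and $\varepsilon T_{A\circ\chi}$ is \emph{not} smoothing: it is bounded on $H^s$ but gains nothing, so it cannot compensate the $\tau$-derivative loss of $(\omega_\star\cdot\nabla_y)^{-1}$. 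If you simply absorb $\varepsilon T_{A\circ\chi}$ into your remainder $\Sigma(u)$, then $\Phi_\varepsilon$ does not map $H^s$ into $H^s$ and the fixed-point scheme does not close at a fixed Sobolev index; closing it by regularizing iterates would just reintroduce a Nash--Moser scheme, which is exactly what you are trying to avoid.

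This is why the paper carries out a \emph{second} reducibility step (its ``matrix reducibility'' theorem): one conjugates $\omega_\star\cdot\nabla_y+\varepsilon T_{A\circ\chi}$ by a paraproduct $T_{I_N+U}$ where $U$ solves the matrix homological equation $(\omega_\star\cdot\partial)U+\varepsilon(A\circ\chi)(I_N+U)=0$, a genuinely nontrivial problem when $N>1$ because of matrix non-commutativity, and the one that requires the Kuksin-lemma-type parahomological argument. The correct new unknown is then $y=T_{I_N+U}^{-1}\chi^\star u$, not merely $\chi^\star u$; the reduced equation is $\omega_\star\cdot\nabla_y\,y=\varepsilon T_{I_N+U}^{-1}\chi^\star f + \mathfrak{F}(\omega,y)$ with $\mathfrak{F}$ genuinely smoothing, and only then does inversion by $\bm{L}_{\gamma,\tau}(\omega_\star)$ land back in $H^s$. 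Without this descent, your Step 2 does not produce a constant-coefficient operator modulo smoothing, and Step 3 fails. (A small additional slip: you search for a fixed point among $H^s$-\emph{odd} functions, but the unknowns $u$ and $y$ are even; it is $f$, $A$, and the remainder $\R(X,F;u)$ that are odd.)
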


Theorem \ref{1NLPHyp} immediately yields the following corollary:


\begin{corollary}\label{NLPHypCoro}
Fix natural numbers $\nu,d,N$. Let $f:\xT_\varphi^\nu\times\xT_x^d\to\xR$ be a smooth odd function. Let $X:\xT_\varphi^\nu\times\xT_x^d\times\xR_z^N\to\xR^n$ be a smooth function that is even in $(\varphi,x)\in\xT_\varphi^\nu\times\xT_x^d$. Let $F:\xT_\varphi^\nu\times\xT_x^d\times\xR_z^N\to\xR^N$ be a smooth function, odd in $(\varphi,x)\in\xT_\varphi^\nu\times\xT_x^d$, vanishing when $z=0$. 

Then there is an index $s>(\nu+d)/2+1$ and a constant $\varepsilon_*=\varepsilon_*(f,X,F)>0$ with the following property. For $\varepsilon<\varepsilon_*$, there is a Cantor type subset $\mathfrak{O}_\varepsilon\subset\xR^{\nu+d}$ which is asymptotically ample when $\varepsilon\to0$, so that given $(\omega',\omega'')\in\mathfrak{O}_\varepsilon$, the forced quasilinear hyperbolic system
\begin{equation}\label{NLPEQ'}
\partial_tu+\big(\omega''\cdot\nabla_x+\varepsilon X(\omega' t,x,u)\cdot\nabla_x\big)u
+\varepsilon F(\omega' t,x,u)=\varepsilon f(\omega' t,x)
\end{equation}
has an $H^s$ quasiperiodic solution $u(t,x)$ with time frequency $\omega'$.
\end{corollary}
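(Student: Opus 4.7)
The plan is to reduce Corollary~\ref{NLPHypCoro} to Theorem~\ref{1NLPHyp} via the classical device of lifting quasiperiodic forcing in time to an autonomous equation on an enlarged torus. First I would introduce an auxiliary angular variable $\varphi\in\xT^\nu$ and search for a solution of \eqref{NLPEQ'} of the ansatz form $u(t,x)=U(\omega' t,x)$ with $U:\xT_\varphi^\nu\times\xT_x^d\to\xR^N$. Using the chain rule $\partial_t u=\omega'\cdot\nabla_\varphi U$, this substitution turns \eqref{NLPEQ'} into the autonomous quasilinear system on $\xT^{\nu+d}$:
\begin{equation*}
\bigl(\Omega\cdot\nabla_y+\varepsilon\mathcal{X}(y,U)\cdot\nabla_y\bigr)U+\varepsilon F(y,U)=\varepsilon f(y),
\end{equation*}
where $y=(\varphi,x)$, $\Omega=(\omega',\omega'')\in\xR^{\nu+d}$, and the lifted vector field is
\[
\mathcal{X}(y,z)\defn(0,X(y,z))\in\xR^\nu\oplus\xR^d.
\]
This is precisely of the form \eqref{1NLPEQ} on $\xT^n$ with $n=\nu+d$ and the same target dimension $N$.

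Next I would verify that the structural hypotheses of Theorem~\ref{1NLPHyp} are inherited by this lifted problem. The vector field $\mathcal{X}$ is even in $y$ because $X$ is even in $(\varphi,x)$ while its first $\nu$ components vanish identically. The nonlinearity $F$ is odd in $y$ and vanishes at $z=0$ by hypothesis, and the forcing $f$ is odd in $y$. All parity and vanishing conditions thus match Theorem~\ref{1NLPHyp} verbatim.

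Then, applying Theorem~\ref{1NLPHyp} with these data supplies an index $s>(\nu+d)/2+1$, a threshold $\varepsilon_*=\varepsilon_*(f,X,F)>0$, and for each $\varepsilon<\varepsilon_*$ an asymptotically ample Cantor set $\mathfrak{O}_\varepsilon\subset\xR^{\nu+d}$ such that for every $\Omega=(\omega',\omega'')\in\mathfrak{O}_\varepsilon$ there is an even $H^s$ solution $U$ of the lifted system. Setting $u(t,x)\defn U(\omega' t,x)$ then produces the required $H^s$ quasiperiodic solution of \eqref{NLPEQ'} with time frequency $\omega'$, and the asymptotic measure estimate of the corollary transfers directly from that of Theorem~\ref{1NLPHyp}.

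There is essentially no genuine obstacle in this reduction: the corollary is the quasiperiodic-in-time reformulation of the theorem, and the parity and vanishing assumptions in its statement were arranged precisely to reflect, after the lift, those demanded by Theorem~\ref{1NLPHyp}. The only mild care required is in checking that the trivial padding $(0,X)$ preserves evenness in the combined variable $y$, which is immediate, and that the Sobolev threshold $s>(\nu+d)/2+1$ inherited from the theorem is the one stated in the corollary.
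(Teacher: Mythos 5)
Your proposal is correct and essentially identical to the paper's own proof: both lift the quasiperiodic-in-time problem to the autonomous equation $\big(\omega\cdot\nabla_{\varphi,x}+\varepsilon X(\varphi,x,v)\cdot\nabla_x\big)v+\varepsilon F(\varphi,x,v)=\varepsilon f(\varphi,x)$ on $\xT^{\nu+d}$, observe that the padded vector field $(0,X)$ and the data $F,f$ inherit the parity hypotheses, and then apply Theorem~\ref{1NLPHyp} with $n=\nu+d$. Your explicit verification of the evenness of the padded field and the matching of the Sobolev index is a slight elaboration of what the paper leaves implicit, but the argument is the same.
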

\begin{proof}
In fact, denoting $\omega=(\omega',\omega'')$, the problem is equivalent to finding a Sobolev solution $v=v(\varphi,x)$ on $\xT^\nu\times\xT^d$ of
$$
\big(\omega\cdot\nabla_{\varphi,x}+\varepsilon X(\varphi,x,v)\cdot\nabla_x\big)v
+\varepsilon F(\varphi,x,v)=f(\varphi,x),
$$
a special case of the general form (\ref{1NLPEQ}). The existence of such $v$ is guaranteed by Theorem \ref{1NLPHyp}. The solution $u(t,x)$ of (\ref{NLPEQ'}) is then simply taken as $u(t,x)=v(\omega't,x)$. 
\end{proof}
\begin{remark}
A special case of (\ref{NLPEQ'}) with $d=N=1$ is the \emph{forced Burger's equation}
$$
\partial_tu+u\partial_xu=\varepsilon^2 f(t,x).
$$
Replacing $u$ by $1+\varepsilon u$, this reduces to the form indicated in (\ref{NLPEQ'}). Existence and dynamical implications of time-periodic \emph{viscous} solutions with time-periodic forcing $f$ was addressed by Jauslin, Kreiss and Moser~\cite{JKM1998} as well as 
Weinan E \cite{Weinan1999} in the context of Hamilton-Jacobi equation. In general, being a quasilinear perturbation of the linear transport system
$$
\partial_tu+(\omega''\cdot\nabla_x)u=0,
$$
the equation (\ref{NLPEQ'}) covers several cases of physical interest. Another example that resembles (\ref{NLPEQ'}) is the forced incompressible Euler equation. Baldi and Montalto recently proved in \cite{BalMon2021} that quasiperiodically forced incompressible Euler equation admits quasiperiodic solutions. However, our result differs from \cite{BalMon2021} since the characteristic velocity $X(x,u)$ in (\ref{NLPEQ'}) is not required to be divergence free. Consequently, quasiperiodic solutions of (\ref{NLPEQ'}) has different frequency-amplitude relation compared to that of incompressible Euler equation, as will be shown in Section \ref{sec:7}. In fact, for (\ref{NLPEQ'}), the eigenfrequencies are of the form $(\omega+h)\cdot\xi$, with $\xi\in\xZ^n$ and some first-order amendment $h\cdot\xi$ depending on the solution, while for the incompressible Euler equation there is no amendment $h\cdot\xi$ but only $O(1)$ ones. It will be clearly shown in the proof that the set $\mathfrak{O}_\varepsilon$ is indeed a Lipschitz distortion of the set of Diophantine vectors; see (\ref{diopc}) and (\ref{O_Set}).
\end{remark}

The paper is divided into two parts. Part \ref{Part1}, including Sections \ref{sec:2} through \ref{sec:4}, studies 
the \emph{paracomposition operators} introduced by Alinhac \cite{Alinhac1986}, along with a symbolic calculus to handle them. These paracomposition operators will be used to 
quantify the diffeomorphism that straightens the vector field $\omega+\varepsilon X(x,u)$ on $\xT^n$. We revisit the Egorov's approach, which 
consists in viewing a paracomposition operator as the flow map corresponding to a paraproduct vector field. We prove new estimates that clarify the dependence on the diffeomorphism in question, which is an essential ingredient to prove contraction estimates and hence to implement a Banach fixed-point argument. Furthermore, this alternative definition, compared to the one originally proposed by \cite{Alinhac1986} (and further develop in \cite{Taylor2000,Said2023}), allows us to show that paradifferential and paracomposition operators share the same \emph{algebraic structure} as standard pseudo-differential operators ($\Psi\mathrm{DOs}$) and Fourier integral operators (FIOs), while also clearly tracking potential loss of regularity. The first part is self-contained and could thus serve as a toolbox for future works. We outline the basic idea of this part in Subsection \ref{sec:1.2}.

Part \ref{Part2} illustrates how the concepts from Part \ref{Part1} can be applied to study partial differential equations involving small denominators, known as ``KAM for PDEs". Starting from Section \ref{sec:5}, it contributes the major novelties of this paper. Sections \ref{sec:5} and \ref{sec:6} provide two reducibility theorems for certain linear operators. 
We will study reducibility results for $(i)$ matrix operators of the 
form $\D{\omega} + A$ and $(ii)$ nearly parallel vector fields on the torus, in order to find a diffeomorphism that straightens the vector field $\omega+ X$ on $\xT^n$. Such reducibility results 
are typically achieved through a similar Nash-Moser/KAM scheme. The first novelty here is that we proceed with the Banach fixed-point theorem only, providing ``KAM via standard fixed point" style proof. 

Theorem \ref{1NLPHyp} is then proved in Section \ref{sec:7} as an initial application of Part \ref{Part1} and Sections \ref{sec:5}-\ref{sec:6}, again through a Banach fixed point scheme. The second and most notable novelty of the paper is that the methodology reduces the \emph{nonlinear equation itself} to constant coefficient form modulo smoothing terms, rather than just its linearization. We summarize the general idea behind these two novelties as \emph{paradifferential reducibility}. A brief description of the core idea is outlined in Subsection \ref{sec:1.3}-\ref{sec:1.5}.

\subsection{Paracomposition and ``Parachange of Variable"}\label{sec:1.2}
Here we briefly outline the intuition behind the toolbox of paracompositions. 

Paradifferential operators were first introduced by Bony \cite{Bony1981} as an extension of \emph{paraproduct decomposition}, a widely used harmonic analysis technique in the study for nonlinear dispersive or hyperbolic 
PDEs. Given two functions $a,u$, with the aid of Littlewood-Paley decomposition $\Id=\sum_{j\geq0}\Delta_j$ (see Section \ref{sec:2} for details), one defines the paraproduct operator
$$
T_au=\sum_{k\geq0}S_{k-3}a\cdot\Delta_ku,\quad S_{k-3}=\sum_{0\le j\le k-3}\Delta_j,
$$
and obtains the decomposition
$$
au=T_au+T_ua+R_\PM(a,u).
$$
The remainder $R_\PM(a,u)$ collects ``high-high frequency interactions" and has better regularity, while the regularity of $T_au$, being the same as $u$, relies on very mild regularity assumption of $a$. For application of this decomposition to nonlinear dispersive or hyperbolic equations, see for example \cite{SautTemam1976,GR1987,KPV1993,Kato1995,Staffilani1995,Taylor2000,Tao,MePise}.

Bony \cite{Bony1981} noticed the feature of ``capturing the lowest regularity" in paraproduct decomposition and generalized it to the decomposition $F(u)=T_{F'(u)}+\R_F(u)$ of a nonlinear expression $F(u)$. It turns out that the \emph{paralinearization} $T_{F'(u)}u$ captures most of the irregularity of $F(u)$, and the remainder $\R_F(u)$ gains more regularity. On the other hand, Bony considered paraproduct operators as special \emph{$\Psi\mathrm{DOs}$}. These operators are of ``forbidden type (1,1)", a terminology used in \cite{SteMur1993} and \cite{Hormander1988} to refer to their ``bad" properties compared to usual type (1,0) \emph{$\Psi\mathrm{DOs}$}. However, Bony discovered that they still enjoy a symbolic calculus as classical $\Psi\text{DOs}$ of type (1,0)  do. They are thus named as \emph{paradifferential operators}. This is a key advantage of paradifferential operators: algebraically they can be manipulated by the usual symbolic calculus, while analytically they keep clear track of any possible regularity loss. These results are summarized in Section \ref{sec:2}, specifically Theorem \ref{2theo:sc0}-\ref{2theo:sc2} and Theorem \ref{2ParaLin}.

Composition with appropriate diffeomorphisms sometimes could drastically simplify differential operators under consideration. However, composition with 
a diffeomorphism $\chi$ is \emph{not} a pseudo-differential 
operator but rather a \emph{Fourier integral operator}:
$$
(f\circ\chi)(x)=\sum_{\xi\in\xZ^n}\hat f(\xi)e^{i\chi(x)\cdot\xi}.
$$
For $\Psi\text{DOs}$ of type (1,0), there is an explicit change-of-variable formula resembling the usual chain rule; see for example, Theorem 2.1.2 of H\"{o}rmander's classical work \cite{Hormander1971}, or Proposition 7.1 of Chapter I of \cite{AG} (Theorem \ref{Change(1,0)} in the present paper). It turns out there is also a counterpart of $f\mapsto f\circ\chi$ for 
paradifferential operators, known as \emph{paracomposition} corresponding to $\chi$. Applying paracomposition operator may be legitimately referred as \emph{parachange of variable}.

The idea of introducing paracomposition operator was first due to Alinhac \cite{Alinhac1986}. In studying the nonlinear composition $f\circ\chi$ where both $f$ and $\chi$ are of limited regularity, Alinhac made the further decomposition from $f\circ\chi-T_{f'\circ\chi}\chi$ to capture the part that is ``the most irregular with respect to $f$". In the discussion for ``good unknown" (\emph{bonne inconnue} introduced in \cite{Alinhac1988,Alinhac1989}) to remove the superficial loss of regularity due to coordinate change in free boundary problems, this decomposition naturally appears. This leads to the original definition of paracomposition operator $\chi^\star$ corresponding to $\chi$ (see Definition 2.2.1 of \cite{Alinhac1986}):
\begin{equation}\label{FIO}
(\chi^\star f)(x)=\sum_{j\geq0}(S_{j+N}-S_{j-N})\big((\Delta_jf)\circ\chi(x)\big),
\end{equation}
where the index $N$ depends only on the Lipschitz constant of $\chi$. This expression can be considered as a ``Fourier integral operator (FIO) of forbidden type".

An important feature is that the definition of $\chi^\star$ only requires $\chi$ to be a bi-Lipschitz homeomorphism, but there always holds $\|\chi^\star f\|_{H^s}\lesssim_{s,\chi}\|f\|_{H^s}$ for any $s\in\xR$. Alinhac then proved a refinement of the paralinearization formula (see Lemma 3.1 of \cite{Alinhac1986}):
\begin{equation}\label{AlinhacParalin}
f\circ\chi=T_{f'\circ\chi}\chi+\chi^\star f+\text{smoother remainder},
\end{equation}
together with a parachange of variable formula when conjugating a paradifferential operator $T_a$ with $\chi^\star$. Consequently, $\chi^\star$ plays the role of Fourier integral operators in the realm of paradifferential calculus. See the Appendix of Chapter 2 of \cite{Taylor2000} for detailed discussion on this construction, and the paper \cite{Said2023} for discussion on the parachange-of-variable formula. 

The definition (\ref{FIO}) suffers from being too complicated for some practical manipulation. For example, the dependence of $\chi^\star$ on $\chi$ is not easily read off from it. An alternative definition is thus more feasible. It is well known that composition with a diffeomorphism can be considered as the time 1 flow map corresponding to a vector field, which is the essence of the \emph{Egorov theorem} in pseudo-differential calculus (see for example, Section 7.8 of the standard textbook \cite{Taylor2013} or \cite{MR1218524} for applications to paradifferential operators). Consequently, it is natural to consider the time 1 flow corresponding to a ``\emph{para}differential vector field". 

This is exactly how the alternative definition for paracomposition operator could be formulated. Corresponding to a diffeomorphism $\chi$, one may determine a deformation vector field $X=X(\tau,x)$ such that $f\to f\circ\chi$ is the time 1 flow map for the linear transport equation
$$
\partial_\tau u+X\cdot\nabla_xu=0,\quad u\big|_{\tau=0}=f
\,\Longrightarrow\,
u\big|_{\tau=1}=f\circ\chi.
$$
The natural generalization is, of course, the \emph{paratransport equation}
$$
\partial_\tau w+T_X\cdot\nabla_xw=0,\quad w\big|_{\tau=0}=f,
$$
and it is feasible to \emph{define} the action $\chi^\star f$ exactly as $w\big|_{\tau=1}$. An immediate advantage is that boundedness of the operator $\chi^\star$ within Sobolev spaces follows almost immediately. Moreover, compared to the original Fourier integral definition (\ref{FIO}), it is much easier to decide the conjugation of a paradifferential operator $T_p$ using $\chi^\star$: 
\begin{equation}\label{PCConjRough}
\chi^\star T_p=T_q\chi^\star+\text{lower order operators},
\end{equation}
where the principal symbol of $q$ is identical to that in the usual change-of-variable formula:
$$
q(x,\xi)=p\left( \chi(x),\chi'(x)^{-\T}\xi\right).
$$
Being exactly as predicted in the classical Egorov theorem, this once again illustrates that paradifferential operators share the same algebraic structures as usual (1,0) $\Psi\text{DOs}$.

The flow map of paratransport equation was already implemented in several scenarios, for example in \cite{BD2018,BFF2021,BFP2023,BMM2024} to obtain extended lifespan estimate for water waves. The flow map operator $G(U;t)$ (where $U$ determines $\chi$) defined in these references relied on \emph{Bony-Weyl quantization}, which bypassed manipulation with lower order symbols and sufficed to simplify the principal symbols therein. However, these are insufficient when lower-order symbols do play crucial role. Besides, the arguments in these references provide no explicit analysis on how the operator depend on the diffeomorphism $\chi$.

To quantitatively analyze the dependence of the paracomposition operator on the diffeomorphism, we study the paratransport equation more carefully in Part \ref{Part1}. We directly work within the usual Bony-Kohn-Nirenberg quantization instead of the Bony-Weyl quantization, choosing a deformation vector field $X(\tau,x)$ that gains almost 1 derivative; see Definition \ref{Deformation} and \ref{DefParacomposition}. By analyzing the paratransport equation, we obtain continuous dependence of $\chi^\star$ on $\chi$, together with \emph{Lipschitz} dependence of $\chi^\star$ on $\chi$ at the price of losing 1 derivative. Such dependence is stated in Proposition \ref{PCNormCont}. Furthermore, we prove a refined version of Bony's paralinearization formula, Theorem \ref{ParaLinRefined}, which resembles Alinhac's observation (\ref{AlinhacParalin}). This shows that the time 1 flow map alternative definition is essentially the same as the original one (\ref{FIO}). Finally, we prove the parachange-of-variable formula in Theorem \ref{4PCConj}. The statement not only clearly indicates that the lower order symbols are \emph{algebraically} the same as their counterparts in the usual change-of-variable for (1,0) $\Psi\text{DOs}$, but also gives quantitative Lipschitz dependence of the regularizing remainder on the diffeomorphism.

\subsection{``KAM for PDE" and Reducibility}\label{sec:1.3}
In this subsection and the next two, we outline the core idea 
of the proof of Theorem \ref{1NLPEQ} and complement it with some historical context.

Traditionally, equations of the form (\ref{1NLPEQ}) are treated by Nash-Moser type iterations, as the loss of regularity due to small denominators led 
to the belief that transforming them into classical fixed point forms was impossible. The general idea is to investigate the \emph{linearized} equation $L(\omega,u)v=g$ of the nonlinear problem $\mathscr{F}(\omega,u)=0$. In case of equation (\ref{1NLPEQ}), we have
\begin{equation}\label{F(u)}
\mathscr{F}(\omega,u)=(\omega+\varepsilon X(x,u))\cdot \nabla_xu- \varepsilon F(x,u)-\varepsilon f,
\end{equation}
\begin{equation}\label{L(u)}
\begin{aligned}
L(\omega,u)v
&=(\omega+\varepsilon X(x,u))\cdot\nabla_xv
+\varepsilon X'_z(x,u)v- \varepsilon F'_z(x,u)v\\
&=:(\omega+\varepsilon X)\cdot\nabla_xv+\varepsilon Av.
\end{aligned}
\end{equation}
Should the linearized equation $L(\omega,u)v = g$ be solvable and loses only a fixed number of derivatives, one could then formulate a \emph{modified Newtonian scheme} of the form
$$
u_{k+1}=u_k-L(\omega,u_k)^{-1}S_k \mathscr{F}(u_k),
\quad \text{ where }S_k\text{ is a suitable smoothing operator.}
$$
The loss of regularity caused by $L(\omega,u_k)^{-1}$ is compensated by the quadratic convergence of Newtonian schemes, resulting in a sequence converging to a solution $u=u(\omega)$ of $\mathscr{F}(\omega,u) = 0$. Realization of this general idea can be found in several classical literature on ``KAM for PDEs", for example \cite{Wayne1994,Poschel1996,Kuksin1998}. 
Concerning Nash-Moser type schemes, let us also mention \cite{BH2017} where the authors implement some basic orthogonality idea used in paradifferetial calculus (see M\'etivier's Lecture Notes \cite{MePise}), into Nash-Moser-H\"ormander scheme.

For nonlinear PDEs, especially quasilinear ones, inverting the linear operator $L(\omega,u)$ often involves technical challenges of individual interest. Usually $L(\omega,u)$ is a linear differential operator with variable coefficients. One way of showing its global invertibility on torus is to conjugate it with suitable FIOs (in this case, change-of-variables operators) and $\Psi\text{DOs}$ to reduce the coefficients to constants. This idea dates back to the pioneering works of Iooss-Plotnikov-Toland on time-periodic or travelling water waves, for example \cite{PT2001,IPT2005,IP2009}. They introduced the straightening of principal symbols via changes of variables, and the so-called ``descent method" which consist in elliminating the lower order terms one by one, by conjugating with $\Psi\text{DOs}$. Later this was employed to construct time-periodic standing gravity-capillary waves in \cite{AB2015}. 

However, in trying to apply the ideas of Iooss-Plotnikov-Toland to \emph{quasiperiodic} solutions, such as those defined by (\ref{F(u)})-(\ref{L(u)}), equations defining the diffeomorphisms and symbols to realize the ``descent method" already involve small denominators in themselves. This raises the question of the \emph{reducibility} of $L(\omega,u)$. More concretely, for (\ref{L(u)}), conjugating the vector field $\omega+\varepsilon X$ to a parallel (that is  constant) one leads to a nonlinear differential equation of the form
\begin{equation}\label{Eq(theta)}
(\omega\cdot\partial)(\eta-\Id)=\varepsilon X\circ\eta+\text{constant counter-term},
\end{equation}
where $\eta$ is the diffeomorphism to be determined. 
Similarly, the ``descent method" used to conjugate $\omega\cdot\nabla_x+\varepsilon A$ to $\omega\cdot\nabla_x$ results in 
a linear differential equation of the form
\begin{equation}\label{Eq(U)}
(\omega\cdot\partial)U=\varepsilon A\cdot(I_N+U),
\end{equation}
where $A$ is the $N\times N$ matrix-valued function from (\ref{L(u)}), and $U$ is the $N\times N$ unknown matrix. 

Both equations (\ref{Eq(theta)}) and (\ref{Eq(U)}) are \emph{dynamical conjugacy equations} involving loss of regularity caused by small denominators. Traditionally, such reducibility is achieved through another Nash-Moser/KAM scheme, precisely the focus of traditional KAM theory. It is therefore not surprising that KAM reducibility results concerning variants of (\ref{Eq(theta)}) and (\ref{Eq(U)}) are extensively discussed in the literature. In particular, (\ref{Eq(theta)}) is often treated as a model problem in studies of Nash-Moser techniques and classical KAM theory, as seen in, for example, \cite{Moser19662, Hormander1977, AG, Poschel2011}. For quantitative reducibility results, see, for example, \cite{FGMP2019} for (\ref{Eq(theta)}) and \cite{BLM2019} for (\ref{Eq(theta)}) with unbounded perturbations. There have also been several reducibility results for quasiperiodic matrix linear differential operators, such as \cite{Eliasson1992,JS1992,JRV1997,Krikorian1999}, as well as several results related to infinite-dimensional systems, including ``KAM for PDEs'' \cite{BG2001,BGMR2017,Bambusi2018,FGP2019}.

To summarize, except in the case of finding time-periodic solutions, the construction of quasiperiodic solutions to the nonlinear problem $\mathscr{F}(\omega,u)=0$ has relied on a ``Nash-Moser within Nash-Moser'' scheme. This is a key feature found in much of the literature on ``KAM for PDEs''; see for example, \cite{FP2015,BBHM-2018,BM2020,BalMon2021,HHM}, where additional ``Melnikov conditions" are also required to ensure the KAM-type reducibility of the linearized operators. 

\subsection{Paradifferential Reducibility for Conjugacy Problems}\label{sec:1.4}
Following the general idea in \cite{AS2023}, we provide ``fixed point style" proofs to the following reducibility theorems for conjugacy problems (\ref{Eq(theta)})(\ref{Eq(U)}) in Section \ref{sec:5}-\ref{sec:6}. Precise statements are given as Theorem \ref{MatRed} and \ref{6VectRed}. 

\begin{theorem}[Reducibility of (\ref{Eq(U)}), informal version]\label{MatRedInformal}
Let $\omega\in\xR^n$ be a Diophantine frequency. Suppose $A:\xT^n\to\mathbf{M}_{N}(\xR)$ is an odd matrix-valued function, with small magnitude in some fixed Sobolev space $H^{s_*}$, where $s_*$ depends on the dimension $n$ and the Diophantine index of $\omega$. Then (\ref{Eq(U)}) has a solution $U=U(\omega,A)$. If in addition $A\in H^{s}$ with $s\geq s_*$, then $U(\omega,A)$ is tame Lipschitz in $(\omega,A)$.
\end{theorem}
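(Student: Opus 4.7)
The plan is to follow the strategy laid out in Subsection~\ref{sec:1.4}: paralinearize the bilinear term, construct a paradifferential right inverse by exploiting the Zehnder-type algebraic structure of conjugacy equations, and conclude with a Banach fixed point. I look for $U$ even in $x$ with zero mean; since $A$ is odd, the product $A(I+U)$ is then odd and mean-zero, which is exactly what is needed to invert $\omega\cdot\partial$ (modulo constants) under the Diophantine condition.

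By Bony's decomposition $AU = T_A U + T_U A + R_{\PM}(A,U)$, in which $R_{\PM}(A,U)$ gains roughly $s_{*}-n/2$ derivatives by the paraproduct estimates of Section~\ref{sec:2}, the equation becomes
$$
\mathcal{L}U \;:=\; (\omega\cdot\partial - \varepsilon T_A)U \;=\; \varepsilon A + \varepsilon T_U A + \varepsilon R_{\PM}(A,U).
$$
The central step is to construct a right inverse $\mathcal{I}_U$ of $\mathcal{L}$ losing only $\tau$ derivatives (the Diophantine exponent) and depending tame-Lipschitz on $(\omega,A,U)$. A naive Neumann expansion around $(\omega\cdot\partial)^{-1}$ fails: each factor costs $\tau$ derivatives and so $T_A(\omega\cdot\partial)^{-1}$ is unbounded on any fixed Sobolev space. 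The remedy is the Zehnder structure of the conjugacy problem: if $U$ were a true solution, $\Phi := I+U$ would satisfy the homogeneous equation $(\omega\cdot\partial - \varepsilon A)\Phi = 0$, and classical variation of constants would give the exact identity $\mathcal{L}^{-1}G = \Phi\,(\omega\cdot\partial)^{-1}[\Phi^{-1}G]$, in which the Diophantine loss appears \emph{only once}. I would paralinearize this to the ansatz
$$
\mathcal{I}_U G \;:=\; T_{I+U}\,(\omega\cdot\partial)^{-1}\,T_{(I+U)^{-1}}G,
$$
verify through the composition and Leibniz identities of Section~\ref{sec:2} that $\mathcal{L}\mathcal{I}_U = I + \mathcal{S}_U$ with $\mathcal{S}_U$ genuinely smoothing, and upgrade $\mathcal{I}_U$ to an exact right inverse via a Neumann correction in $\mathcal{S}_U$.

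The last step is a Banach fixed point on
$$
U \;=\; \mathcal{I}_U\!\left(\varepsilon A + \varepsilon T_U A + \varepsilon R_{\PM}(A,U)\right)
$$
in a small ball of even, mean-zero matrix functions in $H^{s_{*}}$, with $s_{*}$ chosen large enough to absorb the Diophantine loss $\tau$, sit in a Sobolev algebra, and make $R_{\PM}$ smoothing. Smallness of $\varepsilon$ and of $\|A\|_{H^{s_{*}}}$ yields the contraction. Tame Lipschitz dependence in $(\omega,A)$ is propagated through the fixed-point identity using the Lipschitz estimates for $\mathcal{I}_U$ combined with the standard Lipschitz-in-$\omega$ small-divisor calculus over a Diophantine class with uniform constants. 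The principal obstacle is the construction of $\mathcal{I}_U$: establishing that after paralinearizing variation of constants the remainder $\mathcal{L}\mathcal{I}_U - I$ is truly smoothing at the required order, and quantifying its Lipschitz dependence on $U$ and $A$. This is where the algebraic structure of conjugacy takes the place of the quadratic convergence of a Nash-Moser/KAM scheme, keeping the whole argument inside a single Banach fixed point.
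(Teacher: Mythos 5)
Your overall plan — Bony paraproduct decomposition, Zehnder's variation--of--constants ansatz, and a single Banach fixed point with $(\omega\cdot\partial)^{-1}$ replaced by a Lipschitz extension $\bm{L}_{\gamma,\tau}(\omega)$ — is precisely the strategy of Section~\ref{sec:5}, and your candidate parametrix $T_{I+U}(\omega\cdot\partial)^{-1}T_{(I+U)^{-1}}$ is, up to a smoothing correction, the same as the map $\mathscr{G}$ in~(\ref{MatParaHom'}). However, the pivotal claim that $\mathcal{L}\mathcal{I}_U=I+\mathcal{S}_U$ with $\mathcal{S}_U$ genuinely smoothing fails for a generic trial $U$. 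Carrying out the Leibniz and composition identities gives
\begin{equation*}
\mathcal{L}\mathcal{I}_U \;=\; I \;+\; \R_\CM^{\gau}\!\big(I+U,(I+U)^{-1}\big)
\;+\; T^{\gau}_{\,\D{\omega}U-\varepsilon A(I+U)}\,(\omega\cdot\partial)^{-1}T^{\gau}_{(I+U)^{-1}}
\;+\; [\text{smoothing}],
\end{equation*}
and the middle term — a paraproduct with symbol $\mathscr{E}(\varepsilon A,U):=\D{\omega}U-\varepsilon A(I+U)$ — is bounded only from $H^{s+\tau}$ to $H^{s}$. It vanishes precisely when $U$ already solves the equation, which is the thing you are trying to prove. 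For a generic $U$ it costs $\tau$ derivatives, so the Neumann correction $\mathcal{I}_U\sum_{k\ge0}(-\mathcal{S}_U)^k$ suffers exactly the divergence you correctly ruled out for the naive expansion around $(\omega\cdot\partial)^{-1}$.

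The paper circumvents this by never attempting to build a right inverse of $\mathcal{L}=\D{\omega}-T_A$. Instead, one substitutes the Zehnder identity $A=[\D{\omega}U-\mathscr{E}(A,U)](I_N+U)^{-1}$ \emph{inside} the paraproduct $T^{\gau}_A U$; the $\mathscr{E}$-bearing term then drops out additively as $T^{\gau}_{\mathscr{E}(A,U)(I_N+U)^{-1}}U$, cf.~(\ref{MatParaLinE'}), and the fixed point is set up on the remaining \emph{parahomological} equation~(\ref{MatParaHom}), whose associated map involves the genuine operator inverses $(T^{\gau}_{(I_N+U)^{-1}})^{-1}$ and $(T^{\gau}_{I_N+U})^{-1}$ — bounded on every $H^s$ — so that the only derivative loss is the single $\tau$ from $\bm{L}_{\gamma,\tau}(\omega)$, absorbed by the smoothing of $R_\PM(A,U)$ and $\R(U)$. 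That a sufficiently small fixed point then forces $\mathscr{E}(A,U)=0$ is established by a separate and elementary argument \emph{in} $|\mathscr{E}|_{L^\infty}$, not in operator norm: after the parahomological cancellation one has $\mathscr{E}(A,U)=T^{\gau}_{\mathscr{E}(A,U)(I_N+U)^{-1}}U$, hence $|\mathscr{E}|_{L^\infty}\le C_{s_0}\|U\|_{H^{s_0}}|\mathscr{E}|_{L^\infty}$, which forces $\mathscr{E}=0$ once $\|U\|_{H^{s_0}}$ is small. In short, the Zehnder structure is exploited twice — first to reorganize the equation, then a posteriori to close the loop — rather than once to manufacture an exact right inverse of $\mathcal{L}$.
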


\begin{theorem}[Reducibility of (\ref{Eq(theta)}), informal version]\label{VectRedInformal}
Given a Diophantine index $\tau$, if $X$ is a vector field close to zero in some fixed Sobolev space $H^{s_*}$, where $s_*$ depends on the dimension $n$ and $\tau$, then there exists a set $\mathfrak{O}$ of frequencies, an amendment $h(\omega,X)\in\xR^n$ and a diffeomorphism $\eta=\eta(\omega,X)$, such that the vector field $\omega+X$ is conjugated to the parallel vector field $\omega+h(\omega,X)$. The amendment is Lipschitz with respect to $(\omega,X)\in\mathfrak{O}\times H^{s_*}$. Furthermore, if in addition $X\in H^{s}$ with $s\geq s_*$, then $\eta(\omega,X)$ is tame Lipschitz in $(\omega,X)$.
\end{theorem}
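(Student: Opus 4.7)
The plan is to recast Theorem \ref{VectRedInformal} as a fixed point problem for $u:=\eta-\Id$, starting from (\ref{Eq(theta)}) in the normalized form
\begin{equation*}
(\omega\cdot\partial)\,u \;=\; X\circ(\Id+u)\;-\;h,
\end{equation*}
up to higher-order perturbations absorbed into the counter-term. Projecting onto constants fixes $h=\Avg_{\xT^n}(X\circ(\Id+u))$, and the remaining zero-mean equation is what must actually be solved. Since inverting $\omega\cdot\partial$ on zero-mean data loses $\tau$ derivatives under the Diophantine condition, the naive fixed point map $u\mapsto(\omega\cdot\partial)^{-1}\Pi_{\neq 0}X\circ(\Id+u)$ cannot close in any fixed $H^s$. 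The paradifferential strategy is to shift that loss onto an operator that the matrix reducibility of Theorem \ref{MatRedInformal} can handle, and only then to invert.

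First I would apply the refined paralinearization Theorem \ref{ParaLinRefined}, a paraproduct form of Alinhac's identity (\ref{AlinhacParalin}),
\begin{equation*}
X\circ(\Id+u) \;=\; T_{X'\circ(\Id+u)}\,u \;+\; (\Id+u)^\star X \;+\; R(u),
\end{equation*}
where $(\Id+u)^\star$ is the paracomposition operator associated with $\Id+u$ and $R(u)$ is smoothing of some order $\rho>0$ made as large as needed by taking $s_*$ large. Moving the paraproduct to the left, the zero-mean equation becomes
\begin{equation*}
\bigl(\omega\cdot\partial \;-\; T_{X'\circ(\Id+u)}\bigr)\,u \;=\; \Pi_{\neq 0}\bigl((\Id+u)^\star X + R(u)\bigr).
\end{equation*}
For small $X$, the left-hand side is of the form $\D{\omega}+A_u$ with small matrix coefficient $A_u$, so Theorem \ref{MatRedInformal} provides, on a Cantor-type set $\mathfrak{O}(u)$ of admissible $\omega$, a conjugator that reduces it to $\D{\omega}$ plus a constant matrix plus a smoothing tail, with Lipschitz dependence on $(\omega,u)$.

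The fixed point map $\Psi$ is then defined by conjugating the right-hand side by this reducer, inverting the resulting constant-coefficient operator on zero-mean data, and undoing the conjugation. The paracomposition estimates of Proposition \ref{PCNormCont} together with the parachange-of-variable Theorem \ref{4PCConj} control $(\Id+u)^\star X$ in $H^{s_*}$ with Lipschitz dependence on $u$ at the cost of one derivative, while the $\rho$-smoothing remainder $R(u)$ absorbs the $\tau$-derivative loss from the small divisors, provided $s_*$ is chosen so that $\rho$ exceeds $\tau$ plus the margin required by Theorem \ref{MatRedInformal}. For sufficiently small $X\in H^{s_*}$, the map $\Psi$ then sends a small $H^{s_*}$ ball into itself and is a contraction in that norm; Banach's fixed point theorem yields a unique small $u=u(\omega,X)$, and the amendment $h(\omega,X)=\Avg(X\circ(\Id+u))$ inherits Lipschitz dependence on $(\omega,X)\in\mathfrak{O}\times H^{s_*}$. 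The tame Lipschitz statement in higher $H^s$ follows by bootstrapping the same identities using the tame estimates of Part \ref{Part1} together with the tame part of Theorem \ref{MatRedInformal}.

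The main obstacle is that the matrix reducibility of $\D{\omega}-T_{X'\circ(\Id+u)}$ must depend Lipschitz continuously on the variable coefficient (i.e.\ on $u$) in the very same space $H^{s_*}$ in which the fixed point is set up, so that $\Psi$ ends up a strict contraction in $H^{s_*}$. This forces one to accommodate simultaneously the one-derivative loss in the Lipschitz dependence of paracompositions (Proposition \ref{PCNormCont}) and the $\tau$-derivative small divisor loss, entirely within the smoothing gain $\rho$ of the paralinearization---hence the large threshold index $s_*$. A subsidiary difficulty is that the Cantor set $\mathfrak{O}$ depends on the unknown $u$ through the constant matrix produced by reducibility; one resolves this by a Whitney-type extension of the Lipschitz estimates from $\mathfrak{O}$ to nearby $u$'s, which is legitimate because the bad-frequency set is cut out by countably many smooth inequalities in $(\omega,u)$.
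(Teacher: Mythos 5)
Your proposal takes a genuinely different route from the paper. You propose to reduce the paralinearized operator $\D{\omega}-T_{X'\circ(\Id+u)}$ by invoking the matrix reducibility Theorem~\ref{MatRedInformal} as a black box, then invert and run a Banach fixed point. The paper instead proves Theorem~\ref{VectRedInformal} directly (Section~\ref{sec:6}) by applying Zehnder's trick \emph{to the vector field problem itself}: differentiating $\mathscr{F}(X,\theta)=\D{\bar\omega}\theta-X\circ(\Id+\theta)$ yields the exact identity
$$
X'\circ(\Id+\theta)=\big[\D{\bar\omega}\partial\theta\big](I_n+\partial\theta)^{-1}-\partial\big(\mathscr{F}(X,\theta)\big)(I_n+\partial\theta)^{-1},
$$
which, after substitution into the paralinearization, conjugates $\D{\bar\omega}-T_{X'\circ(\Id+\theta)}$ to $T_{(I_n+\partial\theta)}\D{\bar\omega}T_{(I_n+\partial\theta)^{-1}}$ modulo an $\mathscr{F}$-linear correction. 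The conjugator $T_{(I_n+\partial\theta)}$ is an \emph{explicit paraproduct} determined algebraically and pointwise by $\theta$; it requires no small-divisor analysis at all and is trivially Lipschitz in $\theta$ without derivative loss. Your conjugator $T_{I_N+U}$ is obtained by solving a small-divisor problem inside the fixed point, and its Lipschitz dependence on the coefficient loses $\tau$ derivatives in high norm (Item~\eqref{Mat2} of Theorem~\ref{MatRed}), which inflates the threshold index and complicates the contraction bookkeeping. The paper's approach buys self-containedness and avoids the nested small-divisor problem; yours buys modularity, at the price of those extra losses.

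There are, however, two concrete gaps. First, Theorem~\ref{MatRed} as stated requires the matrix $A$ to be \emph{odd-valued} (this is exactly what makes $\D{\omega}U=A(I_N+U)$ solvable without a constant-matrix counter-term). Theorem~\ref{VectRedInformal} / Theorem~\ref{6VectRed} is stated for a general $X$ with no parity hypothesis, so $A_u=X'\circ(\Id+u)$ need not be odd, and Theorem~\ref{MatRed} does not apply; your route therefore covers only the case treated in Corollary~\ref{6VectRed'} (even $X$, odd $\eta$). To handle general $X$ you would first need a version of matrix reducibility with a constant-matrix counter-term, which is a nontrivial extension not provided in the paper. Second, your treatment of the frequency amendment $h$ is too loose: you fix $h=\Avg(X\circ(\Id+u))$ and invoke the Diophantine condition on $\omega$, but the conjugacy equation in the correctly normalized form carries $\D{\omega+h}$, not $\D{\omega}$, and the Diophantine set must be cut out for the shifted frequency $\omega+h(\omega,X)$. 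The paper resolves this cleanly by first solving a \emph{modified problem} with a fixed Diophantine $\bar\omega$ and a counter-term $\lambda(\bar\omega,X)$ (Problem~\ref{VectMod}), and only afterwards inverting the Lipschitz map $\bar\omega\mapsto\bar\omega-\lambda(\bar\omega,X)$ to recover $h$; your ``absorbed into the counter-term'' and ``Whitney-type extension'' remarks gesture at this but do not supply the two-stage argument that actually makes the Diophantine set well-defined.
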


Theorem \ref{MatRedInformal} can be regarded as a ``matrix Kuksin lemma". See \cite{Kuksin1998} or Chapter VI of \cite{Kappeler-Poeschel} for detailed discussion. When the size of matrix $N=1$ the solution is algebraically straightforward to obtain. But for $N>1$, the non-commutativity of matrices becomes a major difficulty. This result appeared as Proposition 5.2 in \cite{BalMon2021} but was proved by a KAM type iterative scheme. On the other hand, Theorem \ref{VectRedInformal} is essentially the same as the main result in \cite{FGMP2019}, although we do not attempt to optimize the regularity of functions involved. As pointed out in \cite{FGMP2019}, the essence is to obtain \emph{tame estimates} of the straightening diffeomorphism $\eta$ in terms of the frequency $\omega$ and perturbation $X$. 

Our strategy to prove Theorem \ref{MatRedInformal} and \ref{VectRedInformal} differs from most of the existing literature on KAM reducibility. The general idea of \cite{AS2023} is employed to transform the conjugacy equations (\ref{Eq(U)})(\ref{Eq(theta)}) into fixed point form that balance out the regularity loss. We first paralinearize the conjugacy equations (\ref{Eq(U)})(\ref{Eq(theta)}) and then exploit the algebraic structure specific to conjugacy problems, in the spirit of Zehnder’s \emph{approximate right inverse} argument \cite{Zehnder1975}. This leads to the \emph{parahomological equation}. Being a quadratic perturbation of the usual linear homological equation, it is then transformed into a convenient Banach fixed point form. The equivalence between the parahomological equation and the original conjugacy problem is established using a straightforward Neumann series argument. The advantage of Banach fixed point form is that parameter dependence can be easily recognized. This could be considered as another realization of ``KAM via standard fixed point theorems", or using the terminology of the present paper, \emph{paradifferential reducibility}. 

The proof of both Theorem \ref{MatRedInformal} and \ref{VectRedInformal} involves the Fourier multiplier $(\omega\cdot\partial)^{-1}$, well-defined only for Diophantine $\omega$, which is a meager set (in the sense of Baire). It will be convenient to extend it to a Fourier multiplier, well-defined for \emph{all} $\omega$, coinciding with $(\omega\cdot\partial)^{-1}$ for Diophantine $\omega$; see Lemma \ref{Dio_Ext}. This enables us to solve the parahomological equations via Banach fixed point argument \emph{depending on parameter}, obtaining a diffeomorphism $\eta$ and a matrix-valued function $U$, well-defined for \emph{all} $\omega$, that solve (\ref{Eq(theta)})(\ref{Eq(U)}) for Diophantine $\omega$.

\subsection{Paradifferential Reducibility for PDEs}\label{sec:1.5}
We turn to discuss the role of paradifferential calculus as an alternative to the Nash-Moser/KAM scheme in the construction of quasi-periodic solutions and reducibility.

It has been observed that Nash-Moser type iteration schemes can be replaced by paradifferential calculus in problems involving loss of regularity. This approach was first noted in H\"{o}rmander's paper \cite{Hormander1990}. To the authors' knowledge, the first practical application of this general idea was carried out by Delort \cite{Delort2012}, who sought for \emph{periodic} solutions to \emph{semilinear} PDEs. No change of variables via diffeomorphism was involved there, and the issue with reducibility does not occur\footnote{In fact, Delort~(\cite{Delort2012}, page 642) pointed out that the issue with reducibility in case of multi-dimensional frequency makes the search for quasi-periodic solutions significantly more difficult, raising the question of the validity of paradifferential method. This paper provides a positive answer to this question at least for hyperbolic systems.}. Other attempts of overcoming regularity loss by paralinearization can be found in case of Landau damping \cite{BMM2016}. Usage of paracomposition to balance the regularity loss appeared in \cite{AM2009}, but the present paper seems to be the first time it is employed for an alternative to KAM proof. 

The general idea is to \emph{paralinearize} the nonlinear equation (\ref{F(u)}), namely $\mathscr{F}(\omega,u)=0$, as
$$
T_{\mathscr{F}'(\omega,u)}u+\text{smoothing remainder of }u=0.
$$
Inverting the \emph{paradifferential} operator $T_{\mathscr{F}'(\omega,u)}$ is algebraically the same as inverting $L(\omega,u)$ as guaranteed by the algebraic structure of paradifferential calculus. If inverting $L(\omega,u)=\mathscr{F}'(\omega,u)$ calls for conjugation with a diffeomorphism $\eta$ and $\Psi\text{DO}$ with symbol $B=I_N+U$ (which are given by (\ref{Eq(theta)})(\ref{Eq(U)}) in our case), the paracomposition and paradifferential operators $\eta^\star$, $T_B$ will replace their roles in inverting $T_{\mathscr{F}'(\omega,u)}$, thanks to the symbolic calculus coinciding with that for the usual $\Psi\text{DOs}$ (here we write $\eta_*f:=f\circ\eta$):
$$
\begin{aligned}
B\eta_* L(\omega,u)&\eta_*^{-1}B^{-1}=L_0(\omega),\text{  which has constant coefficients}\\
\Longleftrightarrow\,&
T_B\eta^\star T_{\mathscr{F}'(\omega,u)}(\eta^\star)^{-1}T_{B}^{-1}=L_0(\omega)+\text{smoothing operators}.
\end{aligned}
$$
This then gives the \emph{paradifferential reducibility} of $\mathscr{F}(\omega,u)=0$: with the new unknown $y=(\eta^\star)^{-1}T_{B}^{-1}u$, there holds
$$
L_0(\omega)y+(\eta^\star)^{-1}T_B^{-1}(\text{smoothing remainder of }y)
=0.
$$
This is a constant coefficient equation modulo smoothing terms. Even though $L_0(\omega)^{-1}$ loses some regularity, the loss is \emph{balanced} by the smoothing remainder, yielding a ``fixed point form" within a single Banach space of functions.

The procedure is presented in detail in Section \ref{sec:7}, see in particular Proposition \ref{ParaRedu}. The key equations in this procedure include the paralinearized equation (\ref{7ParalinEQ}), the reduced equation (\ref{7Paray}), and finally the ``fixed point form" (\ref{7ParalinEQ2}). See the beginning of Section \ref{sec:7} for a more precise outline. As discussed in the end of Subsection \ref{sec:1.4}, we employ the technique of extending the Fourier multiplier $L_0(\omega)^{-1}$ to \emph{any} $\omega\in\xR^n$, and the ``fixed point form" equation is equivalent to the original nonlinear equation for $\omega$ in a feasible set. This enables us to solve the ``fixed point form" equation again via Banach fixed point argument depending on external parameter. The detail can be found in Subsection \ref{sec:7.3}. Finally, the measure estimate for the set of feasible $\omega$ is fulfilled in the end of Section \ref{sec:7}.

In summary, the present paper provides useful tools for further inquiry involving analysis of change-of-variables. It also offers simplified proof for ``KAM for PDEs", together with new perspectives for KAM theory in the context of hyperbolic PDEs.

\newpage
\part{Toolbox of Paracomposition Operators}\label{Part1}

\section{Review of Paradifferential Calculus}\label{sec:2}

In this section, we shall review notations and results about Bony's paradifferential calculus. We refer to Bony's paper \cite{Bony1981}, Meyer's paper \cite{Meyer}, H\"{o}rmander's paper \cite{Hormander1988}, Chapter X of H\"{o}rmander's book \cite{Hormander1997}, or Chapter 4 of M\'{e}tivier's textbook \cite{MePise}, for the general theory. Here we follow the presentation by M\'etivier in \cite{MePise}. The advantage of applying paradifferential calculus to nonlinear problems is twofold: on the one hand, it preserves all the algebraic structures enjoyed by usual (pseudo)differential operators, while on the other hand it explicitly manages the regularity loss. 

Throughout the paper, the manifold that we are concerned with is the $n$-dimensional torus $\xT^n$. It is identified with $\xR^n$ modulo the discrete subgroup $(2\pi\xZ)^n$. There is no essential difficulty in generalizing the theory in this paper to $\xR^n$, but focusing on $\xT^n$ avoids the technicalities at infinity.

For~$k\in\xN$, we denote by $C^k$ the usual space of functions on $\xT^n$ whose derivatives of order $\le k$ are continuous. For non-integer $r>0$, we denote by $C^r$ the space of functions whose derivatives up to order $[r]$ are bounded and uniformly H\"older continuous with exponent $r-[r]$. 

\subsection{Littlewood-Paley Decomposition}\label{s1}
We introduce the \emph{Littlewood-Paley decomposition} of a distribution on $\xT^n$ as a standard harmonic analysis construction. 

We represent a distribution $u$ on $\xT^n$ as a Fourier series:
$$
u=\sum_{\xi\in \xZ^n} \hat{u}(\xi) e^{i\xi\cdot x},
$$
where the Fourier coefficient
$$
\hat{u}(\xi)=\int_{\xT^n}u(x)e^{-i\xi\cdot x}\dx.
$$
We denote $\Avg u=\hat u(0)$ for the mean value of $u$ if $u\in L^1$. If $u\in L^2$, then the series converges in $L^2$.

The Littlewood-Paley decomposition is fixed as follows. Let $\varphi\in C^\infty_0(\xR^n)$ be an \emph{even} function, 
with support in an annulus $\{1/2\le \la \xi\ra\le 2\}$, so that
$$
\sum_{j=1}^\infty\varphi(2^{-j}\xi)=1-\psi(\xi),
\quad\mathrm{supp}\psi\subset\{|\xi|\leq1\}.
$$
We can then decompose any distribution $u$ on $\xT^n$ as
$$
u=\Delta_{0}u+\sum_{j\ge1}\Delta_ju,
\quad\text{where}\quad
\Delta_j u=\sum_{\xi\in\mathbb{Z}^n}\varphi(2^{-j}\xi)\hat{u}(k)e^{i\xi\cdot x},
\quad j\geq1,
$$
while obviously $\Delta_0u=\hat u(0)$. We then define the \emph{partial sum operator} $S_j$ to be
$$
S_j=\sum_{l\leq j}\Delta_l,\quad j\geq0,
$$
while for $j\leq0$ we just fix $S_j=\Delta_0$. 

\begin{remark}
The reason of fixing an even frequency cut-off function $\varphi$ is that the operators $S_j$ and $\Delta_j$ shall preserve parity of the functions.
\end{remark}

For an index $s\in \xR$, the \emph{Sobolev space} $H^s(\xT^n)$ consists of those distributions $u$ on $\xT^n$ such that
$$
\lA u\rA_{H^s}:=\left( \sum_{\xi\in \xZ^n} \big(1+\la \xi\ra^2\big)^s \bla \hat{u}(\xi) \bra^2\right)^{1/2} <+\infty.
$$
The space $(H^s,\lA \cdot\rA_{H^s})$ is a Hilbert space. If $s\in \xN$, then
\[
H^s(\xT^n)=\bigl\{u\in L^2(\xT^n) : 
\forall \alpha\in \xN^n,~|\alpha|\leqslant s,~\partial_x^\alpha u\in L^2(\xT^n)\bigr\},
\]
where $\partial_x^\alpha$ is the derivative in the sense of distribution of $u$. Moreover, when $s$ is not an integer, the Sobolev spaces coincide with those obtained by interpolation. If we define
$$
\lA u\rA_{s}^2 \defn \sum_{j=0}^{\infty}2^{2js}\lA \Delta_j u\rA_{L^2}^2,
$$
then it follows from Plancherel theorem that $\lA \cdot\rA_{H^s}$ and $\lA \cdot\rA_{s}$ are equivalent.

We will be using \emph{H\"{o}lder spaces} (also known as \emph{Lipschitz spaces} in the literature) throughout the paper. For an non-integer index $r>0$, the H\"{o}lder norm with index $r$ is defined as
$$
|u|_{C^r}:=|u|_{C^{[r]}}
+\sum_{|\alpha|=[r]}\sup_{x,y\in\mathbb{T}^n}\frac{\big|\partial^{\alpha}u(x)-\partial^{\alpha}u(y)\big|}{|x-y|^{r-[r]}}.
$$
Here $[r]$ is the integer part of $r$. On the other hand, when $r$ is an integer, we simply define $|u|_{C^{[r]}}$ as the classical norm of functions that are $r$ times continuously differentiable.

The Littlewood-Paley characterization of $C^r$ is as follows: for non-integer $r>0$, there holds
$$
|u|_{C^r}\simeq_r
\sup_{j\geq0} 2^{jr}|\Delta_ju|_{L^\infty}.
$$
Direct manipulation with series implies $H^s\subset C^{s-n/2}$ for $s>n/2$ with $s-n/2\notin\xN$. Notice that when $r$ is a natural number, the Littlewood-Paley characterization yields the so-called \emph{Zygmund space} $C^r_*$, but we shall not use it within the scope of this paper.

\subsection{Paradifferential Operators}\label{s2}

\begin{definition}
Given~$r\in [0, +\infty)$ and~$m\in\xR$, the symbol class $\Gamma_{r}^{m}$ denotes the space of locally bounded functions~$a(x,\xi)$ on~$\xT^n\times\xR^n$, $C^\infty$ with respect to $\xi$, such that for all~$\alpha\in\xN^n$, the partial derivative $x\mapsto \partial_\xi^\alpha a(x,\xi)$ is $C^{r}$ with respect to $x$ and, moreover, there exists a constant $C_\alpha$ such that,
\begin{equation*}
\big|\partial_\xi^\alpha a(\cdot,\xi)\big|_{C^{r}}\le C_\alpha
(1+\la\xi\ra)^{m-\la\alpha\ra}.
\end{equation*}
\end{definition}

\begin{remark}\label{Discr}
Notice that the intersection $\cap_{r>0}\Gamma^m_r$ coincides with the usual symbol class $\mathscr{S}^m_{1,0}$ studied in standard pseudo-differential calculus.
\end{remark}

Given a symbol~$a\in\Gamma^m_r$, we define
the \emph{paradifferential operator} $T_a$ by
\begin{equation}\label{eq.para}
\widehat{T_a u}(\xi)=(2\pi)^{-n}\sum_{\eta\in\mathbb{Z}^n}
\chi(\xi-\eta,\eta)\widehat{a}(\xi-\eta,\eta)\widehat{u}(\eta).
\end{equation}
Here
$$
\widehat{a}(\xi,\eta)=\int_{\xT^n} e^{-ix\cdot\xi}a(x,\eta)\dx
$$
is the Fourier transform of~$a$ with respect to the spatial variable, and the cut-off function $\chi$ is fixed as follows:
$$
\chi(\xi,\eta)=\sum_{j\geq0}\left(\sum_{l\leq j-3}\varphi(2^{-l}\xi)\right)\varphi(2^{-j}\eta),
$$
with $\varphi$ being the smooth truncation used to define Littlewood-Paley decomposition. It satisfies
$$
\chi(\xi,\eta)=1 \quad \text{if}\quad \la\xi\ra\le 0.125\la \eta\ra,\qquad
\chi(\xi,\eta)=0 \quad \text{if}\quad \la\xi\ra\geq 0.5\la\eta\ra,
$$
and
$$
\la \partial_\xi^\alpha \partial_\eta^\beta \chi(\xi,\eta)\ra\le 
C_{\alpha,\beta}(1+\la \eta\ra)^{-\la \alpha\ra-\la \beta\ra}.
$$

When $a(x,\xi)$ is independent of $\xi$, the paradifferential operator $T_a$ becomes the widely used \emph{paraproduct} operator:
\begin{equation}\label{T_au}
T_au=\sum_{j\geq0}(S_{j-3}a)\Delta_ju.
\end{equation}
If $a$ is a constant, then $S_{j-3}a\equiv a$, and we have
$$
T_au=\sum_{j\geq0}a\Delta_ju=au.
$$
More generally, if the symbol $a(x,\xi)=\sum_{\alpha}a_\alpha(x)(i\xi)^\alpha$ is a polynomial with respect to the frequency variable $\xi$, then 
$$
T_au=\sum_{\alpha}T_{a_\alpha}\partial^\alpha u.
$$

In case of matrix valued functions $A,U$, the order of multiplication matters, so we define the corresponding \emph{left paraproduct} and \emph{right paraproduct} as
\begin{equation}\label{LRPM}
T_A^{\gau}U=\sum_{j\geq0}S_{j-3}A\cdot\Delta_jU,
\quad
T_U^{\droi}A=\sum_{j\geq0}\Delta_jA\cdot S_{j-3}U.
\end{equation}
However, in case $A$ is $N\times N$-matrix valued and $u$ is $\xC^N$-column valued, the only legitimate order is $Au$, so there is no risk of confusion for simply writing $T_Au$.

In case the expression for $a$ is lengthy, we shall also use the notation 
\begin{equation}\label{DefOpPM}
\Op^\PM(a)=T_a,
\quad\Op^\PM_{\gau}(A)=T_A^{\gau},\,\text{etc.},
\end{equation}
where PM is the abbreviation for \emph{paramultiplication}, the terminology used in \cite{Bony1981}. 

\subsection{Symbolic Calculus}\label{sec.2.2}
We shall use quantitative results from \cite{MePise} about operator norms estimates in symbolic calculus. 
Introduce the following semi-norms.
\begin{definition}\label{defiGmrho}
For~$m\in\xR$,~$r\in [0,+\infty)$ and~$a\in \Gamma^{m}_{r} $, we set
\begin{equation}\label{defi:norms}
\mathcal{M}_{r}^{m}(a)= 
\sup_{\la\alpha\ra\le 2(n+2) +r}\sup_{\la\xi\ra \ge 1/2~}
\big| (1+\la\xi\ra)^{\la\alpha\ra-m}\partial_\xi^\alpha a(\cdot,\xi)\big|_{C^r_x}.
\end{equation}
\end{definition}

\begin{definition}\label{defi:order}
Let~$m\in\xR$.
An operator is said to be of  order~$\le m$ if, for all~$s\in\xR$,
it is bounded from~$H^{s} $ to~$H^{s-m} $. 
\end{definition}

The main features of symbolic calculus for paradifferential operators are given by the following theorems.

\begin{theorem}\label{2theo:sc0}
Let~$m\in\xR$. 
If~$a \in \Gamma^m_0 $, then~$T_a$ is of order~$\le m$. 
Moreover, for all~$s\in\xR$,
\begin{equation}\label{esti:quant1}
\lA T_a \rA_{\mathcal{L}(H^{s},H^{s-m})}\lesssim_s \mathcal{M}_{r}^{m}(a).
\end{equation}
In particular, for a function $a=a(x)$, the paraproduct operator is bounded in all Sobolev spaces under a very mild assumption:
$$
\lA T_a \rA_{\mathcal{L}(H^{s},H^{s})}\lesssim_s |a|_{L^\infty}.
$$
\end{theorem}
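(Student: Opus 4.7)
The strategy is to exploit the dyadic structure built into the paradifferential quantization: decompose $T_au$ into spectrally localized pieces, bound each piece via a uniform kernel estimate, and reassemble by almost orthogonality.

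\textbf{Step 1 (dyadic block decomposition).} The plan is to insert the expansion $\chi(\zeta,\eta)=\sum_{j\ge 0}\bigl(\sum_{l\le j-3}\varphi(2^{-l}\zeta)\bigr)\varphi(2^{-j}\eta)$ into \eqref{eq.para}, producing the representation
\begin{equation*}
T_a u(x) = \sum_{j \geq 0} T_j u(x), \qquad T_j u(x) := a_j(x, D)\, \Delta_j u(x),
\end{equation*}
where $a_j(x,\xi) := \bigl(S_{j-3}^x a\bigr)(x,\xi)\,\widetilde\varphi(2^{-j}\xi)$; here $S_{j-3}^x$ denotes the partial sum acting on $a$ in the $x$-variable only, and $\widetilde\varphi$ is a cutoff equal to $1$ on $\mathrm{supp}\,\varphi$. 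Two structural features will be crucial: the $x$-spectrum of $a_j(\cdot,\xi)$ lies in $\{|\zeta|\lesssim 2^{j-3}\}$, while $\xi\mapsto a_j(x,\xi)$ is supported in $\{|\xi|\sim 2^j\}$. Consequently $T_j u$ is spectrally localized in a dyadic annulus of radius $\sim 2^j$.

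\textbf{Step 2 (uniform bound on each block).} The heart of the proof is the claim
\begin{equation*}
\|T_j u\|_{L^2} \lesssim 2^{jm}\,\mathcal{M}^m_0(a)\,\|\Delta_j u\|_{L^2},
\end{equation*}
uniformly in $j$. I would represent $T_j u$ via its Schwartz kernel
\begin{equation*}
K_j(x, z) = (2\pi)^{-n} \sum_{\eta \in \xZ^n} a_j(x, \eta)\, e^{i\eta \cdot z}.
\end{equation*}
Since $\partial_\eta^\alpha$ commutes with $S_{j-3}^x$, the pointwise bound $|\partial_\eta^\alpha a_j(x, \eta)| \lesssim 2^{j(m-|\alpha|)}\mathcal{M}^m_0(a)$ holds for every $|\alpha| \leq 2(n+2)$ without invoking any $x$-regularity of $a$. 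A nonstationary-phase integration by parts in $\eta$ then yields
\begin{equation*}
|K_j(x, z)| \lesssim 2^{j(m+n)} \bigl(1 + 2^j |z|\bigr)^{-(n+1)} \mathcal{M}^m_0(a),
\end{equation*}
so that $\sup_x \|K_j(x,\cdot)\|_{L^1_z} \lesssim 2^{jm}\mathcal{M}^m_0(a)$. Writing $T_j u(x) = \int K_j(x, x-y)\,\Delta_j u(y)\,\dy$, Minkowski's integral inequality will deliver the desired $L^2$ bound.

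\textbf{Step 3 (almost-orthogonal reassembly).} The spectral localization from Step 1 makes $(T_j u)_j$ almost orthogonal in every $L^2$-based Sobolev space. Combining this with the block estimate of Step 2,
\begin{equation*}
\|T_a u\|_{H^{s-m}}^2 \lesssim \sum_{j\ge 0} 2^{2j(s-m)} \|T_j u\|_{L^2}^2 \lesssim \mathcal{M}^m_0(a)^2 \sum_{j\ge 0} 2^{2js} \|\Delta_j u\|_{L^2}^2 \simeq \mathcal{M}^m_0(a)^2 \|u\|_{H^s}^2,
\end{equation*}
which is precisely \eqref{esti:quant1}. For a paraproduct $a = a(x)$ with $m = 0$, every $\xi$-derivative vanishes trivially, so $\mathcal{M}^0_0(a) = |a|_{L^\infty}$ and the sharper bound drops out.

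The main obstacle is Step 2: one has to verify that the number of $\eta$-derivatives supplied by Definition \ref{defiGmrho} is actually enough to make the nonstationary-phase integration by parts produce an $L^1_z$-integrable kernel uniformly at every dyadic scale. This is the reason the seminorm \eqref{defi:norms} reaches order $2(n+2)+r$ rather than merely $n+2$, and the cushion it provides will also be convenient when one later iterates the argument for compositions and commutators.
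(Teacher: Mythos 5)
Your proof is correct, and it is the standard argument (dyadic block decomposition, uniform kernel estimate on each piece, almost-orthogonal resummation) found in the references the paper cites for this theorem; the paper itself does not reprove Theorem~\ref{2theo:sc0} but imports it verbatim from M\'etivier's lecture notes \cite{MePise}, so there is no internal proof to compare against. Two small remarks. First, the bound in the paper's statement is written $\mathcal{M}_r^m(a)$, which is a typo for $\mathcal{M}_0^m(a)$ given the hypothesis $a\in\Gamma^m_0$; your proof correctly works with $\mathcal{M}_0^m(a)$. Second, since the setting is $\xT^n$ the phrase ``nonstationary-phase integration by parts'' in Step~2 should really be a discrete summation by parts (multiply $K_j$ by powers of $(e^{iz_l}-1)$ and telescope the $\eta$-sum, using $|e^{iz_l}-1|\sim|z_l|$), or equivalently a Poisson-summation transfer to $\xR^n$; this yields exactly the bound $|K_j(x,z)|\lesssim 2^{j(m+n)}(1+2^j|z|)^{-(n+1)}\mathcal{M}_0^m(a)$ you claim, and $n+1$ differences already suffice for $L^1_z$ integrability, so the margin $2(n+2)$ in Definition~\ref{defiGmrho} is more than enough -- it is chosen generously mainly so the same seminorm also covers the composition and adjoint estimates in Theorems~\ref{2theo:sc}--\ref{2theo:sc2} without re-bookkeeping derivative counts. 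The Minkowski step is best carried out after the change of variables $T_j u(x)=\int K_j(x,z)\Delta_j u(x-z)\,\diff z$, giving $\|T_ju\|_{L^2}\le\|\Delta_ju\|_{L^2}\int\sup_x|K_j(x,z)|\,\diff z$, which is what your kernel bound controls.
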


\begin{theorem}[Composition]\label{2theo:sc}
Let $m\in\xR$ and $r>0$. 
Suppose $a\in \Gamma^{m}_{r} , b\in \Gamma^{m'}_{r} $, and define
$$
a\#_r b=
\sum_{\la \alpha\ra \leq r} \frac{1}{i^{\la\alpha\ra} \alpha !} \partial_\xi^{\alpha} a \partial_{x}^\alpha b.
$$
Then $T_a T_b -T_{a\#_r b}$ is of order $\le m+m'-r$, and its operator norm satisfies a tame estimate: 
\begin{equation}\label{esti:quant2}
\big\| T_a T_b  - T_{a\#_r b}; 
\mathcal{L}(H^{s},H^{s-(m+m'-r)})\big\|
\lesssim_{s,r}
\mathcal{M}_{r}^{m}(a)\mathcal{M}_{0}^{m'}(b)+
\mathcal{M}_{0}^{m}(a)\mathcal{M}_{r}^{m'}(b).
\end{equation}
In particular, for functions $a,b\in C^r$, the remainder $\R_\CM(a,b):=T_aT_b-T_{ab}$, where CM is the abbreviation for \emph{composition de paramultiplication}, satisfies
$$
\lA \R_\CM(a,b);\mathcal{L}(H^{s},H^{s+r}) \rA
\lesssim_{s,r}
|a|_{L^\infty}|b|_{C^r}+|a|_{C^r}|b|_{L^\infty}.
$$
\end{theorem}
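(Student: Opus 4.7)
The plan is to realize $T_a$ and $T_b$ as pseudo-differential operators with \emph{admissible symbols} in the sense of Bony--M\'etivier, apply the standard composition formula with a Taylor expansion in $\xi$ truncated at order $[r]$, and control the resulting remainder by exploiting spectral localization. Concretely, I set
\[
\tilde a(x,\xi) = \sum_{\eta\in\xZ^n}\chi(\eta,\xi)\,\hat a(\eta,\xi)\,e^{ix\cdot\eta},
\]
so that $T_a=\Op(\tilde a)$, and similarly $\tilde b$ for $T_b$. The cutoff $\chi$ confines the $x$-spectrum of $\tilde a(\cdot,\xi)$ into $\{|\eta|\le|\xi|/2\}$, and a Bernstein-type argument using the $C^r$ regularity of $a$ in $x$ yields the admissible bound
\[
\bigl|\partial_x^\beta\partial_\xi^\alpha\tilde a(x,\xi)\bigr|\lesssim\mathcal{M}_r^m(a)\,(1+|\xi|)^{m-|\alpha|+(|\beta|-r)_+},
\]
so that $x$-derivatives up to order $r$ cost no frequency growth, while each extra derivative costs one power of $|\xi|$.

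I would next use the exact composition formula $\Op(\tilde a)\Op(\tilde b)=\Op(c)$ with
\[
c(x,\xi)=\sum_{\zeta\in\xZ^n}e^{ix\cdot\zeta}\,\tilde a(x,\xi+\zeta)\,\widehat{\tilde b}(\zeta,\xi),
\]
and perform a Taylor expansion of $\tilde a(x,\xi+\zeta)$ in $\zeta$ at $\zeta=0$ up to order $[r]$. Invoking the identity $\zeta^\alpha e^{ix\cdot\zeta}=(-i)^{|\alpha|}\partial_x^\alpha e^{ix\cdot\zeta}$, the leading sum becomes
\[
\sum_{|\alpha|\le r}\frac{1}{i^{|\alpha|}\alpha!}\partial_\xi^\alpha\tilde a(x,\xi)\,\partial_x^\alpha\tilde b(x,\xi),
\]
plus an integral Taylor remainder $R(x,\xi)$. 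Since $|\zeta|\le|\xi|/2$ on the support of $\widehat{\tilde b}(\zeta,\xi)$, one has $|\xi+\zeta|\simeq|\xi|$, and combining this with the admissible estimate at order $|\alpha|=[r]+1$ (or its H\"older analogue when $r\in\xN$) shows that $\Op(R)$ is of order $m+m'-r$, with norm $\lesssim\mathcal{M}_r^m(a)\,\mathcal{M}_0^{m'}(b)$ by Theorem \ref{2theo:sc0}.

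The third step is to compare each main term $\Op(\partial_\xi^\alpha\tilde a\,\partial_x^\alpha\tilde b)$ with $T_{\partial_\xi^\alpha a\,\partial_x^\alpha b}$. Differentiation commutes with $\chi$ modulo terms whose $\xi$-support lies in the dyadic annulus where $\partial_{\xi,\eta}\chi\ne 0$, and these produce smoother operators. The remaining discrepancy $\tilde f\tilde g-\widetilde{fg}$ is a high--high paraproduct remainder in the spirit of Bony's decomposition, again a smoothing operator of order $m+m'-r$ whose norm is controlled by either $\mathcal{M}_r^m(a)\,\mathcal{M}_0^{m'}(b)$ or $\mathcal{M}_0^m(a)\,\mathcal{M}_r^{m'}(b)$. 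Summing over $|\alpha|\le r$ and combining with the Taylor-remainder bound from the previous step yields precisely the tame estimate in \eqref{esti:quant2}.

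The main obstacle I anticipate is the bookkeeping of the Taylor remainder in the non-integer regime $r\notin\xN$, where the pointwise derivative of order $[r]+1$ must be replaced by a H\"older-type finite difference in order to recover the fractional gain $r-[r]$; I would handle this by dyadic decomposition in $\xi$ summed as a geometric series. A secondary technical point is producing the symmetric term with $\mathcal{M}_0^m(a)\,\mathcal{M}_r^{m'}(b)$: this can be obtained either by iterating the Taylor expansion on the $b$ side of the composition, or by an adjoint argument using that formal adjoints of paradifferential operators remain paradifferential modulo smoother remainders.
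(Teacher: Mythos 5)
The paper states this theorem without proof, citing M\'etivier's lecture notes \cite{MePise} for the general theory. Your proposal correctly reconstructs the standard Bony--M\'etivier argument — admissible pseudo-differential realization $T_a=\Op(\tilde a)$ via the cutoff $\chi$, exact composition formula, $\xi$-Taylor expansion with remainder controlled through the spectral localization $|\zeta|\le|\xi|/2$, and the comparison $\Op(\partial_\xi^\alpha\tilde a\,\partial_x^\alpha\tilde b)$ versus $T_{\partial_\xi^\alpha a\,\partial_x^\alpha b}$ to produce the tame estimate — which is precisely the route taken in the cited reference.
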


\begin{theorem}[Adjoint]\label{2theo:sc2}
Let $m\in\xR$, $r>0$ and $a\in \Gamma^{m}_{r}$. Denote by 
$(T_a)^*$ the adjoint operator of $T_a$ and by $\bar{a}$ the complex-conjugated of $a$.  Define 
$$
a^{\mathbf{c};r} =
\sum_{\la \alpha\ra \leq r} \frac{1}{i^{\la\alpha\ra} \alpha !} \partial_\xi^\alpha \partial_x^{\alpha} \bar{a} .
$$
Then 
$(T_a)^* -T_{a^{\mathbf{c};r}}$ is of order $\le m-r$. Moreover, for all $s\in\mathbb{R}$, 
\begin{equation}\label{esti:quant3}
\big\|(T_a)^*-T_{a^{\mathbf{c};r}};\mathcal{L}(H^{s}, H^{s-m+r})\big\|
\lesssim_{s,r}
\mathcal{M}_{r}^{m}(a).
\end{equation}
\end{theorem}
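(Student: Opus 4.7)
The plan is to prove the estimate by duality: it suffices to bound
\begin{equation*}
\bigl|\langle T_au,v\rangle_{L^2}-\langle u,T_{a^{\mathbf{c};r}}v\rangle_{L^2}\bigr|
\lesssim_{s,r}\mathcal{M}_r^m(a)\|u\|_{H^s}\|v\|_{H^{m-r-s}}
\end{equation*}
for all test functions $u,v$ and all $s\in\xR$, which is equivalent to the operator-norm estimate \eqref{esti:quant3}. By Plancherel and the Fourier-side definition \eqref{eq.para}, the bilinear form equals $(2\pi)^{-2n}\sum_{\xi,\eta}K(\xi,\eta)\hat u(\eta)\overline{\hat v(\xi)}$, with
\begin{equation*}
K(\xi,\eta)=\chi(\xi-\eta,\eta)\hat a(\xi-\eta,\eta)-\chi(\eta-\xi,\xi)\overline{\widehat{a^{\mathbf{c};r}}(\eta-\xi,\xi)}.
\end{equation*}

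Fourier-transforming the definition of $a^{\mathbf{c};r}$ in the $x$-variable and using $\widehat{\bar a}(\zeta,\mu)=\overline{\hat a(-\zeta,\mu)}$ yields the key identity
\begin{equation*}
\overline{\widehat{a^{\mathbf{c};r}}(\eta-\xi,\xi)}=\sum_{|\alpha|\leq r}\frac{(\eta-\xi)^\alpha}{\alpha!}[\partial_\mu^\alpha\hat a](\xi-\eta,\xi),
\end{equation*}
precisely the order-$r$ Taylor polynomial of $\mu\mapsto\hat a(\xi-\eta,\mu)$ around $\mu=\xi$ evaluated at $\mu=\eta$. Accordingly one splits $K=K_1+K_2$ with $K_1=\chi(\xi-\eta,\eta)\bigl[\hat a(\xi-\eta,\eta)-\overline{\widehat{a^{\mathbf{c};r}}(\eta-\xi,\xi)}\bigr]$ (the Taylor remainder) and $K_2=\bigl[\chi(\xi-\eta,\eta)-\chi(\eta-\xi,\xi)\bigr]\overline{\widehat{a^{\mathbf{c};r}}(\eta-\xi,\xi)}$ (the cutoff mismatch). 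For $K_1$, the bracket is a Taylor remainder in a smooth variable, dominated by $|\eta-\xi|^r$ times an $r$-th $\mu$-derivative of $\hat a$. Coupling this with a Littlewood-Paley decomposition $a=\sum_j\Delta_j a$ in the $x$-variable—each piece having first-variable Fourier support in an annulus of size $2^j$ and gaining a factor $2^{-jr}$ from the $C^r_x$-regularity of $a$—together with the cutoff constraint $|\xi-\eta|\leq|\eta|/2$ forcing $|\xi|\sim|\eta|$, one obtains that $K_1$ defines an operator sending $H^s$ into $H^{s-m+r}$ with norm $\lesssim\mathcal{M}_r^m(a)$.

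The main technical obstacle lies in the mismatch term $K_2$. Both cutoffs localize to the paradiagonal $|\xi-\eta|\lesssim\min(|\xi|,|\eta|)$ and coincide on the sub-region $|\xi-\eta|\leq\min(|\xi|,|\eta|)/8$, so their difference is supported only in the thin transition strip where $|\xi|\sim|\eta|\sim|\xi-\eta|$. Exploiting the smoothness of $\chi$ together with a mean-value expansion, one gains arbitrarily many factors of $\langle\eta\rangle^{-1}$ on this support, which—combined with the polynomial bounds on $\overline{\widehat{a^{\mathbf{c};r}}(\eta-\xi,\xi)}$—turns $K_2$ into a smoothing multiplier of any prescribed order, with constant controlled by $\mathcal{M}_r^m(a)$. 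Combining the $K_1$ and $K_2$ estimates and dualizing yields \eqref{esti:quant3}. The overall structure closely parallels the proof of the composition Theorem \ref{2theo:sc} in \cite{MePise}, with complex conjugation and an adjoint symbol replacing the role of a second paradifferential factor.
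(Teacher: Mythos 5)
The paper does not supply a proof of Theorem~\ref{2theo:sc2}: it is stated as a cited result from M\'etivier's lecture notes \cite{MePise}, so there is no ``paper's own proof'' for your argument to be compared against. Your overall skeleton---duality, the exact Fourier-side identification of $\overline{\widehat{a^{\mathbf{c};r}}(\eta-\xi,\xi)}$ as the order-$[r]$ Taylor polynomial of $\mu\mapsto\hat a(\xi-\eta,\mu)$ at $\mu=\xi$, and the resulting split $K=K_1+K_2$ into Taylor-remainder and cutoff-mismatch pieces---is the standard and correct route, and your handling of $K_1$ is (modulo sketchiness about non-integer $r$) on the right track.

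However, your treatment of $K_2$ contains a genuine error. You claim that, because $\chi$ is smooth and the two cutoffs agree on the bulk of the paradiagonal, a mean-value expansion lets you ``gain arbitrarily many factors of $\langle\eta\rangle^{-1}$'' on the support of $K_2$, turning it into a smoothing operator of arbitrary order. This is false. On the support of $\chi(\xi-\eta,\eta)-\chi(\eta-\xi,\xi)$ one has $|\xi-\eta|\sim|\xi|\sim|\eta|$, so the increment in the second argument is of order $|\eta|$, not small: the derivative bound $|\partial_\eta\chi|\lesssim(1+|\eta|)^{-1}$ combined with a step of size $|\eta-\xi|\sim|\eta|$ gives $O(1)$, and higher-order Taylor terms $(|\eta-\xi|/\langle\eta\rangle)^k$ are likewise $O(1)$. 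The cutoff difference is genuinely of size one on this region. Moreover, your conclusion cannot be true on structural grounds: a kernel built only from a $C^r_x$ symbol cannot define an infinitely smoothing operator with constant $\mathcal{M}_r^m(a)$, as the $\xi$-derivative estimate \eqref{esti:quant3} would then hold with $r$ replaced by any $r'>r$ and the same semi-norm, which is impossible.

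The correct source of gain for $K_2$ is not the smoothness of $\chi$ but the $C^r_x$ regularity of $a$. On the support of $K_2$ one has $|\xi-\eta|\sim|\eta|$, so decomposing $a=\sum_j\Delta_j a$ in the $x$-variable forces $2^j\sim|\eta|$ in the surviving terms, and each such piece carries the factor $|\Delta_j\partial_\mu^\alpha a(\cdot,\mu)|_{L^\infty}\lesssim 2^{-jr}(1+|\mu|)^{m-|\alpha|}\mathcal{M}_r^m(a)$. Combined with $|(\eta-\xi)^\alpha|\sim|\eta|^{|\alpha|}$, this yields $|T(\xi,\eta)|\lesssim(1+|\eta|)^{m-r}\mathcal{M}_r^m(a)$, and together with the $O(1)$ cutoff mismatch one obtains the correct gain of exactly $r$ derivatives---the same as for $K_1$, and exactly what \eqref{esti:quant3} asserts. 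With this replacement your proof closes.
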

\begin{remark}\label{2MatSymbol}
In case that all symbols in the above theorems are matrix valued, the above symbolic calculus formulas are still valid, but the order and type of paraproducts must be specified. For example, the result for matrix valued paraproduct remainder $\R_\CM^{\gau}(A,B):=T_A^{\gau}T_B^{\gau}-T_{AB}^\gau$ reads
$$
\big\|\R_\CM^{\gau}(A,B)\big\|_{\mathcal{L}(H^s,H^{s+r})}
\lesssim_{s,r}|A|_{L^\infty}|B|_{C^r}+|A|_{C^r}|B|_{L^\infty}.
$$
The proof does not differ from the case of scalar valued symbols.
\end{remark}

We observe that the $\#_r$ and $\mathbf{c};r$ operations are simply cut-offs of the composition and adjoint operations in the symbolic calculus for (1,0) pseudo-differential operators. Hence, we assert that \emph{paradifferential calculus retains the same algebraic structure as standard pseudo-differential calculus, modulo smoothing operators}. 

A further observation is that the remainder estimates in Theorem \ref{2theo:sc} are \emph{tame} with respect to the symbols. This feature becomes particularly useful for delicate study of nonlinear problems.

\begin{remark}
We have the following corollary: if 
$$
a=\sum_{0\le j <r}a^{(m-j)}\in \sum_{0\le j <r} \Gamma^{m-j}_{r-j},\quad 
b=\sum_{0\le k <r}b^{(m-k)}\in \sum_{0\le k <r} \Gamma^{m'-k}_{r-k},
$$
with $m,m'\in\xR$ and $r>0$, then with
$$
c=
\sum_{ \la \alpha\ra +j+k < r} \frac{1}{i^{\la\alpha\ra} \alpha !} \partial_\xi^{\alpha} a^{(m-j)} \partial_{x}^\alpha b^{(m'-k)},
$$
the operator $T_a T_b -T_{c}$ is of order $\le m+m'-r$.
\end{remark}
The symbols considered in the previous remark naturally belong to the (poly)homogeneous symbol class introduced by Bony, defined as follows:
\begin{definition}\label{D:3.5}
The space $\dot\Gamma_{r}^{m}$ of homogeneous symbols of degree $m$ consists of $\Gamma^m_r$ symbols $a(x,\xi)$ of the form
$$
a(x,\xi)=\psi(|\xi|)|\xi|^ma'\left(x,\frac{\xi}{|\xi|}\right),
$$
where $a'(x,\omega)$ is $C^r$ in $x\in \xT^n$ and smooth in $\omega\in S^{n-1}\subset\xR^n$, while $\psi(t)$ is a smooth cut off function that vanishes for $t\leq1/2$ and equals to 1 for $t\geq1$. 

The space $\Sigma^m_r$ of polyhomogeneous symbols of degree $m$ consistes of $\Gamma^m_r$ symbols $a(x,\xi)$ of the form
$$
a=\sum_{j:0\le j <r}a^{(m-j)}\in \sum_{j:0\le j <r}\dot\Gamma^{m-j}_{r-j}.
$$
We say that 
$a^{(m)}$ is the principal symbol of $a$.
\end{definition}
\begin{remark}
Our definition of (poly)homogeneous symbols avoids the singularity at zero frequency. Since we focus on periodic functions, this adjustment does not impact our theory at all. For simplicity, we will omit the cut-off function $\psi$ in our expressions from now on. For instance, the symbol $\xi/|\xi|^2$ is understood as having zero average. With a slight abuse of notation, we also treat functions as homogeneous symbols of degree zero, though we \emph{do not} average out in this case.
\end{remark}

\subsection{Paraproduct and Paralinearization}\label{sec.2.3}
Recall that if $a=a(x)$ is a function of $x$ only, the paradifferential operator $T_a$ is called a paraproduct. A key feature of paraproducts is that one can replace 
bilinear expressions by paraproducts up to smoothing operators. 
Also, one can define paraproducts $T_a$ for rough functions $a$ which only belongs to $L^\infty$. 

\begin{definition}\label{R_PM}
Given two functions~$a,b$ on $\xT$, we define the paraproduct remainder 
$$
\begin{aligned}
R_\PM(a,u)&=au-T_a u-T_u a\\
&=\sum_{j,k:|j-k|<3}\Delta_j a\Delta_ku.
\end{aligned}
$$
Here PM is the abbreviation for \emph{paramultiplication}. For matrix valued functions $A,U$, we define similarly
$$
\begin{aligned}
R_\PM(A,U)&=AU-T_A^{\gau}U-T_U^{\droi}A\\
&=\sum_{j,k:|j-k|<3}\Delta_jA\cdot\Delta_kU.
\end{aligned}
$$
\end{definition}

We record here some estimates about paraproducts; see Chapter 2 of \cite{BCD}, Chapter 2 of \cite{Chemin} or Chapter X of \cite{Hormander1997}. The proof for matrix valued version does not differ from scalar valued version.

\begin{theorem}\label{PMReg}
Let $\alpha,\beta\in \xR$. If $\alpha+\beta>0$, then
\begin{align}
&\lA R_\PM(a,u) \rA _{H^{\alpha + \beta-n/2} }
\lesssim_{\alpha,\beta} \lA a \rA _{H^{\alpha} }\lA u\rA _{H^{\beta} },\label{Bony} \\ 
&\lA R_\PM(a,u) \rA _{H^{\alpha + \beta} } 
\lesssim_{\alpha,\beta} |a|_{C^{\alpha}}\lA u\rA _{H^{\beta} }.\label{Bony3}
\end{align}
In case of matrix valued functions the result is the same.
\end{theorem}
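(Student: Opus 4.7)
The plan is to follow the standard dyadic decomposition strategy: rewrite $R_\PM(a,u)$ as a sum of Fourier-ball-localized pieces, estimate each piece in $L^2$ via Bernstein's inequality, and then recombine using the classical lemma on sums supported in dyadic balls.

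First I would write
\[
R_\PM(a,u)=\sum_{j\geq 0}R_j,\qquad R_j:=\Delta_j a\cdot \widetilde\Delta_j u,\qquad \widetilde\Delta_j:=\sum_{|k-j|<3}\Delta_k.
\]
Since $\Delta_j a$ and $\widetilde\Delta_j u$ both have Fourier support in $\{|\xi|\lesssim 2^j\}$, the product $R_j$ has Fourier support in the \emph{ball} $\{|\xi|\leq C 2^j\}$ for some fixed $C$. Next I would record the key $L^2$ bounds. For \eqref{Bony}, Hölder combined with the Bernstein inequality $|\widetilde\Delta_j u|_{L^\infty}\lesssim 2^{jn/2}\lA\widetilde\Delta_j u\rA_{L^2}$ gives
\[
\lA R_j\rA_{L^2}\lesssim 2^{jn/2}\lA\Delta_j a\rA_{L^2}\lA\widetilde\Delta_j u\rA_{L^2}=2^{-j(\alpha+\beta-n/2)}c_j d_j,
\]
where $c_j:=2^{j\alpha}\lA\Delta_j a\rA_{L^2}\in\ell^2$ with $\lA c\rA_{\ell^2}\lesssim\lA a\rA_{H^\alpha}$, and similarly $d_j\in\ell^2$ with $\lA d\rA_{\ell^2}\lesssim\lA u\rA_{H^\beta}$. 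For \eqref{Bony3}, I would instead place $\Delta_j a$ in $L^\infty$ and exploit the Littlewood--Paley characterization $|\Delta_j a|_{L^\infty}\lesssim 2^{-j\alpha}|a|_{C^\alpha}$, giving $\lA R_j\rA_{L^2}\lesssim 2^{-j(\alpha+\beta)}|a|_{C^\alpha}d_j$.

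The second ingredient is the standard ``ball summation lemma'': if $s>0$ and $\{f_j\}$ are functions with Fourier support in $\{|\xi|\leq C 2^j\}$, then
\[
\Big\lA\sum_j f_j\Big\rA_{H^s}\lesssim \big\lA\{2^{js}\lA f_j\rA_{L^2}\}\big\rA_{\ell^2}.
\]
The proof is a convolution-type estimate: $\Delta_l\sum_j f_j=\sum_{j\geq l-c}\Delta_l f_j$, whence $2^{ls}\lA\Delta_l\sum f_j\rA_{L^2}\leq \sum_{m\geq-c}2^{-ms}(2^{(l+m)s}\lA f_{l+m}\rA_{L^2})$, and Minkowski in $\ell^2_l$ combined with the convergence of $\sum_m 2^{-ms}$ (which requires $s>0$) yields the bound. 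Applied to $f_j=R_j$, this gives \eqref{Bony} when $s=\alpha+\beta-n/2>0$ (via Cauchy--Schwarz on $c_j d_j$) and \eqref{Bony3} when $s=\alpha+\beta>0$.

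The main subtlety I expect is the range $0<\alpha+\beta\leq n/2$ in \eqref{Bony}, where the target exponent $s=\alpha+\beta-n/2$ is non-positive and the ball summation lemma fails in general. I would handle this by a duality argument: write
\[
\lA R_\PM(a,u)\rA_{H^s}=\sup_{\lA\varphi\rA_{H^{-s}}\leq 1}|\langle R_\PM(a,u),\varphi\rangle|,
\]
decompose $\varphi=\sum_l\Delta_l\varphi$, and use that pairing $R_j$ against $\Delta_l\varphi$ is supported at $l\leq j+c$, so
\[
|\langle R_\PM(a,u),\varphi\rangle|\leq\sum_j\sum_{l\leq j+c}|\langle R_j,\Delta_l\varphi\rangle|\leq\sum_j\lA R_j\rA_{L^2}\Big(\sum_{l\leq j+c}\lA\Delta_l\varphi\rA_{L^2}^2\Big)^{1/2}(j+c)^{1/2}.
\]
Rewriting $\lA\Delta_l\varphi\rA_{L^2}=2^{ls}\cdot 2^{-ls}\lA\Delta_l\varphi\rA_{L^2}$ and using $-s>0$ together with the bound on $\lA R_j\rA_{L^2}$ derived above, Cauchy--Schwarz in $j$ reduces everything to $\lA c\rA_{\ell^2}\lA d\rA_{\ell^2}\lA\varphi\rA_{H^{-s}}$. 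Finally, the matrix-valued version requires no new idea since all steps are bilinear and Bernstein applies componentwise.
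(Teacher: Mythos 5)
The paper does not prove Theorem~\ref{PMReg} itself; it cites \cite{BCD}, \cite{Chemin}, \cite{Hormander1997}. Your dyadic decomposition, the $L^2$--Bernstein estimate $\|R_j\|_{L^2}\lesssim 2^{-j(\alpha+\beta-n/2)}c_jd_j$, the analogous estimate for \eqref{Bony3}, and the ball-summation lemma are all correct and standard, and they do settle \eqref{Bony3} as well as \eqref{Bony} when $\alpha+\beta>n/2$. You also correctly identify $0<\alpha+\beta\le n/2$ as the delicate regime for \eqref{Bony}. However, the duality argument you propose there does not close. Set $s=\alpha+\beta-n/2\le 0$, $c_j=2^{j\alpha}\|\Delta_j a\|_{L^2}$, $d_j=2^{j\beta}\|\widetilde{\Delta}_j u\|_{L^2}$, and $e_l=2^{-ls}\|\Delta_l\varphi\|_{L^2}$. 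After your Cauchy--Schwarz in $l$, the $l$-sum is bounded (with or without the superfluous $(j+c)^{1/2}$ factor) uniformly in $j$ by a multiple of $\|\varphi\|_{H^{-s}}$, because the weights $2^{ls}$ are $\le 1$ for $s\le 0$ and $l\ge 0$; so one is left with $\sum_j\|R_j\|_{L^2}\lesssim\sum_j 2^{-js}c_jd_j$. Since $-s\ge 0$, this weight grows and the sum need not converge: taking $c_j=d_j=2^{js/2}j^{-1/2}$ gives $c,d\in\ell^2$ but $\sum_j2^{-js}c_jd_j=\sum_j j^{-1}=\infty$. The structural reason is that measuring $R_j$ in $L^2$ charges the Bernstein factor $2^{jn/2}$ on the wrong side, so the off-diagonal decay in the double sum over $(j,l)$ with $l\le j+c$ is governed by $s$ (which may be nonpositive) rather than by $\alpha+\beta$.

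The standard repair is to measure $R_j$ in $L^1$, where no Bernstein factor is needed: $\|R_j\|_{L^1}\le\|\Delta_j a\|_{L^2}\|\widetilde{\Delta}_j u\|_{L^2}=2^{-j(\alpha+\beta)}c_jd_j$. Then the ball-summation lemma applied at the exponent $\alpha+\beta>0$ (not $s$) gives $\bigl\|\sum_j R_j\bigr\|_{B^{\alpha+\beta}_{1,1}}\lesssim\sum_j c_jd_j\le\|c\|_{\ell^2}\|d\|_{\ell^2}$, and the Besov embedding $B^{\alpha+\beta}_{1,1}\hookrightarrow B^{\alpha+\beta-n/2}_{2,1}\hookrightarrow H^{\alpha+\beta-n/2}$ yields \eqref{Bony} in one stroke for the whole range $\alpha+\beta>0$, with no case distinction. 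Equivalently, within your duality framework, replace the $L^2\times L^2$ pairing by
$$|\langle R_j,\Delta_l\varphi\rangle|\le\|R_j\|_{L^1}\|\Delta_l\varphi\|_{L^\infty}\lesssim 2^{-j(\alpha+\beta)}c_jd_j\cdot 2^{l(n/2+s)}e_l=2^{-(j-l)(\alpha+\beta)}c_jd_je_l,$$
putting Bernstein on the dual factor; the decay $2^{-(j-l)(\alpha+\beta)}$ over $j-l\ge -c$ is now summable because $\alpha+\beta>0$, and summing in $m=j-l$ and then $l$ closes the estimate. This $L^1$-plus-embedding argument is essentially the proof of Theorem 2.52 in \cite{BCD}, which is one of the paper's cited sources.
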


The theorem states that in the bilinear expression $B(a,u)=au$, the sum of the paraproducts $T_au+T_ua=T_{B'(a,u)}\cdot(a,u)$ captures the most irregularity. In the same vein, we 
have the following celebrated \emph{paralinearization theorem} due to Bony \cite{Bony1981} (a beautiful proof can be found in \cite{Meyer} or Chapter X of \cite{Hormander1997}, and a quantitative version can be found in \cite{AS2023}). 

\begin{theorem}\label{2ParaLin}
Let $0\leq r<s$. Suppose the function $F=F(x,z)\in C^{\infty}$, and $u\in H^s\cap C^r$. Then there holds the paralinearization formula:
$$
F(x,u)-F(x,0)
=T_{F_z'(x,u)}u+\R_\PL(F,u)\in H^s+H^{s+r}.
$$
The remainder $\R_{\PL}(F,u)$ is a smooth nonlinear mapping in $u$, and is tame in the following sense:
$$
\|\R_\PL(F,u)\|_{H^{s+r}}\lesssim_{r,s,F}\big(1+|u|_{C^r}\big)\|u\|_{H^s}.
$$
In particular, if $F\in C^{\infty}$ and $u\in H^s$ with $s>n/2$, then $F(u)\in H^s$.

In parallel, if both $r$ and $2r$ are not integer, and $F\in C^{\infty}$, then
$$
F(x,u)-F(x,0)
=T_{F_z'(x,u)}u+\R_\PL(F,u)\in C^r+C^{2r}.
$$

Note that neither $F$ nor $u$ are restricted to scalar-valued functions.
\end{theorem}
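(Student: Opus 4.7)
The plan is the classical telescoping argument via Littlewood--Paley decomposition, which captures the most irregular contribution as a paraproduct and leaves a more regular remainder. Without loss of generality assume $F(x,0)=0$. With the convention $S_{-1}u=0$, write
\[
F(x,u) = \sum_{j\geq -1}\bigl[F(x,S_{j+1}u)-F(x,S_j u)\bigr],
\]
and apply the fundamental theorem of calculus to each difference to obtain
\[
F(x,S_{j+1}u)-F(x,S_j u) = M_j(x)\,\Delta_{j+1}u,\qquad M_j(x)\defn\int_0^1 F'_z\bigl(x, S_j u + t\Delta_{j+1}u\bigr)\,\dt.
\]
This puts each term in the ``symbol-times-dyadic-block'' form that can be compared to a paraproduct.

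Next, I would decompose, for each $j$,
\[
M_j = S_{j-3}\bigl(F'_z(x,u)\bigr) + S_{j-3}\bigl(M_j - F'_z(x,u)\bigr) + (\Id-S_{j-3})M_j,
\]
so that $\sum_j S_{j-3}\bigl(F'_z(x,u)\bigr)\Delta_{j+1}u$ reconstructs $T_{F'_z(x,u)}u$ modulo a harmless shift of indices. The remainder $\R_\PL(F,u)$ then splits into a low-frequency symbol correction $\sum_j S_{j-3}(M_j-F'_z(x,u))\Delta_{j+1}u$ and a high-frequency piece $\sum_j (\Id-S_{j-3})M_j\cdot\Delta_{j+1}u$, both of which are essentially spectrally localized at frequency $\sim 2^j$.

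The estimate then combines Bernstein's inequality with two ingredients. First, Moser's composition inequality applied to the smooth nonlinearity $F'_z$ yields $|M_j|_{C^r}\les C_F(1+|u|_{C^r})$ with a $j$-uniform $L^\infty$ bound. Second, a Taylor expansion together with $|u-S_j u|_{L^\infty}\les 2^{-jr}|u|_{C^r}$ and $|\Delta_{j+1}u|_{L^\infty}\les 2^{-jr}|u|_{C^r}$ gives $|M_j-F'_z(x,u)|_{L^\infty}\les 2^{-jr}C_F(1+|u|_{C^r})$. The decay $2^{-jr}$ in each of $(\Id-S_{j-3})M_j$ and $M_j-F'_z(x,u)$ combines with the fact that $\|\Delta_{j+1}u\|_{L^2}$ is $\ell^2$-summable after multiplication by $2^{j(s+r)}$ with bound $(1+|u|_{C^r})\|u\|_{H^s}$, producing the advertised tame estimate $\|\R_\PL(F,u)\|_{H^{s+r}}\les (1+|u|_{C^r})\|u\|_{H^s}$.

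The H\"older version with remainder in $C^{2r}$ follows from the same telescoping, with the $L^2$-book-keeping replaced by the $L^\infty$ Littlewood--Paley characterization of $C^r$; the non-integrality of $r$ and $2r$ is precisely what guarantees that this characterization coincides with the classical H\"older norm. The main technical obstacle is the tame control of $M_j$ via Moser-type inequalities and the careful verification that each of the cross terms produced by the decomposition above indeed gains $r$ derivatives and yields a summable geometric series in $j$; once this bookkeeping is in place, the summation over dyadic scales is routine.
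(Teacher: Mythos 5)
The paper does not prove Theorem~\ref{2ParaLin}: it is stated as Bony's classical paralinearization theorem, with proofs deferred to Meyer, Chapter~X of H\"ormander, and the quantitative version in the Alazard--Shao reference. Your telescoping Littlewood--Paley argument follows the standard Meyer/H\"ormander route, so in outline your approach is the right one; but there is a genuine gap in the summation step for the third piece of your decomposition.

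The low-frequency correction $\sum_j S_{j-3}(M_j - F'_z(x,u))\Delta_{j+1}u$ is indeed supported in balls of radius $\lesssim 2^j$, so the standard dyadic block-sum lemma turns the $L^2$ estimates $\lesssim c_j 2^{-j(s+r)}$ into an $H^{s+r}$ bound (using $s>0$). The high-frequency piece $v_j := (\Id-S_{j-3})M_j\cdot\Delta_{j+1}u$ is \emph{not} of this type: the factor $(\Id-S_{j-3})M_j$ carries all frequencies $\gtrsim 2^{j-4}$ with no a priori upper bound, so the product can contribute to arbitrarily high Littlewood--Paley blocks. Your claim that this piece is ``essentially spectrally localized at frequency $\sim 2^j$'' is exactly what needs to be proved, and the $L^2$ bound $\|v_j\|_{L^2}\lesssim c_j 2^{-j(s+r)}$ by itself does \emph{not} imply $\sum_j v_j \in H^{s+r}$: if $M_{j_0}$ had a component at frequency $2^{100 j_0}$ of size $2^{-100 j_0 r}$ (consistent with a uniform $C^r$ bound) and $u=\Delta_{j_0+1}w$, the corresponding term would live at frequency $\sim 2^{100 j_0}$ with $L^2$ norm $\sim 2^{-100 j_0 r}2^{-j_0 s}$, whose $H^{s+r}$ norm grows like $2^{99 j_0 s}$. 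So the $C^r$ Moser bound on $M_j$ that you invoke is insufficient to close the sum.

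The missing ingredient is the \emph{rapid spectral decay} of $M_j$, which is the step where the $C^\infty$ smoothness of $F$ enters in an essential way. Since $S_j u + t\Delta_{j+1}u$ is spectrally supported in $\{|\xi|\lesssim 2^j\}$, Bernstein's inequality gives $\|\partial^\alpha(S_j u + t\Delta_{j+1}u)\|_{L^\infty}\lesssim 2^{j|\alpha|}|u|_{L^\infty}$; differentiating the composition $M_j=\int_0^1 F'_z(\cdot,S_ju+t\Delta_{j+1}u)\,\dt$ then yields, for every integer $N$, a bound $|M_j|_{C^N}\lesssim_{N,F,|u|_{L^\infty}} 2^{jN}$. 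Combined with the finite-order estimate $|M_j|_{C^r}\lesssim(1+|u|_{C^r})$, this gives $\|\Delta_l M_j\|_{L^\infty}\lesssim_N 2^{-jr}\,2^{-(l-j)N}$ for $l>j$, and therefore the high-frequency tail of $v_j$ decays super-algebraically beyond frequency $2^j$. With this, $v_j$ really is ``almost'' localized at $\lesssim 2^j$ and the block-sum lemma applies modulo a negligible correction, closing the $H^{s+r}$ estimate. You should make this higher-order Moser/Bernstein step explicit before asserting the summability; without it the proof is incomplete. The H\"older version is analogous once the same decay of $\Delta_l M_j$ is in hand.
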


Theorem \ref{2ParaLin}, along with the symbolic calculus theorems \ref{2theo:sc}-\ref{2theo:sc2}, allows us to handle nonlinear expressions $F(u)$ \emph{as if} they were linear. We first \emph{paralinearize} the expression $F(u)$, then analyze the operator $T_{F'(u)}$, which behaves algebraically like the differential $F'(u)$. Since any potential loss of regularity can be recovered by the remainder $\R_\PL(u)$, there is no concern about losing regularity.

We conclude this subsection by a useful tame estimate for product and composition of functions in H\"{o}lder spaces (see Section III.C.3.2 of \cite{AG}): 
\begin{proposition}\label{2esti:C^scomp}
Suppose $r\geq1$. 

(1) If $f,g\in C^r(\xT^n,\xR)$, then
$$
|fg|_{C^r}\lesssim_r |f|_{C^0}|g|_{C^r}+|f|_{C^r}|g|_{C^0}.
$$

(2) If $f\in C^r(\xR^N,\xR^L)$, $g\in C^r(\xT^n,\xR^N)$, then 
$$
|f\circ g|_{C^r}\lesssim_r
|f|_{C^r}|g|_{C^1}^r+|f|_{C^1}|g|_{C^r}+|f|_{C^0}.
$$

(3) Let $\chi$ be a $C^r$ diffeomorphism of $\xT^n$. Then its inverse $\chi^\iota$ satisfies
$$
|\chi^\iota|_{C^r}\leq C\big(|\chi|_{C^1},|\chi^\iota|_{C^1}\big)|\chi|_{C^r}.
$$
\end{proposition}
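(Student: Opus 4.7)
For part (1), the plan is to invoke the Leibniz formula
$\partial^\alpha(fg)=\sum_{\beta\leq\alpha}\binom{\alpha}{\beta}\partial^\beta f\cdot\partial^{\alpha-\beta}g$
for every multi-index $|\alpha|\leq[r]$, and bound each mixed product via Gagliardo--Nirenberg type interpolation $|\partial^\beta f|_{C^0}\lesssim|f|_{C^0}^{1-|\beta|/r}|f|_{C^r}^{|\beta|/r}$ (and its analogue for $g$), followed by Young's inequality to collapse the mixed powers into the tame combination $|f|_{C^0}|g|_{C^r}+|f|_{C^r}|g|_{C^0}$. For non-integer $r=[r]+\sigma$, an additional estimate on the $\sigma$-H\"{o}lder seminorm of $\partial^\alpha(fg)$ for $|\alpha|=[r]$ is obtained using the elementary bound $|uv|_{C^\sigma}\leq|u|_{C^0}|v|_{C^\sigma}+|u|_{C^\sigma}|v|_{C^0}$, followed once more by interpolation.

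For part (2), I would use Fa\`{a} di Bruno's formula, which expresses $\partial^\alpha(f\circ g)$ for $|\alpha|=k\leq[r]$ as a sum of terms of the shape $(D^p f)(g)\cdot\prod_{j=1}^{k}(\partial^j g)^{m_j}$ with $p=m_1+\cdots+m_k$ and $\sum j\, m_j=k$. The two extreme multi-indices, $p=k$ (i.e.\ $m_1=k$) and $p=1$ (i.e.\ $m_k=1$), directly produce the two tame contributions $|f|_{C^r}|g|_{C^1}^r$ and $|f|_{C^1}|g|_{C^r}$. Intermediate terms $1<p<k$ are handled by interpolating $|D^p f|_{C^0}$ between $|f|_{C^1}$ and $|f|_{C^r}$ and $|\partial^j g|_{C^0}$ between $|g|_{C^1}$ and $|g|_{C^r}$; a short calculation shows the resulting exponents balance to give a bound of the form $\big(|f|_{C^r}|g|_{C^1}^r\big)^{\lambda}\big(|f|_{C^1}|g|_{C^r}\big)^{1-\lambda}$ for some $\lambda\in(0,1)$, which Young's inequality subsumes into the two endpoint terms. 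The zeroth-order contribution $|(f\circ g)(\cdot)|_{C^0}\leq|f|_{C^0}$ accounts for the extra $|f|_{C^0}$ term in the statement, and the non-integer case is treated analogously on the H\"{o}lder seminorm.

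For part (3), I would differentiate the identity $\chi\circ\chi^\iota=\Id$ to obtain the algebraic relation $(\chi^\iota)'(x)=[\chi'(\chi^\iota(x))]^{-1}$, and then induct on $r$. The case $r=1$ is built into the hypothesis. Assuming the estimate in $C^{r-1}$, I would apply the composition estimate from part (2) to control $|\chi'\circ\chi^\iota|_{C^{r-1}}$ in terms of $|\chi|_{C^r}$, $|\chi^\iota|_{C^1}$, and the inductive bound on $|\chi^\iota|_{C^{r-1}}$. Since matrix inversion is a smooth mapping on the open set of invertible matrices (with derivatives controlled by lower bounds on $\det\chi'\circ\chi^\iota$, themselves governed by $|\chi^\iota|_{C^1}$), one further application of (2) to the composition $M\mapsto M^{-1}$ evaluated at $\chi'\circ\chi^\iota$ yields a $C^{r-1}$ bound on $(\chi^\iota)'$, hence on $\chi^\iota$ in $C^r$.

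The main obstacle I anticipate is enforcing genuine \emph{tameness}, that is, linearity of the right-hand side in the highest-order norm rather than mere polynomial dependence. This is most delicate in part (2), where the combinatorics of Fa\`{a} di Bruno generates many intermediate terms whose interpolation exponents must be tracked precisely so that Young's inequality collapses them cleanly; any sloppiness would leak nonlinear dependence on $|f|_{C^r}$ or $|g|_{C^r}$. The same concern arises in part (3), where every inductive step must propagate linear dependence on $|\chi|_{C^r}$, with all remaining nonlinearities safely absorbed into the constant $C(|\chi|_{C^1},|\chi^\iota|_{C^1})$.
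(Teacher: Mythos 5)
The paper does not prove Proposition \ref{2esti:C^scomp}; it simply cites Section III.C.3.2 of Alinhac--G\'erard \cite{AG}, so there is no in-paper argument to compare against. Your outline---Leibniz/Fa\`{a} di Bruno expansion, Gagliardo--Nirenberg interpolation of the intermediate H\"{o}lder norms, and Young's inequality to collapse the mixed products---is precisely the classical mechanism used in that reference and in H\"{o}rmander's Nash--Moser work, and the sketch is correct. Two points that a full write-up should make explicit, both of which you correctly anticipate in your final paragraph. First, for derivative orders $k<[r]$ the interpolation exponents on the high norms sum only to $k/r<1$; the weighted AM--GM then leaves a residual weight on a factor of $1$, which is absorbed only after normalizing (e.g.\ $x=|f|_{C^r}/|f|_{C^0}\ge1$, $y=|g|_{C^r}/|g|_{C^0}\ge1$, and using $1\le x+y$). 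Second, in part (3) the induction should proceed in integer steps, and the term $|\chi'|_{C^1}|\chi^\iota|_{C^{r-1}}$ coming out of part (2) a priori looks bilinear in high norms of $\chi$; linearity in $|\chi|_{C^r}$ is recovered via log-convexity of H\"{o}lder norms, $|\chi|_{C^2}\,|\chi|_{C^{r-1}}\le|\chi|_{C^1}\,|\chi|_{C^r}$, with $|\chi|_{C^1}$ absorbed into the constant. With those exponent computations carried out, the argument is complete.
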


\section{Paracomposition as Solution of Paratransport Equation}\label{sc:3}
In this section, we aim to present an alternative definition of the paracomposition operator as a time 1 flow map. The idea here resembles the classical Egorov theorem in the usual pseudo-differential calculus as discussed in Subsection \ref{sec:1.2}. Examples of realization of this general idea include \cite{ABHK} (Appendix C),  \cite{BD2018,BFF2021,BFP2023,BMM2024}, and also \cite{BFPT2024} (Section 6) in the \emph{smooth} category. Comparing to the definition (\ref{FIO}) as Fourier integral operator, the advantage of this alternative approach is clear: it presents more of the hidden geometric information, and is much easier to manipulate with.

\subsection{Auxiliary Results on Linear (Para)transport Equations}
We state and prove some necessary auxiliary results on the Cauchy problem of classical linear transport equations and its \emph{paradifferential} counterpart. Hereafter, given $r>0$, we use  
$L^p_tC^r_x$ and $C^0_tC^r_x$ as a compact notation for $L^p(\xR;C^r(\xT^n))$ and $C^0(\xR;C^r(\xT^n))$ respectively, where the spaces $C^r(\xT^n)$ are as defined at the beginning of Section~\ref{s2}.

\begin{proposition}\label{3RegTrans}
Suppose $r\geq1$. Let $X\colon \xR\times \xR^N\to \xR^N$ be a time-dependent vector field of class $L^1_tC^r_x$, and let $u_0\in C^r(\xR^N)$ and $f\in L^1_tC^r_x$. Then the Cauchy problem
\begin{equation}\label{LinTranspIVP}
\partial_tu+X\cdot\nabla_x u=f,
\quad
u(0,x)=u_0(x)
\end{equation}
has 
a unique solution $u\in C^0_tC^r_x$, and there is a function $K_r\colon\xR\times\xR_+\to\xR_+$, increasing in its two arguments, such that
\be\label{n200}
|u(t,\cdot)|_{C^r_x}
\leq K_r\big(t,\|X\|_{L^1_tC^r_x}\big)\left(|u_0|_{C^r}+\int_0^t |f(\tau,\cdot)|_{C^r_x}\dtau\right).
\ee
\end{proposition}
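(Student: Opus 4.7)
The plan is to solve (\ref{LinTranspIVP}) by the method of characteristics. Let $\Phi(t,\tau,\cdot)$ be the flow of $X$, i.e.\ the unique solution of $\partial_t\Phi=X(t,\Phi)$ with $\Phi(\tau,\tau,x)=x$. Since $r\ge 1$ forces $X\in L^1_t\Lip_x$, the Cauchy--Lipschitz theorem combined with the compactness of $\xT^n$ produces a unique global flow, with each $\Phi(t,\tau,\cdot)$ a bi-Lipschitz homeomorphism of $\xT^n$. Writing $\Phi_t:=\Phi(t,0,\cdot)$ and $\Phi_t^\iota:=\Phi(0,t,\cdot)$ for its inverse, the unique continuous solution of (\ref{LinTranspIVP}) is given by the Duhamel formula along characteristics
\[
u(t,y)=u_0\bigl(\Phi_t^\iota(y)\bigr)+\int_0^t f\bigl(\tau,\Phi(\tau,t,y)\bigr)\dtau,
\]
and uniqueness follows at once: the difference $v=u_1-u_2$ of two solutions is constant along characteristics and vanishes at $t=0$.

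To obtain (\ref{n200}) one estimates the $C^r$ norms of $\Phi_t^\iota$ and $\Phi(\tau,t,\cdot)$, then applies the tame composition inequality of Proposition \ref{2esti:C^scomp}. Differentiating the integral identity $\Phi(t,\tau,x)=x+\int_\tau^t X(s,\Phi(s,\tau,x))\,ds$ once in $x$ and invoking Gr\"onwall's lemma yields $|\partial_x\Phi(t,\tau,\cdot)|_{L^\infty}\le\exp\bigl(\|X\|_{L^1_sC^1_x}\bigr)$. For integer $r\ge 2$, one iterates this procedure: differentiating the identity $k\le r$ times and expanding $\partial^\alpha_x(X\circ\Phi)$ with the Fa\`a di Bruno formula produces, for each fixed $k$, an integral inequality for $\partial^k\Phi$ whose top-order term is linear in $\partial^k \Phi$ and whose lower-order terms are polynomial in $\partial^{<k}\Phi$ with coefficients controlled by $\|X\|_{L^1_tC^k_x}$. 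Another application of Gr\"onwall and induction on $k$ gives a bound that is exponential in $\|X\|_{L^1_tC^1_x}$ and polynomial in $\|X\|_{L^1_tC^r_x}$. For non-integer $r$, the H\"older seminorm of $\partial^{[r]}\Phi$ is estimated along the same lines by taking differences in the Fa\`a di Bruno expansion. Proposition \ref{2esti:C^scomp}(3) then transfers the bound to $\Phi_t^\iota$.

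Putting these ingredients together, Proposition \ref{2esti:C^scomp}(2) applied to $u_0\circ\Phi_t^\iota$ and to $f(\tau,\cdot)\circ\Phi(\tau,t,\cdot)$ delivers (\ref{n200}) with an explicit $K_r$ roughly of the form $C_r(1+\|X\|_{L^1_tC^r_x})^{N_r}\exp\bigl(C_r\|X\|_{L^1_tC^1_x}\bigr)$, which is manifestly increasing in its two arguments. The main obstacle is the careful bookkeeping in the $C^r$ bound for the flow: one must separate the polynomial dependence on the high derivatives of $X$ from the exponential dependence on the Lipschitz constant of $X$, and uniformize the argument across integer and non-integer $r$. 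Once this is done, everything else is a direct application of Proposition \ref{2esti:C^scomp}.
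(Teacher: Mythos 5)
Your proposal is correct and follows essentially the same route as the paper: solve by the method of characteristics, establish $C^r$ regularity of the flow map via an integro-differential identity and Grönwall, then transfer to the solution through the composition estimate Proposition \ref{2esti:C^scomp}. The only substantive variation is in the flow regularity step: the paper sketches only the case $r\in(1,2)$ by observing that the H\"older difference quotient of $\partial_x\Phi$ itself solves a linear ODE, and records the crude (non-tame) bound $|\Phi|_{C^r_x}\lesssim\exp\bigl(C\|X\|_{L^1_tC^r_x}\bigr)$; your Fa\`a di Bruno induction is the natural way to cover general $r\geq1$ and, as you note, yields the sharper tame estimate (exponential only in the Lipschitz norm, polynomial in the higher norm), which is stronger than needed here but not harmed by it. One small slip: you invoke ``compactness of $\xT^n$'' to get a global flow, but the proposition is stated on $\xR^N$; what actually saves you is that $C^r(\xR^N)$ functions are globally bounded and Lipschitz, so no blow-up can occur in finite time.
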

\begin{proof}
The proof follows from the standard characteristic method and thus we omit most of the details. Write $\Phi(\tau,t;x)$ for the flow of the vector field $X$; namely $\Phi(\tau,t;x)$ is the unique solution of the initial value problem
\begin{equation}\label{TranspFlow}
\frac{\partial}{\partial \tau}\Phi(\tau,t;x)=X\big(\tau,\Phi(\tau,t;x)\big),
\quad 
\Phi(t,t;x)=x.
\end{equation}
It follows from standard Carathéodory theory of ODEs (see the appendix of \cite{BV2017}) that $x\mapsto\Phi(\tau,t;x)$ is a diffeomorphism depending absolutely continuously in $(\tau,t)$, satisfying
$$
\Phi\big(\tau_2,t;\Phi(t,\tau_1;x)\big)=\Phi(\tau_2,\tau_1;x).
$$
The hypothetical solution $u$ of (\ref{LinTranspIVP}) should satisfy
$$
\begin{aligned}
\frac{\partial}{\partial \tau}u\big(\tau,\Phi(\tau,0;x)\big)
&=(\partial_tu)\big(\tau,\Phi(\tau,0;x)\big)+X\big(\tau,\Phi(\tau,0;x)\big)\cdot\nabla_xu\big(\tau,\Phi(\tau,0;x)\big)\\
&=f\big(\tau,\Phi(\tau,0;x)\big),
\end{aligned}
$$
implying
$$
u\big(t,\Phi(t,0;x)\big)=u_0(x)+
\int_0^t f\big(\tau,\Phi(\tau,0;x)\big)\dtau.
$$
As a result, noticing that $\Phi(0,t,x)$ is the inverse of the diffeomorphism $\Phi(t,0,x)$, we have
\begin{equation}\label{SolLinTranspIVP}
u(t,x)=u_0\big(\Phi(0,t;x)\big)+
\int_0^t f\big(\tau,\Phi(\tau,t;x)\big)\dtau.
\end{equation}
This gives the explicit formula of solving (\ref{LinTranspIVP}).

We can then proceed to prove the regularity result. In view of the composition estimate (Proposition \ref{2esti:C^scomp}), it suffices to show that the flow map $x\mapsto \Phi(\tau,t,x)$ is of $C^r$ regularity. For clarity, let us sketch how this could be fulfilled for $r\in(1,2)$. In fact, we know from standard ODE theory that $\Phi$ is $C^1$ in $x$, and the differential $\partial_x\Phi$ solves the linear equation
$$
\frac{\partial}{\partial \tau}\partial_x\Phi(\tau,t;x)
=(\partial_x X)\big(\tau,\Phi(\tau,t;x)\big)\cdot\partial_x\Phi(\tau,t;x),
\quad 
\partial_x\Phi(t,t;x)=I_N.
$$
It remains to notice that for two given points $x_1,x_2\in\mathbb{R}^N$, the difference quotient
$$
\frac{\nabla_x\Phi(\tau,t;x_1)-\nabla_x\Phi(\tau,t;x_2)}{|x_1-x_2|^{r-1}}
$$
again solves a linear ODE with respect to $\tau$. Standard a priori estimates for linear ODEs then ensures that this difference quotient is uniformly bounded in terms of the H\"{o}lder norms of $X$:
\begin{equation}\label{Phi_Esti}
|\Phi(\tau,t;x)|_{C^r_x}\leq
C\exp\left(C\int_t^\tau |X(\tau_1,x)|_{C^r_x}\dtau_1\right).
\end{equation}
Then the desired estimate~\e{n200} follows from~\e{SolLinTranspIVP} and Proposition \ref{2esti:C^scomp}.
\end{proof}

We next introduce the notion of \emph{paratransport equation}. Formally this is nothing but the paradifferential version of linear transport equation. However, the calculus of paradifferential operators enable one to directly obtain several useful a priori estimates for the solution.

When~$a$ and~$u$ are
symbols and functions depending on time $t$, we still denote by~$T_a u$ the spatial
paradifferential operator (or paraproduct), namely for all time $t$, $(T_a u)(t):=T_{a(t)}u(t)$. 

\begin{proposition}\label{ParaTranspEq}
Suppose $X$ is a time-dependent real vector field of class $L^1_tC^1_x$. Suppose $f\in L^1_tH^s_x$, $h\in H^s$, where $s\in\mathbb{R}$ and $B\in \cap_{\sigma\in\xR}L^1_t\mathcal{L}(H^\sigma_x, H^\sigma_x)$. Then there exists a unique function
$$
w\in C^0_tH^s_x\cap C^1_tH^{s-1}_x
$$
solving the Cauchy problem
\be\label{n20}
\partial_t w+T_X\cdot\nabla_x w+Bw=f,\quad w\big|_{t=0}=h.
\ee
Moreover, the following estimate holds:
$$
\begin{aligned}
\|w(t)\|_{H^s_x}
&\leq C_s\exp\left[C_s\int_0^t\left(|X(\tau)|_{C^1_x}
+\|B(\tau)\|_{\mathcal{L}(H^s_x, H^s_x)}\right)\dtau\right]\\
&\quad\times\left(\|h\|_{H^s}+\int_0^t \|f(\tau)\|_{H^s_x}\dtau\right).
\end{aligned}
$$
\end{proposition}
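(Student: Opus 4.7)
The plan is to derive an a priori $H^s$ estimate by exploiting the near skew-adjointness of $T_X\cdot\nabla_x$ on $L^2$, then build a solution by regularizing $X$ and passing to the limit; uniqueness and time regularity will fall out as byproducts. The strategy mirrors the classical Friedrichs approach for linear symmetric hyperbolic systems, with paradifferential calculus playing the role of the chain rule.

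First I would reduce to $s=0$ by setting $v:=\Lambda^sw$ with $\Lambda:=(1-\Delta)^{1/2}$. Since $\langle\xi\rangle^s$ is a smooth symbol independent of $x$, $\Lambda^s$ coincides with $T_{\langle\xi\rangle^s}$ modulo a smoothing operator, and Theorem \ref{2theo:sc} shows that $[\Lambda^s,T_X\cdot\nabla_x]\Lambda^{-s}$ is bounded on $L^2$ with norm $\lesssim_s|X|_{C^1_x}$. Hence $v$ satisfies a paratransport equation
$$
\partial_tv+T_X\cdot\nabla_xv+\widetilde{B}(t)v=\Lambda^sf
$$
with $\|\widetilde{B}(t)\|_{\mathcal{L}(L^2,L^2)}\lesssim_s\|B(t)\|_{\mathcal{L}(H^s,H^s)}+|X(t)|_{C^1_x}\in L^1_t$. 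For the $L^2$ energy estimate itself, I would exploit that $X$ is real, so the principal symbol $iX\cdot\xi$ is purely imaginary. Two observations make $T_X\cdot\nabla_x$ nearly skew-adjoint: because each Littlewood--Paley projection commutes with $\partial_j$, the defining formula (\ref{T_au}) gives the \emph{exact} identity $[\partial_j,T_{X_j}]=T_{\partial_jX_j}$; and Theorem \ref{2theo:sc2} applied with $m=0$, $r=1$ (noting $a^{\mathbf{c};1}=X_j$ since $\partial_\xi X_j=0$) yields $(T_{X_j})^*=T_{X_j}+\mathcal{R}_j$ with $\|\mathcal{R}_j\|_{\mathcal{L}(L^2,H^1)}\lesssim|X_j|_{C^1_x}$. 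Together these give
$$
T_X\cdot\nabla_x+(T_X\cdot\nabla_x)^*=-T_{\operatorname{div}X}-\sum_j\partial_j\mathcal{R}_j,
$$
whose $L^2$ operator norm is $\lesssim|X|_{C^1_x}$. Pairing the equation with $v$ in $L^2$ and invoking Gronwall then produces the claimed estimate at $s=0$, which lifts to general $s$ via the conjugation above.

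For existence, I would regularize $X$ to $X_N:=S_NX$ and solve the resulting paratransport equation (for instance by a further frequency truncation that reduces it to a linear non-autonomous ODE on a finite-dimensional space). The a priori bound is uniform in $N$ since $|X_N|_{C^1_x}\le|X|_{C^1_x}$, and the same estimate applied to the difference $w_N-w_M$ (whose forcing is $-T_{X_N-X_M}\cdot\nabla_xw_M\to0$ in $L^1_tH^{s-1}_x$) yields a Cauchy sequence in $C^0_tH^{s-1}_x$ converging to a limit $w\in L^\infty_tH^s_x$. Uniqueness is a one-line consequence of the $L^2$ estimate applied to the difference of two solutions with identical data. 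Reading $\partial_tw=-T_X\cdot\nabla_xw-Bw+f\in L^1_tH^{s-1}_x$ off the equation gives $w\in C^1_tH^{s-1}_x$, and a Bona--Smith approximation argument (smooth the data, use the $H^s$ estimate for differences) upgrades the weak $L^\infty_tH^s_x$ control to strong $C^0_tH^s_x$ continuity.

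The hard part will be managing the borderline regularity $X\in L^1_tC^1_x$: a naive use of symbolic calculus would have $T_X\cdot\nabla_x$ lose a full derivative in the energy estimate. What saves the argument is the combination of the exact commutator identity $[\partial_j,T_{X_j}]=T_{\partial_jX_j}$ (which requires no gain of regularity at all) with the real-valuedness of $X$; together they leave only a $C^1_x$-bounded, rather than unbounded, symmetric part, just enough for the energy method to close at the assumed regularity.
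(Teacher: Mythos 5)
Your a priori estimate follows the same route as the paper: you commute with $\langle D_x\rangle^s$ to reduce to $L^2$, observe that the symmetric part of $T_X\cdot\nabla_x$ is bounded on $L^2$ by $|X|_{C^1_x}$ (you derive this from the exact identity $[\partial_j,T_{X_j}]=T_{\partial_j X_j}$ together with the order-$0$ adjoint estimate; the paper applies the adjoint theorem \ref{2theo:sc2} directly to the symbol $iX\cdot\xi$ with $r=1$, which produces the same $-T_{\operatorname{div}X}+\text{order }0$ symmetric part), and then apply Grönwall.

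The only genuine difference is in how existence is established. The paper invokes a standard duality argument (citing H\"{o}rmander), deriving surjectivity from the a priori estimate applied to the adjoint problem. You instead construct the solution by regularizing the vector field $X\mapsto S_NX$, solving the resulting frequency-truncated equation, and passing to the limit using the uniform estimate applied to differences; continuity in $C^0_tH^s_x$ is recovered by a Bona--Smith-type argument. Both are textbook methods. Your route is more constructive and gives convergence of approximations as a byproduct, at the cost of the extra bookkeeping with the approximating sequence; the duality route is shorter once the estimate is in hand but requires verifying the estimate for the backward adjoint problem. Either works, and the mild imprecision about the stated $C^1_tH^{s-1}_x$ regularity under the hypothesis $X\in L^1_tC^1_x$ (which really gives $W^{1,1}_tH^{s-1}_x$) is present in both your sketch and the paper's.
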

\begin{proof}
Since the proof of this proposition is quite standard, we still only provide a sketch of it. We first assume $s=0$ and further assume that 
$w\in C^1_tL^2_x$. Multiplying (\ref{n20}) by $\bar w$, integrating by parts with respect to $x$, implementing the adjoint formula (using Proposition \ref{2theo:sc2} with $r=1$) for $T_X\cdot\nabla_x$, we obtain the differential inequality
$$
\frac{\diff}{\dt}\|w(t)\|_{L^2_x}^2
\lesssim \left(|X(t)|_{C^1_x}+\|B(t)\|_{\mathcal{L}(L^2_x,L^2_x)}\right)
\|w(t)\|_{L^2_x}^2+\|f(t)\|_{L^2_x}\|w(t)\|_{L^2_x}.
$$
Direct application of Grönwall's inequality then implies the wanted estimate for $s=0$. We then infer that the inequality holds 
for all $w\in C^0_tL^2_x\cap C^1_tH^{-1}_x$ by using the Friedrichs mollifiers.

For general $s\in\mathbb{R}$, we commute equation (\ref{n20}) with $\langle D_x \rangle^s$ to obtain
\begin{equation}\label{n20s}
\begin{aligned}
&\partial_t \langle D_x \rangle^sw+T_X\cdot\nabla_x  \langle D_x \rangle^sw
+\big[\langle D_x \rangle^s,T_X\cdot\nabla_x\big]w
+\langle D_x \rangle^s Bw
=\langle D_x \rangle^sf,\\
& \langle D_x \rangle^sw\Big|_{t=0}=\langle D_x \rangle^sh.
\end{aligned}
\end{equation}
Therefore the unknown $u=\langle D_x \rangle^sw$ satisfies an equation of the form~\eqref{n20} with $B$ replaced by $B_s\defn\big[\langle D_x \rangle^s,T_X\cdot\nabla_x\big]\langle D_x \rangle^{-s}
+\langle D_x \rangle^s B\langle D_x \rangle^{-s}$. 
According to Proposition \ref{2theo:sc}, the commutator $\big[\langle D_x \rangle^s,T_X\cdot\nabla_x\big]$ is of order $s$, with operator norm in $\mathcal{L}(H^s_x,L^2_x)$ controlled by $|X|_{C^1_x}$. This implies that $B_s$ belongs to $L^1_t\mathcal{L}(L^2_x, L^2_x)$. 
We are then in position to apply the inequality for $s=0$ to the unknown $u$, which then implies the a priori estimate for general~$s$. 

Once the a priori estimate has been established, a standard duality argument may be implemented to show existence and uniqueness of solution to the Cauchy problem; see for example, Subsection 6.3 in Chapter VI of \cite{Hormander1997}. 
\end{proof}

The following corollary is a direct implication of the energy estimates in Proposition \ref{ParaTranspEq}. For simplicity we present the result for $B\equiv 0$, since this is the only one we need. 

\begin{corollary}\label{ParaTranspEqCor1}
The solution $w$ has Lipschitz dependence on $X$ at the price of losing one derivative in the following sense. Fix $X_1,X_2\in L^1_tC^1_x$, $f_1,f_2\in L^1_tH^s_x$, such that
$$
|X_i|_{L^1_tC^1_x}\leq L,\quad
\|f_i\|_{L^1_tH^s_x}\leq M.
$$
For $i=1,2$, let $w_i\in C^0_tH^s_x\cap C^1_tH^{s-1}_x$ be the solution of the Cauchy problem 
$$
\partial_tw_i+T_{X_i}\cdot\nabla_x w_i=f_i,
\quad w_i\big|_{t=0}=h.
$$
Then with $\Delta_{1,2}A:=A_1-A_2$,
    $$
    \begin{aligned}
    \|\Delta_{1,2}&w(t)\|_{H^{s-1}_x}\\
    &\leq C_{s,L}(t)
    \left[\int_0^t\big(M+\|h\|_{H^s}\big)|\Delta_{1,2}X(\tau)|_{C^1_x}
    +\|\Delta_{1,2}f(\tau)\|_{H^{s-1}_x}
    \dtau\right].
    \end{aligned}
    $$
\end{corollary}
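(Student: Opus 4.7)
The plan is to derive a paratransport equation for the difference $W := \Delta_{1,2}w = w_1 - w_2$ and then apply the a priori estimate of Proposition \ref{ParaTranspEq} one Sobolev level below, so that the loss of one derivative appears naturally through the factor $\nabla_x w_2$ in the forcing.

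Subtracting the two equations, $W$ satisfies the Cauchy problem
\begin{equation*}
\partial_t W + T_{X_1}\cdot\nabla_x W = \Delta_{1,2}f - T_{\Delta_{1,2}X}\cdot\nabla_x w_2, \qquad W\big|_{t=0}=0,
\end{equation*}
since $T_{X_1}\cdot\nabla_x w_1 - T_{X_2}\cdot\nabla_x w_2 = T_{X_1}\cdot\nabla_x W + T_{\Delta_{1,2}X}\cdot\nabla_x w_2$. The natural space in which to solve this equation is $C^0_tH^{s-1}_x$: the first forcing term $\Delta_{1,2}f$ lives in $L^1_tH^{s-1}_x$ by hypothesis, and the second term $T_{\Delta_{1,2}X}\cdot\nabla_x w_2$ can only be controlled in $H^{s-1}$ because $\nabla_x w_2$ sits a priori only in $H^{s-1}_x$.

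To make this quantitative I would first apply Proposition \ref{ParaTranspEq} at regularity $s$ to $w_2$ alone, yielding a uniform bound
\begin{equation*}
\|w_2(\tau)\|_{H^s_x} \leq C_{s,L}\bigl(\|h\|_{H^s}+M\bigr)
\end{equation*}
on $[0,t]$, where $C_{s,L}$ absorbs the exponential of $|X_2|_{L^1_tC^1_x}\le L$. Next, Theorem \ref{2theo:sc0} with $m=0$ gives
\begin{equation*}
\bigl\|T_{\Delta_{1,2}X(\tau)}\cdot\nabla_x w_2(\tau)\bigr\|_{H^{s-1}_x}
\lesssim_s |\Delta_{1,2}X(\tau)|_{L^\infty_x}\,\|w_2(\tau)\|_{H^s_x}
\leq |\Delta_{1,2}X(\tau)|_{C^1_x}\,\|w_2(\tau)\|_{H^s_x}.
\end{equation*}
Now I apply Proposition \ref{ParaTranspEq} to $W$ at regularity $s-1$ with $B\equiv 0$, zero initial datum, vector field $X_1$, and forcing $\Delta_{1,2}f-T_{\Delta_{1,2}X}\cdot\nabla_x w_2$. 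The resulting estimate reads
\begin{equation*}
\|W(t)\|_{H^{s-1}_x}\leq C_{s,L}(t)\int_0^t\Bigl(\|\Delta_{1,2}f(\tau)\|_{H^{s-1}_x}+|\Delta_{1,2}X(\tau)|_{C^1_x}\|w_2(\tau)\|_{H^s_x}\Bigr)\dtau,
\end{equation*}
and inserting the $H^s$ bound for $w_2$ gives exactly the claimed inequality.

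I do not anticipate a real obstacle: the structure of the proof is forced by the difference equation. The only subtle point is the appearance of the loss of one derivative, which is not a technical defect but an intrinsic feature, since the perturbation term $T_{\Delta_{1,2}X}\cdot\nabla_x w_2$ requires $w_2$ to be one derivative more regular than the space in which we measure $W$; any attempt to close the estimate in $H^s$ would force $w_2\in H^{s+1}$, which is not available under the assumptions.
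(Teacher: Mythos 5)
Your proposal is correct and follows essentially the same route as the paper: subtract the two equations to obtain a paratransport equation for $\Delta_{1,2}w$ with zero initial datum, bound the source term $T_{\Delta_{1,2}X}\cdot\nabla_x w_2$ in $H^{s-1}$ using the $H^s$ bound on $w_2$ from Proposition~\ref{ParaTranspEq} together with paradifferential operator bounds, and then apply Proposition~\ref{ParaTranspEq} at the level $s-1$. Your version just spells out more explicitly which operator-norm estimate is invoked and why the loss of one derivative is unavoidable; there is no substantive difference from the paper's argument.
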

\begin{proof}
In fact, $\Delta_{1,2}w$ satisfies
$$
\partial_t\Delta_{1,2}w+T_{X_1}\cdot\nabla_x\Delta_{1,2}w
=\Delta_{1,2}f-T_{\Delta_{1,2}X}\cdot\nabla_x w_2,
\quad \Delta_{1,2}w\Big|_{t=0}=0.
$$
Since we already know the estimate of $\|w_2(\tau)\|_{H^{s}_x}$ in terms of $\|f_2\|_{L^1_tH^s_x}$ and $\|h\|_{H^s}$, so the right-hand-side satisfies
$$
\begin{aligned}
\big\|\Delta_{1,2}f(\tau)&-T_{\Delta_{1,2}X(\tau)}\cdot\nabla_x w_2(\tau)\big\|_{H^{s-1}_x}\\
&\lesssim_{s,L}\|\Delta_{1,2}f(\tau)\|_{H^{s-1}_x}
+|\Delta_{1,2}X(\tau)|_{C^1_x}\big(\|f_2\|_{L^1H^s}+\|h\|_{H^s}\big).
\end{aligned}
$$
We conclude by directly applying Proposition \ref{ParaTranspEq}.
\end{proof}

\subsection{Strongly Continuous Dependence}
With a bit more effort, we are even able to prove strongly continuous dependence of the solution to paratransport equation on the vector field in a certain sense. However, no explicit estimate on the modulus of such continuity can be obtained, and we should not anticipate such estimate.
\begin{proposition}\label{ParaTranspCont}
Fix a time interval $[0,T]$, an index $s\in\xR$ and $h\in H^s(\xT^n)$. Suppose $X\in L^1_TC^1_x$. The solution $w\in L^\infty_TH^s_x$ of the initial value problem
$$
\partial_t w+T_{X}\cdot\nabla_x w=0,\quad w\Big|_{t=0}=h
$$ 
has strongly continuous dependence on $X$: if $\|X_k- X;L^1_TC^1_x\|\to0$ as $k$ goes to $+\infty$, then the corresponding sequence of 
solutions $w_k$ converges to $w$ for the $L^\infty_TH^s_x$-norm.
\end{proposition}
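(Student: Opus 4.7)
The plan is to combine the uniform $H^s$ bound from Proposition \ref{ParaTranspEq} with an approximation of the initial data, in order to overcome the one-derivative loss present in Corollary \ref{ParaTranspEqCor1}. First I would extract a uniform a priori bound. Since $X_k \to X$ in $L^1_T C^1_x$, there is $L>0$ with $\|X_k\|_{L^1_T C^1_x}, \|X\|_{L^1_T C^1_x} \le L$ for all $k$. Proposition \ref{ParaTranspEq}, applied with $f=0$ and $B=0$, then yields
$$
\sup_k \|w_k\|_{L^\infty_T H^s_x} + \|w\|_{L^\infty_T H^s_x}
\;\le\; C_s e^{C_s L} \|h\|_{H^s}.
$$
A direct use of Corollary \ref{ParaTranspEqCor1} only gives $w_k \to w$ in $L^\infty_T H^{s-1}_x$, because the Lipschitz estimate loses one derivative; this is the core obstacle, and I would address it by a density argument in the initial datum.

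To upgrade the convergence to $H^s$, I would smooth the initial data: set $h_\delta \defn S_{N(\delta)} h$ with $N(\delta) \to +\infty$ as $\delta \to 0^+$, where $S_N$ is the partial sum operator of Subsection \ref{s1}. Then $h_\delta \in H^{s+1}$ for every fixed $\delta > 0$ (since $S_N h$ has compactly supported Fourier spectrum), while $\|h - h_\delta\|_{H^s} \to 0$ as $\delta \to 0^+$. Denote by $w_k^\delta$ (resp.\ $w^\delta$) the solution of the paratransport equation with transport field $X_k$ (resp.\ $X$) and initial data $h_\delta$. I would then split
$$
\|w_k - w\|_{L^\infty_T H^s_x}
\le
\|w_k - w_k^\delta\|_{L^\infty_T H^s_x}
+ \|w_k^\delta - w^\delta\|_{L^\infty_T H^s_x}
+ \|w^\delta - w\|_{L^\infty_T H^s_x}.
$$
By linearity, $w_k - w_k^\delta$ and $w - w^\delta$ satisfy paratransport equations with zero source and initial data $h - h_\delta$, so Proposition \ref{ParaTranspEq} controls the first and third terms uniformly in $k$ by $2 C_s e^{C_s L} \|h-h_\delta\|_{H^s}$. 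For the middle term I would invoke Corollary \ref{ParaTranspEqCor1} at the regularity level $s+1$ (with $f_i=0$ and common initial datum $h_\delta \in H^{s+1}$), accepting its one-derivative loss from $s+1$ down to $s$, to obtain
$$
\|w_k^\delta - w^\delta\|_{L^\infty_T H^s_x}
\;\le\; C_{s+1,L}(T)\, \|h_\delta\|_{H^{s+1}}\, \|X_k - X\|_{L^1_T C^1_x}.
$$
For each fixed $\delta > 0$, the right-hand side tends to $0$ as $k \to \infty$, although the constant $\|h_\delta\|_{H^{s+1}}$ may be arbitrarily large.

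The conclusion then follows by a standard $\varepsilon/2$-argument: given $\varepsilon > 0$, first choose $\delta > 0$ so small that the first and third contributions together are at most $\varepsilon/2$ uniformly in $k$, and then choose $K$ so large that for $k \ge K$ the middle contribution is at most $\varepsilon/2$. Hence $\|w_k - w\|_{L^\infty_T H^s_x} \to 0$, as claimed. The main — indeed the only — difficulty is precisely the derivative loss in the Lipschitz dependence of Corollary \ref{ParaTranspEqCor1}, which prevents a direct quantitative argument and forces the density step; consistently with the remark in the statement, this scheme provides no explicit modulus of continuity with respect to $X$.
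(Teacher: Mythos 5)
Your argument is correct and takes a genuinely different route from the paper's. You run the classical ``uniform bound + density + quantitative estimate with loss'' scheme: smooth the datum to $h_\delta = S_{N(\delta)}h\in H^{s+1}$, control the two tails $w_k-w_k^\delta$ and $w-w^\delta$ uniformly in $k$ by $\|h-h_\delta\|_{H^s}$ via the energy estimate of Proposition~\ref{ParaTranspEq}, and absorb the one-derivative loss of Corollary~\ref{ParaTranspEqCor1} by applying it at regularity level $s+1$ to the middle piece $w_k^\delta-w^\delta$, which then goes to zero for each fixed $\delta$; an $\varepsilon/2$-argument closes the proof. The hypotheses of the cited results are met: $h_\delta\in H^{s+1}$ implies $w_k^\delta, w^\delta\in C^0_tH^{s+1}_x\cap C^1_tH^{s}_x$, and the constants $C_s e^{C_sL}$ and $C_{s+1,L}(T)$ do not depend on $\delta$. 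The paper, by contrast, commutes the equation with the Littlewood--Paley blocks $\Delta_j$, represents the solution as a fixed point $w=(\Id-B(X))^{-1}A(X)h$ of an integral equation built from the flows $\Phi_j$ of the smoothed vector fields $S_{j-3}X$ and the commutators $P_j(X),Q_j(X)$, proves continuity of $A(X),B(X)$ with respect to the strong operator topology, and finishes with a Neumann series plus continuous induction in time. Your proof is shorter and leans only on the two a priori estimates already established in the section; the paper's proof is more structural, exhibiting the solution operator explicitly (which the authors tie to the abstract interpolation framework of \cite{ABITZ2024}), but both routes deliver the same qualitative conclusion and neither produces a modulus of continuity in $X$.
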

\begin{proof}
Let us first study the evolution of a Littlewood-Paley building block $\Delta_jw$, defined in Subsection \ref{s1}. Commuting the equation with $\Delta_j$, we obtain, with $P_j(X)w:=[T_X\cdot\nabla_x,\Delta_j]w$,
\begin{equation}\label{D_jw}
\partial_t\Delta_jw
+T_{X}\cdot\nabla_x \Delta_jw
=P_j(X)w,
\quad 
\Delta_jw\Big|_{t=0}=\Delta_jh.
\end{equation}
Recalling from (\ref{T_au}) the definition of paraproducts, we rewrite the paratransport term as
$$
\begin{aligned}
T_{X}\cdot\nabla_x \Delta_jw
=\sum_{l:|l-j|\leq1}(S_{l-3}X)\cdot\nabla_x\Delta_l\Delta_jw.
\end{aligned}
$$
Now, we have
$$
(S_{j-3}X)\cdot\nabla_x\Delta_jw=\sum_{l:|l-j|\leq1}(S_{j-3}X)\cdot\nabla_x\Delta_l\Delta_jw,
$$
and hence we obtain that
\begin{equation}\label{D_jwbis}
\partial_t\Delta_jw
+(S_{j-3}X)\cdot\nabla_x\Delta_jw
=P_j(X)w+Q_j(X)w,
\quad 
\Delta_jw\Big|_{t=0}=\Delta_jh.
\end{equation}
where 
$$
Q_j(X)w=\sum_{l:|l-j|\leq1}(S_{j-3}X-S_{l-3}X)\cdot\nabla_x\Delta_l\Delta_jw.
$$
Since for $|l-j|\leq 1$ there holds,
$$
|S_{l-3}X-S_{j-3}X|_{L^\infty_x}\lesssim 2^{-j}|X|_{C^1_x},
$$
we get a factor $2^{-j}$ that balances the derivative on $w$, so
\begin{equation}\label{PjQj}
\|Q_j(X)w\|_{L^2_x}
\lesssim |X|_{C^1_x}\|\Delta_jw\|_{L^2_x}.
\end{equation}
On the other hand, the commutator $P_j(X)$ is uniformly of order zero, and extracts Fourier coefficients $\hat w(\xi)$ with $|\xi|\simeq 2^j$. We may then insert a frequency localization $\zeta_j(D_x)$, satisfying $\zeta_j\equiv1$ on $\text{supp}(\varphi_{j-1}+\varphi_j+\varphi_{j+1})$, and find $P_j(X)w=P_j(X)\zeta_j(D_x)w$ together with the estimate
\begin{equation}\label{PjQj-b}
\|P_j(X)w\|_{L^2_x}
\lesssim |X|_{C^1_x}\|\zeta_j(D_x)w\|_{L^2_x}.
\end{equation}

Denoting by $\Phi_j(\tau,t;x)$ the flow of the regularized vector field $S_{j-3}X$, we may apply (\ref{SolLinTranspIVP}) to (\ref{D_jwbis}). Inserting the frequency localization $\zeta_j(D_x)$ in front of each $\Delta_jw$, summing over $j$, we get the following integral equation for $w$:
\begin{equation}\label{W_Int}
\begin{aligned}
w(t,x)
&=\sum_{j\geq0}\zeta_j(D_x)(\Delta_jh)\big(\Phi_j(0,t;x)\big)\\
&\quad+\sum_{j\geq0}\zeta_j(D_x)\int_0^t(P_j(X)w)\big(\tau,\Phi_j(\tau,t,x)\big)+(Q_j(X)w)\big(\tau,\Phi_j(\tau,t,x)\big)\dtau\\
&=:A(X)h+B(X)w.
\end{aligned}
\end{equation}

Let us show that the linear operators $A(X),B(X)$ are bounded. Noting that the regularized vector fields $S_{j-3}X$ satisfy $\|S_{j-3}X\|_{L^1_TC^1_x}\lesssim \|X\|_{L^1_TC^1_x}$, we find that the flows $\Phi_j(\tau,t;x)$ form a family of $C^1$ diffeomorphisms of $\xT^n$ absolutely continuous in $\tau,t$, again by the Carathéodory theory of ODEs (see the appendix of \cite{BV2017}). Therefore, given any $f\in L^2(\xT^n)$, we have the following estimate independent from $j$:
\begin{equation}\label{L2_flow}
\big\|f\big(\Phi_j(\tau,t,x)\big)\big\|_{L^2_x}
\leq K\big(T,\|X\|_{L^1_TC^1_x}\big)\|f\|_{L^2}.
\end{equation}
This can be obtained by a simple change of variable, with the aid of the estimate (\ref{Phi_Esti}). Therefore we can use (\ref{PjQj})-(\ref{PjQj-b}) to get
\begin{equation}\label{L2_flow1}
\begin{aligned}
\big\|(\Delta_jh)\big(\Phi_j(0,t;x)\big)\big\|_{L^2_x}
&\leq K\big(T,\|X\|_{L^1_TC^1_x}\big)\|\Delta_jh\|_{L^2},\\
\big\|(P_j(X)w)\big(\tau,\Phi_j(\tau,t,x)\big)\big\|_{L^2_x}
&\leq K\big(T,\|X\|_{L^1_TC^1_x}\big)|X(\tau)|_{C^1_x}\|\zeta_j(D_x)w\|_{L^2_x},\\
\big\|(Q_j(X)w)\big(\tau,\Phi_j(\tau,t,x)\big)\big\|_{L^2_x}
&\leq K\big(T,\|X\|_{L^1_TC^1_x}\big)|X(\tau)|_{C^1_x}\|\Delta_jw\|_{L^2_x}.
\end{aligned}
\end{equation}
(\ref{L2_flow1}) implies that as long as $\|X\|_{L^1_TC^1_x}$ stays bounded, the sequence $j\to 2^{js}\big\|(\Delta_jh)\big(\Phi_j(0,t;x)\big)\big\|_{L^2_x}$ will be uniformly summable in $\ell^2$ since $2^{js}\|\Delta_jh\|_{L^2}$ serves as a dominating sequence.

Given the presence of the frequency localization $\zeta_j(D_x)$ in (\ref{W_Int}), we find that the $H^s_x$ norm of $A(X)h,B(X)w$ are just weighted sums of the $L^2$ norms in (\ref{L2_flow1}); for example,
\begin{equation}\label{A(X)h}
\begin{aligned}
\|(A(X)h)(t)\|_{H^s_x}^2
&\lesssim_s \sum_{j\geq0} 2^{2js}\big\|(\Delta_jh)\big(\Phi_j(0,t;x)\big)\big\|_{L^2_x}^2\\
&\leq K_s\big(T,\|X\|_{L^1_TC^1_x}\big)\sum_{j\geq0} 2^{2js}\|\Delta_jh\|_{L^2}^2
\simeq K_s\big(T,\|X\|_{L^1_TC^1_x}\big)\|h\|_{H^s}^2.
\end{aligned}
\end{equation}
We thus obtain
\begin{equation}\label{AB(X)}
\begin{aligned}
\|A(X)h\|_{L^\infty_TH^s_x}
&\leq K_s\big(T,\|X\|_{L^1_TC^1_x}\big)\|h\|_{H^s},\\
\|B(X)w\|_{L^\infty_TH^s_x}
&\leq \left(\int_0^T|X(\tau,x)|_{C^1_x}\dtau\right)K_s\big(T,\|X\|_{L^1_TC^1_x}\big)\|w\|_{L^\infty_TH^s_x}.
\end{aligned}
\end{equation}

We next verify that the linear operators $A(X),B(X)$ are continuous in $X\in L^1_TC^1_x$ with respect to the strong operator topology of $L^\infty_TH^s_x$; by definition of the latter topology, this means that, if $X_k\to X$ in $X\in L^1_TC^1_x$, then for all $h\in H^s(\xT^n)$ and all 
$w\in L^\infty_TH^s_x$, there holds as $k$ goes to $+\infty$,
$$
\|A(X_k)h-A(X)h\|_{L^\infty_TH^s_x}\to0,
\quad
\|B(X_k)w-B(X)w\|_{L^\infty_TH^s_x}\to0.
$$
In fact, the flow map $\Phi_j^k(\tau,t;x)$ of $S_{j-3}X_k$ converges in $C^1_x$ to $\Phi_j(\tau,t;x)$, uniformly in $\tau,t$, so we immediately obtain
$$
\big\|(\Delta_jh)\big(\Phi_j^k(0,t;x)\big)-(\Delta_jh)\big(\Phi_j(0,t;x)\big)\big\|_{L^2_x}\to0.
$$
Applying the dominated convergence theorem to (\ref{A(X)h}), we obtain the convergence $\|A(X_k)h-A(X)h\|_{L^\infty_TH^s_x}\to0$. The convergence $\|B(X_k)w-B(X)w\|_{L^\infty_TH^s_x}\to0$ is similar.

The last step is quite straightforward. We can take $T$ so small that the constant 
$$
\left(\int_0^T|X_k(\tau,x)|_{C^1_x}\dtau\right)K_s(\|X_k\|_{L^1_TC^1_x})
\leq \frac{1}{2},
$$
which is possible since $X_k\to X$ in $L^1_TC^1_x$. Therefore, the integral equation (\ref{W_Int}) is directly solved by $w=\big(\Id-B(X)\big)^{-1}A(X)h$. It then suffices to justify that the sequence
$$
w_k=\big(\Id-B(X_k)\big)^{-1}A(X_k)h
\to
\big(\Id-B(X)\big)^{-1}A(X)h=w,
\quad \text{in the norm of }L^\infty_TH^s_x.
$$
This follows from the uniform bound $\|B(X_k);\mathcal{L}(L^\infty_TH^s_x)\|\leq1/2$ and the convergence $A(X_k)\to A(X)$, $B(X_k)\to B(X)$ in the strong operator topology. Finally, a continuous induction on time interval justifies this convergence for any time $T$.
\end{proof}

\begin{remark}
The proof just presented may be viewed as the realization of the abstract interpolation argument developed in \cite{ABITZ2024}.
\end{remark}

\subsection{Isotopy and Deformation Vector Field}
Throughout this subsection, we shall denote $x \in \xT^n$ as a \emph{column} vector. Unlike the convention in symplectic geometry, the frequency $\xi$ will also be treated as a \emph{column} vector. The notation $\cdot$ will denote the inner product of column vectors. A vector field on $\xT^n$ will be considered as taking values in column vectors. The gradient $\nabla_x a(x, \xi)$ or $\nabla_\xi a(x, \xi)$ of a scalar function will likewise be treated as a column vector. 

We will be considering a diffeomorphism of the form
$$
\xT^n\ni x\mapsto \chi(x)=x+\theta(x) \in \xT^n,
$$
where the vector function $\theta=(\theta_1,\ldots,\theta_n)^\T$ satisfies\footnote{The upper bound 0.99 is not essential. We fix it simply to omit dependence of constants on upper bounds of $\sup_x|\partial_x\theta(x)|$.}
\be\label{theta}
\theta\in C^r\quad\text{for some }r\in (1,+\infty)\setminus\xN,
\quad\text{and}
\quad \sup_{x\in\xT ^n}\la \partial_x\theta(x)\ra\leq0.99.
\ee
Here $\partial_x\theta$ is the Jacobian matrix of $\theta$: 
$$
\partial_x\theta(x)=\left(\begin{matrix}
\partial_x\theta_1 \\ \vdots \\ \partial_x\theta_n
\end{matrix}\right)=
\left(\partial_{x_j}\theta_i
\right)_{1\le i,j\le n},
$$
and $\la \partial_x\theta(x)\ra$ denotes the Hilbert-Schmidt matrix norm
$$
\la \partial_x\theta(x)\ra=\left(\sum_{1\le i,j\le n}(\partial_{x_j}\theta_i)^2\right)^{1/2}.
$$

Our next step is to introduce deformations associated to this diffeomorphism $\chi$.

\begin{definition}\label{Deformation}
Given $\theta$ satisfying (\ref{theta}), define a homotopy
\begin{equation}\label{homotopy}
\Theta(\tau,x)=\tau e^{-(1-\tau)\langle D_x\rangle}\theta(x)
:=\sum_{\xi\in\xZ^n}\tau e^{-(1-\tau)\langle\xi\rangle}\hat{\theta}(\xi)e^{i\xi\cdot x},
\quad \tau\in[0,1].
\end{equation}
Given a scalar function $f\in C^1(\xT^n)$, the deformation $f^{\sharp;\theta}$ in the space $C^1(\xT^n)$ is defined by
$$
f^{\sharp;\theta}(\tau,x)=f(x+ \Theta(\tau,x)),
$$
so that $f^{\sharp;\theta}(1,x)=f\circ \chi (x)=f(x+\theta(x))$. Finally, the deformation vector field $X(\tau,x)$ associated to the diffeomorphism $\chi(x)=x+\theta(x)$ is defined as
\be\label{N10}
X(\tau,x)=-\big(I_n+\partial_x\Theta(\tau,x)\big)^{-1}\partial_\tau\Theta(\tau,x).
\ee
\end{definition}

We shall verify below that $x+\Theta(\tau,x)$ is in fact an isotopy between the identity map and $\chi$. Of course, the simplest isotopy between identity and $\chi$ is $x+\tau\theta(x)$, the one used in \cite{BD2018,BFF2021,BFP2023,BMM2024}, and also \cite{BFPT2024} in the \emph{smooth} category. However, the $\Theta(\tau,x)$ in Definition \ref{Deformation} is more regular than $\theta$ when averaged with respect to the parameter $\tau$, reminiscent of the smoothing diffeomorphism considered by Lannes~\cite{Lannes2005}. This makes it more suitable for symbolic calculus compared to the simplest one; see for example, the Remark after Theorem \ref{ParacompoPrd}.

\begin{lemma}\label{L:rho}
Suppose $\theta$ is as in \e{theta} and define $\Theta$ as in Definition  \ref{Deformation}.

(1) There holds
\begin{align}
&\sup_{\tau\in[0,1]}|\Theta|_{C^r_x}
+\sup_{\tau\in[0,1]}|\partial_\tau\Theta|_{C^{r-1}_x}
\lesssim |\theta|_{C^r},\label{N1f}\\
&
\int_0^1|\Theta(\tau,x)|_{C^{r+\lambda}_x}\dtau\lesssim_{\lambda,r}
|\theta|_{C^r},
\quad0<\lambda<1.\label{N1}
\end{align}

(2) For all $\tau\in [0,1]$, there holds
$$
\sup_{x\in\xT ^n}\la \partial_x\Theta(\tau,x)\ra
\leq\sup_x|\partial_x\theta(x)|\leq 0.99,
$$
so for any $\tau\in [0,1)$ and any $r'<r$, the mapping $x\mapsto x+\Theta(\tau,x)$ is a $C^{r'+1}$ diffeomorphism of the torus $\xT^n$.

(3) For any $r'<r$, the deformation vector field $X$ defined by \e{N10} satisfies
\be\label{XReg}
|X(\tau,x)|_{C^0_\tau C^{r-1}_x}
\lesssim_{r}|\theta|_{C^r},
\quad
|X(\tau,x)|_{L^1_\tau C^{r'}_x}
\lesssim_{r,r'}|\theta|_{C^r}.
\ee
Furthermore, the mapping $\theta\to X$ from $C^r$ to $C^0_\tau C^{r-1}_x\cap L^1_\tau C^{r'}_x$ is smooth.
\end{lemma}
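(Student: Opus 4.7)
The plan is to decompose the proof along the three enumerated items and exploit the explicit Fourier-multiplier structure $\Theta(\tau,\cdot) = M_\tau \theta$ with $M_\tau := \tau e^{-(1-\tau)\langle D_x\rangle}$. The key structural fact is that $e^{-(1-\tau)\langle D_x\rangle}$ is a Poisson-type semigroup on $\xT^n$ whose convolution kernel is non-negative (by the subordination formula writing it as a positive integral of heat kernels $e^{s\Delta}$) and whose Fourier symbol decays exponentially for $\tau<1$. This simultaneously supplies a contraction property on $L^\infty$-based function spaces and a smoothing gain that becomes singular as $\tau\to1$; both features are needed.

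For part (1), the $C^r$ bound on $\Theta$ is immediate from the contraction property: $M_\tau$ is convolution with a non-negative kernel of $L^1$-norm $\tau e^{-(1-\tau)}\le 1$, hence contractive on every $C^s$. Writing $\partial_\tau \Theta = e^{-(1-\tau)\langle D_x\rangle}\theta + \tau \langle D_x\rangle e^{-(1-\tau)\langle D_x\rangle}\theta$, the first summand is bounded in $C^r\subset C^{r-1}$, and the second loses exactly one derivative because $\tau\langle D_x\rangle e^{-(1-\tau)\langle D_x\rangle}=\tau e^{-(1-\tau)\langle D_x\rangle}\langle D_x\rangle$ factors as a contraction on $C^{r-1}$ composed with $\langle D_x\rangle:C^r\to C^{r-1}$. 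For the integrated bound, the Littlewood-Paley characterization gives $2^{j(r+\lambda)}|\Delta_j\Theta(\tau,\cdot)|_{L^\infty}\lesssim \tau\,2^{j\lambda}e^{-(1-\tau)2^j}\cdot 2^{jr}|\Delta_j\theta|_{L^\infty}$, and $\sup_{j\ge 1}2^{j\lambda}e^{-(1-\tau)2^j}\lesssim (1-\tau)^{-\lambda}$, which is integrable on $[0,1]$ exactly when $\lambda<1$.

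For part (2), I apply the contraction property of $M_\tau$ entrywise to the Jacobian matrix $\partial_x\theta$: since the kernel $k_\tau$ is non-negative with $\|k_\tau\|_{L^1}\le 1$, the triangle inequality for the Hilbert-Schmidt norm gives $|\partial_x\Theta(\tau,x)|\le \int k_\tau(x-y)|\partial_x\theta(y)|\,\dy\le \sup_y|\partial_x\theta(y)|\le 0.99$. Consequently $I_n+\partial_x\Theta(\tau,x)$ is invertible at every point, with uniform bound $100$ on the Neumann-series inverse. The inverse function theorem then gives a local diffeomorphism; a standard degree / Hadamard-type covering argument on the compact torus (the map is homotopic to identity via $\sigma\mapsto x+\sigma\Theta$) upgrades this to a global diffeomorphism. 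Its $C^{r'+1}$ regularity for $\tau\in[0,1)$ is in fact $C^\infty$, because the exponential decay $e^{-(1-\tau)\langle\xi\rangle}$ forces $\Theta(\tau,\cdot)\in C^\infty$ for every fixed $\tau<1$.

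For part (3), the $C^0_\tau C^{r-1}_x$ estimate on $X=-(I_n+\partial_x\Theta)^{-1}\partial_\tau\Theta$ combines the $L^\infty$ bound on $(I_n+\partial_x\Theta)^{-1}$, the composition-with-smooth-map estimate of Proposition \ref{2esti:C^scomp} applied to $M\mapsto (I+M)^{-1}$ on the ball $\{|M|_{L^\infty}\le 0.99\}$, the product rule in $C^{r-1}$, and the bounds from part (1). The $L^1_\tau C^{r'}_x$ estimate needs in addition the companion identity $\int_0^1|\partial_\tau\Theta(\tau,\cdot)|_{C^{r'}_x}\,\dtau\lesssim |\theta|_{C^r}$, which follows from the same Littlewood-Paley computation applied to $\partial_\tau\Theta$: the extra factor $\tau\langle\xi\rangle$ produces a singularity of order $(1-\tau)^{r'-r}$ as $\tau\to 1$, still integrable because $r'<r$. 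Smoothness of $\theta\mapsto X$ between the stated Banach spaces then follows because each constituent operation is either linear bounded ($\theta\mapsto\Theta,\partial_\tau\Theta,\partial_x\Theta$), real-analytic on an open ball (matrix inversion), or continuous bilinear (multiplication in $C^{r-1}$). The main technical obstacle I anticipate is tracking the fractional singularity $(1-\tau)^{-\lambda}$ uniformly across Littlewood-Paley blocks so that it survives product and composition estimates cleanly, especially when matching the differently-behaved factors $\partial_x\Theta$ and $\partial_\tau\Theta$ in the product defining $X$.
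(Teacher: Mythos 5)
Your proof is correct and reaches the same conclusions, but it takes a genuinely different route from the paper at the key step, namely part (2). The paper obtains the pointwise contraction bound $\sup_x|\partial_x\Theta(\tau,x)|\le\sup_x|\partial_x\theta(x)|$ by viewing $u(y,x)=e^{-y\langle D_x\rangle}\theta(x)$ as the bounded solution of the Helmholtz equation $(-\Delta_{y,x}+1)u=0$ on $\xT^n_x\times(0,\infty)_y$ and applying the maximum principle to the subharmonic function $|\partial_x u|^2$. You instead use the non-negativity of the convolution kernel of $e^{-s\langle D_x\rangle}=e^{-s\sqrt{1-\Delta}}$ (via the subordination formula, which represents it as a positive superposition of the positive heat semigroup $e^{-t}e^{t\Delta}$) and conclude by Minkowski's integral inequality that $|\partial_x\Theta(\tau,x)|\le\|k_\tau\|_{L^1}\sup_y|\partial_x\theta(y)|$ with $\|k_\tau\|_{L^1}=\tau e^{-(1-\tau)}\le1$. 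The kernel-positivity route gives contractivity (not merely boundedness) on every $C^s$ at once, which in part~(1) the paper obtains only by appealing to order-zero pseudo-differential boundedness, while the maximum-principle route is a self-contained alternative that stays within classical elliptic PDE; both are valid. For~\eqref{N1}, your block-wise computation $\sup_{j\ge1}2^{j\lambda}e^{-(1-\tau)2^j}\lesssim_\lambda(1-\tau)^{-\lambda}$ even avoids the $\varepsilon$-loss the paper concedes in~\eqref{Theta,(1-tau)}. For part~(3), which the paper dispatches as a corollary, your detailed filling-in is sound modulo one small numerical slip: the singularity of $|\partial_\tau\Theta(\tau,\cdot)|_{C^{r'}_x}$ as $\tau\to1$ has order $(1-\tau)^{r-r'-1}$, coming from $\sup_j 2^{j(r'+1-r)}e^{-(1-\tau)2^j}\lesssim(1-\tau)^{-(r'+1-r)}$, not $(1-\tau)^{r'-r}$ as written. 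The integrability condition $r-r'-1>-1$, i.e.\ $r'<r$, that you actually invoke matches the correct exponent, so the conclusion stands; but the exponent you wrote down would instead require $r'>r-1$.
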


\begin{proof}
The estimate \e{N1f} follows from the fact that, for each fixed $\tau\in [0,1]$, 
$e^{-(1-\tau)\langle D_x\rangle}$ is a standard pseudo-differential operator of order $0$, hence bounded 
on the H\"older space $C^r$, provided that $r$ is not an integer. To prove~\e{N1}, notice that, for any 
$0<\lambda<\lambda+\varepsilon<1$,   
$|1-\tau|^{\lambda+\varepsilon} \langle D_x\rangle^\lambda e^{-(1-\tau)\langle D_x\rangle}$ is bounded from $L^\infty$ to $L^\infty$, uniformly in $\tau \in [0,1]$. Therefore, for every Littlewood-Paley building block operator $\Delta_j$, we have
\begin{align*}
\big| \Delta_j   e^{-(1-\tau)\langle D_x\rangle}\theta\big|_{L^\infty} 
&= \vert 1-\tau \vert^{-\lambda-\varepsilon} \big| (|1-\tau|^{\lambda+\varepsilon} \langle D_x\rangle^\lambda) e^{-(1-\tau)\langle D_x\rangle}\langle D_x\rangle^{-\lambda} \Delta_j \theta\big|_{L^\infty}\\
&\leq C_\varepsilon\vert 1-\tau \vert^{-\lambda-\varepsilon} \big| \langle D_x\rangle^{-\lambda}  \Delta_j \theta\big|_{L^\infty}.
\end{align*}
Taking into account the Littlewood-Paley characterization of H\"{o}lder functions, this implies 
\begin{equation}\label{Theta,(1-tau)}
|\Theta(\tau,x)|_{C^{r+\lambda}_x}\leq \frac{C_{\varepsilon,r}}{|1-\tau|^{\lambda+\varepsilon}}|\theta|_{C^r},
\end{equation}
hence the desired result. This proves the first statement.

As for the second statement, we note that the Fourier multiplier $e^{-y\langle D_x\rangle}$ is nothing but the Green operator for Helmholtz equation on the periodic half space $\mathbb{T}_x^n\times[0,+\infty)_y$: the vetor function $u(y,x):=e^{-y\langle D_x\rangle}\theta(x)$ satisfies
$$
\big(-\Delta_{y,x}+1\big)u(y,x)=0,
\quad\text{where }\Delta_{y,x}=\partial_y^2+\sum_{i=1}^n\partial_{x^i}^2
$$
inside $\mathbb{T}_x^n\times(0,+\infty)_y$, tending to 0 uniformly as $y\to+\infty$, with boundary value $u(0,x)=\theta(x)$. We then compute, with $|\partial_xu|^2=\sum_{i,j=1}^n|\partial_{x^i}u_j|^2$,
$$
\Delta_{y,x}|\partial_{x}u(y,x)|^2
=2|\partial_{x}u(y,x)|^2+2\sum_{i,j=1}^n|\nabla_x\partial_{x^i}u_j(y,x)|^2\geq0.
$$
By the maximum principle for Laplacian, we must have
$$
\sup_{x,y}|\partial_xu(y,x)|^2\leq\sup_x|\partial_xu(0,x)|^2
=\sup_x|\partial_x\theta(x)|^2.
$$
This proves the derivative bound for $\Theta$ by noticing $\Theta(\tau,x)=\tau u(1-\tau,x)$. Thus $\mathrm{Id}+\Theta(\tau,\cdot)$ is a $C^1$ diffeomorphism by the standard inverse function theorem, and $C^{r'+1}$ regularity of its inversion follows from inversion of $C^{r'+1}$ mapping (see Proposition \ref{2esti:C^scomp}).

The third statement is clearly a corollary of the first two.
\end{proof}

\subsection{Deformation of Symbol}
We now state a lemma that justifies the terminology \emph{deformation vector field}\footnote{The vector field $X$ is required to be non-autonomous, since \emph{not} all diffeomorphisms close to the identity can be embedded into an autonomous flow. Equivalently, the exponential map from autonomous vector fields (Lie algebra of the diffeomorphism group) to the diffeomorphism group is \emph{not} surjective near zero. See for example, Section 43 of \cite{KM1997}.} associated to $\chi(x)=x+\theta(x)$:

\begin{lemma}\label{L:X}
Let $\chi=\Id+\theta$, $\Theta$ and $X$ be as in in Definition \ref{Deformation}.

(1) For any function $f\in C^1(\xT^n)$, 
the deformation $f^{\sharp;\theta}(\tau,x)=f\big(x+\Theta(\tau,x)\big)\in C^1([0,1]\times\xT^n)$ solves the linear transport equation
\be\label{FcnTransp}
\partial_\tau f^{\sharp;\theta}+X\cdot\nabla_x f^{\sharp;\theta}=0.
\ee

(2) For any function $a(x,\xi)$ that is $C^1$ in $x\in \xT^n$ and smooth in $\xi\in\xR^n\setminus\{0\}$, 
the deformation $a^{\sharp;\theta}$ defined by 
$$
a^{\sharp;\theta}(\tau,x,\xi)=a\Big( x+\Theta(\tau,x),
\big(I_n+\partial_x\Theta(\tau,x)\big)^{-\T}\xi\Big)
$$
solves the linear transport equation
\be\label{SymTransp}
\partial_\tau a^{\sharp;\theta}+X\cdot\nabla_x a^{\sharp;\theta}-\nabla_\xi a^{\sharp;\theta}\cdot 
 \nabla_x(\xi \cdot X)=0.
\ee
Here $-\T$ stands for taking the inverse of transpose. 
\end{lemma}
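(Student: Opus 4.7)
The plan is to carry out both parts by direct chain rule, with the hinge being the defining relation $MX=-\partial_\tau\Theta$ in \e{N10}, where I abbreviate $M:=I_n+\partial_x\Theta$. For part (1), I differentiate $f^{\sharp;\theta}(\tau,x)=f(x+\Theta(\tau,x))$ to obtain $\partial_\tau f^{\sharp;\theta}=\partial_\tau\Theta\cdot(\nabla f)(x+\Theta)$ and $\nabla_x f^{\sharp;\theta}=M^{\T}(\nabla f)(x+\Theta)$. Combining,
$$\partial_\tau f^{\sharp;\theta}+X\cdot\nabla_x f^{\sharp;\theta}=\bigl[\partial_\tau\Theta+MX\bigr]^{\T}(\nabla f)(x+\Theta),$$
which vanishes precisely by the definition of $X$ in \e{N10}.

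Part (2) follows the same strategy, but with extra bookkeeping because $\eta(\tau,x,\xi)=M^{-\T}\xi$ now depends on $\tau$ and $x$ as well. Applying the chain rule to $a^{\sharp;\theta}(\tau,x,\xi)=a(y,\eta)$ with $y=x+\Theta$, each of $\partial_\tau a^{\sharp;\theta}$, $\nabla_x a^{\sharp;\theta}$, and $\nabla_\xi a^{\sharp;\theta}$ splits into a $\nabla_y a$-piece and a $\nabla_\eta a$-piece; in particular $\nabla_\xi a^{\sharp;\theta}=M^{-1}\nabla_\eta a$. Substituting into $\partial_\tau a^{\sharp;\theta}+X\cdot\nabla_x a^{\sharp;\theta}-\nabla_\xi a^{\sharp;\theta}\cdot\nabla_x(\xi\cdot X)$, the $\nabla_y a$ contributions cancel by exactly the computation done in part (1) (the base point $y$ transforms as a scalar). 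What remains is a single contraction against $\nabla_\eta a$, so, since $a$ is arbitrary, the proof reduces to the vector identity
$$\partial_\tau\eta+(\partial_x\eta)X=M^{-\T}\nabla_x(\xi\cdot X).$$

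The hard part will be this residual identity. I would differentiate the defining relation $M^{\T}\eta=\xi$ with respect to $\tau$ and each $x_j$ to express $\partial_\tau\eta$ and $\partial_{x_j}\eta$ in closed form in terms of derivatives of $M$. After substituting back and multiplying through by $M^{\T}$, the whole problem collapses to the matrix identity
$$M\,\partial_x X+\partial_\tau M+(X\cdot\nabla_x)M=0.$$
To verify this I use $\partial_\tau M=\partial_x\partial_\tau\Theta=-\partial_x(MX)$, which comes from the definition of $X$ in part (1); writing everything out in indices, the cancellations reduce the left-hand side to $\sum_l X_l\bigl(\partial_{x_l}M_{ij}-\partial_{x_j}M_{il}\bigr)$, which vanishes by equality of mixed partials applied to $M_{ij}-\delta_{ij}=\partial_{x_j}\Theta_i$. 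This mixed-partial symmetry is the geometric heart of the statement: it is precisely what makes $\xi\mapsto M^{-\T}\xi$ the pointwise cotangent lift of $x\mapsto x+\Theta$, so that $a^{\sharp;\theta}$ evolves as a function on $T^*\xT^n$ along the Hamiltonian $H(\tau,x,\xi)=\xi\cdot X(\tau,x)$.
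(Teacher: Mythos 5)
Your proof is correct and follows essentially the same route as the paper: chain rule splits the derivative of $a^{\sharp;\theta}$ into a base ($\nabla_y a$) part, which cancels by the transport identity for $x+\Theta$ from \eqref{N10}, and a fiber ($\nabla_\eta a$) part that reduces to the matrix identity $\partial_\tau M + (X\cdot\nabla_x)M + M\partial_x X = 0$, verified by differentiating the defining relation $\partial_\tau\Theta = -MX$ in $x$ and invoking symmetry of mixed partials. The paper organizes the fiber computation by pulling $(\nabla_\xi a)^{\sharp;\theta}$ back to $M\,\nabla_\xi(a^{\sharp;\theta})$ and then commuting $M^{\T}$ past $\partial_\tau+X\cdot\nabla_x$ via $M^{\T}A=I_n$, whereas you differentiate $M^{\T}\eta=\xi$ directly; these are the same Leibniz manipulation phrased in transposed form, so the two proofs are computationally identical.
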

\begin{remark}\label{HamiltonxiX}
The last identity (\ref{SymTransp}) could also be written as a Hamiltonian equation
$$
\partial_\tau a^{\sharp;\theta}+\left\{ H,a^{\sharp;\theta}\right\}=0.
$$
Here the time-dependent Hamiltonian $H$ on the symplectic manifold $\mathbb{T}^n\times\mathbb{R}^n$ is defined 
by
$$
H(\tau;x,\xi)=\xi \cdot  X(\tau,x),
$$ 
and $\{\cdot,\cdot\}$ denotes the Poisson bracket defined by
$$
\left\{a,b\right\}=\nabla_\xi a\cdot \nabla_x b-\nabla_x a\cdot
\nabla_\xi b.
$$
Thus (2) of Lemma \ref{L:X} states nothing but the following: the flow $\Phi$ of the Hamiltonian vector field of $\xi \cdot  X(\tau,x)$ on $\mathbb{T}^n\times\mathbb{R}^n$, defined by (see Proposition \ref{3RegTrans})
$$
\frac{\partial}{\partial\tau}\left(\begin{matrix}
\Phi^x(\tau,t;x,\xi) \\
\Phi^\xi(\tau,t;x,\xi)\
\end{matrix}\right)
=\left(\begin{matrix}
X\big(\tau,\Phi^x(\tau,t;x,\xi)\big) \\
-\partial_xX\big(\tau,\Phi^x(\tau,t;x,\xi)\big)\Phi^\xi(\tau,t;x,\xi)\
\end{matrix}\right),
\,
\Phi(t,t;x,\xi)=\left(\begin{matrix}
x \\
\xi
\end{matrix}\right),
$$
in fact takes an explicit form of a type I canonical transformation
$$
\Phi(0,\tau;x,\xi)=\left(\begin{matrix}
x+\Theta(\tau,x) \\
\big(I_n+\partial_x\Theta(\tau,x)\big)^{-\T}\xi
\end{matrix}\right).
$$
In particular, the flow acts as linear transformation with respect to $\xi$, and thus the zero section $\{\xi=0\}$ is an invariant submanifold of the flow $\Phi$.
\end{remark}
\begin{proof}
The first statement is clearly a particular case of the second, so it suffices to prove the latter. We introduce the matrix $A(\tau,x)=\big(I_n+\partial_x\Theta(\tau,x)\big)^{-\T}$. The chain rule reads 
$$
\begin{aligned}
\partial_\tau a^{\sharp;\theta}+X\cdot\nabla_x a^{\sharp;\theta}
&=(\mathrm{I})+(\mathrm{II})\\
&:=(\nabla_x a)^{\sharp;\theta} (x,\xi)
\cdot (\partial_\tau+X\cdot\nabla_x)\big(x+\Theta(\tau,x)\big)\\
&\quad +(\nabla_\xi a)^{\sharp;\theta} (x,\xi)
\cdot (\partial_\tau+X\cdot\nabla_x)\big(A(\tau,x)\xi\big).
\end{aligned}
$$
Now we have $(\mathrm{I})=0$, since directly from the definition of the vector field $X$ (see~\e{N10}),
\be\label{N20}
(\partial_\tau+X\cdot\nabla_x)\big(x+\Theta(\tau,x)\big)=0.
\ee

To handle the term $(\mathrm{II})$, we begin by using an elemetary result: for any invertible constant matrix $A=(a_{ij})_{1\le i,j\le n}$ and any function $v=v(\xi)$, there holds
$$
\nabla_\xi \big(v(A\xi)\big)=A^\T(\nabla_\xi v)(A\xi) 
\quad\Longleftrightarrow\quad
(\nabla_\xi v)(A\xi)=
A^{-\T}\nabla_\xi \big(v(A\xi)\big).
$$
We deduce that
$$
(\nabla_\xi a)^{\sharp;\theta} (x,\xi)
=A(\tau,x)^{-\T}\nabla_\xi (a^{\sharp;\theta}(x,\xi))
=\big(I_n+\partial_x\Theta(\tau,x)\big)\nabla_\xi (a^{\sharp;\theta}(x,\xi)).
$$
Therefore the term $(\mathrm{II})$ simplifies to
\begin{align*}
(\mathrm{II})
&=\left[\big(I_n+\partial_x\Theta(\tau,x)\big)\nabla_\xi (a^{\sharp;\theta}(x,\xi))\right]
\cdot \left[(\partial_\tau+X\cdot\nabla_x)\big(A(\tau,x)\xi\big)\right]\\
&=
\nabla_\xi \big(a^{\sharp;\theta}(x,\xi)\big)
\cdot \left[\big(I_n+\partial_x\Theta(\tau,x)\big)^{\T}
\cdot(\partial_\tau+X\cdot\nabla_x)\big(A(\tau,x)\xi\big)\right].
\end{align*}
Note that $\cdot$ stands for inner product between vectors in $\xR^n$. This is why the transpose $\big(I_n+\partial_x\Theta(\tau,x)\big)^{\T}$ appears. 

Next, we commute the matrix $\big(I_n+\partial_x\Theta(\tau,x)\big)^{\T}$ with the differential operator $(\partial_\tau+X\cdot\nabla_x)$. From the identity
$$
\big(I_n+\partial_x\Theta(\tau,x)\big)^{\T}A(\tau,x)=I_n,
$$
we obtain that the bracket in the right-hand-side of (II) equals
$$
\begin{aligned}
\big(I_n+\partial_x\Theta(\tau,x)\big)^{\T}&
\cdot(\partial_\tau+X\cdot\nabla_x)(A(\tau,x)\xi)\\
&=
-\big[(\partial_\tau+X\cdot\nabla_x)\big(I_n+\partial_x\Theta(\tau,x)\big)^{\T}\big]\cdot A(\tau,x)\xi.
\end{aligned}
$$
By differentiating the identity~\e{N20} with respect to $x_j$, we get for any $1\le j,k\le n$, 
$$
\begin{aligned}
(\partial_\tau+X\cdot\nabla_x)(\delta_j^k+\partial_{j}\Theta_k)
&=-\partial_{x_j}X\cdot \nabla_x (x_k+\Theta_k)\\
&=-\sum_l\partial_{x_j}X_l 
(\delta_k^l+\partial_l \Theta_k),
\end{aligned}
$$
so
$$
\begin{aligned}
(\partial_\tau+X\cdot\nabla_x)(I_n+\partial_x\Theta)^\T=-(\partial_xX)(I_n+\partial_x\Theta)^\T.
\end{aligned}
$$
We conclude that
$$
(\mathrm{II})=\nabla_\xi a^{\sharp;\theta} 
\cdot \big(
(\partial_xX)\xi\big)=
\nabla_\xi a^{\sharp;\theta} 
\cdot\nabla_x (\xi\cdot X).
$$
This completes the proof.
\end{proof}

\subsection{Paracomposition}
We are now prepared to provide an alternative definition of paracomposition operator in this subsection. For concreteness, we confine ourselves in presenting the theory on $\xT^n$ to avoid issues at infinity. 

In the last subsection, given a diffeomorphism $\chi(x)=x+\theta(x)$, we defined an isotopy $x+\Theta(\tau,x)$ and the corresponding deformation vector field $X(\tau,x)$ (\ref{N10}) as in Definition \ref{Deformation}. Lemma \ref{L:X} shows that the composition $f\circ \chi$ is in fact the value at time $\tau=1$ of the solution to the transport equation~\e{FcnTransp} with initial value $f$. 

To define the paracomposition operator $\chi^\star$ associated to $\chi$, we consider a paradifferential version of~\e{FcnTransp} and define $\chi^\star f$ as the value at time $1$ of the solution to the latter equation. 

\begin{definition}\label{DefParacomposition}
Suppose the diffeomorphism $\chi=\mathrm{Id}+\theta$ is such that
$$
\theta\in C^r\quad\text{for some }r\in (1,+\infty)\setminus\xN,
\quad \sup_{x\in\xT ^n}\la \partial_x\theta(x)\ra\leq0.99.
$$
Set the homotopy $\Theta$ and the deformation vector field $X$ as in Definition \ref{Deformation}:
$$
\Theta(\tau,x)=\tau e^{-(1-\tau)\langle D_x\rangle}\theta(x),
\quad
X(\tau,x)=-\big(I_n+\partial_x\Theta(\tau,x)\big)^{-1}\partial_\tau\Theta(\tau,x).
$$
For fixed $s\in\xR$, the action of paracomposition operator $\chi^\star$ on $f\in H^s$ is defined to be the time 1 flow map of the Cauchy problem
\be\label{n20-bis}
\partial_\tau w+ T_{X}\cdot \nabla_x w=0,\quad w\Big|_{\tau=0}=f.
\ee
In other words,
$$
\chi^\star f=w(1,\cdot).
$$
\end{definition}

An important feature of $\chi^\star$ is that it is a well-defined bounded invertible linear operator in \emph{all} Sobolev spaces, and the bound requires only ``minimal" regularity of $\chi$:

\begin{proposition}[Boundedness of paracomposition operator]\label{PCNorm}
Assume that $\chi$ is as in Definition \ref{DefParacomposition}. Then $\chi^\star$ 
is a bounded invertible linear operator from $H^s$ to $H^s$ for all $s\in\mathbb{R}$:
$$
\|\chi^\star\|_{\mathcal{L}(H^s,H^s)}\lesssim_{s} |\chi|_{C^r},
\quad
\|(\chi^\star)^{-1}\|_{\mathcal{L}(H^s,H^s)}\lesssim_{s} |\chi|_{C^r}.
$$
\end{proposition}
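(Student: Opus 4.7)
The plan is to read off the boundedness of $\chi^\star$ directly from the energy estimate for the paratransport Cauchy problem~\eqref{n20-bis}, which is the content of Proposition~\ref{ParaTranspEq}. Applying that proposition with $f=0$, $B=0$, and initial datum $h\in H^s$, the unique solution $w\in C^0_\tau H^s_x$ satisfies at $\tau=1$
$$
\|\chi^\star h\|_{H^s}=\|w(1)\|_{H^s}\leq C_s\exp\!\left(C_s\int_0^1|X(\tau,\cdot)|_{C^1_x}\,\dtau\right)\|h\|_{H^s}.
$$
To convert this into a bound involving only $|\chi|_{C^r}$, I would next invoke Lemma~\ref{L:rho}(3). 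Since $r>1$ by assumption, pick any $r'\in(1,r)\setminus\xN$; then the continuous embedding $C^{r'}_x\hookrightarrow C^1_x$ combined with the second inequality in~\eqref{XReg} yields
$$
\int_0^1|X(\tau,\cdot)|_{C^1_x}\,\dtau\lesssim\int_0^1|X(\tau,\cdot)|_{C^{r'}_x}\,\dtau\lesssim_{r,r'}|\theta|_{C^r}\lesssim|\chi|_{C^r}.
$$
Combining the two displays gives the boundedness of $\chi^\star:H^s\to H^s$, with a constant that depends on $s$ and on $|\chi|_{C^r}$ through the exponential factor.

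For invertibility, my strategy is a time-reversal argument. Setting $v(\tau):=w(1-\tau)$, the backward evolution of~\eqref{n20-bis} is governed by $\partial_\tau v-T_{\tilde X}\cdot\nabla_x v=0$ with $\tilde X(\tau):=X(1-\tau)$, i.e.\ another paratransport equation whose vector field has exactly the same $L^1_\tau C^1_x$ norm as $X$. Proposition~\ref{ParaTranspEq} therefore applies to this reversed Cauchy problem and produces unique solvability from any terminal data in $H^s$, together with the symmetric energy bound. This shows that $\chi^\star:H^s\to H^s$ is a topological isomorphism with the announced norm control on $(\chi^\star)^{-1}$.

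The one subtle point I anticipate is the discrepancy between the exponential dependence on $|\chi|_{C^r}$ produced by Gr\"onwall's inequality and the multiplicative form $\lesssim_s|\chi|_{C^r}$ appearing in the statement. I read the latter as a qualitative boundedness assertion in which the implicit constant is allowed to depend on $|\chi|_{C^r}$ as well as on $s$; a genuinely linear dependence on $|\chi|_{C^r}$ does not follow from the energy method and is not actually needed downstream, because the finer quantitative and Lipschitz dependence of $\chi^\star$ on $\chi$ will be handled separately in Proposition~\ref{PCNormCont}, presumably by combining the present a priori estimate with Corollary~\ref{ParaTranspEqCor1} applied to the difference of two paratransport flows.
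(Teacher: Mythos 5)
Your proof is correct and follows essentially the same route as the paper: invoke the energy estimate of Proposition~\ref{ParaTranspEq} for the paratransport problem~\eqref{n20-bis}, convert the resulting $L^1_\tau C^1_x$ bound on the deformation vector field into a bound in terms of $|\theta|_{C^r}$ via Lemma~\ref{L:rho}(3), and obtain the inverse by solving the paratransport flow in the reverse time direction. The paper phrases the inversion step via the propagator (fundamental solution) $\Xi(\tau,\tau_0)$ and the group law $\Xi(1,0)\Xi(0,1)=\mathrm{Id}$; your time-reversal substitution $v(\tau)=w(1-\tau)$ is the same device made explicit, with identical content. Your closing remark about the constant is a good catch: Gr\"onwall yields an estimate of the form $C_s\exp\bigl(C_s|\theta|_{C^r}\bigr)$, which is exponential rather than linear in $|\theta|_{C^r}$, so the statement $\lesssim_s|\chi|_{C^r}$ must indeed be read as allowing the implicit constant to depend on $|\chi|_{C^r}$ (or as restricted to a fixed bounded range of $|\theta|_{C^r}$, where $\exp(C_s|\theta|_{C^r})\lesssim 1+|\theta|_{C^r}\lesssim|\chi|_{C^r}$); this is consistent with how the bound is actually used downstream.
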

\begin{proof}
Recalling (\ref{n20-bis}), Proposition \ref{ParaTranspEq} implies the first inequality $\|\chi^\star\|_{\mathcal{L}(H^s,H^s)}\lesssim_{s} |\chi|_{C^r}$. As for invertibility, we may simply follow standard argument for ``fundamental solutions". For $\tau,\tau_0\in [0,1]$, we denote by $\Xi(\tau,\tau_0)f$ the solution $w(\tau,\cdot)$ of (\ref{n20-bis}) such that $w(\tau_0,\cdot)=f(\cdot)$. Proposition \ref{ParaTranspEq} shows that this evolution problem is always uniquely solvable, whether $\tau\geq\tau_0$ or not, so $\Xi(\tau,\tau_0)$ is a well-defined bounded linear operator satisfying energy estimates. Furthermore, uniqueness of solution implies the composition property $\Xi(\tau,\tau_0)\Xi(\tau_0,\tau')=\Xi(\tau,\tau')$. Taking $\tau=\tau'=1,\tau_0=0$, we obtain $\chi^\star\Xi(0,1)=\mathrm{Id}$, which shows that $\Xi(0,1)$ inverts $\chi^\star$ and still satisfies energy estimates.
\end{proof}

We can also determine the continuity of $\chi^\star$ with respect to the diffeomorphism $\chi$:

\begin{proposition}\label{PCNormCont}
The assignment $\chi\mapsto\chi^\star$, viewed as a mapping starting from $C^r$ diffeomorphisms to the space of operators, is Lipschitz continuous at the price of losing one derivative. More precisely, assume that $\chi_1=\mathrm{Id}+\theta_1$, $\chi_2=\mathrm{Id}+\theta_2$ are as in Definition \ref{DefParacomposition}. Then 
$$
\|(\chi_1^\star-\chi_2^\star)f\|_{H^{s-1}}
\lesssim_{r,s}|\theta_1-\theta_2|_{C^r} \|f\|_{H^s},
\quad \forall s\in\xR.
$$
Consequently, the assignment $\chi\mapsto\chi^\star$ is
\begin{itemize}
    \item continuous, viewed as a mapping from $C^r$ to $\mathcal{L}(H^s,H^s)$, with $\mathcal{L}(H^s,H^s)$ topologized by the strong operator topology\footnote{Recall that a sequence of bounded operators $A_k$ in a Banach space converges to some $A$ in the strong operator topology, if $\|(A_k-A)x\|\to0$ for any element $x$. This is weaker than convergence in operator norm.};
    \item $C^\alpha$-H\"{o}lder continuous (here $0<\alpha\leq1$), viewed as a mapping from $C^r$ to $\mathcal{L}(H^s,H^{s-\alpha})$, with $\mathcal{L}(H^s,H^{s-\alpha})$ topologized by the operator norm.
\end{itemize}
\end{proposition}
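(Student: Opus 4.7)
The plan is to reduce the quantitative $H^{s-1}$ Lipschitz estimate to two ingredients, and to derive the two continuity statements as easy consequences. I would let $w_i(\tau,\cdot)$ denote the solution of the paratransport Cauchy problem defining $\chi_i^\star f$, so that $\chi_i^\star f = w_i(1,\cdot)$, with $X_i$ the deformation vector field associated to $\chi_i$ as in Definition \ref{DefParacomposition}. Since $w_1$ and $w_2$ start from the \emph{same} initial data $f$ and have zero forcing, I would apply Corollary \ref{ParaTranspEqCor1} directly to obtain
\[
\|w_1(1) - w_2(1)\|_{H^{s-1}} \lesssim_{s} \|f\|_{H^s} \, \|X_1 - X_2\|_{L^1_\tau C^1_x}.
\]
It remains to translate the bound on $X_1 - X_2$ into a bound on $\theta_1 - \theta_2$. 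Since Lemma \ref{L:rho}(3) asserts that $\theta \mapsto X$ is smooth from $C^r$ into $L^1_\tau C^{r'}_x$ for every $r' < r$, and the hypothesis $r > 1$ allows one to pick $r' \in (1,r)$, local Lipschitz continuity of the smooth map (applied on the bounded set $\sup_x|\partial_x \theta_i| \leq 0.99$) combined with the embedding $L^1_\tau C^{r'}_x \hookrightarrow L^1_\tau C^1_x$ yields $\|X_1-X_2\|_{L^1_\tau C^1_x} \lesssim_{r} |\theta_1 - \theta_2|_{C^r}$, which completes the quantitative bound.

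The two continuity statements are then straightforward consequences. For strong operator continuity into $\mathcal{L}(H^s,H^s)$, I would invoke Proposition \ref{ParaTranspCont}: if $\theta_k \to \theta$ in $C^r$, Lemma \ref{L:rho}(3) again gives $X_k \to X$ in $L^1_\tau C^1_x$, so Proposition \ref{ParaTranspCont} yields the corresponding solutions $w_k \to w$ in $L^\infty_\tau H^s_x$, and in particular $\chi_k^\star f \to \chi^\star f$ in $H^s$ for every fixed $f \in H^s$. For $C^\alpha$-H\"older continuity in operator norm into $\mathcal{L}(H^s, H^{s-\alpha})$, I would interpolate the $H^{s-1}$ Lipschitz bound just obtained with the uniform $H^s$ bound $\|\chi_i^\star\|_{\mathcal{L}(H^s,H^s)} \lesssim 1$ from Proposition \ref{PCNorm}; the standard interpolation inequality $\|u\|_{H^{s-\alpha}} \leq \|u\|_{H^s}^{1-\alpha} \|u\|_{H^{s-1}}^{\alpha}$ immediately gives
\[
\|(\chi_1^\star - \chi_2^\star)f\|_{H^{s-\alpha}} \lesssim |\theta_1 - \theta_2|_{C^r}^\alpha \, \|f\|_{H^s},
\]
which is the required H\"older continuity of the assignment into $\mathcal{L}(H^s,H^{s-\alpha})$.

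All analytic content has already been packaged in Corollary \ref{ParaTranspEqCor1}, Lemma \ref{L:rho}(3) and Proposition \ref{ParaTranspCont}, so no hard estimate remains to be produced. The main (minor) conceptual point I expect to address is why strong continuity in $\mathcal{L}(H^s,H^s)$ cannot be upgraded to norm continuity: this is precisely the reason why the loss of one derivative appears in the Lipschitz bound, and reflects the fact that the paratransport flow has no smoothing effect in $H^s$ to compensate for the transport character of the equation. The statement is therefore tight in distinguishing the two topologies.
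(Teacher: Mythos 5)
Your proposal is correct and follows essentially the same route as the paper's proof: reduce to the paratransport Cauchy problem, apply Corollary \ref{ParaTranspEqCor1} for the $H^{s-1}$ Lipschitz bound, pass through Lemma \ref{L:rho}(3) to control $\|X_1-X_2\|_{L^1_\tau C^1_x}$ by $|\theta_1-\theta_2|_{C^r}$, then obtain strong operator continuity from Proposition \ref{ParaTranspCont} and the H\"older estimate in $\mathcal{L}(H^s,H^{s-\alpha})$ by interpolating against the uniform $H^s$ bound of Proposition \ref{PCNorm}. This is exactly the argument in the paper, and your closing remark about why the derivative loss is necessary is a sensible heuristic addition.
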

\begin{remark}
It is natural to expect that Lipschitz dependence loses one derivative from Taylor's formula $f\circ\chi_1-f\circ\chi_2\simeq f'\circ\chi_2\cdot(\theta_1-\theta_2)$.
\end{remark}
\begin{proof}
Note that $\chi_i^\star f$ is the solution of paratransport equation (\ref{n20-bis}). By Lemma \ref{L:rho}, we can estimate the $L^1_\tau C^1_x$ norm of $X_1-X_2$ in terms of $|\theta_1-\theta_2|_{C^r}$. Corollary \ref{ParaTranspEqCor1} then implies the desired Lipschitz estimate 
$$
\|(\chi_1^\star-\chi_2^\star)f\|_{H^{s-1}}
\lesssim_s|X_1-X_2|_{L^1_\tau C^1_x}\|f\|_{H^s}
\lesssim_{s,r}|\theta_1-\theta_2|_{C^r}\|f\|_{H^s},
$$
which gives the Lipschitz continuity from $C^r$ to $\mathcal{L}(H^s,H^{s-1})$. The H\"{o}lder continuity for $0<\alpha<1$ follow from interpolation inequalities in $H^s$. 

The proof of continuity of the assignment $\chi\mapsto\chi^\star$ under the strong operator topology, namely\footnote{Note that the strong operator topology in $\mathcal{L}(H^s,H^s)$ restricted to a bounded set is metrizable.} $\|(\chi_k^\star-\chi^\star)f\|_{H^s}\to0$ when $\chi_k\to\chi$ in $C^r$, is a direct consequence of Proposition \ref{ParaTranspCont}. In fact, if $\chi_k\to\chi$ in $C^r$, then the corresponding deformation vector fields $X_k\to X$ in $L^1_\tau C^{r'}_x$ for any $r'\in[1,r)$. Proposition \ref{ParaTranspCont} is then directly applicable.
\end{proof}

We note that this alternative definition of paracomposition operator is slightly less general compared to the original one (\ref{FIO}), as the diffeomorphism is required to be of class $C^r$ instead of Lipschitz. However, in practice, this alternative definition is not undermined since a calculus for paracomposition operators inherently requires regularity beyond Lipschitz continuity. On the other hand, in the integral equation (\ref{W_Int}), it can be proved with a little bit more effort that the operator $B(X)$ in fact gains back $r-1$ derivatives. When evaluated at time 1, the expression 
$$
A(X)f=\sum_{j\geq0}\zeta_j(D_x)(\Delta_jf)\circ\chi_j,
\quad \chi_j\text{ a smooth approximating sequence to }\chi
$$
also differs from (\ref{FIO}) by a $(r-1)$-smoothing term. Therefore, the alternative definition of $\chi^\star$ via flow map is essentially the same as (\ref{FIO}).

\section{Calculus of Paracomposition Operators}\label{sec:4}

In this section, we will develop symbolic calculus of paracomposition operators.

\subsection{Refinement of Paralinearization} We start by presenting a refinement of Bony's paralinearization theorem \ref{2ParaLin}, which, in the original context of Alinhac \cite{Alinhac1986}, leads to the very definition of paracomposition, as discussed in Subsection \ref{sec:1.2}.

\begin{theorem}[Refinement of Paralinearization]\label{ParaLinRefined}
Let $s>n/2+1$, $r>1$. Let $\chi=\mathrm{Id}+\theta$ be a $C^r$ diffeomorphism on $\xT^n$ as in Definition \ref{DefParacomposition}. Suppose $f\in H^s$. Then there holds the following refinement of Bony's paralinearization formula:
$$
f\circ\chi
=T_{f'\circ\chi}\theta+\chi^\star f+\R_{\PLR}(f,\theta).
$$
Here PLR is the abbreviation for \emph{para-linéarisation raffin\'ee}, and the remainder $\R_{\PLR}(f,\theta)$ is smoothing in the following sense: set $\rho=\min(s-n/2-1,r-1)$, then for any $r'\in(1,r)$,
$$
\|\R_{\PLR}(f,\theta)\|_{H^{r'+\rho}}
\lesssim_{r',r,s} |\theta|_{C^r}\|f\|_{H^s}.
$$
Furthermore, if in addition $s>n/2+2$, then the remainder $\R_\PLR(f,\theta)$ is Lipschitz with respect to $\theta$ at the price of losing one derivative: with $\rho'=\min(s-n/2-2,r-1)$,
$$
\|\R_\PLR(f,\theta_1)-\R_\PLR(f,\theta_2)\|_{H^{r'+\rho'}}
\lesssim_{r',r,s} \|f\|_{H^s}|\theta_1-\theta_2|_{C^r}.
$$
\end{theorem}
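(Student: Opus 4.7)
I will exploit the flow-map definition of $\chi^\star$ and compare the two flows. Set $u(\tau,x) := f(x+\Theta(\tau,x))$, which by Lemma~\ref{L:X} solves the transport equation $\partial_\tau u + X\cdot\nabla_x u = 0$ with $u|_{\tau=0}=f$. Let $w(\tau,x)$ be the $H^s$-solution of the paratransport Cauchy problem $\partial_\tau w + T_X\cdot\nabla_x w = 0$ with $w|_{\tau=0}=f$, so that by Definition~\ref{DefParacomposition}, $w(1,\cdot)=\chi^\star f$. Introduce $A_j(\tau,x) := (\partial_{x_j}f)(x+\Theta(\tau,x))$ and the correction
$$
g(\tau,x) := \sum_{j=1}^{n} T_{A_j(\tau,\cdot)}\,\Theta_j(\tau,x).
$$
A direct chain-rule computation together with the definition \e{N10} of $X$ shows that $A = (A_j)_{j}$ satisfies $\partial_\tau A_j + X\cdot\nabla_x A_j = 0$. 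Moreover, $g(0,\cdot)=0$ and $g(1,\cdot) = T_{f'\circ\chi}\theta$, so the quantity $R(\tau,x) := u(\tau,x)-w(\tau,x)-g(\tau,x)$ vanishes at $\tau=0$ and equals $\R_\PLR(f,\theta)$ at $\tau=1$.

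\textbf{Paratransport equation satisfied by $R$.} Next, I will compute $\partial_\tau R + T_X\cdot\nabla_x R$. The contribution of $u-w$ is $-(X\cdot\nabla_x u - T_X\cdot\nabla_x u)$. Using Bony's decomposition (Definition~\ref{R_PM}) to write $X_k\partial_{x_k}u = T_{X_k}\partial_{x_k}u + T_{\partial_{x_k}u}X_k + R_\PM(X_k,\partial_{x_k}u)$, together with the chain-rule expansion $\partial_{x_k}u = A_k + \sum_{l} A_l\,\partial_{x_k}\Theta_l$, this produces a principal term $-\sum_k T_{A_k}X_k$ together with paraproduct remainders and cross terms $-\sum_{k,l}T_{A_l\partial_{x_k}\Theta_l}X_k$. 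For the contribution $\partial_\tau g + T_X\cdot\nabla_x g$, I commute $T_{X_k}$ with $T_{A_j}$ and with $T_{\partial_{x_k}A_j}$ via the composition formula (Theorem~\ref{2theo:sc}), invoke the transport equation for $A_j$ to annihilate $T_{\partial_\tau A_j + X\cdot\nabla_x A_j}\Theta_j$, and use the relation $\partial_\tau\Theta_j + X\cdot\nabla_x\Theta_j = -X_j$ (itself a consequence of \e{N20}) to reconstruct exactly the term $-\sum_k T_{A_k}X_k$, which cancels with its counterpart from $u-w$. The outcome is
$$
\partial_\tau R + T_X\cdot\nabla_x R = \mathcal{S}(\tau,\cdot),
\qquad R|_{\tau=0}=0,
$$
where $\mathcal{S}$ consists of: (i) $R_\PM$-remainders of the form $R_\PM(X_k,\partial_{x_k}u)$; (ii) $\R_\CM$-remainders from the composition $T_{X_k}T_{A_j}$ and $T_{X_k}T_{\partial_{x_k}A_j}$; and (iii) cross-term paraproducts $T_{A_l\partial_{x_k}\Theta_l}X_k$.

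\textbf{Estimating $\mathcal{S}$ and concluding.} Each piece of $\mathcal{S}(\tau,\cdot)$ will be shown to lie in $H^{r'+\rho}$ with $\tau$-integrable bound $\lesssim |\theta|_{C^r}\|f\|_{H^s}$. For (i), Theorem~\ref{PMReg} gives a gain of $r-1$ derivatives from $X(\tau)\in C^{r-1}$ (Lemma~\ref{L:rho}) combined with $\partial_{x_k}u(\tau)\in H^{s-1}$, using $s > n/2+1$ so that $\nabla_x u \in L^\infty\cap H^{s-1}$; this produces the exponent $\rho=\min(s-n/2-1,r-1)$. For (ii), Theorem~\ref{2theo:sc} yields the same gain via the tame remainder estimate for $\R_\CM$. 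For (iii), the composition estimate (Proposition~\ref{2esti:C^scomp}) controls $|A_l\partial_{x_k}\Theta_l|_{L^\infty}$ and the product is paired with $X_k$, where the smoothing estimate \e{N1} of Lemma~\ref{L:rho} furnishes the additional fractional derivative of $\Theta$ (and of $X$) needed after time integration. Applying Proposition~\ref{ParaTranspEq} to the paratransport equation for $R$ at regularity $r'+\rho$ (with $R|_{\tau=0}=0$) yields $\|\R_\PLR(f,\theta)\|_{H^{r'+\rho}} \lesssim |\theta|_{C^r}\|f\|_{H^s}$.

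\textbf{Lipschitz dependence and main obstacle.} For the second assertion I would repeat the argument with two parameters $\theta_1,\theta_2$ and form the difference $R_1-R_2$, which still satisfies a paratransport equation $\partial_\tau(R_1-R_2) + T_{X_1}\cdot\nabla_x(R_1-R_2) = \mathcal{S}_1-\mathcal{S}_2 - T_{X_1-X_2}\cdot\nabla_x R_2$. Each summand is estimated by its Lipschitz variant: the composition Lipschitz bound (Proposition~\ref{2esti:C^scomp}) controls $A^1-A^2$; Lemma~\ref{L:rho} controls $\Theta_1-\Theta_2$ and $X_1-X_2$ losing one derivative; the tame remainder bounds in Theorems~\ref{PMReg} and \ref{2theo:sc} provide the corresponding Lipschitz estimates for (i) and (ii). Applying Corollary~\ref{ParaTranspEqCor1} in place of Proposition~\ref{ParaTranspEq} then yields the claimed $H^{r'+\rho'}$ bound with $\rho'=\min(s-n/2-2,r-1)$. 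The main obstacle throughout is the careful symbolic book-keeping to ensure that \emph{every} commutator, composition remainder, and cross-term gains the full $\rho$ derivatives when $r$ is only slightly larger than $1$; here the smoothing property of the homotopy $\Theta$ (namely \e{N1}) is essential, as it compensates the marginal $C^{r-1}$ regularity of $X$ with an integrable-in-$\tau$ $C^{r'}$ bound for any $r'<r$.
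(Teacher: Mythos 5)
Your overall architecture — compare the classical transport evolution $u(\tau,\cdot)=f(\Id+\Theta(\tau,\cdot))$ with the paratransport evolution $w$, correct by $g=\sum_j T_{A_j}\Theta_j=T_{f'(\Id+\Theta)}\Theta$, derive a paratransport Cauchy problem for $R=u-w-g$ with zero initial data, and close with Proposition~\ref{ParaTranspEq} (and Corollary~\ref{ParaTranspEqCor1} for the Lipschitz part) — is exactly the strategy in the paper, and your use of the transport equations for $A_j$ and for $\Theta_j$ is a clean way of realizing the cancellation that the paper achieves by paralinearizing $X=-(I_n+\partial_x\Theta)^{-1}\partial_\tau\Theta$ directly.

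There is, however, a concrete gap in your description of the source term $\mathcal{S}$, and it is located at item (iii). The term $T_{A_l\partial_{x_k}\Theta_l}X_k$ is a \emph{paraproduct}, so its Sobolev regularity is that of $X_k$, not better: $T_ab$ maps $H^\sigma$ to $H^\sigma$ with norm $|a|_{L^\infty}$, and Lemma~\ref{L:rho} only gives $X\in L^1_\tau C^{r'}_x$ for $r'<r$, hence at best $H^{r'-\epsilon}$. This falls short of the target $H^{r'+\rho}$ by $\rho$ derivatives. The estimate you propose for (iii) therefore does not close.

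The missing ingredient is a second cross-term coming from the $g$-side. When you write $\sum_j T_{A_j}\bigl(\partial_\tau\Theta_j+(T_X\cdot\nabla_x)\Theta_j\bigr)$, you cannot directly substitute $\partial_\tau\Theta_j+X\cdot\nabla_x\Theta_j=-X_j$, since $T_X\cdot\nabla_x\Theta_j\neq X\cdot\nabla_x\Theta_j$; the discrepancy is $-T_{\nabla_x\Theta_j}\cdot X-R_\PM(X,\nabla_x\Theta_j)$. Carrying this along, $-(\partial_\tau g+T_X\cdot\nabla_x g)$ contributes, besides $\sum_jT_{A_j}X_j$ and bona fide remainders, the additional cross-term $\sum_{j,k}T_{A_j}T_{\partial_{x_k}\Theta_j}X_k$. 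Adding it to the $-\sum_{j,k}T_{A_j\partial_{x_k}\Theta_j}X_k$ of item (iii), one obtains
$$
\sum_{j,k}\bigl(T_{A_j}T_{\partial_{x_k}\Theta_j}-T_{A_j\partial_{x_k}\Theta_j}\bigr)X_k
=\sum_{j,k}\R_\CM\bigl(A_j,\partial_{x_k}\Theta_j\bigr)X_k,
$$
which gains the required $\min(s-n/2-1,r')$ derivatives by Theorem~\ref{2theo:sc}, and the leftover $\sum_jT_{A_j}R_\PM(X,\nabla_x\Theta_j)$ is handled by Theorem~\ref{PMReg}. With (iii) replaced by these two smoothing pieces (and the corresponding Lipschitz variants), the rest of your argument goes through as written, including the second assertion via Corollary~\ref{ParaTranspEqCor1} and Lemma~\ref{fTheta}.
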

This refined paralinearization formula states that, if $f$ is only of limited differentiability, then $\chi^\star f$ captures the most irregular part ``with respect to $f$" in the remainder $f\circ\chi-T_{f'\circ\chi}\theta$. In view of Ainhac's decomposition (\ref{AlinhacParalin}), the alternative definition for $\chi^\star$ essentially gives the same object as the original Fourier integral one (\ref{FIO}) does, at least on sufficiently regular Sobolev functions. 

Before we give the proof, let us state an auxiliary result:
\begin{lemma}\label{fTheta}
Let $r\in(1,+\infty)\setminus\xN$, $\chi_i=\mathrm{Id}+\theta_i$, $i=1,2$ be $C^r$ diffeomorphisms on $\xT^n$ as in Definition \ref{DefParacomposition}. Suppose $F$ is a $C^\mu$ function on $\xT^n$ with $\mu>1$. Then 
$$
\big|F\circ\chi_1-F\circ\chi_2\big|_{C^{\min(\mu-1,r)}}
\lesssim |F|_{C^\mu}|\theta_1-\theta_2|_{C^r}.
$$
\end{lemma}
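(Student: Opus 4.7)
The plan is to reduce the difference $F\circ\chi_1-F\circ\chi_2$ to a linear expression via the fundamental theorem of calculus, and then apply the composition and product estimates of Proposition~\ref{2esti:C^scomp}. For $t\in[0,1]$, set $\eta_t:=\mathrm{Id}+(1-t)\theta_2+t\theta_1$. Since the bound $\sup_x\la\partial_x\theta(x)\ra\leq 0.99$ in~\e{theta} is preserved under convex combinations, each $\eta_t$ is itself a $C^r$ diffeomorphism of $\xT^n$, with $|\eta_t|_{C^1}\leq 1.99$ and $|\eta_t|_{C^r}\lesssim 1+|\theta_1|_{C^r}+|\theta_2|_{C^r}$. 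The fundamental theorem of calculus then gives the representation
\[
F\circ\chi_1-F\circ\chi_2=\int_0^1(\nabla F)(\eta_t)\cdot(\theta_1-\theta_2)\,\dt,
\]
and it suffices to bound the $C^\sigma$-norm of the integrand uniformly in $t\in[0,1]$, where $\sigma:=\min(\mu-1,r)>0$.

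First consider the case $\sigma\geq 1$, which forces $\mu\geq 2$ and $r\geq 1$. Since $\nabla F\in C^{\mu-1}\subset C^\sigma$, the composition estimate Proposition~\ref{2esti:C^scomp}(2) yields
\[
\big|(\nabla F)\circ\eta_t\big|_{C^\sigma}\lesssim_\sigma|\nabla F|_{C^\sigma}|\eta_t|_{C^1}^\sigma+|\nabla F|_{C^1}|\eta_t|_{C^\sigma}+|\nabla F|_{C^0}\lesssim|F|_{C^\mu},
\]
with the implicit constant depending only on $|\theta_1|_{C^r}+|\theta_2|_{C^r}$. Coupling this with the product estimate Proposition~\ref{2esti:C^scomp}(1) and the inclusion $\sigma\leq r$ then gives
\[
\big|(\nabla F)(\eta_t)\cdot(\theta_1-\theta_2)\big|_{C^\sigma}\lesssim|F|_{C^\mu}\big(|\theta_1-\theta_2|_{C^\sigma}+|\theta_1-\theta_2|_{C^0}\big)\lesssim|F|_{C^\mu}|\theta_1-\theta_2|_{C^r},
\]
and integrating over $t\in[0,1]$ produces the claimed bound.

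The remaining case $0<\sigma<1$ (that is, $1<\mu<2$) is not directly covered by Proposition~\ref{2esti:C^scomp}, since the latter assumes $r\geq 1$, but can be handled by direct difference quotients: for $x\neq y\in\xT^n$,
\[
\big|(\nabla F)(\eta_t(x))-(\nabla F)(\eta_t(y))\big|\leq|\nabla F|_{C^{\mu-1}}|\eta_t|_{C^1}^{\mu-1}|x-y|^{\mu-1}\lesssim|F|_{C^\mu}|x-y|^{\mu-1},
\]
so $|(\nabla F)\circ\eta_t|_{C^{\mu-1}}\lesssim|F|_{C^\mu}$; the elementary H\"older product inequality $|fg|_{C^\sigma}\lesssim|f|_{C^0}|g|_{C^\sigma}+|f|_{C^\sigma}|g|_{C^0}$ for $0<\sigma<1$ (immediate from the pointwise identity $f(x)g(x)-f(y)g(y)=f(x)[g(x)-g(y)]+g(y)[f(x)-f(y)]$) then closes the argument exactly as above. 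The only mild subtlety is the case split on $\sigma$ according to whether $\nabla F$ or $\eta_t$ limits the regularity; otherwise the lemma is a quantitative version of the chain rule and should not present a serious obstacle.
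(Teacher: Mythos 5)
Your proof is correct and follows essentially the same route as the paper: the fundamental theorem of calculus along the segment joining $\chi_2$ to $\chi_1$, followed by the H\"older composition and product estimates of Proposition~\ref{2esti:C^scomp}. One point where you are more careful than the paper: the paper simply cites Proposition~\ref{2esti:C^scomp}, which is stated only for index $\geq 1$, whereas you correctly flag that when $1<\mu<2$ the target index $\sigma=\mu-1$ drops below $1$, and you supply the elementary difference-quotient estimates needed to close that sub-case; this is a minor gap in the paper's terse argument that your proof fills.
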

\begin{proof}
We write
$$
F\circ\chi_1(x)-F\circ\chi_2(x)
=\left(\int_0^1 F'\big(x+t\theta_2(x)+(1-t)\theta_1(x)\big)\dt\right)\big(\theta_2(x)-\theta_1(x)\big).
$$
Then due to the H\"{o}lder estimate (Proposition \ref{2esti:C^scomp}) for composite functions, the $C^{\min(\mu-1,r)}$ norm of the integral is controlled by $|F|_{C^\mu}$. The result then follows from the usual product H\"{o}lder estimate.
\end{proof}

\begin{proof}[Proof of Theorem \ref{ParaLinRefined}]
We want to compare the linear transport equation
$$
\partial_\tau f^{\sharp;\theta}+X\cdot\nabla_x f^{\sharp;\theta}=0
$$
with the linear paratransport equation
$$
\partial_\tau w+T_X\cdot\nabla_x w=0,
$$
of the same initial value $f$ at $\tau=0$. Here as in (\ref{homotopy}) and (\ref{N10}),
$$
\Theta(\tau,x)=\tau e^{-(1-\tau)\langle D_x\rangle}\theta,
\quad
X(\tau,x)=-\big(I_n+\partial_x\Theta(\tau,x)\big)^{-1}\partial_\tau\Theta(\tau,x).
$$
Subtracting one equation from another, we obtain the equation for the difference $f^{\sharp;\theta}-w$:
\begin{equation}\label{PLRemTransp}
\partial_\tau(f^{\sharp;\theta}-w)
+T_{X}\cdot \nabla_x (f^{\sharp;\theta}-w)
+X\cdot \nabla_x f^{\sharp;\theta}-T_{X}\cdot \nabla_x f^{\sharp;\theta}=0,
\quad (f^{\sharp;\theta}-w)\Big|_{\tau=0}=0.
\end{equation}
It then suffices to study the paratransport equation (\ref{PLRemTransp}) in view of Proposition \ref{ParaTranspEq}.

To control the regularity of $f^{\sharp;\theta}-w$, we paralinearize the product $X\cdot\nabla_xf^{\sharp;\theta}$ to get 
\be\label{T_XDu}
X\cdot \nabla_x f^{\sharp;\theta}-T_{X}\cdot \nabla_x f^{\sharp;\theta}=
T_{\nabla_x f^{\sharp;\theta}}\cdot X+R_\PM(X,\nabla_x f^{\sharp;\theta}).
\ee
By Lemma \ref{L:rho} we have $X\in L^1_tC^{r'}_x$ for any $r'<r$. On the other hand, since $\rho=\min(s-n/2-1,r-1)$, we may use the H\"{o}lder composition estimate (Proposition \ref{2esti:C^scomp}) to estimate
$$
|\nabla_xf^{\sharp;\theta}|_{C^0_\tau C^\rho_x}
\lesssim_{r}|f|_{C^{\rho+1}_x}
+|f|_{C^1}\big(1+|\Theta(\tau,x)|_{C^0_\tau C^r_x}\big)
\lesssim \|f\|_{H^s}+|f|_{C^1}|\theta|_{C^r}.
$$
Therefore by Theorem \ref{PMReg},
$$
\big\| R_\PM(X,\nabla_x f^{\sharp;\theta})\big\|_{H^{r'+\rho}_x}
\les 
|X|_{C^{r'}_x}\|f\|_{H^s}.
$$
Hence
$$
\big\| R_\PM(X,\nabla_x f^{\sharp;\theta})\big\|_{L^1_\tau H^{r'+\rho}_x}
\les |\theta|_{C^r_x}\|f\|_{H^s}.
$$
It then remains to study the paraproduct term $T_{\nabla_x f^{\sharp;\theta}}\cdot X$ in (\ref{T_XDu}).

To this end we use the already well-established paralinearization formula, namely Theorem \ref{2ParaLin}, to obtain\footnote{Recall from (\ref{DefOpPM}) that $\Op^\PM(a)$ is simply another way of writing $T_a$. Strictly speaking we should add subscripts left/right, but we abbreviate them here since the order of action is clear in the context.}
$$\begin{aligned}
X
&=-(I_n+\partial_x\Theta)^{-1}\partial_\tau\Theta\\
&= 
-\Op^\PM\left((I_n+\partial_x\Theta)^{-1}\right)\partial_\tau\Theta\\
&\quad-\Op^\PM\left((I_n+\partial_x\Theta)^{-1}\bullet X\right)\partial_x\Theta
+\R_1(\partial_\tau\Theta,\partial_x\Theta),
\end{aligned}
$$
where the matrix operation $(A\bullet B)C:=ACB$, the remainder $\R_1(\partial_\tau\Theta,\partial_x\Theta)$ is $C^1$ in $\partial_\tau\Theta,\partial_x\Theta$, satisfying
$$
\big\|\R_1(\partial_\tau\Theta,\partial_x\Theta)\big\|_{L^1_\tau H^{r+r'-1}_x}
\lesssim
\big\|\R_1(\partial_\tau\Theta,\partial_x\Theta)\big\|_{L^1_\tau C^{r+r'-1}_x}
\lesssim|\theta|_{C^r}.
$$
Observing
$$
(\nabla_x f^{\sharp;\theta})(\tau,x)= f'(x+\Theta(\tau,x))\cdot\big(I_n+\partial_x\Theta(\tau,x)\big),
$$
we find
\be\label{T_DuX}
\begin{aligned}
T_{\nabla_x f^{\sharp;\theta}}\cdot X
&= -\Op^\PM\big(f'(x+\Theta)\cdot\big(I_n+\partial_x\Theta\big)\big)
\cdot\Op^\PM\big((I_n+\partial_x\Theta)^{-1}\big)\partial_\tau\Theta\\
&\quad 
-\Op^\PM\big(f'(x+\Theta)\cdot(I_n+\partial_x\Theta)\big)
\cdot\Op^\PM\left((I_n+\partial_x\Theta)^{-1}\bullet X\right)\partial_x\Theta\\
&\quad+T_{\nabla_xf^{\sharp;\theta}}\R_1(\partial_\tau\Theta,\partial_x\Theta)\\
&=:-T_{f'(x+\Theta)}
\big(\partial_\tau\Theta+(T_{X}\cdot\nabla_x)\Theta\big)
+\R_2(f;\partial_\tau\Theta,\partial_x\Theta).
\end{aligned}
\ee
Here the remainder 
\begin{equation}\label{PLAR2}
\begin{aligned}
\R_2(f;\partial_\tau\Theta,\partial_x\Theta)
&:=T_{\nabla_xf^{\sharp;\theta}}\R_1(\partial_\tau\Theta,\partial_x\Theta)\\
&\quad-\R_\CM\big(f'(x+\Theta)(I_n+\partial_x\Theta),(I_n+\partial_x\Theta)^{-1}\big)\partial_\tau\Theta\\
&\quad-\R_\CM\big(f'(x+\Theta)(I_n+\partial_x\Theta),(I_n+\partial_x\Theta)^{-1}\bullet X\big)\partial_x\Theta.
\end{aligned}
\end{equation}
Recalling $\rho=\min(s-n/2-1,r-1)$, we find that $f'(x+\Theta)\in C^0_\tau C^{\rho}_x$, with norm controlled by $\|f\|_{H^s}$, so by the symbolic calculus theorem \ref{2theo:sc0}, $\R_2$ enjoys the following regularity:
$$
\|\R_2(f;\partial_\tau\Theta,\partial_x\Theta)\|_{L^1_\tau H^{r'+\rho}_x}
\lesssim\|\R_2(f;\partial_\tau\Theta,\partial_x\Theta)\|_{L^1_\tau C^{r'+\rho}_x}
\lesssim \|f\|_{H^s}|\theta|_{C^r}.
$$

Now the usual Leibniz rule for paraproducts implies
$$
\begin{aligned}
\big(\partial_\tau+T_X\cdot\nabla_x\big)T_{f'(x+\Theta)}
&=\Op^\PM\big((\partial_\tau+X\cdot\nabla_x) f'(x+\Theta)\big)
+T_{f'(x+\Theta)}\big(\partial_\tau+T_X\cdot\nabla_x\big)\\
&=T_{f'(x+\Theta)}\big(\partial_\tau+T_X\cdot\nabla_x\big),
\end{aligned}
$$
since 
$$
\big(\partial_\tau+X(\tau,x)\cdot\nabla_x\big)f'\big(x+\Theta(\tau,x)\big)=0.
$$
Consequently, (\ref{T_DuX}) becomes
$$
T_{\nabla_x f^{\sharp;\theta}}\cdot X
=-\big(\partial_\tau+T_X\cdot\nabla_x\big)T_{f'(x+\Theta)}\Theta
+\R_2(f;\partial_\tau\Theta,\partial_x\Theta).
$$
Substituting this into (\ref{T_XDu}), then back to (\ref{PLRemTransp}), we find that the function $v=(f^{\sharp;\theta}-w)-T_{f'(x+\Theta)}\Theta$ satisfies
\begin{equation}\label{ParaTransp(v)}
\partial_\tau v+T_X\cdot\nabla_xv=R_\PM(X,\nabla_xf^{\sharp;\theta})
+\R_2(f;\partial_\tau\Theta,\partial_x\Theta),
\quad v\Big|_{\tau=0}=0,
\end{equation}
where the right-hand-side is in $L^1_\tau C^{r'+\rho}_x\subset L^1_\tau H^{r'+\rho}_x$. The estimate for $\R_\PLR(f,\theta)$ then follows from Proposition \ref{ParaTranspEq} by evaluating $v$ at $\tau=1$.

As for the Lipschitz dependence of $\R_\PLR(f,\theta)$ under the stronger condition $f\in H^s$, $s>n/2+2$, we simply notice that the right-hand-side of (\ref{ParaTransp(v)}) is Lipschitz continuous with respect to $\theta$ at the price of losing one derivative. This is because the mapping $\theta\to X$ is smooth by Lemma \ref{L:rho}, while we have by Lemma \ref{fTheta}, recalling $\rho'=\min(s-n/2-2,r-1)$,
$$
\begin{aligned}
|f'(x+\Theta_1(\tau,x))-f'(x+\Theta_2(\tau,x))|_{C_x^{\rho'}}
&\lesssim \|f\|_{H^s}|\theta_1-\theta_2|_{C^r}.
\end{aligned}
$$
Concerning the expression (\ref{PLAR2}) of $\R_2$, Theorem \ref{2theo:sc0} and \ref{PMReg} implies
$$
\begin{aligned}
\left\|R_\PM\big(X_1,\nabla_x[f(x+\Theta_1)]\big)-R_\PM\big(X_2,\nabla_x[f(x+\Theta_2)]\big)\right\|_{L^1_\tau H^{r'+\rho'}_x}
&\lesssim \|f\|_{H^s}|\theta_1-\theta_2|_{C^r}\\
\left\|\R_2(f;\partial_\tau\Theta_1,\partial_x\Theta_1)-\R_2(f;\partial_\tau\Theta_1,\partial_x\Theta_2)\right\|_{L^1_\tau H^{r'+\rho'}_x}
&\lesssim \|f\|_{H^s}|\theta_1-\theta_2|_{C^r}.
\end{aligned}
$$
Then Corollary \ref{ParaTranspEqCor1} implies the desired result.
\end{proof}
\subsection{Parachange of Variable for Paraproduct}
We now state and prove the formula for the conjugation of a paraproduct operator with a paracomposition. This result is a special case of the more general Theorem \ref{4PCConj} proved later. However, since the proof for this special case is technically much simpler, it deserves separate treatment.

\begin{theorem}[Conjugation with Paraproduct]\label{ParacompoPrd}
Suppose $r>1$ is not an integer and let $\sigma>1$. Let $\chi=\mathrm{Id}+\theta$ be a $C^r$ diffeomorphism on $\xT^n$, with $\theta$ as in Definition \ref{DefParacomposition}. Let $a\in C^\sigma$ be a function on $\xT^n$. Then there holds the following conjugation formula:
$$
\chi^\star T_a=T_{a\circ\chi}\chi^\star+\R_{\Con}(a,\chi),
$$
where the operator $\R_\Con(a,\chi)$ is smoothing: given $r'<r$ and $\mu=\min(\sigma,r'+1)$, there holds,
$$
\big\|\R_\Con(a,\chi);\mathcal{L}(H^s,H^{s+\mu-1})\big\|
\lesssim_{s,r,r',\sigma}|a|_{C^\sigma}|\theta|_{C^r},
\quad \forall s\in\mathbb{R}.
$$
Furthermore, if $\chi_i=\Id+\theta_i$, $i=1,2$ satisfy the requirement of Definition \ref{DefParacomposition}, we have the following Lipschitz dependence of $\R_\Con(a,\chi)$: with $r'<r$, $\mu_1=\min(\sigma-1,r'+1)$, there holds
$$
\big\|\R_\Con(a,\chi_1)-\R_\Con(a,\chi_2);\mathcal{L}(H^s,H^{s+\mu_1-1})\big\|
\lesssim_{s,r,r',\sigma}|a|_{C^\sigma}|\theta_1-\theta_2|_{C^r}.
$$
\end{theorem}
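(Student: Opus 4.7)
The plan is to run an Egorov-type argument with respect to the paratransport flow. Let $\Xi(\tau)$ denote the solution operator of the paratransport equation (\ref{n20-bis}) on $[0,1]$, so that $\chi^\star=\Xi(1)$. Set
\[
b(\tau,x):=a\bigl(x+\Theta(\tau,x)\bigr),
\]
which, by part (1) of Lemma \ref{L:X}, is the classical transport solution of $\partial_\tau b+X\cdot\nabla_xb=0$ with $b(0,\cdot)=a$ and $b(1,\cdot)=a\circ\chi$. Introduce the conjugation remainder
\[
R(\tau):=\Xi(\tau)T_a-T_{b(\tau)}\Xi(\tau),\qquad R(0)=0.
\]
Differentiating and using $\partial_\tau\Xi=-T_X\cdot\nabla_x\,\Xi$ and $\partial_\tau T_{b(\tau)}=T_{\partial_\tau b}$, a direct algebraic manipulation gives the paratransport equation
\[
\partial_\tau R+T_X\cdot\nabla_x R=-E(\tau)\Xi(\tau),\qquad
E(\tau):=T_{\partial_\tau b}+[T_X\cdot\nabla_x,T_{b(\tau)}].
\]
Since $R(0)=0$, Proposition \ref{ParaTranspEq} reduces the whole problem to estimating the ``source'' $E(\tau)\Xi(\tau)$ in $L^1_\tau\mathcal{L}(H^s,H^{s+\mu-1})$.

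The core of the proof is the smoothing estimate for $E(\tau)$. Writing $T_X\cdot\nabla_x=\sum_j T_{X_j}\partial_j$ and using the exact identity $\partial_j T_b-T_b\partial_j=T_{\partial_j b}$, one rewrites
\[
[T_X\cdot\nabla_x,T_b]=\sum_j [T_{X_j},T_b]\,\partial_j+\sum_j T_{X_j}T_{\partial_j b}.
\]
Theorem \ref{2theo:sc} applied to the first sum (with $X_j b=bX_j$, so the principal symbol cancels) produces an operator of order $\le -\mu+1$, tame in $|X(\tau,\cdot)|_{C^{r'}}$ and $|b(\tau,\cdot)|_{C^{\mu}}$. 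The same theorem for the second sum yields $\sum_j T_{X_j\partial_j b}$ plus a smoothing operator of the same quality. The principal piece $\sum_j T_{X_j\partial_j b}=T_{X\cdot\nabla_x b}$ \emph{exactly} cancels $T_{\partial_\tau b}$ by the transport identity $\partial_\tau b+X\cdot\nabla_xb=0$. Thus $E(\tau)$ has order $\le -\mu+1$, with the operator norm $\mathcal{L}(H^s,H^{s+\mu-1})$ bounded by a tame product of $|X(\tau,\cdot)|_{C^{r'}}$ and $|b(\tau,\cdot)|_{C^{\mu}}$. Using Proposition \ref{2esti:C^scomp} and Lemma \ref{L:rho}, these are in turn controlled by $|a|_{C^\sigma}|\theta|_{C^r}$ after time integration, with the choice $\mu=\min(\sigma,r'+1)$ arising naturally from the composition H\"older estimate for $b(\tau,\cdot)$ and the averaged $C^{r'+1}$ bound on $\Theta$ from (\ref{N1}). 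Combining with the $H^s\to H^s$ bound on $\Xi(\tau)$ from Proposition \ref{PCNorm} and feeding this into Proposition \ref{ParaTranspEq} gives the desired smoothing estimate on $R(1)=\R_\Con(a,\chi)$.

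For the Lipschitz statement, I would apply the same scheme to $R(\tau;\chi_1)-R(\tau;\chi_2)$. It satisfies
\[
(\partial_\tau+T_{X_1}\cdot\nabla_x)\bigl(R(\tau;\chi_1)-R(\tau;\chi_2)\bigr)
=-T_{\Delta_{1,2}X}\cdot\nabla_x R(\tau;\chi_2)-E_1(\tau)\Xi_1+E_2(\tau)\Xi_2,
\]
with zero initial data. The differences $\Delta_{1,2}X$, $\Delta_{1,2}b$, and $\Delta_{1,2}\Xi$ are controlled at the price of one derivative by Lemma \ref{L:rho}, Lemma \ref{fTheta}, and Proposition \ref{PCNormCont} respectively; applying the tame estimates of Theorems \ref{2theo:sc}-\ref{2theo:sc0} to each piece of $E_1-E_2$ and then invoking Corollary \ref{ParaTranspEqCor1} yields the Lipschitz bound with gain $\mu_1-1=\min(\sigma-1,r'+1)-1$, the loss of one extra derivative coming from the derivative of $a$ appearing in $\Delta_{1,2}b$.

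The main obstacle I anticipate is the bookkeeping of regularity indices in the symbolic calculus step: one needs to verify that the cancellation of principal symbols in $E(\tau)$ together with the actual H\"older regularity available for $b(\tau,\cdot)$ (limited by $\min(\sigma,r)$) and for $X(\tau,\cdot)$ (only $C^{r'}$ in the $L^1_\tau$-average sense) yields precisely the advertised gain of $\mu-1$ derivatives, uniformly in $\tau\in[0,1]$, so that time integration produces an $L^1_\tau$ bound with the correct tame structure. The rest is a direct application of the paratransport a priori estimates already established in Section \ref{sc:3}.
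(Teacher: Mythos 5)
Your proposal is correct and follows essentially the same route as the paper: your $b(\tau,\cdot)$ is the paper's $a^{\sharp;\theta}$, your source $E(\tau)\Xi(\tau)$ coincides exactly (after expanding the commutator and using $\partial_\tau b+X\cdot\nabla_x b=0$) with the paper's right-hand side $\R_\CM(X,\nabla_x a^{\sharp;\theta})w+[T_X,T_{a^{\sharp;\theta}}]\cdot\nabla_x w$ in \eqref{TawEq}, and the conclusion is drawn from the same energy estimate (Proposition \ref{ParaTranspEq}, and Corollary \ref{ParaTranspEqCor1} for the Lipschitz part). The only cosmetic difference is that you derive the transport equation for the operator-valued remainder $R(\tau)=\Xi(\tau)T_a-T_{b(\tau)}\Xi(\tau)$ directly, whereas the paper compares the two scalar evolutions $T_{a^{\sharp;\theta}}w$ and $\chi^\star T_a f$ for each fixed $f$.
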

\begin{remark}
If $\sigma\geq r+1$, the regularity gain due to $\R_\Con(a,\chi)$ is $r'$, only slightly less than the regularity of $\chi$. This results from the almost full derivative gained through $\Theta(\tau,x)$, as indicated by Proposition \ref{L:rho}: the deformation vector field $X$ is only slightly less smooth than $\chi$ itself. In contrast, the simplest homotopy $\tau\theta(x)$ yields less gain in regularity.
\end{remark}
\begin{proof}
We set $\Theta(\tau,x)$ and the deformation vector field $X(\tau,x)$ are as in Definition \ref{Deformation}:
$$
\Theta(\tau,x)=\tau e^{-(1-\tau)\langle D_x\rangle}\theta(x),
\quad
X(\tau,x)=-\big(I_n+\partial_x\Theta(\tau,x)\big)^{-1}\partial_\tau\Theta(\tau,x).
$$
Furthermore, given $s\in\mathbb{R}$ and $f\in H^s$, we still denote by $w\in C^0([0,1];H^s)\cap C^1([0,1];H^{s-1})$ the unique solution to
$$
\partial_\tau w+T_{X}\cdot\nabla_x w=0,\quad w\Big|_{\tau=0}=f,
$$
so that $w(1,\cdot)=\chi^\star f$, as in Definition \ref{DefParacomposition}.

As in Lemma \ref{L:X}, we set $a^{\sharp;\theta}(\tau,x)=a\big(x+\Theta(\tau,x)\big)$. Then a direct computation yields 
\begin{equation}\label{TawEq}
(\partial_\tau+T_X\cdot\nabla_x) T_{a^{\sharp;\theta}}w
=\R_{\CM}(X,\nabla_x a^{\sharp;\theta})w+[T_X,T_{a^{\sharp;\theta}}]\cdot\nabla_xw.
\end{equation}
Here $\R_\CM$ is defined as in Theorem \ref {2theo:sc}. Since $r'<r$, we know that $\Theta(\tau,x)\in L^1_\tau C^{r'+1}_x$ by Proposition \ref{L:rho}. Taking into account the definition $\mu=\min(\sigma,r'+1)$, and using the product and composition estimates in H\"{o}lder spaces, namely Proposition \ref{2esti:C^scomp}, we obtain
$$
|a^{\sharp;\theta}|_{C^0_\tau C^{1}_x}
\lesssim_{r,\sigma,\mu}|a|_{C^\sigma},
\quad
|a^{\sharp;\theta}|_{L^1_\tau C^\mu_x}
\lesssim_{r,\sigma,\mu}|a|_{C^\sigma}|\theta|_{C^r_x}.
$$
We then notice that $\R_\CM$ satisfies \emph{tame} estimates as indicated in Theorem \ref {2theo:sc}. For example, using the $L^1_\tau C^{r'}_x$ estimates for $X$ as in Proposition \ref{L:rho}, we have
\begin{equation}\label{TameXa}
\begin{aligned}
\int_0^1\|\R_\CM & (X,\nabla_x a^{\sharp;\theta})w\|_{H^{s+\mu-1}_x}\dtau \\
&\overset{(\ref{esti:quant2})}{\lesssim_{s,r,r',\sigma}}
\int_0^1\left(|X|_{C^{\mu-1}_x}|\nabla_xa^{\sharp;\theta}|_{C^0}+|X|_{C^0_x}|\nabla_xa^{\sharp;\theta}|_{C^{\mu-1}_x}\right)\|w\|_{H^s_x}\dtau\\
&\lesssim_{s,r,r',\sigma}\left(|a^{\sharp;\theta}|_{C^0_\tau C^{1}_x}|X|_{L^1_\tau C^{\mu-1}_x}+|X|_{C^0_\tau C^0_x}|a^{\sharp;\theta}|_{L^1_\tau C^\mu_x}\right)\|w\|_{C^0_\tau H^s_x}\\
&\lesssim_{s,r,r',\sigma} |a|_{C^\sigma}|\theta|_{C^r_x}\|f\|_{H^s}.
\end{aligned}
\end{equation}
A similar estimate holds for $[T_X,T_{a^{\sharp;\theta}}]\cdot\nabla_x w$.

The right-hand-side of (\ref{TawEq}) thus enjoys the estimate
$$
\int_0^1\big\|\R_{\CM}(X,\nabla_x a^{\sharp;\theta})w+[T_X,T_{a^{\sharp;\theta}}]\cdot\nabla_xw\big\|_{H^{s+\mu-1}_x}\dtau
\lesssim_{s,r,r',\sigma}
|a|_{C^\sigma}|\theta|_{C^r}\|f\|_{H^s}.
$$
Noting that $T_{a^{\sharp;\theta}}w\big|_{\tau=0}=T_af$, we conclude from Proposition \ref{ParaTranspEq} that 
$$
\big\|T_{a^{\sharp;\theta}}w\big|_{\tau=1}-\chi^\star T_af\big\|_{H^{s+\mu-1}}
\lesssim_{s,r,r',\sigma}
|a|_{C^\sigma}|\theta|_{C^r}\|f\|_{H^s}.
$$
Noticing that $T_{a^{\sharp;\theta}}w\big|_{\tau=1}=T_{a\circ\chi}\chi^\star f$, 
we obtain the wanted result.

In order to prove Lipschitz dependence, it suffices to determine the Lipshictz dependence of the right-hand-side of (\ref{TawEq}). We make use of Lemma \ref{fTheta} again. Since $\mu_1=\min(\sigma-1,r'+1)$, we obtain
$$
\begin{aligned}
\big\|a^{\sharp;\theta_1}-a^{\sharp;\theta_2}\big\|_{C^{\mu_1}_x}
&\lesssim_{r,r',\sigma}|a|_{C^\sigma}|\Theta_1(\tau,x)-\Theta_2(\tau,x)|_{C^{r'+1}_x}.
\end{aligned}
$$
Repeating the proof for (\ref{TameXa}), we obtain the Lipschitz dependence of the right-hand-side of (\ref{TawEq}): for example,
$$
\begin{aligned}
\int_0^1&\big\|\R_\CM  (X_1,\nabla_x a^{\sharp;\theta_1})-\R_\CM (X_2,\nabla_x a^{\sharp;\theta_2});\mathcal{L}(H_x^s,H_x^{s+\mu_1-1})\big\|\dtau\\
&\overset{(\ref{esti:quant2})}{\lesssim_{s,r,r',\sigma}}
\int_0^1\left(|X_1-X_2|_{C^{\mu_1-1}_x}|\nabla_xa^{\sharp;\theta_1}|_{C^0}+|X_2|_{C^0_x}|\nabla_xa^{\sharp;\theta_1}-\nabla_xa^{\sharp;\theta_2}|_{C^{\mu_1-1}_x}\right)\dtau\\
&\lesssim_{s,r,r',\sigma}|a^{\sharp;\theta_1}|_{C^0_\tau C^{1}_x}|X_1-X_2|_{L^1_\tau C^{\mu_1-1}_x}+|X|_{C^0_\tau C^0_x}|a^{\sharp;\theta_1}-a^{\sharp;\theta_2}|_{L^1_\tau C^{\mu_1}_x}\\
&\lesssim_{s,r,r',\sigma} |a|_{C^\sigma}|\theta_1-\theta_2|_{C^r_x}.
\end{aligned}
$$
Therefore, the right-hand-side of (\ref{TawEq}), considered as a function taking value in $L^1_\tau H^{s+\mu_1-1}_x$, has Lipschitz dependence on the diffeomorphism $\chi\in C^r$.
\end{proof}

\subsection{General Conjugation Formula} 
We are finally ready to develop the conjugation property of paracomposition operators on the symbol class $\Sigma^m_r$ introduced in Definition~\ref{D:3.5}. Recall that $\Sigma^m_r$ consists of symbols $p$ of the form
$$
p(x,\xi)=p_m(x,\xi)+\cdots +p_{m-[r]}(x,\xi),\quad m\in \xR,~r\in [0,+\infty),
$$
where $[r]$ is integer part of $r$, and $p_{m-k}(x,\xi)$ is homogeneous of degree $m-k$ in $\xi$ with regularity $C^{r-k}$ in $x$. Here ``homogeneous" is understood in the sense of \ref{D:3.5}.

Before presenting the conjugation formula, we first cite the change-of-variable formula for standard pseudo-differential operators of type (1,0) for comparison; see for example, Theorem 2.1.2 of \cite{Hormander1971}, or Proposition 7.1 of Chapter I of \cite{AG}:

\begin{theorem}\label{Change(1,0)}
Suppose $p(x,\xi)\in\mathscr{S}^m_{1,0}$ is a polyhomogeneous symbol smooth in $(x,\xi)\in\xT^n\times\xR^n\setminus\{0\}$; that is, $p(x,\xi)$ admits the asymptotic expansion
$$
p(x,\xi)\sim p_m(x,\xi)+p_{m-1}(x,\xi)+\cdots,
$$
where $p_{m-k}$ is homogeneous in $\xi$ of degree $m-k$ and smooth in $(x,\xi)\in\xT^n\times\xR^n\setminus\{0\}$. 

Let $\chi:\xT^n\to\xT^n$ be a smooth diffeomorphism, such that $\chi(x)=x+\theta(x)$ with $\sup_x|\partial_x\theta(x)|<1$. Then there is a symbol $q(x,\xi)\in\mathscr{S}^m_{1,0}$ such that 
$$
(\Op(p)f)\circ\chi=\Op(q)(f\circ\chi),
\quad f\in \bigcup_{s\in\xR}H^s.
$$
Moreover $q(x,\xi)$ admits the following asymptotic expansion: 
\begin{equation}\label{Smoothq(x,xi)}
q(x,\xi)
\sim\sum_{|\alpha|=0}^{\infty}
\frac{1}{i^{|\alpha|}|\alpha|!}\partial_y^\alpha\partial_\xi^\alpha
\left(p\big(\chi(x),J(x,y)^{-1}\xi\big)
\left|\frac{\det\chi'(y)}{\det J(x,y)}\right|\right)
\Bigg|_{y=x},
\end{equation}
where 
$$
J(x,y)=\int_0^1\chi'\big(tx+(1-t)y\big)^{\T}dt.
$$
\end{theorem}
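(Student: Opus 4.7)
The strategy is classical and amounts to writing $(\Op(p)f)\circ\chi$ as an oscillatory integral operator, performing two changes of variable (one spatial to produce $f\circ\chi$, one in frequency to restore the canonical phase $(x-y)\cdot\eta$), and then applying the standard Kohn--Nirenberg amplitude-to-symbol reduction. Since the assertion is only modulo smoothing operators, I would first localize with a partition of unity and work on $\xR^n$, periodizing the result at the end via Poisson summation; this reduces the torus version to the Euclidean one.

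Starting from
$$
(\Op(p)f)(\chi(x)) = (2\pi)^{-n}\iint e^{i(\chi(x)-z)\cdot\xi}\, p(\chi(x),\xi)\, f(z)\,\diff z\,\diff\xi,
$$
the substitution $z=\chi(y)$ introduces a Jacobian $|\det\chi'(y)|$ and turns the phase into $(\chi(x)-\chi(y))\cdot\xi$. Hadamard's lemma yields $\chi(x)-\chi(y)=J(x,y)(x-y)$, so the phase rewrites as $(x-y)\cdot J(x,y)^{\T}\xi$. The linear change of frequency variable $\eta = J(x,y)^{\T}\xi$, whose Jacobian is $|\det J(x,y)|$, restores the canonical phase $(x-y)\cdot\eta$ and produces an amplitude
$$
A(x,y,\eta) \;=\; p\bigl(\chi(x),\,J(x,y)^{-\T}\eta\bigr)\,\frac{|\det\chi'(y)|}{|\det J(x,y)|},
$$
so that $(\Op(p)f)\circ\chi$ is realized as an oscillatory integral operator with amplitude $A$ and canonical phase, applied to $f\circ\chi$. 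The standard Kohn--Nirenberg amplitude-to-symbol reduction then represents this operator as $\Op(q)$ with
$$
q(x,\eta) \;\sim\; \sum_{\alpha\in\xN^n}\frac{1}{i^{|\alpha|}\alpha!}\,\partial_y^\alpha\partial_\eta^\alpha A(x,y,\eta)\Bigr|_{y=x},
$$
which is exactly the expansion (\ref{Smoothq(x,xi)}); since $J(x,x)=\chi'(x)$, the principal symbol is $p(\chi(x),\chi'(x)^{-\T}\eta)$, as expected from the classical cotangent transformation law.

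The hard part is making each of these formal manipulations rigorous. The identity $\chi(x)-\chi(y)=J(x,y)(x-y)$ is only meaningful when $x,y$ are close on the torus, so one inserts a smooth cutoff $\phi(x-y)$ near the diagonal and treats the complement by repeated non-stationary integration by parts in $\xi$, producing an operator with smooth Schwartz kernel. The discrete nature of the frequency variable on $\xT^n$ requires the linear frequency change $\eta=J(x,y)^{\T}\xi$ to be executed only after lifting to $\xR^n$ and applying Poisson summation, the Poisson correction terms yielding further smoothing remainders. Finally, the Kohn--Nirenberg expansion is an asymptotic series that must be truncated at a finite order $N$ with remainder in $\mathscr{S}^{m-N}_{1,0}$; each of these steps is executed in detail in \cite{Hormander1971,AG}.
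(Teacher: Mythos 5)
Your proof is correct and is precisely the classical oscillatory-integral argument that the paper cites for this result (Theorem 2.1.2 of H\"{o}rmander~\cite{Hormander1971}, Proposition 7.1 of Chapter I of~\cite{AG}): Hadamard factorization of the phase, linear frequency change $\eta=J(x,y)^{\T}\xi$, and Kohn--Nirenberg amplitude-to-symbol reduction, with the diagonal cutoff and Poisson summation handling the torus and the off-diagonal smoothing error. The paper does not reprove this theorem; it treats it as a known classical result. However, in Step~4 of the proof of Theorem~\ref{4PCConj} the paper sketches an alternative \emph{flow-based} route to the same expansion: the diffeomorphism $\chi$ is realized as the time-$1$ flow of a deformation vector field, the symbol $\wp(\tau;x,\xi)$ is propagated along a cascade of transport equations (one per homogeneity degree), and by uniqueness of polyhomogeneous symbols modulo $\mathscr{S}^{-\infty}$ the time-$1$ value $\wp|_{\tau=1}$ must coincide with~\eqref{Smoothq(x,xi)}. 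The flow-based route is what generalizes to the paradifferential case, where the oscillatory-integral manipulation is unavailable because the amplitude has only limited spatial regularity; your proof is the right one for the classical theorem but does not carry over to Theorem~\ref{4PCConj}, which is the paper's real objective here. One small bookkeeping note: your expansion uses the factor $1/(i^{|\alpha|}\alpha!)$, the standard Kohn--Nirenberg normalization, whereas the paper's display~\eqref{Smoothq(x,xi)} writes $|\alpha|!$ in the denominator; the latter appears to be a typographical slip (the composition and adjoint formulas of Theorems~\ref{2theo:sc}--\ref{2theo:sc2} consistently use $\alpha!$).
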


The \emph{para}change-of-variable formula for \emph{para}differential operators is stated below. It is clear from the statement that this formula resembles Theorem \ref{Change(1,0)} \emph{algebraically}. Naturally, 
it suffices to state the theorem for $p\in\dot{\Gamma}^m_\sigma$, a single homogeneous symbol.

\begin{theorem}[Conjugation with paracomposition]\label{4PCConj}
Let $m\in\xR$, let $r>1$ and $\sigma>1$ be non-integer. Assume that $\theta\in C^r$ is as in Definition \ref{DefParacomposition}, and set again $\chi(x)=x+\theta(x)$. Let $p(x,\xi)\in\dot{\Gamma}^m_\sigma$ be a symbol of $C^\sigma$ regularity in $x$, smooth and homogeneous of degree $m$ for $\xi\in\mathbb{R}^n\setminus\{0\}$. 
Fix any $r'<r$, and define $\mu=\min(\sigma,r'-1)$.
 
Then there is a symbol $q\in \Sigma^m_{\min(\sigma,r-1)}$, whose principal part is given by 
$$
q_m(x,\xi)=p\left( \chi(x),
\chi'(x)^{-\T}\xi\right),
$$
where recall that $-\T$ stands for taking the inverse of transpose, 
such that
$$
\chi^\star T_p=T_{q}\chi^\star+\R_{\Con}(p,\chi).
$$
The symbol $q(x,\xi)$ is explicitly computed as
\begin{equation}\label{q(x,xi)}
q(x,\xi)
=\sum_{|\alpha|=0}^{[\mu]}
\frac{1}{i^{|\alpha|}|\alpha|!}\partial_y^\alpha\partial_\xi^\alpha
\left(p\big(\chi(x),J(x,y)^{-1}\xi\big)
\left|\frac{\det\chi'(y)}{\det J(x,y)}\right|\right)
\Bigg|_{y=x},
\end{equation}
with 
$$
J(x,y)=\int_0^1\chi'\big(tx+(1-t)y\big)^{\T}\dt.
$$ 
The remainder is regularizing in $H^s$ for any $s\in\mathbb{R}$:
$$
\left\|\R_{\Con}(p,\chi);\mathcal{L}(H^s,H^{s+(\mu-m-1)})\right\|
\lesssim_{s,r,r',\sigma}
|\theta|_{C^r}\mathcal{M}_\sigma^m(p).
$$

Moreover, if $\chi_i=\Id+\theta_i$, $i=1,2$ satisfy the requirement of Definition \ref{DefParacomposition}, we have the following Lipschitz dependence of $\R_\Con(p,\chi)$: with $r'<r$, $\mu_1=\min(\sigma-1,r'-1)$, there holds
$$
\big\|\R_\Con(p,\chi_1)-\R_\Con(p,\chi_2);\mathcal{L}(H^s,H^{s+(\mu_1-m-1)})\big\|
\lesssim_{s,r,r',\sigma}\mathcal{M}^m_\sigma(p)|\theta_1-\theta_2|_{C^r}.
$$
\end{theorem}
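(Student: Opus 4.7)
Given $f \in H^s$, let $w$ solve $\partial_\tau w + T_X \cdot \nabla_x w = 0$ with $w|_{\tau=0} = f$, so that $\chi^\star f = w(1, \cdot)$ by Definition \ref{DefParacomposition}. The plan, inspired by the proof of Theorem \ref{ParacompoPrd}, is to construct a time-dependent polyhomogeneous symbol $Q(\tau, x, \xi) = \sum_{k=0}^{[\mu]} Q^{(m-k)}(\tau, x, \xi)$ with $Q(0, \cdot, \cdot) = p$, such that the paratransport equation for $T_{Q(\tau)} w$ has a smoothing right-hand side. Setting $q(x, \xi) \defn Q(1, x, \xi)$ and applying Proposition \ref{ParaTranspEq} to the difference between $T_{Q(\tau)} w$ and the homogeneous paratransport flow of $T_p f$ will yield the conjugation identity together with the quantitative bound on $\R_\Con(p, \chi)$.

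The principal symbol is taken as the pullback by the Hamiltonian flow of $H(\tau; x, \xi) = \xi \cdot X(\tau, x)$,
$$
Q^{(m)}(\tau, x, \xi) = p\Big( x + \Theta(\tau, x),\; \big(I_n + \partial_x \Theta(\tau, x)\big)^{-\T} \xi \Big),
$$
which by Lemma \ref{L:X}(2) and Remark \ref{HamiltonxiX} solves $\partial_\tau Q^{(m)} + \{H, Q^{(m)}\} = 0$. A direct computation gives
$$
(\partial_\tau + T_X \cdot \nabla_x)\, T_{Q(\tau)} w = T_{\partial_\tau Q}\, w + [T_X \cdot \nabla_x,\, T_Q]\, w.
$$
Expanding the commutator with the polyhomogeneous form of Theorem \ref{2theo:sc}, the $|\alpha| = 1$ part of the $\#_r$-product reproduces the Poisson bracket $\{H, Q\}$ and cancels $\partial_\tau Q^{(m)}$ to principal order. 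The leading residual, of order at most $m - 1$, is then killed by choosing the next corrector $Q^{(m-1)}$ to satisfy an inhomogeneous transport equation along the same Hamiltonian flow; iterating $[\mu]$ times produces correctors $Q^{(m-k)}$ of the appropriate homogeneity and regularity class, and leaves a right-hand side bounded by $|\theta|_{C^r}\,\mathcal{M}^m_\sigma(p)\,\|f\|_{H^s}$ in $L^1_\tau H^{s + \mu - m - 1}_x$, exactly in the spirit of the tame estimate (\ref{TameXa}).

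By construction the principal symbol is $q_m(x, \xi) = p(\chi(x), \chi'(x)^{-\T}\xi)$, matching the claim. To identify the full expansion of $q = Q(1, \cdot, \cdot)$ with the explicit formula (\ref{q(x,xi)}), the key observation is that the above iteration is \emph{algebraically} identical, term by term, to the one that produces the classical H\"ormander expansion (\ref{Smoothq(x,xi)}) in the smooth category: both arise from Taylor expansion of $p$ around the diagonal $y = x$ combined with integration by parts in the oscillatory-integral representation of a $(1,0)$-type change of variable. Since the paradifferential $\#_r$-product is precisely a truncation of the same series (Remark \ref{Discr}), the formula (\ref{q(x,xi)}) is recovered modulo a smoothing error already absorbed into $\R_\Con$. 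The Lipschitz dependence of $\R_\Con$ on $\chi$ is then obtained by rerunning the whole construction for $\chi_i = \Id + \theta_i$, $i = 1, 2$, and combining: Lemma \ref{L:rho} for the Lipschitz control of $X_i$ and $\Theta_i$ in terms of $\theta_i$, Lemma \ref{fTheta} applied to $\partial_\xi^\alpha p$ evaluated at the two flows, the tame bounds (\ref{esti:quant1})--(\ref{esti:quant2}) on symbolic calculus remainders, and Corollary \ref{ParaTranspEqCor1} to propagate the resulting difference through the paratransport equation; the one-derivative loss expressed by $\mu_1 = \min(\sigma - 1, r' - 1)$ traces back to Lemma \ref{fTheta}, in exact parallel with the Lipschitz part of Theorem \ref{ParacompoPrd}.

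The principal technical difficulty is the bookkeeping in the iterative construction of the correctors $Q^{(m-k)}$: at each step one must simultaneously track the homogeneity order in $\xi$, the surviving H\"older regularity in $x$ (which decreases from $\min(\sigma, r-1)$ by one at each step), and the $L^1_\tau$ versus $C^0_\tau$ behavior of the symbols, so that the tame symbolic calculus estimates close. A secondary but genuine subtlety is the algebraic identification with the compact Jacobian-determinant form (\ref{q(x,xi)}); this reduces to uniqueness of the polyhomogeneous expansion modulo symbols of order $\le m - \mu - 1$, which is the exact paradifferential analogue of the uniqueness implicit in Theorem \ref{Change(1,0)}.
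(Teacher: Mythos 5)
Your proposal follows essentially the same route as the paper's proof: define $\wp(\tau,x,\xi)$ as a polyhomogeneous deformation symbol whose principal part is the Hamiltonian pullback of $p$, form the evolution equation for $T_\wp w$, solve the resulting cascade of inhomogeneous transport equations for the correctors $\wp_{m-k}$, and read off $\R_\Con$ as the endpoint value of the difference between $T_\wp w$ and the paratransport flow of $T_p f$. The quantitative analytic estimates (tame bounds on the $\#_r$-remainders, the $L^1_\tau$ gain coming from the smoothing homotopy $\Theta$, and the use of Corollary~\ref{ParaTranspEqCor1} for Lipschitz dependence) are all the same as in the paper.

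The only place your argument is weaker is the identification of $q = \wp|_{\tau=1}$ with the explicit Jacobian-determinant formula (\ref{q(x,xi)}). You describe the cascade as ``algebraically identical, term by term, to the one that produces the classical H\"ormander expansion'' via ``Taylor expansion of $p$ around the diagonal combined with integration by parts in the oscillatory-integral representation.'' That conflates two distinct derivations: the transport-equation cascade does \emph{not} arise from an oscillatory-integral Taylor expansion, and it is not evident a priori that the two produce the same coefficients. The paper's Step 4 closes this gap cleanly by running the \emph{same} cascade for a smooth symbol in the \emph{classical} $\Psi$DO setting, observing that the resulting symbol must satisfy the classical change-of-variable theorem (and hence equal the H\"ormander expansion modulo $\mathscr{S}^{-\infty}$ by uniqueness there), and then passing to rough symbols by approximation, since each $\wp_{m-k}$ solves a classical linear transport ODE. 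Your appeal to ``uniqueness of the polyhomogeneous expansion'' captures the right principle, but the uniqueness statement implicit in Theorem~\ref{Change(1,0)} lives in the smooth category and does not directly apply to your $C^{\mu-k}$ correctors; you would need the smooth-then-approximate step to make that rigorous.
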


The proof of this theorem will take up the remainder of this subsection. The underlying idea is similar to that of Theorem \ref{ParacompoPrd}, though the technical details are more intricate.

We continue to define the homotopy $\Theta(\tau,x)$ and the deformation vector field $X(\tau,x)$ as in (\ref{homotopy}) and (\ref{N10}):
$$
\Theta(\tau,x)=\tau e^{-(1-\tau)\langle D_x\rangle}\theta(x),
\quad
X(\tau,x)=-\big(I_n+\partial_x\Theta(\tau,x)\big)^{-1}\partial_\tau\Theta(\tau,x).
$$
Furthermore, given $s\in\mathbb{R}$ and $f\in H^s$, we still set $w\in C^0([0,1];H^s)\cap C^1([0,1];H^{s-1})$ as the unique solution to
$$
\partial_\tau w+T_{X}\cdot\nabla_x w=0,\quad w\Big|_{\tau=0}=f,
$$
so that $w(1,\cdot)=\chi^\star f$, as in Definition \ref{DefParacomposition}. We will determine a deformation $\wp(\tau;x,\xi)$ of $p(x,\xi)$, such that 
$$
\wp(\tau;x,\xi)=\wp_m(\tau;x,\xi)+\cdots+\wp_{m-[\mu]}(\tau;x,\xi)\in C^0_\tau\Sigma^m_{\mu},
$$
where each $\wp_{m-k}$ is homogeneous in $\xi$ of degree $m-k$, and $T_{\wp} w\big|_{\tau=1}-\chi^\star T_pf$ is an acceptable error. 
Then we will get the desired symbol $q$ by setting $q=\wp\arrowvert_{\tau=1}$. The symbols in formula (\ref{q(x,xi)}) are in fact of slightly higher regularity than $\mu$, but this will follow from the last step of the proof.

\noindent
\textbf{Step 1: Set up.} 

Let us form an evolution equation for $T_{\wp}w$. 
Remembering that $\partial_\tau w+T_{X}\cdot\nabla_x w=0$ by definition of $w$, and using $\Op^\PM(a)$ as another way of writing $T_a$, we compute that
\begin{equation}\label{Tqmw}
\begin{aligned}
(\partial_\tau&+T_X\cdot\nabla_x)(T_{\wp}w)\\
&=T_{\wp}(\partial_\tau +T_X\cdot\nabla_x)w
+T_{\partial_\tau \wp}w+T_X\cdot T_{\nabla_x \wp} w
+[T_X,T_{\wp}]\cdot\nabla_x w\\
&=\Op^\PM(\partial_\tau \wp +X\cdot\nabla_x \wp)w
+(T_X\cdot T_{\nabla_x \wp}-T_{X\cdot\nabla_x \wp}) w
+[T_X,T_{\wp}]\cdot\nabla_x w\\
&=\Op^\PM\left[\partial_\tau \wp +X\cdot\nabla_x \wp-(\nabla_\xi \wp)\cdot\nabla_x \big(\xi \cdot X\big)\right]w\\
&\quad
+(T_X\cdot T_{\nabla_x \wp}-T_{X\cdot\nabla_x \wp}) w
+\left([T_X,T_{\wp}]-\Op^\PM\frac{1}{i}\left\{
X,\wp\right\}\right)\cdot\nabla_x w,
\end{aligned}
\end{equation}
where the third equality follows from the straightforward computation
$$
\begin{aligned}
\left\{
X,\wp\right\}(\tau;x,\xi) \cdot \xi
=-(\nabla_\xi \wp)(\tau;x,\xi) \cdot 
\nabla_x \big(\xi \cdot X(\tau,x)\big).
\end{aligned}
$$
By the symbolic calculus of paradifferential operators (see Theorem~\ref{2theo:sc}), it follows that
the operator norm of $\R_1(p,\chi;\tau):=T_X\cdot T_{\nabla_x \wp}-T_{X\cdot\nabla_x \wp}$ in the right-hand-side of (\ref{Tqmw}) satisfies a \emph{tame} estimate
\be\label{NR0}
\begin{aligned}
\big\|\R_1(p,\chi;\tau);&\,\mathcal{L}(H^s_x, H^{s-m+(\mu-1)}_x)\big\|\\
&\overset{(\ref{esti:quant2})}{\lesssim_{s,r,r',\sigma}}
|X|_{L^\infty_x}
\sum_{k=0}^{[\mu]}\mathcal{M}_{\mu-(k+1)}^{m-k}(\wp_{m-k})
+|X|_{C^{\mu-1}_x}\sum_{k=0}^{[\mu]}\mathcal{M}_{1}^{m-k}(\wp_{m-k})
\end{aligned}
\ee
Thus, as long as $\wp$ has enough regularity with respect to $x$, this operator $\R_1(p,\chi;\tau)$ is regularizing in $x$.

Using again the symbolic calculus of paradifferential operators, we compute
\be\label{N31}
\left([T_X,T_{\wp}]-\Op^\PM\frac{1}{i}\left\{
X,\wp\right\} \right)\cdot\nabla_x
=T_{\gamma_{m-1}}+\cdots+T_{\gamma_{m-([\mu]-1)}}+\R_{2}(p,\chi;\tau),
\ee
where for $k\geq1$,
\begin{equation}\label{gamma_m-k}
\gamma_{m-k}=-\sum_{l=0}^{k-1}\sum_{\substack{\alpha\in\xN^n\\ |\alpha|=k-l+1}}\frac{1}{i^{\la \alpha\ra}\alpha!}\partial_x^\alpha (i\xi \cdot X)\partial_\xi^\alpha \wp_{m-l}
\in L^1_\tau C^{\mu-k}_x,
\end{equation}
and the remainder $\R_2(p,\chi;\tau)$ still satisfies a \emph{tame} estimate as indicated by Theorem \ref{2theo:sc}:
\begin{equation}\label{Rmu}
\begin{aligned}
\big\|\mathcal{R}_{2}(p,\chi;\tau);&\,{\mathcal{L}(H^s_x, H^{s-m+(\mu-1)}_x)}\big\|\\
&\overset{(\ref{esti:quant2})}{\lesssim_s} |X|_{C^{\mu-1}_x}
\sum_{k=0}^{[\mu]}\mathcal{M}_0^{m-k}(\wp_{m-k})
+|X|_{L^\infty_x}
\sum_{k=0}^{[\mu]}\mathcal{M}_{\mu-k}^{m-k}(\wp_{m-k})
\end{aligned}
\end{equation}

We can thus rearrange (\ref{Tqmw}) as
\begin{equation}\label{Tqmw'}
\begin{aligned}
(\partial_\tau&+T_X\cdot\nabla_x)(T_{\wp}w)\\
&=\Op^\PM\left[\partial_\tau \wp_m +X\cdot\nabla_x \wp_m-(\nabla_\xi \wp_m)\cdot\nabla_x \big(\xi \cdot X\big)\right]w\\
&\quad+\sum_{k=1}^{[\mu]}\Op^\PM\left[\partial_\tau \wp_{m-k}+X\cdot\nabla_x \wp_{m-k}-(\nabla_\xi \wp_{m-k})\cdot\nabla_x \big(\xi \cdot X\big)+\gamma_{m-k}\right]w\\
&\quad+\R_1(p,\chi;\tau)w+\R_{2}(p,\chi;\tau)w,
\end{aligned}
\end{equation}
with $\R_1(p,\chi;\tau),\R_{2}(p,\chi;\tau)$ satisfying (\ref{NR0}) and (\ref{Rmu}). This form enables us to solve the symbols $\wp_{m-k}$ inductively.

\noindent
\textbf{Step 2: Cascade of transport equations.} Set $\wp_m=p^{\sharp;\theta}$ as in Lemma \ref{L:X}:
\begin{equation}\label{wp(m)}
\wp_m(\tau;x,\xi)=p\Big( x+\Theta(\tau,x),
\big(I_n+\partial_x\Theta(\tau,x)\big)^{-\T}\xi\Big).
\end{equation}
Obviously $\wp_m\in C^0_\tau C^{\min(\sigma,r-1)}_x$. Moreover, it follows from Proposition~\ref{L:rho} that for any $\tau\in[0,1)$, the symbol has additional regularity $\wp_m(\tau;\cdot,\cdot)\in \dot{\Gamma}^m_{\min(\sigma,r')}$, and satisfies the estimate
\be\label{Reg(m)}
\int_0^1 \mathcal{M}^m_{\min(\sigma,r')}(\wp_m)\dtau
\lesssim
\mathcal{M}^m_\sigma(p).
\ee
Recall that Lemma~\ref{L:X} gives
$$
\partial_\tau \wp_m +X\cdot\nabla_x \wp_m-(\nabla_\xi \wp_m)\cdot\nabla_x \big(\xi \cdot X\big)=0.
$$
Consequently, the first term in the right-hand-side of (\ref{Tqmw'}) is cancelled.

We then determine the sub-principal symbols $\wp_{m-k}$ by solving a cascade of transport equations inductively. Given $k\ge 1$, if $\wp_{m-(k-1)}$ is already defined, then define $\wp_{m-k}$ to be the solution of the classical linear transport equation
\begin{equation}\label{wp(m-k)}
\left\{
\begin{aligned}
&\partial_\tau \wp_{m-k}
+X\cdot\nabla_x \wp_{m-k}
-\nabla_\xi \wp_{m-k}\cdot 
 \nabla_x(\xi \cdot X)+\gamma_{m-k}=0,\\
&\wp_{m-k}\Big|_{\tau=0}=0.
\end{aligned}
\right.
\end{equation}
This cascade of equations is truly solvable inductively, since by (\ref{gamma_m-k}), the definition of $\gamma_{m-k}$ only involves $\wp_m,\ldots,\wp_{m-(k-1)}$. In particular, 
$$
\gamma_{m-1}=-\frac{1}{2! i^2}
\sum_{\alpha:|\alpha|=2}\partial_x^\alpha (i\xi \cdot X) \partial_\xi^\alpha \wp_m.
$$
This choice of $\wp_{m-k}$'s exactly cancels all the homogeneous symbols in the sum on the right-hand-side of (\ref{Tqmw'}). What remains is to investigate the regularity of the $\wp_{m-k}$'s inductively. 

We then aim to show that 
\be\label{Reg(m-k)}
\sup_{\tau\in [0,1]}
\mathcal{M}^{m-k}_{\mu-k}(\wp_{m-k})
\lesssim_{\mu}
|\theta|_{C^r}\mathcal{M}^m_\sigma(p)
\ee
for $0\leq k\leq [\mu]$. This then justifies the choice of all the symbols $\wp_{m-k}$.

Suppose (\ref{Reg(m-k)}) is true for $k\leq k'-1$, where $k'\geq1$ is given. We aim to prove (\ref{Reg(m-k)}) for $k=k'$. By (\ref{gamma_m-k}), the expression of $\gamma_{m-k'}$  involves $k'+1$'th order $x$-derivative for $X$ and symbols $\wp_m,\cdots,\wp_{m-(k'-1)}$, so by combining the classical estimates for products and and compositions in H\"older spaces (see Proposition \ref{2esti:C^scomp}) with Lemma \ref{L:rho} and the fact that $\mu=\min(\sigma,r'-1)$, we obtain 
\begin{equation}\label{Int(gamma(m-k))}
\begin{aligned}
\int_0^1\mathcal{M}^{m-k'}_{\mu-k'}(\gamma_{m-k'})\dtau
&\lesssim \sum_{l=0}^{k'-1}\sup_{\tau\in[0,1]}\mathcal{M}^{m-l}_{\mu-l}(\wp_{m-l})
\int_0^1|X|_{C^{r'}_x}\dtau \\
&\lesssim |\theta|_{C^r}\mathcal{M}^m_\sigma(p).
\end{aligned}
\end{equation}
From the proof of Proposition~\ref{3RegTrans} , along with Remark \ref{HamiltonxiX}, it is possible to explicitly write down the expression for $\wp_{m-k'}$: with the flow maps
$$
\Phi(0,\tau;x,\xi)=\left(\begin{matrix}
x+\Theta(\tau,x) \\
\big(I_n+\partial_x\Theta(\tau,x)\big)^{-\T}\xi
\end{matrix}\right),
\quad
\Phi(\tau,t;x,\xi):=\Phi^\iota\big(0,t;\Phi(0,\tau;x,\xi)\big),
$$
there holds
\begin{equation}\label{wp(m-k)explicit}
\wp_{m-k'}(\tau;x,\xi)
=\int_0^\tau\gamma_{m-k'}\big(\tau,\Phi(\tau',\tau;x,\xi)\big)\dtau'.
\end{equation}
Noting that the flow map acts as linear transformation for $\xi$, we conclude inductively that the symbol $\wp_{m-k'}$ is still homogeneous of degree $m-k'$ and smooth with respect to $\xi\neq0$. Furthermore, Lemma \ref{L:rho} shows that the flow map $\Phi$ is of $C^{r'}$ regularity in $x$ for $r'<r$, so (\ref{Int(gamma(m-k))}) (together with H\"{o}lder composition estimate) implies the desired result (\ref{Reg(m-k)}).

Therefore, (\ref{Tqmw'}) is converted to
$$
(\partial_\tau +T_X\cdot\nabla_x)(T_{\wp}w)
=\R_1(p,\chi;\tau)w+\R_2(p,\chi;\tau)w,
$$
where, by (\ref{NR0}) and (\ref{Rmu}) together with (\ref{Reg(m-k)}), $\R_1(p,\chi;\tau)+\R_{2}(p,\chi;\tau)$ is of order $m-(\mu-1)$ uniformly in $\tau$:
$$
\sup_{\tau\in[0,1]}\big\|\R_1(p,\chi;\tau)+\R_{2}(p,\chi;\tau);\,{\mathcal{L}(H^s_x, H^{s-m+(\mu-1)}_x)}\big\|
\les_{s,\mu} |\theta|_{C^r}\mathcal{M}^m_r(p).
$$

\noindent
\textbf{Step 3: Smoothing Remainder.} We can finally define the symbol $q$ as
$$
q(x,\xi)
:=\wp(\tau;x,\xi)\Big|_{\tau=1}
=\sum_{k=0}^{[\mu]}\wp_{m-k}(\tau;x,\xi)\Big|_{\tau=1}.
$$
To see that it is the symbol in Theorem \ref{4PCConj}, set $v\in C^0([0,1];H^{s-m} )\cap C^1([0,1];H^{s-m-1} )$ as the unique solution to
\be\label{n10bis}
\partial_\tau v+T_X\cdot\nabla_x v=0,\quad v\Big|_{\tau=0}=T_pf,
\ee
so that $v(1,\cdot)=\chi^\star T_pf$. Then the difference $T_\wp w-v$ satisfies
\begin{equation}\label{Diff}
\partial_\tau (T_\wp w-v)+T_X\cdot\nabla_x (T_\wp w-v)
=\R_1(p,\chi;\tau)w+\R_{2}(p,\chi;\tau)w,
\quad T_\wp w-v\Big|_{\tau=0}=0.
\end{equation}
By Proposition \ref{ParaTranspEq}, recalling that $w$ solves $(\partial_\tau-T_X\cdot\nabla_x)w=0$ with initial value $f$, we estimate
$$
\begin{aligned}
\sup_{\tau\in[0,1]}\|T_\wp w-v;H^{s-m+(\mu-1)}_x\|
&\lesssim_{s,\mu} |\theta|_{C^r}\mathcal{M}^m_\sigma(p)\|w\|_{C^0_\tau H^{s}_x}\\
&\lesssim_{s,\mu}
|\theta|_{C^r}\mathcal{M}^m_\sigma(p)\|f\|_{H^s}
\end{aligned}
$$
But the value of $T_\wp w-v$ at $\tau=1$ is nothing but
$$
T_q\chi^\star f-\chi^\star T_pf=\R_\Con(p,\chi)f.
$$
This then justifies that $\R_\Con(p,\chi)=T_q\chi^\star-\chi^\star T_p$ gains back $\mu-m-1$ derivatives. 

To obtain Lipschitz continuity of $\R_\Con(p,\chi)$ with respect to $\chi$, we just consider the difference of (\ref{Diff}) for two different $\chi_1,\chi_2$. Remembering Corollary \ref{ParaTranspEqCor1}, we find that the difference loses one derivative in $x$. Noticing that $\R_1(p,\chi;\tau),\R_2(p,\chi;\tau)$ are both linear in the deformation vector field $X$, we obtain the desired estimate for $\R_\Con(p,\chi_1)-\R_\Con(p,\chi_2)$. The proof is very similar to the Lipschitz part of Theorem \ref{ParacompoPrd}.

\noindent
\textbf{Step 4: The algebra part.}

It remains to prove that the symbols $q_{m-k}$, as defined, indeed take the 
form indicated in formula (\ref{q(x,xi)}). Theoretically, this 
can be inferred from the explicit formulas (\ref{gamma_m-k}) 
and (\ref{wp(m-k)explicit}), but there is an algebraic 
justification that avoids the (obviously!) cumbersome computations. 
The general idea is straightforward: the symbolic calculus of paradifferential 
operators preserves all the algebraic structures of \emph{pseudo-differential operators of the usual type (1,0)}.

In fact, the classical proof of Theorem \ref{Change(1,0)} (see e.g. Theorem 2.1.2 of \cite{Hormander1971}) makes use of asymptotic expansion of oscillatory integrals to decide the lower order terms in the polyhomogeneous asymptotic expansion. On the other hand, we can recast the proof by considering the diffeomorphism as the time 1 map of a flow. Let us still define the homotopy $\Theta$ and deformation vector field $X$ as in Definition \ref{Deformation}:
$$
\Theta(\tau,x)=\tau e^{-(1-\tau)\langle D_x\rangle}\theta(x),
\quad
X(\tau,x)=-\big(I_n+\partial_x\Theta(\tau,x)\big)^{-1}\partial_\tau\Theta(\tau,x),
$$
Let us then set $p(x,\xi)\in\mathscr{S}^m_{1,0}$ to be a \emph{smooth} symbol and homogeneous of degree $m$ in $\xi$, and still consider the solution $u=(\Op(p)f)^{\sharp;\theta}$ of the \emph{classical} transport equation
$$
\partial_\tau u+X\cdot\nabla_xu=0,\quad u\Big|_{\tau=0}=\Op(p)f.
$$
Then we still have $u\Big|_{\tau=1}=(\Op(p)f)\circ\chi$. 

On the other hand, if we define $\wp_m$ as in (\ref{wp(m)}) and $\wp_{m-k}$ as in (\ref{wp(m-k)}) \emph{for any $k>0$}, we obtain the following ``classical" counterpart of (\ref{Tqmw'}):
$$
(\partial_\tau+X\cdot\nabla_x)\Op(\wp)f^{\sharp;\theta}
\simeq\Op\mathscr{S}^{-\infty}f^{\sharp;\theta},
\quad
\Op(\wp)f^{\sharp;\theta}\Big|_{\tau=0}=\Op(p)f.
$$
with $\wp=\sum_{k=0}^\infty\wp_{m-k}$. Subtracting the equation for $u$ and $\Op(\wp)f^{\sharp;\theta}$, evaluating at $\tau=1$, we obtain 
$$
(\Op(p)f)\circ\chi=\Op\big(\wp\big|_{\tau=1}\big)(f\circ\chi)
\mod\Op\mathscr{S}^{-\infty}f.
$$
Noticing that $\wp$ is necessarily polyhomogeneous of degree $m$ in $\xi$ by the transport equation (\ref{wp(m-k)}), we find $\wp\big|_{\tau=1}$ must take the form indicated in (\ref{Smoothq(x,xi)}). Therefore we have proved the solution of (\ref{wp(m-k)}) is of the desired form \emph{for smooth symbols}. It is then a simple matter of approximation to pass to rough symbols, as (\ref{wp(m-k)}) is merely a classical linear transport equation.

The proof of Theorem \ref{4PCConj} is complete.

\subsection{Addendum: Paradifferential Egorov Analysis}
It is sometimes necessary to develop a symbolic calculus of conjugation by flow maps beyond change of variables. For usual pseudo-differential operators, the classical \emph{Egorov theorem} asserts that the symbol of a pesudo-differential operator propagates along the bi-characteristic flow of the phase function. 

It turns out that this classical Egorov analysis has a paradifferential generalization. 

We fix some $r>1$, some symbol $a(x,\xi)\in\Gamma^{\delta}_r$, where $0\leq \delta<1$, and suppose that the principal symbol of $a$ is \emph{real}. That is, $a-\bar a\in\Gamma^{\delta-1}_r$. By Theorem \ref{2theo:sc2}, the paradifferential operator $T_a$ is almost self-adjoint: the adjoint $(T_a)^*$ has symbol $a^{\mathfrak{c};r}:=\bar{a}+\cdots$, so
$$
\big\|T_a-(T_a)^*;\mathcal{L}(H^s;H^{s+(1-\delta)})\big\|
\lesssim_s\mathcal{M}_r^\delta(a),
\quad \forall s\in\xR.
$$
Consequently, standard operator semi-group theory ensures that $iT_a$ generates a strongly continuous operator group $\Phi(\tau):=\exp(i\tau T_a)$ on any $H^s$. 

Now suppose $p\in\Gamma^m_r$. We immediately see that the linear operator $P(\tau):=\Phi(\tau)T_p\Phi(\tau)^{-1}$ solves the Heisenberg equation
\begin{equation}\label{Hei}
\partial_\tau P=i[T_a,P]=:i\cdot\mathbf{Ad}_{T_a}P,
\quad
P(0)=T_p.
\end{equation}
In order to study the operator $P(1)$, we can just formulate the Lie series expansion for (\ref{Hei}). In fact, reiterating the Duhamel formula for this differential equation, we find
\begin{equation}\label{LieExp}
\begin{aligned}
\Phi(1)T_p\Phi(1)^{-1}
&=T_p+\sum_{k=1}^N\frac{i^k}{k!}\mathbf{Ad}_{T_a}^kT_p\\
&\quad+\frac{i^{N+1}}{N!}\int_0^1(1-\tau)^{N+1}
\Phi(\tau)\cdot\mathbf{Ad}_{T_a}^{N+1}T_p\cdot\Phi(\tau)^{-1}\dtau.
\end{aligned}
\end{equation}
By the symbolic calculus formula, we find for example the principal symbol of the sum in (\ref{LieExp}) is $\{a,p\}$, the Poisson bracket, which is of order $m+\delta-1<m$. Every time the adjoint $\mathbf{Ad}_{T_a}$ is applied, the order will be decreased by $1-\delta$. Thus the Lie expansion just given provides a delicate description of the behaviour of $T_p$ under the conjugation of $\Phi(\tau)$. An important advantage is that the symbolic calculus for paradifferential operators provides quantitative, \emph{tame} control on the remainder of (\ref{LieExp}).

We notice that the symbol of the right-hand-side of (\ref{LieExp}) can be formally written as
$$
p+\{a,p\}+\frac{1}{2!}\{a,\{a,p\}\}+\cdots,
$$
which can be thought as ``propagation of $p(x,\xi)$ along the bi-characteristic flow of $a(x,\xi)$". To a certain extent, the symbolic calculus of paracomposition operators can be viewed as the ``$\delta=1$ limit" of the above Egorov analysis. In fact, as illustrated in Theorem \ref{4PCConj}, the principal symbol of a paradifferential operator $T_p$ conjugated by a paracomposition $\chi^\star$ is exactly the propagation of $p(x,\xi)$ along the ``time-dependent bi-characteristic flow" generated by the deformation vector field corresponding to $\chi$, as mentioned in Remark \ref{HamiltonxiX}.

\newpage
\part{Paradifferential Reducibility}\label{Part2}

In Section \ref{sec:5}-\ref{sec:6}, we discuss linear reducibility results for matrix differential operator and nearly parallel vector field on torus. The informal theorems \ref{MatRedInformal} and \ref{VectRedInformal} are stated as Theorem \ref{MatRed} and \ref{6VectRed}. Being crucial intermediate results needed for the paradifferential reducibility of (\ref{1NLPHyp}), they can also be considered as toolboxes per se. 

Following the same general approach, the proof for matrix operator is technically simpler, so we address it first in Section \ref{sec:5}. The reducibility for vector field will be discussed in Section \ref{sec:6}.

\section{Paradifferential Approach to Reducibility of Matrix Operator}\label{sec:5}
Throughout Section \ref{sec:5}-\ref{sec:6}, we denote by $\Dio[\gamma,\tau]\subset\xR^n$ the set of Diophantine frequency vectors with parameters $\gamma,\tau$; that is, 
\begin{equation}\label{diopc}
\Dio[\gamma,\tau]=\left\{\omega\in\xR^n:\,|\omega\cdot\xi|\geq\frac{\gamma}{|\xi|^\tau},
\quad \forall\xi\in\xZ^n\setminus\{0\}\right\}.
\end{equation}
As a preliminary remark, we shall see, given some parameters $\gamma,\tau$, one can define for \emph{any} $\omega\in\xR^n$, a Fourier multiplier $\bm{L}_{\gamma,\tau}(\omega)$ which coincides with $(\omega\cdot\partial)^{-1}$ when $\omega\in\Dio[\gamma,\tau]$.

Hereafter, given an expression $F$ depending on some variable $X$, we denote by $\Delta_{12}F$ the difference of the quantity under concern when evaluated at $X_1$ and $X_2$. For instance, $\Delta_{12}\omega=\omega_1-\omega_2$, and $\Delta_{12}\bm{L}_{\gamma,\tau}(\omega)=\bm{L}_{\gamma,\tau}(\omega_1)-\bm{L}_{\gamma,\tau}(\omega_2)$. 

\begin{lemma}\label{Dio_Ext}
Fix $\gamma,\tau>0$. There is a Fourier multiplier
$$
\bm{L}_{\gamma,\tau}(\omega)u:=\sum_{\xi\in\xZ^n}L_{\gamma,\tau}^\xi(\omega)\hat u(\xi)e^{i\xi\cdot x},
$$
depending on $\omega\in\xR^n$, with the following properties:
\begin{itemize}
    \item For every $\omega\in\xR^n$, there holds
    $$
    \|\bm{L}_{\gamma,\tau}(\omega)u\|_{H^s}\leq \gamma^{-1}\|u\|_{H^{s+\tau}},
    $$
    and $\bm{L}_{\gamma,\tau}(\omega)$ reverts parity of functions.
    \item The multiplier is Lipschitz in $\omega$ with loss of derivatives: 
    $$
    \big\|\big(\Delta_{12}\bm{L}_{\gamma,\tau}(\omega)\big)u\big\|_{H^s}\leq \gamma^{-2}|\Delta_{12}\omega|\cdot\|u\|_{H^{s+2\tau+1}}.
    $$
    \item When $\omega\in\Dio[\gamma,\tau]$, there holds
    $$
    \bm{L}_{\gamma,\tau}(\omega)=(\omega\cdot\partial)^{-1}.
    $$
\end{itemize}
\end{lemma}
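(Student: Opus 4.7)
The plan is to realize $\bm{L}_{\gamma,\tau}(\omega)$ as a Fourier multiplier whose symbol is a smooth truncation of $(i\omega\cdot\xi)^{-1}$ near the resonant set $\{|\omega\cdot\xi|<\gamma/|\xi|^\tau\}$. Fix a smooth odd function $\phi:\xR\to\xR$ satisfying $\phi(t)=1/t$ for $|t|\geq 1$ together with $\|\phi\|_{L^\infty}+\|\phi'\|_{L^\infty}<\infty$, produced by any standard smooth odd extension across $[-1,1]$. Then set
\begin{equation*}
L^\xi_{\gamma,\tau}(\omega) \defn \frac{|\xi|^\tau}{i\gamma}\,\phi\!\left(\frac{|\xi|^\tau\,\omega\cdot\xi}{\gamma}\right)\quad(\xi\ne 0),\qquad L^0_{\gamma,\tau}(\omega)\defn 0,
\end{equation*}
which is smooth in $\omega\in\xR^n$ and odd in $\xi$ (since $\phi$ is odd), so that $\bm{L}_{\gamma,\tau}(\omega)$ reverses parity. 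When $\omega\in\Dio[\gamma,\tau]$, every non-zero $\xi$ satisfies $|\xi|^\tau|\omega\cdot\xi|/\gamma\geq 1$, so the branch $\phi(t)=1/t$ applies and $L^\xi_{\gamma,\tau}(\omega)=(i\omega\cdot\xi)^{-1}$, which is the Fourier symbol of $(\omega\cdot\partial)^{-1}$; this gives the third bullet.

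For the operator bound, the pointwise estimate $|L^\xi_{\gamma,\tau}(\omega)|\leq \|\phi\|_{L^\infty}\gamma^{-1}|\xi|^\tau$ is immediate, and Plancherel's identity yields
\begin{equation*}
\lA \bm{L}_{\gamma,\tau}(\omega)u\rA_{H^s}^2 = \sum_{\xi\in\xZ^n}\la L^\xi_{\gamma,\tau}(\omega)\ra^2(1+|\xi|^2)^s|\hat u(\xi)|^2 \les \gamma^{-2}\lA u\rA_{H^{s+\tau}}^2.
\end{equation*}
For the Lipschitz estimate, setting $t_i:=|\xi|^\tau\,\omega_i\cdot\xi/\gamma$, the mean value theorem applied to $\phi$ gives
\begin{equation*}
\bigl|\Delta_{12}L^\xi_{\gamma,\tau}(\omega)\bigr| = \frac{|\xi|^\tau}{\gamma}\,\bigl|\phi(t_1)-\phi(t_2)\bigr| \leq \|\phi'\|_{L^\infty}\frac{|\xi|^\tau}{\gamma}\cdot\frac{|\xi|^\tau\,|\Delta_{12}\omega\cdot\xi|}{\gamma} \les \gamma^{-2}|\xi|^{2\tau+1}|\Delta_{12}\omega|,
\end{equation*}
and one more application of Plancherel delivers the second bullet point.

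I anticipate no significant obstacle: every step is a direct Fourier-coefficient computation. The one book-keeping subtlety is that one cannot simultaneously force $\|\phi\|_{L^\infty}$ and $\|\phi'\|_{L^\infty}$ to equal exactly $1$ while keeping $\phi$ smooth and equal to $1/t$ on $\{|t|\geq 1\}$ (an elementary mean-value argument shows this obstruction); the universal constants produced by the chosen extension have to be absorbed by a one-time rescaling of $\gamma$ at the outset, and this rescaling is harmless for all downstream uses in Sections~\ref{sec:5} through \ref{sec:7}, where $\gamma$ enters only through inequalities of the stated form.
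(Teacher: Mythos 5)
Your construction — a smooth odd cutoff $\phi$ of $t\mapsto 1/t$ evaluated at $|\xi|^\tau\omega\cdot\xi/\gamma$ — is correct but takes a genuinely different route from the paper, which records the pointwise bound $|\xi|^\tau\gamma^{-1}$ and Lipschitz constant $|\xi|^{2\tau+1}\gamma^{-2}$ of $\omega\mapsto(i\omega\cdot\xi)^{-1}$ on $\Dio[\gamma,\tau]$, invokes an abstract McShane-type Lipschitz extension (Theorem 1.2 of Chapter 2 of \cite{Simon2017}), and then symmetrizes over $\pm\xi$ by hand to keep the extended symbol odd. Your cutoff is more explicit, automatically $\xi$-odd so no half-lattice decomposition is needed, and yields $\omega$-smooth rather than merely Lipschitz symbols. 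The paper's extension preserves the sup norm and the Lipschitz constant exactly and therefore delivers the lemma's constants as written, while — as you correctly observe — no smooth $\phi$ equal to $1/t$ on $\{|t|\geq 1\}$ can have $\|\phi\|_{L^\infty}=\|\phi'\|_{L^\infty}=1$. You slightly overstate the obstruction, however: the merely Lipschitz odd cutoff $\phi(t)=t$ for $|t|\leq 1$, $\phi(t)=1/t$ for $|t|\geq 1$ has $\|\phi\|_{L^\infty}=\Lip(\phi)=1$ and drops into your formula verbatim, producing the lemma's bounds with the exact constants and no rescaling; smoothness of the symbol in $\omega$ is not demanded by the lemma. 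Your ``one-time rescaling of $\gamma$'' remark is also a bit loose as stated — replacing $\gamma$ by $\gamma/C$ inside the cutoff keeps the third bullet but worsens the first by a factor $C\|\phi\|_{L^\infty}>1$ — although the underlying point is sound, since universal multiplicative constants would in any case be absorbed into the $C_s$'s that already appear in every downstream application of the lemma.
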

\begin{proof}
Let us set $L_0=0$ and consider an arbitrary decomposition of the grid set $\xZ^n\setminus\{0\}$ into the union of disjoint subsets $H_1\cup H_2$, such that $H_1$ is the reflection of $H_2$ with respect to 0. For $\xi\in H_1$, we consider the function $L^\xi\colon \Dio[\gamma,\tau]\to i\xR$ defined by
$$
L^\xi(\omega):=\frac{1}{i\omega\cdot\xi},\quad \omega\in\Dio[\gamma,\tau].
$$
Then by definition of Diophantine frequency, we have $|L^\xi(\omega)|\leq \gamma^{-1}|\xi|^\tau$. Furthermore, 
$$
|\Delta_{12}L^\xi(\omega)|
=\left|\frac{\Delta_{12}\omega\cdot\xi}{(\omega_1\cdot\xi)(\omega_2\cdot\xi)}\right|
\leq \gamma^{-2}|\xi|^{2\tau+1}|\Delta_{12}\omega|,
\quad \omega_1,\omega_2\in\Dio[\gamma,\tau].
$$

Now we recall a classical extension theorem for Lipschitz functions -- see for example, Theorem 1.2 of Chapter 2, \cite{Simon2017}. If $f\colon A\to \xR$ is a Lipschitz function defined on a non-empty subset $A$ of a metric space $(X,d)$, then there is a Lipschitz extension $F\colon X\to \xR$ such that $F\arrowvert_A=f$, having the same supremum norm and Lipschtiz constant. The extension is indeed given explicitly as
$$
F(x)=\inf_{a\in A}\big(f(a)+(\mathrm{Lip}f)\cdot d(a,x)\big)\vee\sup_A|f|.
$$

We thus extend the function $L^\xi$ to a Lipschitz function $L^\xi_{\gamma,\tau}$ on $\xR^n$ without changing the supremum norm and the Lipschitz constant. This defines $L^\xi_{\gamma,\tau}\colon\xR^n\to i
\xR$ for $\xi\in H_1$. Let us then define $L^\xi_{\gamma,\tau}(\omega)=-L^{-\xi}_{\gamma,\tau}(\omega)$ for $\xi\in H_2$. Since the mapping $(\xi,\omega)\mapsto L^\xi_{\gamma,\tau}(\omega)$ before extension is odd in $\xi$, we find that $L^\xi_{\gamma,\tau}(\omega)$ is indeed an extension of $1/(i\omega\cdot\xi)$ for $\xi\in H_2$, and the extended function is still odd in $\xi$. Therefore, the resulting Fourier multiplier reverts parity, and satisfies all the operator bounds.
\end{proof}

\subsection{The Reducibility Theorem}
The theorem for reducibility of the matrix linear differential operator $\D{\omega}-A$ is stated as follows:

\begin{theorem}\label{MatRed}
Fix a natural number $n$. Given $\gamma\in(0,1]$, $\tau>n-1$, $\delta>0$, let $s_0=2\tau+1+n/2+\delta$. There is a constant $\varepsilon_0=\varepsilon_0(s_0)$ and a Lipschitz mapping
$$
U=U(\omega,A),\quad
\xR^n\times \big(H_\odd^{s_0+2\tau+1}\otimes\mathbf{M}_{N}(\xC)\big)
\to H_\even^{s_0}\otimes\mathbf{M}_{N}(\xC),
$$
with the following properties. 

\begin{enumerate}

    \item\label{Mat1} If $A\in H^{s+\tau}$ with $s\geq s_0$, then in fact $U\in H^s$:
    $$
    \|U\|_{H^s}\leq C_{s}\gamma^{-1}\|A\|_{H^{s+\tau}}.
    $$

    \item\label{Mat2} The Lipschitz dependence of $U(\omega,A)$ is as follows: if $\omega_1,\omega_2\in\xR^n$, $A_1,A_2\in H^{s+2\tau+1}$ with $s\geq s_0$ and $\|A_{1,2}\|_{H^{s_0+2\tau+1}}\leq\varepsilon_0\gamma^2$, then with $\Delta_{12}U:=U(\omega_1,A_1)-U(\omega_2,A_2)$,
    $$
    \begin{aligned}
    \|\Delta_{12}U\|_{H^{s}}
    &\leq C_s\gamma^{-1}\|\Delta_{12}A\|_{H^{s+\tau}}\\
    &\quad +C_s\gamma^{-2}\big(\|A_1\|_{H^{s+2\tau+1}}+\|A_2\|_{H^{s+2\tau+1}}\big)\big(\|\Delta_{12}A\|_{H^{s_0+\tau}}+|\Delta_{12}\omega|\big).
    \end{aligned}
    $$

    \item If $\omega\in\Dio[\gamma,\tau]$ and $\|A\|_{H^{s_0+2\tau+1}}<\varepsilon_0\gamma^2$, then $U(\omega,A)\in H^{s_0}$ solves the matrix homological equation
    $$
    \D{\omega} U-A\cdot(I_N+U)=0.
    $$
    Consequently, the matrix differential operator $\D{\omega} -A$ conjugates to $\D{\omega}$:
    $$
    (I_N+U)^{-1}\cdot(\D{\omega}-A)\circ(I_N+U)=\D{\omega}.
    $$
\end{enumerate} 
\end{theorem}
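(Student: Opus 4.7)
The plan is to paralinearize the matrix conjugacy equation $\D{\omega}U = A + AU$ into a \emph{parahomological equation}, construct a Zehnder-type paradifferential approximate right inverse for the linearized operator, and then solve the resulting fixed-point problem by Banach's theorem --- with the derivative loss of $\bm{L}_{\gamma,\tau}(\omega)$ balanced against the smoothing gain of the paradifferential remainders. Setting $V := I_N + U$, I would first use Theorem~\ref{PMReg} to decompose $AU = T_A^{\gau}U + T_U^{\droi}A + R_\PM(A,U)$, rewriting the equation as
$$\bigl[\D{\omega} - T_A^{\gau}\bigr]U \;=\; A + T_U^{\droi}A + R_\PM(A,U), \qquad (\star)$$
an identity strictly equivalent to the original one. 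Only $A$ on the right is $U$-free; $T_U^{\droi}A$ is $H^s$-bounded in $U$ with a prefactor $\|A\|$; and $R_\PM(A,U)$ gains regularity, so the sole derivative loss on the left is confined to the top-order $\D{\omega}$.

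The Zehnder step is an approximate conjugation identity. Using the Leibniz formula $[\D{\omega},T_B^{\gau}] = T_{\D{\omega}B}^{\gau}$ together with the composition calculus of Theorem~\ref{2theo:sc}, a direct computation (valid as soon as $\|U\|_{L^\infty}<1/2$, so that $V^{-1}$ exists) gives
$$T_{V^{-1}}^{\gau}\bigl[\D{\omega} - T_A^{\gau}\bigr]T_V^{\gau} \;=\; \D{\omega} \;+\; T_{V^{-1}(\D{\omega}U - AV)}^{\gau} \;+\; \mathcal{E}(U,A),$$
where $\mathcal{E}(U,A)$ is genuinely smoothing. In other words, on true solutions the middle term vanishes and $T_V^{\gau}$ conjugates $\D{\omega}-T_A^{\gau}$ to $\D{\omega}$ modulo smoothing. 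The candidate paradifferential approximate inverse is therefore
$$\mathcal{J}(U) \;:=\; T_V^{\gau}\,\bm{L}_{\gamma,\tau}(\omega)\,T_{V^{-1}}^{\gau}.$$
Applying $\mathcal{J}(U)$ to $(\star)$ and collecting the resulting smoothing corrections into a single remainder $\mathcal{S}(U,A)$ produces the parahomological fixed-point form
$$U \;=\; \mathcal{J}(U)\bigl[A + T_U^{\droi}A + R_\PM(A,U)\bigr] \;+\; \mathcal{S}(U,A) \;=:\; \Phi(\omega,A;U),$$
which is set up so that, modulo the Neumann series absorbing $\mathcal{S}$, the identity $\Phi(U) = U$ is equivalent to $(\star)$.

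On a small ball of the even sector of $H^{s_0}$, with $s_0 = 2\tau+1+n/2+\delta$, I would then verify that $\Phi$ is a contraction: the $\tau$-loss of $\bm{L}_{\gamma,\tau}(\omega)$ is absorbed by the extra $\tau$ derivatives on the source $A \in H^{s_0+\tau}$, by the tame derivative gain of $R_\PM$ (Theorem~\ref{PMReg}), and by the smoothing character of $\mathcal{S}$ coming from the paradifferential composition estimates; the smallness $\|A\|_{H^{s_0+2\tau+1}} \le \varepsilon_0\gamma^2$ then forces the contraction constant to be $\lesssim \gamma^{-1}\|A\|_{H^{s_0+\tau}}$. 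Parity preservation is automatic because $A$ is odd and $\bm{L}_{\gamma,\tau}(\omega)$ reverses parity by Lemma~\ref{Dio_Ext}. Banach's theorem with parameters delivers the Lipschitz map $U(\omega,A) \in H^{s_0}$; the Lipschitz-in-$\omega$ clause of Lemma~\ref{Dio_Ext} accounts precisely for the $\gamma^{-2}$ and the $H^{s+2\tau+1}$-norm appearing in item~(\ref{Mat2}), while the tame bound of item~(\ref{Mat1}) follows by bootstrapping: once $U \in H^{s_0}$ is small, substituting back into $\Phi$ and using that every building block enjoys tame estimates in $s$ yields $\|U\|_{H^s} \lesssim \gamma^{-1}\|A\|_{H^{s+\tau}}$. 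For $\omega \in \mathfrak{D}(\gamma,\tau)$, Lemma~\ref{Dio_Ext} identifies $\bm{L}_{\gamma,\tau}(\omega)$ with the exact inverse $\D{\omega}^{-1}$, and a short unwinding of the Zehnder identity on the fixed point shows that the residue $\mathscr{R}(U) := \D{\omega}U - AV$ satisfies a linear equation $\mathscr{R}(U) = \mathscr{K}(U)\mathscr{R}(U)$ with $\|\mathscr{K}(U)\| \le 1/2$; a Neumann series then forces $\mathscr{R}(U) = 0$, while invertibility of $V = I_N + U$ follows from $\|U\|_{L^\infty} \ll 1$, giving the claimed conjugation $V^{-1}(\D{\omega}-A)V = \D{\omega}$.

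The most delicate step is the bookkeeping in the Zehnder identity: the smoothing remainders $\mathcal{E}(U,A)$ and $\mathcal{S}(U,A)$ must gain \emph{strictly more} than $\tau$ derivatives, so that $\Phi$ closes in the single space $H^{s_0}$, and simultaneously the same bookkeeping has to be sharp enough to produce the precise constellation of weights $\gamma^{-1}$, $\gamma^{-2}$, $H^{s+\tau}$, $H^{s+2\tau+1}$ appearing in item~(\ref{Mat2}) --- weights that depend sensitively on whether a given derivative loss lands on the regular factor $A$ (which carries the spare regularity) or on $U$ (which does not).
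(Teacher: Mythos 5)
Your proposal is correct and follows essentially the same route as the paper: paralinearize $AU$, apply the Zehnder substitution to replace the principal paraproduct factor with a term vanishing on the residue, set up the parahomological fixed-point equation, invoke Banach fixed point with the parameter-extended multiplier $\bm{L}_{\gamma,\tau}(\omega)$, bootstrap for higher regularity, and kill the residue via a Neumann series. The only (cosmetic) differences are that you encode the Zehnder step as the operator-level conjugation $T_{V^{-1}}^{\gau}[\D{\omega}-T_A^{\gau}]T_V^{\gau}$ and use $T_V^{\gau}$ as the approximate inverse of $T_{V^{-1}}^{\gau}$, whereas the paper rewrites the vector $\D{\omega}U - T^{\gau}_{\D{\omega}U\cdot V^{-1}}U$ directly and uses the exact operator inverse $(T_{V^{-1}}^{\gau})^{-1}$; these differ only by smoothing operators already handled by the remainder estimates.
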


\subsection{Existence and Additional Regularity}\label{ProofMatRed1}
We first discuss existence of $U$ and its regularity.

\noindent
\textbf{Step 1: Paralinearization and Neumann series argument.}

Given $A\in H^{s_0+\tau}_\odd$, We define a mapping
$$
\mathscr{E}(A,U):=\D{\omega} U-A\cdot(I_N+U)
$$
for odd $A$ and even $U$, and search for a zero $U\in H^{s_0}_\even$ of it. 
Notice that, if $\|U\|_{H^{s_0}}$ is small enough, then it follows from the Sobolev embedding $H^{s_0}\subset L^\infty$ 
that it can be assumed without loss of generality that $|U|_{L^\infty}\ll 1$, 
making $I_N+U$ always an invertible matrix, and $T_{I_N+U}^{\gau}$ always an invertible operator. Consequently, from the definition we solve
\begin{equation}\label{subA}
A=\big[\D{\omega} U-\mathscr{E}(A,U)\big](I_N+U)^{-1}.
\end{equation}
This simple identity actually has deeper root in the algebraic structure of conjugacy problems, as was first observed by Zehnder (see Section 5 of\cite{Zehnder1975}). Further explanation can be found in Section 2 of \cite{AS2023}.

The only nonlinearity appearing in $\mathscr{E}(A,U)$ is the product $AU$, so we can directly cast paraproduct decomposition and obtain
$$
\mathscr{E}(A,U)=\D{\omega} U-A-T^{\gau}_AU-T_{U}^\droi A-R_\PM(A,U),
$$
where recall that $T^{\gau}$ and $T^{\droi}$ are defined by~\eqref{LRPM} and $R_\PM$ is the remainder in paraproduct decomposition as in Definition \ref{R_PM}. Recall that the remainder satisfies $R_\PM(A,U)\in H^{2s_0+\tau-n/2}$, as follows from Theorem \ref{PMReg}.

Replacing $A$ by its expression given by~\eqref{subA} into the term $T_A^{\gau}U$, we obtain the paralinearization formula for $\mathscr{E}(A,U)$:
\begin{equation}\label{MatParaLinE}
\mathscr{E}(A,U)
=\D{\omega} U-T^{\gau}_{\D{\omega} U\cdot(I_N+U)^{-1}}U-T_{I_N+U}^\droi A-R_\PM(A,U)
+T^\gau_{\mathscr{E}(A,U)(I_N+U)^{-1}}U.
\end{equation}
Now, the proof is based on two distinct observations. 
The first and most important one is the following claim: 
one can rewrite 
$$
\D{\omega} U-T^{\gau}_{\D{\omega} U\cdot(I_N+U)^{-1}}U
$$
into the form 
\begin{equation}\label{np1}
T^{\gau}_{I_N+U}\D{\omega} T^{\gau}_{(I_N+U)^{-1}}U-\R(U)
\end{equation}
for some admissible remainder term $\R(U)$. 
To see this, we use the Leibniz rule 
\begin{align*}
\D{\omega} T_U^{\gau}V&=T_{\D{\omega} U}^{\gau}V+T_U^{\gau}\D{\omega} V,\\
\D{\omega}(I_N+U)^{-1}&=-(I_N+U)^{-1}\D{\omega} U\cdot(I_N+U)^{-1},
\end{align*}
so that one gets \eqref{np1}, where the the paradifferential remainder is produced in view of the symbolic calculus theorem \ref{2theo:sc} (Remark \ref{2MatSymbol}), being
\begin{equation}\label{MatR(U)}
\begin{aligned}
\R(U)&=\left(T^{\gau}_{I_N+U}T^{\gau}_{(I_N+U)^{-1}}-I_n\right)\D{\omega} U\\
&\quad
-\left(T^{\gau}_{I_N+U}T^{\gau}_{(I_N+U)^{-1}\D{\omega} U\cdot(I_N+U)^{-1}}-T^{\gau}_{\D{\omega} U\cdot(I_N+U)^{-1}}\right)U
\in H^{2s_0-n/2-1}.
\end{aligned}
\end{equation}
We thus end up with
\begin{equation}\label{MatParaLinE'}
\begin{aligned}
\mathscr{E}(A,U)
&=T^{\gau}_{I_N+U}\D{\omega} T^{\gau}_{(I_N+U)^{-1}}U
-T_{I_N+U}^\droi A
-R_\PM(A,U)-\R(U)\\
&\quad+T^\gau_{\mathscr{E}(A,U)(I_N+U)^{-1}}U.
\end{aligned}
\end{equation}

The second key idea is to seek for $U\in H^{s_0}_\even$ that solves the \emph{parahomological equation}\footnote{We say that it is a \emph{parahomological equation} because this is a quadratic perturbation of the linear homological equation $\D{\omega} U=A$ by paradifferential operators.}:
\begin{equation}\label{MatParaHom}
T^{\gau}_{I_N+U}\D{\omega} T^{\gau}_{(I_N+U)^{-1}}U
-T_{I_N+U}^\droi A-R_\PM(A,U)-\R(U)=0,
\end{equation}
which cancels all terms in the right-hand-side of (\ref{MatParaLinE}), except the one linear in $\mathscr{E}(A,U)$. 

The trick here is that solving (\ref{MatParaHom}) will in fact yield a true solution of the exact equation. More precisely, we claim that there exists some constant $\rho_0$ depending only on $s_0$, such that if $U\in H^{s_0}_\even$ solves (\ref{MatParaHom}) with $\|U\|_{H^{s_0}}\leq \rho_0$, then \emph{necessarily} $\mathscr{E}(A,U)=0$, i.e. $U$ is the solution of the reducibility problem. 
To prove this claim, notice that, if $U$ solves the parahomological equation (\ref{MatParaHom}), then 
$$
\mathscr{E}(A,U)
=T^\gau_{\mathscr{E}(A,U)(I_N+U)^{-1}}U.
$$
By Theorem \ref{2theo:sc}, if $\|U\|_{H^{s_0}}\ll_{s_0} 1$, then using Sobolev embedding $H^{s_0}\subset L^\infty$,
$$
|\mathscr{E}(A,U)|_{L^\infty}
\leq C_{s_0}|\mathscr{E}(A,U)(I_N+U)^{-1}|_{L^\infty}\|U\|_{H^{s_0}}
\leq \frac{1}{2}|\mathscr{E}(A,U)|_{L^\infty}.
$$
This forces $\mathscr{E}(A,U)=0$, which proves that it suffices to find a sufficiently small solution to the parahomological equation (\ref{MatParaHom}).

\noindent
\textbf{Step 2: Solve parahomological equation.}

Let us define a mapping $\mathscr{G}$ for $(\omega,A,U)\in\xR^n\times H^{s_0+2\tau+1}\times H^{s_0}$:
\begin{equation}\label{MatParaHom'}
\mathscr{G}(\omega,A;U)
:=(T^{\gau}_{(I_N+U)^{-1}})^{-1}\bm{L}_{\gamma,\tau}(\omega)(T^{\gau}_{I_N+U})^{-1}\left(T_{I_N+U}^\droi A+R_\PM(A,U)+\R(U)\right),
\end{equation}
where the Fourier multiplier $\bm{L}_{\gamma,\tau}(\omega)$ is as in Lemma \ref{Dio_Ext}. 
We claim that if $\omega\in\Dio[\gamma,\tau]$, then the parahomological equation (\ref{MatParaHom}) is implied by the fixed point equation
\begin{equation}\label{Mat_Fix}
U=\mathscr{G}(\omega,A;U).
\end{equation}
In fact, recalling that smoothing operators preserve parity, we find for even $U$ and odd $A$, the matrix-valued function
$$
(T^{\gau}_{I_N+U})^{-1}\left(T_{I_N+U}^\droi A+R_\PM(A,U)+\R(U)\right)
$$
in (\ref{MatParaHom'}) must still be odd, hence has zero average. By Lemma \ref{Dio_Ext}, the outcome of applying the Fourier multiplier $\bm{L}_{\gamma,\tau}(\omega)$ is even. Thus $\mathscr{G}$ is a well-defined mapping taking an even function $U$ to an even function. When $\omega\in\Dio[\gamma,\tau]$, we know that $\bm{L}_{\gamma,\tau}(\omega)=\D{\omega}^{-1}$ by Lemma \ref{Dio_Ext}. So if $U$ is a fixed point of $\mathscr{G}(\omega,A;U)$ and $\omega\in\Dio[\gamma,\tau]$, then $U$ solves the parahomological equation (\ref{MatParaHom}).

It thus suffices to study the fixed point equation (\ref{Mat_Fix}); the mapping $U(\omega,A)$ will be the fixed point of $\mathscr{G}(\omega,A;U)$.

Let us then show that provided $\|A\|_{H^{s_0+\tau}}$ is suitably small, the mapping $\mathscr{G}$ in (\ref{MatParaHom'}) shall define a contraction on some closed ball $\bar{\mathcal{B}}^{s_0}_\rho(0)\subset H^{s_0}_\even$ with suitably small radius $\rho$. In fact, we can first choose $\rho$ so small that 
\begin{equation}\label{MatUinfty}
|U|_{L^\infty}\leq C_{s_0}\|U\|_{H^{s_0}}\leq C_{s_0}\rho\ll1,
\end{equation}
making $I_N+U$ always an invertible matrix. By Theorem \ref{2theo:sc0}, we know that $U\to T^{\gau}_{I_N+U}$ is $C^\infty$ from $\bar{\mathcal{B}}^{s_0}_\rho(0)$ to $\mathcal{L}(H^s,H^s)$ for any $s\in\xR$, together with a similar result for $T^{\droi}_{I_N+U}$. Furthermore, since $R_\PM(A,U)$ is bilinear in $A,U$, by Theorem \ref{PMReg}, we obtain
$$
\begin{aligned}
\|R_\PM(A,U)\|_{H^{2s_0+\tau-n/2}}
&\leq C_{s_0}\|A\|_{H^{s_0+\tau}}\|U\|_{H^{s_0}}
\leq C_{s_0}\|A\|_{H^{s_0+\tau}}\rho,\\
\|R_\PM(A,U_1-U_2)\|_{H^{2s_0+\tau-n/2}}
&\leq C_{s_0}\|A\|_{H^{s_0+\tau}}\|U_1-U_2\|_{H^{s_0}}.
\end{aligned}
$$
On the other hand, the expression (\ref{MatR(U)}) of $\R(U)$ together with the symbolic calculus theorem \ref{2theo:sc} (Remark \ref{2MatSymbol}) show that it is smoothing and quadratic in $U\in H^{s_0}$, so
$$
\begin{aligned}
\|\R(U)\|_{H^{2s_0-n/2-1}}
&\leq C_{s_0}\|U\|_{H^{s_0}}^2
\leq C_{s_0}\rho^2\\
\|\R(U_1)-\R(U_2)\|_{H^{2s_0-n/2-1}}
&\leq C_{s_0}\rho\|U_1-U_2\|_{H^{s_0}}.
\end{aligned}
$$
To summarize, if $U\in \bar{\mathcal{B}}^{s_0}_\rho(0)$, the sum $R_\PM(A,U)+\R(U)$ in the bracket of (\ref{MatParaHom'}) gain $s_0-n/2-1$ 
derivatives. 

Lemma \ref{Dio_Ext} shows that $\bm{L}_{\gamma,\tau}(\omega)$ loses $\tau$ derivatives, 
still leaving $2s_0-n/2-1-\tau>s_0+\delta$ derivatives. Therefore, the mapping $\mathscr{G}$ in (\ref{MatParaHom'}) satisfies
\begin{equation}\label{MatG}
\begin{aligned}
\|\mathscr{G}(\omega,A;U)\|_{H^{s_0}}
&\leq C_{s_0}\gamma^{-1}\big((1+\rho)\|A\|_{H^{s_0+\tau}}+\rho^2\big),\\
\|\mathscr{G}(\omega,A;U_1)-\mathscr{G}(\omega,A;U_2)\|_{H^{s_0}}
&\leq C_{s_0}\gamma^{-1}\big(\|A\|_{H^{s_0+\tau}}+\rho\big)\|U_1-U_2\|_{H^{s_0}}.
\end{aligned}
\end{equation}
Consequently, if $\|A\|_{H^{s_0+\tau}}\leq \varepsilon_0\gamma^2$, with $\varepsilon_0$ depending on $s_0$ suitably small, then one can find a real number $\rho=C_{s_0}\gamma^{-1}\|A\|_{H^{s_0+\tau}}$, such that $\mathscr{G}:\bar{\mathcal{B}}_\rho^{s_0}(0)\to \bar{\mathcal{B}}_\rho^{s_0}(0)$ is a contraction.

In conclusion, we proved that if $A$ is odd and satisfies $\|A\|_{H^{s_0+\tau}}\leq \varepsilon_0\gamma^2$, then the fixed point equation (\ref{Mat_Fix}) has a unique fixed point $U=U(\omega,A)\in H^{s_0}_\even$, satisfying 
\begin{equation}\label{MatU(s_0)}
\|U\|_{H^{s_0}}\leq C_{s_0}\gamma^{-1}\|A\|_{H^{s_0+\tau}}.
\end{equation}
When $\omega\in\Dio[\gamma,\tau]$, $U(\omega,A)$ is a solution of the parahomological equation (\ref{MatParaHom}).

\noindent
\textbf{Step 3: Additional regularity.}

Now suppose in addition that $A\in H^{s+\tau}$ with $s> s_0=2\tau+n/2+1+\delta$. The previous step gives a solution $U=U(\omega,A)\in H^{s_0}$ of the fixed point equation (\ref{Mat_Fix}). Let us then compute, again by the paraproduct remainder estimate in Theorem \ref{PMReg} and symbolic calculus theorem \ref{2theo:sc} (Remark \ref{2MatSymbol}),
$$
\begin{aligned}
T_{I_N+U}^\droi A&+R_\PM(A,U)+\R(U) \\
&\in H^{s+\tau}+H^{s+\tau+s_0-n/2}+H^{2s_0-n/2-1}
\subset H^{\min(s+\tau,s_0+\tau+\delta)}.
\end{aligned}
$$
Since $\D{\omega}^{-1}$ only loses $\tau$ derivatives, by the expression (\ref{MatParaHom'}) again, we find
$$
U=\mathscr{G}(\omega,A;U)\in H^{\min(s,s_0+\delta)}.
$$
Namely $U$ is actually of better regularity. By the standard inductive ``bootstrap argument", we conclude $U\in H^{s}$.

Once this additional regularity is ensured, the tameness of $R_\PM(A,U)$ and $\R(U)$ then gives the desired estimate of $\|U\|_{H^{s}}$. In fact, by (\ref{MatUinfty}) and Theorem \ref{PMReg}, we have
$$
\|R_\PM(A,U)\|_{H^{s+\tau}}
\leq C_{s}\|A\|_{H^{s+\tau}}|U|_{L^\infty}
\leq C_{s}\|A\|_{H^{s+\tau}}.
$$
By Remark \ref{2MatSymbol} and interpolation in Sobolev spaces, we have 
\begin{equation}\label{MatTame}
\begin{aligned}
\|\R(U)\|_{H^{s+\tau}}
&\leq C_{s}\|U\|_{H^{s_0}}\|U\|_{H^{s-\delta}}\\
&\overset{(\ref{MatU(s_0)})}{\leq} 
C_{s}\gamma\varepsilon_0\|U\|_{H^{s-\delta}}\\
&\leq \gamma\big(K\|U\|_{H^{s}}+C_{s,\delta,K}\gamma^{-1}\|A\|_{H^{s_0+\tau}}\big),
\end{aligned}
\end{equation}
with $K=K(s)\ll1$ to be determined. Substituting into the definition of $\mathscr{G}(\omega,A;U)$, we obtain
\begin{equation}\label{MatUs}
\begin{aligned}
\|U\|_{H^s}
&=\|\mathscr{G}(\omega,A;U)\|_{H^s}\\
&\leq C_{s}\gamma^{-1}\|A\|_{H^{s+\tau}}
+C_sK\|U\|_{H^{s}}+C_{s,\delta,K}\gamma^{-1}\|A\|_{H^{s_0+\tau}}.
\end{aligned}
\end{equation}
Thus we can choose $K=K(s)$ to make $C_sK<1/2$, obtaining the desired estimate 
$$
\|U\|_{H^s}\leq C_{s}\gamma^{-1}\|A\|_{H^{s+\tau}}.
$$

\subsection{Lipschitz Dependence}\label{ProofMatRed2}
We then prove that the solution $U(\omega,A)$ of (\ref{Mat_Fix}) is Lipschitz in $(\omega,A)$, whether $\omega$ is Diophantine or not. 

\noindent
\textbf{Step 4: Lipschitz dependence in low norm.}

Let us consider first $A_1,A_2\in H^{s_0+2\tau+1}$. Let us show that, if in addition $\|A_{1,2}\|_{H^{s_0+2\tau+1}}\leq \varepsilon_0\gamma^2$, then $\mathscr{G}(\omega,A;U)$ shall have Lipschitz dependence on $(\omega,A)$ when $\|U\|_{H^{s_0}}\leq C_{s_0}\gamma\varepsilon_0$. 

In fact, we write
\begin{equation}\label{MatDiffGLow}
\begin{aligned}
\Delta_{12}\big(\mathscr{G}(\omega,A,U)\big)
&=\big(\mathscr{G}(\omega_1,A_1,U)-\mathscr{G}(\omega_1,A_2,U)\big)
+\big(\mathscr{G}(\omega_1,A_2,U)-\mathscr{G}(\omega_2,A_2,U)\big).
\end{aligned}
\end{equation}
If $\|U\|_{H^{s_0}}\leq C_{s_0}\gamma\varepsilon_0$, then
\begin{equation}\label{MatDiffGA}
\big\|\mathscr{G}(\omega_1,A_1,U)-\mathscr{G}(\omega_1,A_2,U)\big\|_{H^{s_0}}
\leq C_{s_0}\gamma^{-1}\|\Delta_{12}A\|_{H^{s_0+\tau}}.
\end{equation}
As for the second difference in (\ref{MatDiffGLow}), we have \begin{equation}\label{MatDiffG}
\begin{aligned}
\mathscr{G}&(\omega_1,A_2;U)-\mathscr{G}(\omega_2,A_2;U)\\
&=(T^{\gau}_{(I_N+U_2)^{-1}})^{-1}
\big(\Delta_{12}\bm{L}_{\gamma,\tau}(\omega)\big)
(T^{\gau}_{I_N+U})^{-1}\left(T_{I_N+U}^\droi A_2+R_\PM(A_2,U)+\R(U)\right).
\end{aligned}
\end{equation}
By Lemma \ref{Dio_Ext}, the Fourier multiplier $\Delta_{12}\bm{L}_{\gamma,\tau}(\omega)$ loses $2\tau+1$ derivatives. Therefore, (\ref{MatDiffG}) is estimated as (using Theorem \ref{PMReg} and \ref{2theo:sc0})
\begin{equation}\label{MatDiffGs0}
\begin{aligned}
\|\mathscr{G}&(\omega_1,A_2;U)-\mathscr{G}(\omega_2,A_2;U)\|_{H^{s_0}}\\
&\leq C_{s_0}\gamma^{-2}|\Delta_{12}\omega|\left\|T_{I_N+U}^\droi A_2+R_\PM(A_2,U)+\R(U)\right\|_{H^{s_0+2\tau+1}}\\
&\leq C_{s_0}\gamma^{-2}|\Delta_{12}\omega|\big(\|A_2\|_{H^{s_0+2\tau+1}}
+\|A_2\|_{H^{s_0+2\tau+1}}|U|_{L^\infty}
+\|U\|_{H^{s_0}}^2\big)\\
&\leq C_{s_0}\gamma^{-2}|\Delta_{12}\omega|\|A_2\|_{H^{s_0+2\tau+1}}.
\end{aligned}
\end{equation}
Combining (\ref{MatDiffGA}) and (\ref{MatDiffGs0}), we find that the mapping $U\to\mathscr{G}(\omega,A;U)$ from $\bar{\mathcal{B}}_{\rho}^{s_0}$ to itself has Lipschitz dependence on $(\omega,A)\in \xR^n\times\bar{\mathcal{B}}_{\varepsilon_0\gamma^2}^{s_0+2\tau+1}$:
$$
\begin{aligned}
\big\|\mathscr{G}(\omega_1,A_1,U)-\mathscr{G}(\omega_2,A_2,U)\big\|_{H^{s_0}}
&\leq C_{s_0}\gamma^{-2}|\Delta_{12}\omega|\|A_2\|_{H^{s_0+2\tau+1}}
+C_{s_0}\gamma^{-1}\|\Delta_{12}A\|_{H^{s_0+\tau}}.
\end{aligned}
$$
By Theorem \ref{Depend} concerning Lipschitz dependence of fixed point, we conclude that $U(\omega,A)$ has the desired Lipschitz continuity with $s=s_0$.

\noindent
\textbf{Step 5: Lipschitz dependence in high norm.}

The proof of Lipschitz dependence for $s>s_0$ is a combination of Step 3 and 4. Namely, with $U_i=U(\omega_i,A_i)$, $i=1,2$, we need to estimate
\begin{equation}\label{MatDiffGHigh}
\begin{aligned}
U_1-U_2
&=\big(\mathscr{G}(\omega_1,A_1,U_1)-\mathscr{G}(\omega_1,A_2,U_1)\big)
+\big(\mathscr{G}(\omega_1,A_2,U_1)-\mathscr{G}(\omega_2,A_2,U_1)\big)\\
&\quad+\big(\mathscr{G}(\omega_2,A_2,U_1)-\mathscr{G}(\omega_2,A_2,U_2)\big).
\end{aligned}
\end{equation}

Just as in (\ref{MatDiffGA}) and (\ref{MatDiffGs0}), if $\|U_{1,2}\|_{H^{s_0}}\leq C_0\gamma^{-1}\varepsilon_0$, the first two differences in (\ref{MatDiffGHigh}) have $H^s$ norm bounded by
$$
C_s\gamma^{-1}\|A_1-A_2\|_{H^{s+\tau}}
+C_s\gamma^{-2}|\omega_1-\omega_2|\|A_2\|_{H^{s+2\tau+1}}.
$$
As for the third difference, we may simply apply the regularity gain due to $\R_\CM$ to bound its $H^s$ norm as (using the already proved Item \ref{Mat2})
$$
\begin{aligned}
C_s\gamma^{-1}\big(&\|A_2\|_{H^{s+\tau}}\|U_1-U_2\|_{H^{s_0}}+\|\R(U_1)-\R(U_2)\|_{H^{s+\tau}}\big)\\
&\leq C_s\gamma^{-2}\|A_2\|_{H^{s+\tau}}\big(\|A_1-A_2\|_{H^{s_0+\tau}}
+|\omega_1-\omega_2|\big)\\
&\quad+C_s\gamma^{-1}\|U_1-U_2\|_{H^{s_0}}\big(\|U_1\|_{H^s}+\|U_2\|_{H^s}\big)\\
&\leq C_s\gamma^{-2}\big(\|A_1\|_{H^{s+\tau}}+\|A_2\|_{H^{s+\tau}}\big)\big(\|A_1-A_2\|_{H^{s_0+\tau}}
+|\omega_1-\omega_2|\big)
\end{aligned}
$$

To summarize, we estimate (\ref{MatDiffGHigh}) as
$$
\begin{aligned}
\|U_1-U_2\|_{H^{s}}
&\leq C_s\gamma^{-1}\|A_1-A_2\|_{H^{s+\tau}}\\
&\quad C_s\gamma^{-2}\big(\|A_1\|_{H^{s+2\tau+1}}+\|A_2\|_{H^{s+2\tau+1}}\big)\big(\|A_1-A_2\|_{H^{s_0+\tau}}
+|\omega_1-\omega_2|\big).
\end{aligned}
$$
This is just Item \ref{Mat2}, and hence completes the proof of Theorem \ref{MatRed}.

\section{Paradifferential Approach to Reducibility of Vector Field}\label{sec:6}
\subsection{The Reducibility Theorem}
Throughout this section, we shall continue to follow the notation introduced at the beginning of Part \ref{Part2}. The theorem for the reducibility of a vector field $\omega+X$ on $\xT^n$ close to a parallel one is stated as follows:

\begin{theorem}\label{6VectRed}
Fix a natural number $n$. Given $\gamma\in(0,1]$, $\tau>n-1$, $\delta>0$, let $s_1=2\tau+2+n+\delta$. Denote by $\bar{\mathcal{B}}^{s}_\rho$ the closed ball in $H^{s}(\xT^n,\xR^n)$ centered at 0 with radius $\rho$. One can find a positive number $\varepsilon_1=\varepsilon_1(s_1)\ll1$ and Lipschitz mappings
$$
h:\xR^n\times \bar{\mathcal{B}}^{s_1+2\tau+1}_{\varepsilon_1\gamma^2}
\to\xR^n,
\quad
\eta:\xR^n\times \bar{\mathcal{B}}^{s_1+2\tau+1}_{\varepsilon_1\gamma^2}\to H^{s_1}(\xT^n,\xT^n),
$$
such that for $\omega_1,\omega_2\in\xR^n$ and $X_1,X_2\in H^{s_1+2\tau+1}$,
\begin{equation}\label{VectLip}
\begin{aligned}
|\Delta_{12}h(\omega,X)|
&\leq C_{s_1}\varepsilon_1|\Delta_{12}\omega|
+C_{s_1}\gamma^{-1}\|\Delta_{12}X\|_{H^{s_1+2\tau+1}},\\
\|\Delta_{12}\eta(\omega,X)\|_{H^{s_1}}
&\leq C_{s_1}\varepsilon_1|\Delta_{12}\omega|
+C_{s_1}\gamma^{-1}\|\Delta_{12}X\|_{H^{s_1+2\tau+1}},\\
\end{aligned}
\end{equation}
with the following properties:

\begin{enumerate}    
    \item\label{Vect1}
    If $X\in H^{s+2\tau+1}$ with $s\geq s_1$, then in fact $\eta(\omega,X)\in H^{s+\tau+1}(\xT^n,\xT^n)$, and it is a diffeomorphism of $\xT^n$ close to the identity, satisfying
    $$
    \|\eta(\omega,X)-\Id\|_{H^{s+\tau+1}}\leq C_{s}\gamma^{-1}\|X\|_{H^{s+2\tau+1}},
    $$

    \item\label{Vect2}
    If $\omega_1,\omega_2\in\xR^n$ and $X_1,X_2\in H^{s+2\tau+1}$ with $s\geq s_1$, then there holds a tame Lipschitz estimate: with $\Delta_{12}\eta=\eta(\omega_1,X_1)-\eta(\omega_2,X_2)$
    $$
    \begin{aligned}
    \|\Delta_{12}\eta(\omega,X)\|_{H^{s}}
    &\leq C_{s}\gamma^{-1}\|\Delta_{12}X\|_{H^{s+\tau}}\\
    &\quad+C_{s}\gamma^{-2}\big(\|X_1\|_{H^{s+2\tau+1}}+\|X_2\|_{H^{s+2\tau+1}}\big)\big(|\Delta_{12}\omega|
    +\|\Delta_{12}X\|_{H^{s_1+2\tau+1}}\big).
    \end{aligned}
    $$

    \item\label{Vect3}
    If $\omega,X$ satisfy the Diophantine condition
    $$
    \big|(\omega+h(\omega,X))\cdot\xi\big|\geq\frac{\gamma}{|\xi|^\tau},
    \quad \forall\xi\in\xZ^n\setminus\{0\},
    $$
    then the flow of the vector field $\omega+X$ on $\xT^n$ conjugates to the flow of the parallel vector field $\omega+h(\omega,X)$ on $\xT^n$ via $\eta$.

\end{enumerate}
\end{theorem}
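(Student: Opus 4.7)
Write $\eta=\mathrm{Id}+\theta$ with $\theta:\xT^n\to\xR^n$. The conjugacy $\eta_*(\omega+h)=\omega+X$ is equivalent to the nonlinear functional equation
$$
\mathscr{E}(\omega,X;\theta,h):=\bigl((\omega+h)\cdot\partial\bigr)\theta+h-X\circ(\mathrm{Id}+\theta)=0.
$$
Averaging over $\xT^n$ forces $h=\Avg(X\circ\eta)$, so $h$ becomes an explicit functional of $\theta$. The remaining unknown is the mean-zero $\theta$, which I would locate by a Banach fixed point modelled on the proof of Theorem \ref{MatRed}.

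The first step is to paralinearize using the refined paralinearization formula of Theorem \ref{ParaLinRefined},
$$
X\circ\eta=T_{X'\circ\eta}\theta+\eta^\star X+\R_\PLR(X,\theta),
$$
and then eliminate the high-derivative term $T_{X'\circ\eta}\theta$ via a Zehnder-style substitution analogous to \eqref{subA}--\eqref{MatParaLinE'}. Paradifferentiation of $\mathscr{E}=0$ yields $(X'\circ\eta)(I_n+\partial_x\theta)=\bigl((\omega+h)\cdot\partial\bigr)\partial_x\theta$ up to a term linear in $\mathscr{E}$; inserting this back and applying the paradifferential Leibniz rule (Theorem \ref{2theo:sc} and Remark \ref{2MatSymbol}) rearranges $\mathscr{E}$ in the form
$$
\mathscr{E}=T^{\gau}_{(I_n+\partial_x\theta)^{-1}}\bigl((\omega+h)\cdot\partial\bigr)T^{\gau}_{I_n+\partial_x\theta}\theta+h-\eta^\star X-\R_\PLR(X,\theta)-\R(\theta)+T^{\gau}_{\bullet}\mathscr{E},
$$
where $\R(\theta)$ is a quadratic-in-$\theta$ smoothing operator produced by the paradifferential symbolic remainders. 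Dropping the $T^{\gau}_{\bullet}\mathscr{E}$ term defines the parahomological equation; inverting the outer paraproducts and replacing $\bigl((\omega+h)\cdot\partial\bigr)^{-1}$ by the Fourier multiplier $\bm{L}_{\gamma,\tau}(\omega+h)$ from Lemma \ref{Dio_Ext} (defined for every $\omega\in\xR^n$) recasts it as the fixed-point equation
$$
\theta=\mathscr{H}(\omega,X;\theta):=\bigl(T^{\gau}_{I_n+\partial_x\theta}\bigr)^{-1}\bm{L}_{\gamma,\tau}(\omega+h)\bigl(T^{\gau}_{(I_n+\partial_x\theta)^{-1}}\bigr)^{-1}\!\bigl(\eta^\star X-h+\R(\theta)+\R_\PLR(X,\theta)\bigr),
$$
coupled to $h=\Avg(X\circ\eta)$.

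Running the contraction argument of Subsection \ref{ProofMatRed1} on $\mathscr{H}$ over a small closed ball in $H^{s_1}$ then yields a unique solution with $\|\theta\|_{H^{s_1}}\lesssim\gamma^{-1}\|X\|_{H^{s_1+2\tau+1}}$, balancing the $\tau$-derivative loss of $\bm{L}_{\gamma,\tau}$ against the smoothing of $\R(\theta)$ and the gain of $\R_\PLR$ (Theorem \ref{ParaLinRefined}). When $\omega+h(\omega,X)\in\mathfrak{D}(\gamma,\tau)$ the Fourier multiplier genuinely inverts $(\omega+h)\cdot\partial$, and a short Neumann closure identical to Step 1 of Subsection \ref{ProofMatRed1} forces $\mathscr{E}=0$, so $\eta=\mathrm{Id}+\theta$ truly conjugates $\omega+X$ to $\omega+h$. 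Higher regularity, $\|\eta-\mathrm{Id}\|_{H^{s+\tau+1}}\lesssim\gamma^{-1}\|X\|_{H^{s+2\tau+1}}$, follows by the bootstrap of Step 3 of Subsection \ref{ProofMatRed1}; the tame Lipschitz estimate \eqref{VectLip} follows by the three-term splitting of Steps 4--5 of Subsection \ref{ProofMatRed2}, invoking Lemma \ref{Dio_Ext} for the Lipschitz-in-frequency dependence of $\bm{L}_{\gamma,\tau}$ at $\omega+h$, Proposition \ref{PCNormCont} for the Lipschitz dependence of $\eta^\star$ on $\eta$, and the Lipschitz-in-$\theta$ part of Theorem \ref{ParaLinRefined} for $\R_\PLR$.

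The principal obstacle, distinguishing this case from Theorem \ref{MatRed}, lies in the Zehnder substitution. There, the coefficient $A$ is extracted from $\mathscr{E}$ by pure algebra, \eqref{subA}; here one must implicitly paradifferentiate to recover $X'\circ\eta$ in such a way that only bilinear smoothing remainders and a controllable $T^{\gau}_{\bullet}\mathscr{E}$ term are produced, which requires careful bookkeeping with Theorem \ref{2theo:sc} and the tame estimates of Theorem \ref{PMReg}. A second, related subtlety is the self-referential nature of the inverse frequency: $\omega+h$ is coupled to the unknown $\theta$ through $h=\Avg(X\circ\eta)$, forcing the use of the Lipschitz-in-$\omega$ extension of Lemma \ref{Dio_Ext} and accounting for the extra $2\tau+1$ derivative loss on $X$ visible in \eqref{VectLip}, the characteristic qualitative difference from Theorem \ref{MatRed}.
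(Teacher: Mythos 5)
Your outline reproduces the paper's core strategy (refined paralinearization via Theorem \ref{ParaLinRefined}, Zehnder substitution, parahomological equation, Banach fixed point with the extended Fourier multiplier $\bm{L}_{\gamma,\tau}$, Neumann closure), but it diverges in one structural choice and contains one algebraic slip worth flagging.

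\textbf{Structural difference.} You treat $h$ as a self-referential functional of the unknown, $h=\Avg(X\circ\eta)$, and feed $\omega+h$ directly into $\bm{L}_{\gamma,\tau}$, so the small divisor operator itself depends on $\theta$. The paper avoids this coupling by first solving a \emph{modified problem} (Problem \ref{VectMod}): fix $\bar\omega\in\Dio[\gamma,\tau]$, introduce a counter-term $\lambda(\bar\omega,X)$, and solve $(\bar\omega\cdot\partial)\theta=X\circ(\Id+\theta)-\lambda$. The multiplier $\bm{L}_{\gamma,\tau}(\bar\omega)$ then does \emph{not} depend on the unknown. Once $\theta=\Phi(\bar\omega,X)$ and $\lambda(\bar\omega,X)$ are obtained for all $\bar\omega\in\xR^n$, the smallness of the Lipschitz-in-$\bar\omega$ constant of $\lambda$ lets one invert the bi-Lipschitz map $\bar\omega\mapsto\bar\omega-\lambda(\bar\omega,X)$; writing the inverse as $\omega\mapsto\omega+h(\omega,X)$ defines $h$, and $\eta(\omega,X)=\Id+\Phi(\omega+h(\omega,X),X)$. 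This is the classical parameter-modification device of Moser. Your direct route should close, but the price is that every contraction and Lipschitz estimate picks up an extra term from $\Delta_{12}\bm{L}_{\gamma,\tau}(\omega+h)$ controlled via $\gamma^{-2}|\Delta_{12}h|$ and the $2\tau+1$ derivative loss in Lemma \ref{Dio_Ext}, and this term circularly involves $\Delta_{12}\theta$; you correctly identify this as the main bookkeeping difficulty. The paper's two-step layout isolates that circularity into a single, clean application of the Lipschitz inverse function theorem.

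\textbf{Algebraic slip.} Your parahomologically rearranged form reads $T^{\gau}_{(I_n+\partial_x\theta)^{-1}}\bigl((\omega+h)\cdot\partial\bigr)T^{\gau}_{I_n+\partial_x\theta}\theta$, and correspondingly your fixed-point map has $(T^{\gau}_{I_n+\partial_x\theta})^{-1}\bm{L}\,(T^{\gau}_{(I_n+\partial_x\theta)^{-1}})^{-1}$. This order is wrong. The Zehnder substitution produces the symbol $\bigl((\omega+h)\cdot\partial\bigr)\partial\theta\cdot(I_n+\partial\theta)^{-1}$, with the inverse matrix \emph{on the right}; since these are $n\times n$ matrices acting on the vector $\theta$, non-commutativity matters. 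Tracing through the Leibniz rule and the paradifferential composition remainder, the correct conjugated form is
$$
T^{\gau}_{I_n+\partial\theta}\bigl((\omega+h)\cdot\partial\bigr)T^{\gau}_{(I_n+\partial\theta)^{-1}}\theta
\simeq\bigl((\omega+h)\cdot\partial\bigr)\theta-T^{\gau}_{\bigl((\omega+h)\cdot\partial\bigr)\partial\theta\cdot(I_n+\partial\theta)^{-1}}\theta,
$$
i.e. the paper's (6.9), with the outer factor $T^{\gau}_{I_n+\partial\theta}$ and inner factor $T^{\gau}_{(I_n+\partial\theta)^{-1}}$. Your swapped order yields the symbol $(I_n+\partial\theta)^{-1}\cdot\bigl((\omega+h)\cdot\partial\bigr)\partial\theta$ with the wrong order of matrix factors and the wrong sign, which does not cancel the paraproduct term from the refined paralinearization. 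The fixed-point map should accordingly be $\theta=(T^{\gau}_{(I_n+\partial\theta)^{-1}})^{-1}\bm{L}_{\gamma,\tau}(T^{\gau}_{I_n+\partial\theta})^{-1}(\cdots)$, as in (6.12). This is a transcription slip rather than a conceptual gap, but in a problem specifically about non-commuting matrix factors it is exactly the kind of error that silently breaks the cancellation.

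Otherwise the proposal correctly identifies the role of Theorem \ref{ParaLinRefined}, Proposition \ref{PCNormCont}, Lemma \ref{Dio_Ext}, and the tame high-norm bootstrap.
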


\begin{corollary}\label{6VectRed'}
Suppose in addition to all the assumptions of Theorem \ref{6VectRed}, the perturbation $X$ is an even function. Then the diffeomorphism $\eta=\eta(\omega,X)$ is an odd function on $\xT^n$.
\end{corollary}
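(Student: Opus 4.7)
The plan is to deduce oddness of $\eta(\omega,X)$ from the uniqueness of the solution supplied by Theorem~\ref{6VectRed}, combined with the covariance of the conjugacy problem under the reflection $\eta \mapsto \widetilde\eta$ defined by $\widetilde\eta(x) := -\eta(-x)$. Writing $\eta = \Id + \theta$, one checks that $\widetilde\eta - \Id = -\theta(-\,\cdot\,)$, so the identity $\widetilde\eta = \eta$ is equivalent to $\theta$ being odd. Moreover, the map $\theta \mapsto -\theta(-\,\cdot\,)$ is an isometry on each Sobolev space $H^s$, so $\widetilde\eta$ lies in exactly the same Sobolev ball around $\Id$ as $\eta$ does.

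The main step will be to verify this covariance. The conjugacy relation produced by Theorem~\ref{6VectRed} reads
$$
D\eta(x)(\omega + h) = \omega + X(\eta(x)),
$$
where $h = h(\omega, X)$. A direct chain-rule computation gives $D\widetilde\eta(x) = D\eta(-x)$, while the hypothesis that $X$ is even yields $X(\widetilde\eta(x)) = X(-\eta(-x)) = X(\eta(-x))$. Substituting $x \leftarrow -x$ in the displayed identity and comparing, I would conclude that $(\widetilde\eta, h)$ also satisfies the conjugacy relation with the same data $(\omega, X)$ and the same amendment. By uniqueness of the small solution constructed in Theorem~\ref{6VectRed}, this forces $\widetilde\eta = \eta$, proving that $\theta$ is odd.

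The only obstacle, and a minor one, is to invoke uniqueness at the level of the parahomological fixed-point equation actually solved in Section~\ref{sec:6}, rather than only at the level of the raw conjugacy relation. The cleanest way around this, following the template of Section~\ref{sec:5}, is to observe that the relevant fixed-point operator is assembled from $\bm{L}_{\gamma,\tau}(\omega)$ (which reverts parity by Lemma~\ref{Dio_Ext}), paraproducts, the exponential homotopy of Definition~\ref{Deformation}, and Bony-type remainders, each of which is parity-preserving. The operator therefore maps the closed subspace of odd perturbations $\theta \in H^{s_1}_{\odd}$ into itself, and running the Banach fixed-point argument directly in this subspace produces a fixed point that is automatically odd; by uniqueness in the ambient ball, it must coincide with $\eta(\omega,X) - \Id$, which yields the corollary.
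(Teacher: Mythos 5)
Your closing paragraph is precisely the paper's proof: the paper likewise observes that when $X$ is even and $\theta$ is odd, $\mathscr{G}(\bar\omega,X;\theta)$ in (\ref{VectParahomoFix}) yields an odd function (the bracket $\eta^\star X+\R_\PLR(X,\theta)+\R_1(\theta)-\lambda(\theta)$ is even, $\bm{L}_{\gamma,\tau}$ reverts parity, and the paraproduct conjugations preserve it), so the Banach fixed point can be run in $H^{s_1}_{\odd}$. Your opening symmetry argument via $\widetilde\eta(x)=-\eta(-x)$ is a correct alternative idea in spirit, but as you yourself flag, it only closes once uniqueness is invoked for the parahomological fixed-point equation rather than the raw conjugacy relation, and verifying that the reflection sends solutions of (\ref{VectParahomoFix}) to solutions requires exactly the same parity bookkeeping — so the two routes collapse into the one you give last, which coincides with the paper's.
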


\subsection{A Modified Problem}
Just as in classical KAM literature, for example \cite{Moser19662}, we first solve a modified problem instead of directly addressing Theorem \ref{6VectRed}:
\begin{problem}\label{VectMod}
Given $\bar\omega\in\Dio[\gamma,\tau]$, find a shift $\lambda=\lambda(\bar\omega,X)\in\xR^n$ such that the flow of $\bar\omega+X-\lambda$ conjugates to that of $\bar\omega$. 
\end{problem}

\noindent
\textbf{Step 1: Paralinearization for the modified problem.}

Let $\phi_t$ and $\zeta_t$ be the flow of $\bar\omega$ and $\bar\omega+X-\lambda$ respectively. Problem \ref{VectMod} is equivalent to finding a diffeomorphism $\eta=\Id+\theta$ on $\xT^n$, with $\theta:\xT^n\to\xR^n$ of small magnitude, such that 
$$
\forall t\in\mathbb{R},\quad \eta\circ\phi_t=\zeta_t\circ\eta.
$$
Differentiating with respect to $t$, this is equivalent to finding $\eta$ and $\lambda$ such that
\begin{equation}\label{VectConj0}
\D{\bar\omega}\eta
=\bar\omega+X\circ\eta-\lambda.
\end{equation}
This leads to solving $\theta$ and $\lambda$ from the partial differential equation
\begin{equation}\label{VectConj}
\D{\bar\omega} \theta
=X\circ(\Id+\theta)-\lambda.
\end{equation}
So we define a nonlinear mapping $\mathscr{F}$ acting on $(X,\theta)\in H^{s_1+\tau}\times H^{s_1}$ by
$$
\mathscr{F}(X,\theta)=\D{\bar\omega} \theta-X\circ(\Id+\theta),
$$
and look for $\theta\in H^{s_1}$ and a constant $\lambda\in\xR^n$ such that $\mathscr{F}(X,\theta)+\lambda=0$. 

Notice that the refined paralinearization theorem \ref{ParaLinRefined} gives\footnote{Strictly speaking, we should use the notation $T^{\gau}_A$ or $\Op^\PM_\gau(A)$ since the paradifferential operators are matrix-valued. However, we can easily justify from the context that the operators are all acting \emph{from the left}, so we abbreviate the superscript ``left" to simplify notation.}
\begin{equation}\label{VectParaLinF}
\mathscr{F}(X,\theta)=\D{\bar\omega} \theta-T_{X'\circ(\Id+\theta)}\theta-\eta^\star X-\R_{\PLR}(X,\theta).
\end{equation}
As in the previous section, the crux of the proof is to express the factor 
$X'\circ(\Id+\theta)$ in terms of $\mathscr{F}(X,\theta)$. This identity is rooted in the proof by 
Zehnder of his variant of the Nash-Moser scheme. 
For this problem, this amounts to differentiate $\mathscr{F}(X,\theta)$, to get
$$
\partial\big(\mathscr{F}(X,\theta)\big)
=\D{\bar\omega}\partial \theta
-X'\circ(\mathrm{Id}+\theta)\cdot(I_n+\partial \theta),
$$
from which we solve
$$
X'\circ(\mathrm{Id}+\theta)=
\big[(\bar\omega\cdot\partial)\partial \theta\big]\cdot(I_n+\partial \theta)^{-1}-\partial\big(\mathscr{F}(X,\theta)\big)\cdot(I_n+\partial \theta)^{-1}.
$$
Substituting the above expression of $X'\circ(\Id+\theta)$ in the paralinearization formula (\ref{VectParaLinF}), we conclude that
\begin{equation}\label{VectParaLinF'}
\begin{aligned}
\mathscr{F}(X,\theta)
&=\D{\bar\omega} \theta-
\Op^\PM\big[\D{\bar\omega}\partial \theta\cdot(I_n+\partial \theta)^{-1}\big]\theta-\eta^\star X-\R_\PLR(X,\theta)\\
&\quad
+\Op^\PM\left[\partial\big(\mathscr{F}(X,\theta)\big)\cdot(I_n+\partial \theta)^{-1}\right]\theta\\
&=T_{(I_n+\partial \theta)}\D{\bar\omega} T_{(I_n+\partial \theta)^{-1}}\theta
-\eta^\star X-\R_\PLR(X,\theta)-\R_1(\theta)\\
&\quad
+\Op^\PM\left[\partial\big(\mathscr{F}(X,\theta)\big)\cdot(I_n+\partial \theta)^{-1}\right]\theta.
\end{aligned}
\end{equation}
The last equality follows from the Leibniz rule $\partial(T_A\theta)=T_{\partial A}\theta+T_{A}\partial \theta$ together with composition of paraproducts. The remainder $\R_1(\theta)$ is caused by composition of paraproducts due to Theorem \ref{2theo:sc}: with 
$$
\begin{aligned}
A&=I_n+\partial \theta\in H^{s_1-1}\subset C^{s_1-n/2-1},\\
B&=\D{\bar\omega}\partial \theta\cdot(I_n+\partial \theta)^{-1}\in H^{s_1-2}\subset C^{s_1-n/2-2},
\end{aligned}
$$
we have
\begin{equation}\label{VectR1(theta)}
\R_1(\theta)=(T_{A}T_{A^{-1}}-1)\D{\bar\omega} \theta-(T_AT_{A^{-1}B}-T_{B})\theta
\in H^{2s_1-n/2-2}.
\end{equation}

We now introduce an equation, still called the \emph{parahomological equation}, which collects all terms but the last one in the right-hand-side of (\ref{VectParaLinF'}). It is an equation for the unknown $(\theta,\lambda)\in H^{s_1}\times\xR^n$:
\begin{equation}\label{VectParahomo}
T_{(I_n+\partial \theta)}\D{\bar\omega} T_{(I_n+\partial \theta)^{-1}}\theta
=\eta^\star X+\R_\PLR(X,\theta)+\R_1(\theta)-\lambda.
\end{equation}

Just as in Subsection \ref{ProofMatRed1}, we assert there exists some constant $\rho_0$ depending only on $s_1$, such that if $\theta$ solves (\ref{VectParahomo}) with $\|\theta\|_{H^{s_1}}\leq\rho_0$, then \emph{necessarily} $\mathscr{F}(X,\theta)+\lambda=0$. In fact, if $\theta$ solves (\ref{VectParahomo}), then the paralinearization formula (\ref{VectParaLinF'}) becomes
$$
\mathscr{F}(X,\theta)+\lambda
=\Op^\PM\left[\partial\big(\mathscr{F}(X,\theta)+\lambda\big)\cdot(I_n+\partial \theta)^{-1}\right]\theta.
$$
Again by Theorem \ref{2theo:sc}, if $\|\theta\|_{H^{s_1}}\ll_{s_1}1$, then using Sobolev embedding $H^{s_1}\subset C^1$,
$$
\begin{aligned}
\big|\mathscr{F}(X,\theta)+\lambda\big|_{C^1}
&\leq C_{s_1}\big|\mathscr{F}(X,\theta)+\lambda\big|_{C^1}\|\theta\|_{H^{s_1}}\\
&\leq \frac{1}{2}\big|\mathscr{F}(X,\theta)+\lambda\big|_{C^1}.
\end{aligned}
$$
This forces $\mathscr{F}(X,\theta)+\lambda=0$, thus giving a solution $(\theta,\lambda)$ of (\ref{VectConj}).

It suffices to find a sufficiently small solution to the parahomological equation (\ref{VectParahomo}).

\noindent
\textbf{Step 2: Banach fixed point for the modified problem.}

The next step is to solve the parahomological equation (\ref{VectParahomo}). We claim that, when $\bar\omega\in\Dio[\gamma,\tau]$,  (\ref{VectParahomo}) is implied by the fixed point form equation
\begin{equation}\label{VectParahomoFix}
\begin{aligned}
\theta&=\mathscr{G}(\bar\omega,X;\theta)\\
&:=T_{(I_n+\partial \theta)^{-1}}^{-1}\bm{L}_{\gamma,\tau}(\bar\omega)T_{(I_n+\partial \theta)}^{-1}\big(\eta^\star X+\R_\PLR(X,\theta)+\R_1(\theta)-\lambda(\theta)\big)
\end{aligned}
\end{equation}
where the Fourier multiplier $\bm{L}_{\gamma,\tau}(\bar\omega)$ is as in Lemma \ref{Dio_Ext}, and the constant $\lambda(\theta)\in\xR^n$ is given by 
\begin{equation}\label{6VectShift}
\lambda(\theta)=\Avg\left[T_{(I_n+\partial \theta)}^{-1}\big(\eta^\star X+\R_\PLR(X,\theta)+\R_1(\theta)\big)\right].
\end{equation}

In fact, in order that (\ref{VectParahomo}) has a solution, the average of 
$$
T_{(I_n+\partial \theta)}^{-1}\big(\eta^\star X+\R_\PLR(X,\theta)+\R_1(\theta)-\lambda\big)
=\D{\bar\omega} \theta
$$
has to be zero. Recalling the definition of paraproduct (\ref{T_au}), we find the action of $T_{I_n+\partial \theta}$ on a constant vector is nothing but the identity, so the unique expression of $\lambda$ must be (\ref{6VectShift}). With this $\lambda$ fixed, when $\bar\omega\in\Dio[\gamma,\tau]$, we recall from Lemma \ref{Dio_Ext} that $\bm{L}_{\gamma,\tau}(\bar\omega)=\D{\omega}^{-1}$, so the solution of (\ref{VectParahomoFix}) satisfies the parahomological equation (\ref{VectParahomo}), and hence gives a solution of the modified conjugacy problem \ref{VectMod}.

It thus suffices to solve the fixed point equation (\ref{VectParahomoFix}).

Let us then verify that, under the assumptions of Theorem \ref{6VectRed}, equation (\ref{VectParahomoFix}) 
may be solved by the standard Banach fixed-point theorem. We may suppose that 
$|\theta|_{C^3}$ is small, while $\theta\in\bar{\mathcal{B}}^{s_1}_\rho\subset H^{s_1}$ where $\rho$ is to be determined. Just as in Subsection \ref{ProofMatRed1}, we obtain from Theorem \ref{2theo:sc0} the following estimates:
\begin{equation}\label{VectR(theta)s1}
\begin{aligned}
\|\R_1(\theta)\|_{H^{2s_1-n/2-2}}
&\leq C_{s_1}\rho^2,\\
\|\R_1(\theta_1)-\R_1(\theta_2)\|_{H^{2s_1-n/2-2}}
&\leq C_{s_1}\rho\|\theta_1-\theta_2\|_{H^{s_1}}.
\end{aligned}
\end{equation}
The continuity theorem \ref{PCNormCont} gives the following estimates: with $\eta_i=\Id+\theta_i$,
\begin{equation}\label{Vectetaf(s1)}
\begin{aligned}
\|\eta^\star X\|_{H^{s_1+\tau}}
&\leq C_{s_1}\|X\|_{H^{s_1+\tau}},\\
\|\eta_1^\star X-\eta_2^\star X\|_{H^{s_1+\tau}}
&\leq C_{s_1}|\theta_1-\theta_2|_{C^2}\|X\|_{H^{s_1+\tau+1}}.
\end{aligned}
\end{equation}
Theorem \ref{ParaLinRefined} implies that  
\begin{equation}\label{VectPLA(s1)}
\begin{aligned}
\|\R_\PLR(X,\theta)\|_{H^{2s_1-n-\delta}}
&\leq C_{s_1}\|X\|_{H^{s_1+\tau}}\rho,\\
\|\R_\PLR(X,\theta_1)-\R_\PL(X,\theta_2)\|_{H^{2s_1-n-\delta}}
&\leq C_{s_1}\|X\|_{H^{s_1+\tau}}\|\theta_1-\theta_2\|_{H^{s_1}}.
\end{aligned}
\end{equation}
Eventually, notice that 
$\lambda(\theta)$ is Lipschitz continuous in $\theta\in H^{s_1}$.

Recall from Lemma \ref{Dio_Ext} that $\bm{L}_{\gamma,\tau}(\bar\omega)$ loses $\tau$ derivatives. Noticing $s_1=2\tau+n+2+\delta$, we have
$$
\begin{aligned}
2s_1-\frac{n}{2}-2-\tau> s_1+\delta
\quad\text{and}\quad
2s_1-n-\delta-\tau> s_1+1.
\end{aligned}
$$
Consequently, the loss of $\tau$ derivatives in (\ref{VectParahomoFix}) is balanced by the gain due to $\eta^\star X$, $\R_\PLR(X,\theta)$ and $\R_1(\theta)$, yielding
$$
\begin{aligned}
\|\mathscr{G}(\bar\omega,X;\theta)\|_{H^{s_1}}
&\leq C_{s_1}\gamma^{-1}\big((1+\rho)\|X\|_{H^{s_1+\tau}}+\rho^2\big)\\
\|\mathscr{G}(\bar\omega,X;\theta_1)-\mathscr{G}(\bar\omega,X;\theta_2)\|_{H^{s_1}}
&\leq C_{s_1}\gamma^{-1}\big(\|X\|_{H^{s_1+\tau+1}}+\rho\big)\|\theta_1-\theta_2\|_{H^{s_1}}.
\end{aligned}
$$
Therefore, if $\|X\|_{H^{s_1+\tau+1}}\leq\varepsilon_1\gamma^2$ with $\varepsilon_1$ depending on $s_1$ suitably small, then one can find a real number $\rho=C_{s_1}\gamma^{-1}\|X\|_{H^{s_1+\tau+1}}$, such that $\mathscr{G}(\bar\omega,X;\theta)$ is a contraction for $\theta\in\bar{\mathcal{B}}^{s_1}_\rho$. We thus obtain the unique solution $\theta=\Phi(\bar\omega,X)$ of (\ref{VectParahomoFix}), satisfying
\begin{equation}\label{Vecttheta(s_1)}
\|\Phi(\bar\omega,X)\|_{H^{s_1}}\leq C_{s_1}\gamma^{-1}\|X\|_{H^{s_1+\tau+1}}.
\end{equation}
\noindent
\textbf{Step 3: Lipschitz estimates for (\ref{VectParahomoFix}).}

Let us now determine Lipschitz dependence on parameters. Just as in proving Theorem \ref{MatRed}, this follows from the general Lipschitz dependence on parameters for Banach fixed point theorem.

We already know that for $(\bar\omega,X)\in\xR^n\times\bar{\mathcal{B}}_{\varepsilon_1\gamma^2}^{s_1+2\tau+1}$, the mapping $\mathscr{G}(\bar\omega,X;\cdot)$ has a unique fixed point $\Phi(\bar\omega,X)\in\bar{\mathcal{B}}^{s_1}_{\rho}$, with $\rho=C_{s_1}\gamma\varepsilon_1$. We now estimate how $\mathscr{G}(\bar\omega,X;\theta)$ changes when $(\bar\omega,X)\in\xR^n\times\bar{\mathcal{B}}^{s_1+2\tau+1}_{\varepsilon_1\gamma^2}$ change for $\theta\in \bar{\mathcal{B}}^{s_1}_{\rho}$. We compute
\begin{equation}\label{VectDiffG}
\begin{aligned}
\Delta_{12}\mathscr{G}(\bar\omega,X;\theta)
&=\big(\mathscr{G}(\bar\omega_1,X_1;\theta)-\mathscr{G}(\bar\omega_1,X_2;\theta)\big)\\
&\quad+T_{(I_n+\partial \theta)^{-1}}^{-1}\left(\Delta_{12}\bm{L}_{\gamma,\tau}(\bar\omega)\right)
T_{(I_n+\partial \theta)}^{-1}\big(\eta^\star X_2+\R_\PLR(X_2,\theta)+\R_1(\theta)-\lambda(\theta)\big).
\end{aligned}
\end{equation}

For the first difference in (\ref{VectDiffG}), we argue as in (\ref{MatDiffGA}), together with the estimates already obtained in the last step, to find
\begin{equation}\label{VectDiff(X)}
\|\mathscr{G}(\bar\omega_1,X_1;\theta)-\mathscr{G}(\bar\omega_1,X_2;\theta)\|_{H^{s_1}}
\leq C_{s_1}\gamma^{-1}\|X_1-X_2\|_{H^{s_1+\tau+1}}
\end{equation}

As for the second difference in (\ref{VectDiffG}), we know from Lemma \ref{Dio_Ext} that the Fourier multiplier $\Delta_{12}\bm{L}_{\gamma,\tau}(\bar\omega)$ loses $2\tau+1$ derivatives. Therefore, for $\theta\in\bar{\mathcal{B}}^{s_1}_\rho$, the second difference in (\ref{VectDiffG}) is estimated as
\begin{equation}\label{DiffGomega}
\begin{aligned}
\|\Delta_{12}\mathscr{G}&(\bar\omega_1,X_2;\theta)-\mathscr{G}(\bar\omega_2,X_2;\theta)\|_{H^{s_1}}\\
&\leq C_{s_1}\gamma^{-2}|\Delta_{12}\bar\omega|
\left\|\eta^\star X_2+\R_\PLR(X_2,\theta)+\R_1(\theta)-\lambda(\theta)\right\|_{H^{s_1+2\tau+1}}\\
&\leq C_{s_1}\gamma^{-2}|\Delta_{12}\bar\omega|
\left((1+\rho)\|X_2\|_{H^{s_1+2\tau+1}}+\rho^2\right)\\
&\leq C_{s_1}\varepsilon_1|\Delta_{12}\bar\omega|.
\end{aligned}
\end{equation}
Here we used the estimates established in the previous step again. Combining (\ref{VectDiff(X)}) and (\ref{DiffGomega}), using again Theorem \ref{Depend} concerning Lipschitz dependence of fixed point, we obtain
\begin{equation}\label{VectLipFix(low)}
\|\Delta_{12}\Phi(\bar\omega,X)\|_{H^{s_1}}
\leq C_{s_1}\gamma^{-1}\|\Delta_{12}X\|_{H^{s_1+\tau+1}}
+C_{s_1}\varepsilon_1|\Delta_{12}\bar\omega|.
\end{equation}

We then substitute these back to the expression (\ref{6VectShift}) of $\lambda(\theta)$. Note that the definition of paracomposition $\eta^\star$ and $\R_\PLR(X,\theta)$, $\R_1(\theta)$ (see (\ref{VectR1(theta)})) only requires $\theta\in C^2$. Thus, by averaging on $\xT^n$, we obtain the Lipschitz continuity of the shift $\lambda$ with respect to $(\bar\omega,X)$: denoting the shift as $\lambda(\bar\omega,X)$, we have
\begin{equation}\label{VectLipShift}
|\Delta_{12}\lambda(\bar\omega_,X)|
\leq C_{s_1}\varepsilon_1|\Delta_{12}\bar\omega|
+C_{s_1}\gamma^{-1}\|\Delta_{12}X\|_{H^{s_1+2\tau+1}}.
\end{equation}

\subsection{Back to the Original Problem}
Having solved the modified problem \ref{VectMod}, we can then pass to the original problem and prove Theorem \ref{6VectRed}.

\noindent
\textbf{Step 4: Determining $h$ and $\eta$.}

Let us summarize the result of Step 3 as follows. For some small $\varepsilon_1\ll1$ depending on $s_1$, there is a shift $\lambda(\bar\omega,X)$ and a perturbation $\theta=\Phi(\bar\omega,X)$, both Lipschitz in $(\bar\omega,X)\in\xR^n\times\bar{\mathcal{B}}^{s_1+2\tau+1}_{\varepsilon_1\gamma^2}$, such that if $\bar\omega\in\Dio[\gamma,\tau]$, then $\lambda(\bar\omega,X)$ and $\theta=\Phi(\bar\omega,X)$ give the unique solution of Problem \ref{VectMod}.

Notice (\ref{VectLipShift}) implies that the Lipschitz constant of $\lambda(\bar\omega,X)$ with respect to $\bar\omega$ is small since $\varepsilon_1\ll1$, so the classical Lipschitz inversion theorem implies that the mapping $\bar\omega\to\bar\omega-\lambda(\bar\omega,X)$ is in fact a bi-Lipschitz homeomorphism on $\mathbb{R}^n$. We shall denote its inverse as $\omega+h(\omega,X)$. As a consequence of the classical Lipschitz inversion theorem, $h(\omega,X)$ satisfies (\ref{VectLip}). We then define
$$
\eta(\omega,X):=\Id+\Phi\big(\omega+h(\omega,X),X\big),
\quad\mathbb{R}^n\times\bar{\mathcal{B}}^{s_1+2\tau+1}_{\varepsilon_1\gamma^2}\to H^{s_1}.
$$
Then by (\ref{VectLipFix(low)})(\ref{VectLipShift}), we find $\eta$ satisfies (\ref{VectLip}). 

The solution of Problem \ref{VectMod} exactly shows that when $\bar\omega=\omega+h(\omega,X)\in\Dio[\gamma,\tau]$, the flow of $\omega+X=\bar\omega+X-\lambda(\bar\omega,X)$ conjugates to the flow of $\omega+h(\omega,X)$, via the diffeomorphism $\eta(\omega,X)=\Id+\Phi(\bar\omega,X)$. This justifies Item \ref{Vect3} of Theorem \ref{6VectRed}.

To obtain the additional regularity $\eta\in H^{s_1+\tau}$ in Item \ref{Vect1} of Theorem \ref{6VectRed}, we simply employ (\ref{VectR(theta)s1})(\ref{Vectetaf(s1)})(\ref{VectPLA(s1)}) to find
\begin{equation}\label{Vectu(s_1+tau)}
\|\Phi(\bar\omega,X)\|_{H^{s_1+\tau+1}}\leq C_{s_1}\gamma^{-1}\|X\|_{H^{s_1+2\tau+1}}.
\end{equation}

Corollary \ref{6VectRed'} follows readily from the fixed point form modified problem. We simply notice that if $X$ is even and $\theta$ is odd, then $\mathscr{G}(\bar\omega,X;\theta)$ in (\ref{VectParahomoFix}) gives an odd function, because paraproduct and paralinearization do not affect parity. The fixed point arguments can thus be conducted within the space of odd functions.

\noindent
\textbf{Step 5: Additional regularity.}

For Item \ref{Vect2} of Theorem \ref{6VectRed}, we may argue similarly as in step 3 of proving Theorem \ref{MatRed}. Namely, if $s>s_1$ and $X\in H^{s+2\tau+1}$, then with $\bar\omega=\omega+h(\omega,X)$, $\theta=\Phi(\bar\omega,X)$, $\eta=\Id+\theta$, we may apply Theorem \ref{2theo:sc2}, \ref{PCNormCont}, \ref{ParaLinRefined} again to obtain
$$
\begin{aligned}
\eta^\star X+\R_\PLR(X,\theta)+\R_1(\theta)-\lambda(\bar\omega,X)
&\in H^{s+2\tau+1}+H^{2s_1-n-\delta}+H^{2s_1-n/2-2}+C^\infty\\
&\subset H^{s_1+2\tau+2}.
\end{aligned}
$$
Taking into account the equation (\ref{VectParahomoFix}) satisfied by $\theta$, we find $\eta\in H^{s+\tau+1}$. 

Having established this additional regularity, the tame estimate $\|\theta\|_{H^{s+\tau+1}}\lesssim_s\gamma^{-1}\|X\|_{H^{s+2\tau+1}}$ then follows easily. In fact, as long as $\eta$ stays near $\Id$ in $C^2$ topology, we have
\begin{equation}\label{VectetafHs}
\|\eta^\star X\|_{H^{s+2\tau+1}}
\leq C_s\|X\|_{H^{s+2\tau+1}}.
\end{equation}
We may also recast (\ref{MatTame}) for $\R_1(\theta)$ to obtain
\begin{equation}\label{VectR1uHs}
\begin{aligned}
\|\R_1(\theta)\|_{H^{s+2\tau+1}}
&\leq \gamma\big(K\|\theta\|_{H^{s+\tau+1}}+C_{s,\delta,K}\gamma^{-1}\|X\|_{H^{s_1+2\tau+1}}\big),\\
&\quad K=K(s)\ll1\text{ to be decided.}
\end{aligned}
\end{equation}

To estimate $\|\R_{\PLR}(X,\theta)\|_{H^{s+2\tau+1}}$, we use the refined paralinearization theorem \ref{ParaLinRefined} with $r=(s_1+s)/2-n/2=(s+2\tau+2+\delta)/2$, $r'=r-\delta/2$, and consider $X$ as in $H^{(s_1+s)/2+2\tau+1}$. Then we estimate
$$
\begin{aligned}
\|\R_{\PLR}(X,\theta)\|_{H^{s+2\tau+1}}
&\leq C_s\big\|X;H^{(s_1+s)/2+2\tau+1}\big\|\cdot|\theta|_{C^{(s_1+s)/2-n/2}}\\
&\leq C_s\big\|X;H^{(s_1+s)/2+2\tau+1}\big\|\cdot\|\theta\|_{H^{(s_1+s)/2}}.
\end{aligned}
$$
By interpolation inequalities in Sobolev spaces, we obtain
$$
\begin{aligned}
\|\R_{\PLR}(X,\theta)\|_{H^{s+2\tau+1}}
&\leq C_s\|X\|_{H^{s+2\tau+1}}^{1/2}\|X\|_{H^{s_1+2\tau+1}}^{1/2}
\left(\beta\|\theta\|_{H^s}+\beta^{-1}\|\theta\|_{H^{s_1}}\right).
\end{aligned}
$$
We then choose $\beta=C_s^{-1}K\gamma\|X\|_{H^{s+2\tau+1}}^{-1/2}\|X\|_{H^{s_1+2\tau+1}}^{-1/2}$, where $K=K(s)\ll1$ is to be determined; then 
\begin{equation}\label{VectRPLAHs}
\begin{aligned}
\|\R_{\PLR}(X,\theta)\|_{H^{s+2\tau+1}}
&\leq 
\gamma K\|\theta\|_{H^s}+C_{s,K}\gamma^{-1}\|X\|_{H^{s+\tau+1}}\|X\|_{H^{s_1+\tau+1}}\|\theta\|_{H^{s_1}}\\
&\leq \gamma K\|\theta\|_{H^{s}}+C_{s,K}\gamma^2\varepsilon_1^2\|X\|_{H^{s+\tau+1}}.
\end{aligned}
\end{equation}
Summing up (\ref{VectetafHs})(\ref{VectR1uHs})(\ref{VectRPLAHs}), we obtain the following estimate for $\theta=\mathscr{G}(\bar\omega,X;\theta)$:
\begin{equation}\label{VectHs}
\begin{aligned}
\|\theta\|_{H^{s+\tau+1}}
&\leq C_s\gamma^{-1}\left(\|\eta^\star X\|_{H^{s+2\tau+1}}+\|\R_1(\theta)\|_{H^{s+2\tau+1}}+\|\R_{\PLR}(X,\theta)\|_{H^{s+2\tau+1}}\right)\\
&\leq C_sK\|\theta\|_{H^{s}}+C_s\gamma^{-1}\|X\|_{H^{s+2\tau+1}}.
\end{aligned}
\end{equation}
If we choose $K$ to make $C_sK<1/2$, then this inequality exactly gives the desired estimate of Item \ref{Vect2}.

For Item \ref{Vect3}, we have
$$
\begin{aligned}
\eta(\omega_1,X_1)-\eta(\omega_2,X_2)
&=\Phi\big(\omega_1+h(\omega_1,X_1),X_1\big)-\Phi\big(\omega_2+h(\omega_2,X_2),X_2\big).
\end{aligned}
$$
We can recast the estimate (\ref{VectHs}), but this time for the difference of two fixed points, and the computations largely remain similar.

\section{Proof of Theorem \ref{1NLPHyp}}\label{sec:7}

The proof of Theorem \ref{1NLPHyp} is divided into two major steps. We first paralinearize (\ref{1NLPEQ}) into
$$
\big[(\omega+\varepsilon T_{Y})\cdot\nabla_x\big]u
+\varepsilon T_{A}u+\varepsilon\R(X,F;u)=\varepsilon f,
$$
which is equation (\ref{7ParalinEQ}) in the sequel (together with the meaning of $Y,A,\R$), and prove \emph{paradifferential reducibility}. That is, we conjugate this equation with an appropriate paracomposition $\chi^\star$ and paradifferential operator $T_{I_N+U}$, to transform it into 
$$
\big((\omega+h(\omega,\varepsilon Y))\cdot\nabla_x\big)y=\varepsilon T_{I_N+U}^{-1}\chi^\star f+\mathfrak{F}(\omega,y),
\quad\text{provided }\omega+h(\omega,\varepsilon Y)\text{ is Diophantine}.
$$
This is equation (\ref{7Paray}) in the sequel. It is a \emph{constant coefficient equation} in the new unknown $y$ modulo smoothing remainder $\mathfrak{F}(\omega,y)$: the shifted vector field $\omega+h(\omega,\varepsilon Y)$ is parallel. The new unknown $y=T_{I_N+U}^{-1}\chi^\star u$ is defined by Proposition \ref{7NewVar}. 

Next, we show that the constant coefficient equation is implied by the following fixed point form equation (with $\bm{L}_{\gamma,\tau}$ as in Lemma \ref{Dio_Ext}) when $\omega+h(\omega,\varepsilon Y)$ is Diophantine:
$$
y=\bm{L}_{\gamma,\tau}\big(\omega+h(\omega,\varepsilon Y)\big)\left[\varepsilon T_{I_N+U}^{-1}\chi^\star f+\mathfrak{F}(\omega,y)\right].
$$
Whether $\omega$ satisfies this Diophantine requirement or not, this equation can always be solved by \emph{standard Banach fixed point argument}, since the loss of regularity caused by $\bm{L}_{\gamma,\tau}\big(\omega+h(\omega,\varepsilon Y)\big)$ is balanced by the smoothing right-hand-side. This yields a solution $u=u(\omega)$. The set of $\omega$ making $\omega+h(\omega,\varepsilon Y)$ Diophantine is ``very ample" in the sense of Lebesgue measure. Therefore, there are ``plenty" of $\omega$ for which it gives a solution of the original problem.

To the authors' knowledge, this paradifferential reducibility method differs from most of the existing literature on ``KAM for PDEs", where only the \emph{linearized equation} is reduced to constant coefficient form. If we regard Sections \ref{sec:2}-\ref{sec:6} as black boxes of paradifferential calculus to be exploited, then the proof occupies very few pages.

We now enter into the details. To simplify the notation, we will not explicitly indicate the dependence of constants on $X$ and $F$ below. We continue to denote by $\bar{\mathcal{B}}^{s}_\rho$ the closed ball in $H^s$ centered at $0$ with radius $\rho$. We also assume \emph{a priori} that for some fixed $s > n/2 + 1$, whose value to be specified later, we have $u \in H^s_\even(\xT^n,\xR^N)$ with $\lA u\rA_{H^s}\leq 1$, so that $\varepsilon$ is the only parameter that needs to be suitably small. 

We begin by rewriting (\ref{1NLPEQ}) into a paradifferential form using Bony's theorem \ref{2ParaLin}:
\begin{equation}\label{7ParalinEQ}
\big[(\omega+\varepsilon T_{Y})\cdot\nabla_x\big]u
+\varepsilon T_{A}u+\varepsilon\R(X,F;u)=\varepsilon f
\end{equation}
where $Y(x):=X(x,u(x))\in H^s_\even(\xT^n;\xR^n)$, and
\begin{equation}\label{7A(x)}
\begin{aligned}
A(x)&:=F'_z\big(x,u(x)\big)
+\sum_{j=1}^n\big(\partial_ju(x)\big)\otimes X_{j;z}'\big(x,u(x)\big)
\in H^{s-1}_{\odd}\otimes\mathbf{M}_{N}(\xR).
\end{aligned}
\end{equation}
Furthermore, the paralinearization remainder 
\begin{equation}\label{7R(X,F,u)}
\R(X,F;u)
=\R_\CM\big(X(x,u),\nabla_xu\big)
+\R_{\PL}(F,u)+T_{\nabla_x u}\cdot\R_{\PL}(X,u)
\in H^{2s-n/2-1}_\odd,
\end{equation}
so that $\|\R(X,F;u)\|_{H^{2s-n/2-1}}\lesssim_s1$. The parity of these functions are all easy to check since paraproduct and paralinearization preserve parities. Since $X,F$ are smooth, the mappings $H^s\ni u\mapsto Y\in H^{s}$ and $H^s\ni u\mapsto A\in H^{s-1}$ are smooth. The remainder $\R_\CM$ is bilinear, while the remainder $\R_{\PL}(F,u)+T_{\nabla_x u}\cdot\R_{\PL}(X,u)\in H^{2s-n/2}$ is in fact $C^\infty$ in $u$ by Theorem~\ref{2ParaLin}. 

\subsection{Reducibility of Linear Paradifferential Operator}
Let us fix three real numbers:
$$
\tau>n-1,\quad \gamma\in(0,1],\quad \delta>0,
$$
and still write, as in Theorem \ref{MatRed} and \ref{6VectRed},
$$
s_1=2\tau+2+n+\delta,\quad
s_0=2\tau+1+n/2+\delta.
$$
We shall fix a real number 
$$
s\geq s_1+2\tau+2
$$
whose value will be explicitly determined later. The set $\Dio[\gamma,\tau]$ still denotes the set of Diophantine frequencies with parameter $\gamma,\tau$ as in (\ref{diopc}). 

Suppose now we have a vector field $Y\in H^s_\even(\xT^n,\xR^n)$ and a matrix-valued function $A\in H^{s-1}_\odd\otimes\mathbf{M}_N(\xR)$. Suppose $Y,A$ both have size $\simeq1$ in the function spaces $H^{s_1},H^{s_0}$ respectively, so $\varepsilon>0$ is the only parameter that should be suitably small. Let us look at a general symbol
\begin{equation}\label{7p(x,xi)}
p(x,\xi):=i\big(\omega+\varepsilon Y(x)\big)\cdot\xi
+\varepsilon A(x)
\in\Gamma^1_{s-n/2}+\Gamma^0_{s-n/2-1},
\end{equation}
corresponding to the general linear paradifferential operator $T_p=\big(\omega+\varepsilon T_{Y}\big)\cdot\nabla_x+\varepsilon T_A$. This obviously corresponds to the paralinearized equation (\ref{7ParalinEQ}) if $Y,A$ are related to $u$ in the manner indicated therein. 

Our aim is to reduce the general linear paradifferential operator $T_p$ into a constant coefficient form modulo a smoothing remainder. To do so, we will rely on Theorem \ref{6VectRed} and Corollary \ref{6VectRed'}. 

Suppose $\varepsilon\leq\varepsilon_1\gamma^2$. Theorem \ref{6VectRed} immediately gives a shift $h(\omega,\varepsilon Y)\in\xR^n$ and an odd diffeomorphism $\eta=\eta(\omega,\varepsilon Y)\in H^{s-\tau}_\odd$, such that if $\omega+h(\omega,\varepsilon Y)\in\Dio[\gamma,\tau]$, then $\eta(\omega,\varepsilon Y)$ conjugates the vector field $\omega+\varepsilon Y$ to the parallel one $\omega+h(\omega,\varepsilon Y)$, namely (see (\ref{VectConj0}))
\begin{equation}\label{7Eqneta}
\big((\omega+h(\omega,\varepsilon Y))\cdot\partial\big)(\eta-\Id)=\varepsilon Y\circ\eta.
\end{equation}
Let us define the diffeomorphism 
\begin{equation}\label{7chi}
\chi:=\eta(\omega,\varepsilon Y)^{-1}\in H^{s-\tau}_\odd.
\end{equation}
By (\ref{7Eqneta}), if $\omega+h(\omega,\varepsilon Y)\in\Dio[\gamma,\tau]$, then $\chi$ satisfies 
$$
p\left( \chi(x),\chi'(x)^{-\T}\xi\right)=i(\omega+h(\omega,\varepsilon Y))\cdot\xi
+\varepsilon A\circ\chi,
$$
where recall that $-\T$ stands for taking the inverse of transpose. 

We then apply the parachange of variable formula in Theorem \ref{4PCConj} to the diffeomorphism $\chi$ and symbol $p(x,\xi)$. Since $p(x,\xi)$ is a classical differential symbol, the conjugation formula (\ref{q(x,xi)}) reduces to the usual chain rule and contains only the leading term. We thus obtain
\begin{equation}\label{7chi_Tp}
\chi^\star T_p=(T_{q}+\varepsilon T_{A\circ\chi})\chi^\star+\R_\Con(p,\chi),
\end{equation}
where the symbol $q$ is given by
\begin{equation}\label{q(x,xi)_1}
q(x,\xi)=i\Big(\omega+\varepsilon Y\circ\chi(x)\Big)\cdot\big(\chi'(x)^{-\T}\xi\big)
\in\Gamma^1_{s-\tau-1-n/2}.
\end{equation}
By Proposition \ref{Findeta}, it shall equal to $i(\omega+h(\omega,\varepsilon Y))\cdot\xi$ if $\omega+h(\omega,\varepsilon Y)\in\Dio[\gamma,\tau]$. 

We next look for a matrix-valued function to eliminate the operator $\varepsilon T_{A\circ\chi}$. Since $\chi\in H^{s-\tau}_\odd$ and $A\in H^{s-1}_\odd$, we find
$A\circ\chi\in H^{s-\tau}_\odd$, and
$$
\|A\circ\chi\|_{H^{s-\tau}}\leq C_s,
\quad s\geq s_1+2\tau+2.
$$
We assume in addition 
\begin{equation}\label{7_s_size}
s\geq \max(s_1+(2\tau+2),s_0+(3\tau+2)).
\end{equation}
If we have further $\varepsilon\ll\varepsilon_0\gamma^2$, Theorem \ref{MatRed} immediately gives the existence of a matrix-valued function 
$$
U=U\big(\omega+h(\omega,\varepsilon Y),\varepsilon A\circ\chi\big)\in H^{s-2\tau}_\even,
$$
such that if $\omega+h(\omega,\varepsilon Y)\in\Dio[\gamma,\tau]$, then 
\begin{equation}\label{7UEqn}
\big((\omega+h(\omega,\varepsilon Y))\cdot\nabla_x\big)U
+\varepsilon(A\circ\chi)\cdot(I_N+U)=0.
\end{equation}

With this matrix-valued function $U$ given, we compute (with the aid of Theorem \ref{2theo:sc})
\begin{equation}\label{7UConj}
\begin{aligned}
(T_q+\varepsilon T_{A\circ\chi})T_{I_N+U}
&=T_{I_N+U}T_q+\varepsilon \R_\CM^{\gau}(A\circ\chi,U)\\
&\quad+[T_q,T_{I_N+U}]
+\varepsilon\Op^\PM\big((A\circ\chi)\cdot(I_N+U)\big).
\end{aligned}
\end{equation}
If $\omega+h(\omega,\varepsilon Y)\in\Dio[\gamma,\tau]$, then $T_q=(\omega+h(\omega,\varepsilon Y))\cdot\nabla_x$, so by (\ref{7UEqn}) the operator 
$$
\begin{aligned}
[T_q,T_{I_N+U}]
&+\varepsilon\Op^\PM\big((A\circ\chi)\cdot(I_N+U)\big)\\
&=\Op^\PM\left[\big((\omega+h(\omega,\varepsilon Y))\cdot\nabla_x\big)U
+\varepsilon(A\circ\chi)\cdot(I_N+U)\right]
\end{aligned}
$$
in (\ref{7UConj}) shall vanish. Summarizing the computations above, we conclude the following reducibility result for the linear paradifferential operator $T_p=\big(\omega+\varepsilon T_{Y}\big)\cdot\nabla_x+\varepsilon T_A$:

\begin{proposition}\label{7_Lin_Red}
Fix $s$ as in (\ref{7_s_size}). Suppose $Y\in H^s_\even(\xT^n;\xR^n)$ and $A\in H^{s-1}_\odd\otimes\mathbf{M}_N(\xR)$ are of norm $\leq1$ in the function spaces $H^{s_1},H^{s_0}$ respectively. Suppose $\varepsilon$ satisfies the smallness requirement as in Theorem \ref{MatRed} and \ref{6VectRed}, namely $\varepsilon\leq\min(\varepsilon_0,\varepsilon_1)\gamma^2$. If the shifted frequency $\omega+h(\omega,\varepsilon Y)$, given by Theorem \ref{MatRed}, is in the Diophantine set $\Dio[\gamma,\tau]$, then with 
\begin{equation}\label{7_chi_U}
\chi=\chi(\omega,\varepsilon Y)\in H^{s-\tau}_\odd,
\quad
U=U\big(\omega+h(\omega,\varepsilon Y),\varepsilon A\circ\chi\big)\in H^{s-2\tau}_\even,
\end{equation}
given by Theorem \ref{MatRed} and Theorem \ref{6VectRed}, we have
\begin{equation}\label{7_Lin_Red_Eq}
\begin{aligned}
\chi^\star T_p
&=T_{I_N+U}\left[\big(\omega+h(\omega,\varepsilon Y)\big)\cdot\nabla_x\right] T_{I_N+U}^{-1}\chi^\star\\
&\quad
+\varepsilon \R_\CM^{\gau}(A\circ\chi,U)T_{I_N+U}^{-1}\chi^\star
+\R_\Con(p,\chi)
\end{aligned}
\end{equation}
\end{proposition}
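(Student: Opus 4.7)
The plan is to assemble three reducibility results in sequence, exactly as foreshadowed in the discussion preceding the statement: Theorem \ref{6VectRed} handles the first-order transport part, Theorem \ref{4PCConj} transfers the resulting change of variables to the paradifferential level, and Theorem \ref{MatRed} eliminates the remaining zero-order matrix term. The proof itself is a bookkeeping assembly; no new estimates are needed.

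First, apply Theorem \ref{6VectRed} to the vector field $\varepsilon Y$, which satisfies $\|\varepsilon Y\|_{H^{s_1+2\tau+1}} \lesssim \varepsilon \leq \varepsilon_1 \gamma^2$. This produces the shift $h(\omega, \varepsilon Y) \in \xR^n$ and an odd diffeomorphism $\eta \in H^{s-\tau}_{\odd}$ (item (1) of Theorem \ref{6VectRed} applies because $s - \tau - 1 \geq s_1$), and Corollary \ref{6VectRed'} delivers oddness. Whenever $\omega + h(\omega, \varepsilon Y) \in \Dio[\gamma,\tau]$, the inverse $\chi := \eta^{-1} \in H^{s-\tau}_{\odd}$ straightens $\omega + \varepsilon Y$ in the sense that $p\bigl(\chi(x),\chi'(x)^{-\T}\xi\bigr) = i(\omega + h)\cdot\xi + \varepsilon A \circ \chi$, by differentiating (\ref{7Eqneta}) and inverting-transposing the Jacobian factor.

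Next, apply Theorem \ref{4PCConj} to the symbol $p$ in (\ref{7p(x,xi)}) with diffeomorphism $\chi$. Because $p$ is affine in $\xi$, the higher-order terms in the expansion (\ref{q(x,xi)}) vanish, leaving only the principal symbol $q$ given by (\ref{q(x,xi)_1}) together with the transported zero-order piece $\varepsilon A \circ \chi$. This yields (\ref{7chi_Tp}), with $\R_\Con(p,\chi)$ smoothing in the sense quantified by Theorem \ref{4PCConj}. On the Diophantine set, $T_q$ reduces to the constant-coefficient operator $(\omega + h)\cdot\nabla_x$.

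Finally, apply Theorem \ref{MatRed} with frequency $\omega + h(\omega, \varepsilon Y)$ and perturbation $\varepsilon A \circ \chi$. The latter is odd (since $\chi$ and $A$ are odd) and, by the composition estimate together with the hypothesis $s \geq s_0 + 3\tau + 2$, satisfies $\|A\circ\chi\|_{H^{s_0+2\tau+1}} \lesssim 1$, so $\varepsilon \leq \varepsilon_0\gamma^2$ furnishes $U \in H^{s-2\tau}_{\even}$ solving (\ref{7UEqn}) on the Diophantine set. To close the identity, compute $(T_q + \varepsilon T_{A\circ\chi})T_{I_N+U}$ using Theorem \ref{2theo:sc} and Remark \ref{2MatSymbol}: the composition produces the smoothing paramultiplication remainder $\varepsilon\R_\CM^{\gau}(A\circ\chi, U)$, and the principal and subprincipal terms collapse to $T_{I_N+U} T_q + \Op^\PM\bigl[((\omega+h)\cdot\nabla_x)U + \varepsilon(A\circ\chi)(I_N+U)\bigr]$, whose second summand vanishes identically by (\ref{7UEqn}). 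Substituting into (\ref{7chi_Tp}) and inserting $T_{I_N+U}T_{I_N+U}^{-1} = \mathrm{Id}$ yields (\ref{7_Lin_Red_Eq}).

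The only delicate point, and the place where the hypothesis (\ref{7_s_size}) is used, is the regularity bookkeeping: the vector-field reduction costs $\tau$ derivatives on $\chi$; the matrix reduction costs $2\tau$ more on $U$; and Theorem \ref{4PCConj} requires an additional margin before the smoothing remainders land in acceptable classes. The choice (\ref{7_s_size}) is precisely calibrated so that each intermediate object ($\chi$, $A\circ\chi$, $U$, and the various $\R$ remainders) sits in a Sobolev space of positive index where the cited theorems apply.
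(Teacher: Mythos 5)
Your proposal is correct and follows essentially the same route as the paper: apply Theorem \ref{6VectRed} (and Corollary \ref{6VectRed'}) to straighten the vector field, conjugate $T_p$ by $\chi^\star$ via Theorem \ref{4PCConj} (noting that the lower-order terms in (\ref{q(x,xi)}) vanish for an affine-in-$\xi$ symbol), then apply Theorem \ref{MatRed} to eliminate the zero-order matrix term, closing the identity with the paraproduct commutator computation and (\ref{7UEqn}). The only small slip is the cited threshold ``$s-\tau-1\geq s_1$'' for item (1) of Theorem \ref{6VectRed}: with $\varepsilon Y\in H^s$ one must take $s'=s-2\tau-1$ in that theorem, so the relevant condition is $s-2\tau-1\geq s_1$, but this is of course implied by (\ref{7_s_size}) and the conclusion $\eta\in H^{s-\tau}_\odd$ is unaffected.
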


The paradifferential remainders $\R^\gau_\CM$, $\R_\Con$ in (\ref{7_Lin_Red_Eq}) are ``very smoothing": by Theorem \ref{2theo:sc} and Theorem \ref{4PCConj}, they gain nearly $s$ derivatives back given the regularity of $Y,\chi$ and $A$. Therefore, conjugation with $T_{I_N+U}^{-1}\chi^\star$ really brings $T_p$ to the simpler form $\big(\omega+h(\omega,\varepsilon Y)\big)\cdot\nabla_x$ (modulo smoothing remainder) under the Diophantine condition $\omega+h(\omega,\varepsilon Y)\in\Dio[\gamma,\tau]$.

\subsection{The New Unknown}
In the previous subsection, we did the computation for the paradifferential operator $T_p$ with \emph{general} $Y$ and $A$. Now we are at the place to let $Y$ and $A$ depend on $u\in H^s_\even$, with $s$ as in (\ref{7_s_size}). Namely, we put $Y(x):=X(x,u(x))$, and
$$
\begin{aligned}
A(x)=F'_z\big(x,u(x)\big)
+\sum_{j=1}^n\big(\partial_ju(x)\big)\otimes X_{j;z}'\big(x,u(x)\big)
\in H^{s-1}_{\odd}\otimes\mathbf{M}_{N}(\xR),
\end{aligned}
$$
as in (\ref{7A(x)}). In this case, the diffeomorphism $\chi$ and matrix $U$ in Proposition \ref{7_Lin_Red} will then depend on $u$, while the shift $h$ will be a $\xR^n$-valued function of $u$. From now on, we shall use subscripts 1,2 to denote quantities evaluated at $\omega_{1,2}\in\xR^n$ and $u_{1,2}\in H^s_\even$. 

We first study how $\chi$ and $U$ varies as $\omega,u$ varies. Since $X(x,z)$ is smooth in its arguments, we find that $Y(x)=X(x,u(x))$ has smooth dependence on $u\in H^s_\even$. Using the Lipschitz dependence part of Theorem \ref{6VectRed}, we conclude the following:

\begin{proposition}\label{Findeta}
Fix $u \in H^s_\even(\xT^n,\xR^N)$ with $\lA u\rA_{H^s}\leq 1$. Write $Y(x):=X(x,u(x))$, which is an even $H^s$ vector field on $\xT^n$. Suppose $\varepsilon$ satisfies the following smallness condition in low norm:
\begin{equation}\label{EtaSmall}
\forall u\in \bar{\mathcal{B}}^{s_1+2\tau+1}_{\varepsilon_1\gamma^2,\even},\quad \varepsilon\|X(x,u(x))\|_{H^{s_1+2\tau+1}}\leq\varepsilon_1\gamma^2,
\end{equation}
where $\varepsilon_1$ is as in Theorem \ref{6VectRed}. Then, for all $(\omega,u)\in \xR^n\times\bar{\mathcal{B}}^{s_1+2\tau+1}_{\varepsilon_1\gamma^2,\even}$, the odd diffeomorphism $\eta=\eta(\omega,\varepsilon Y)$ has regularity
\begin{equation}\label{Addieta}
\|\eta(\omega,\varepsilon Y)-\Id\|_{H^{s-\tau}}
\leq C_s\varepsilon\gamma^{-1},
\end{equation}
and satisfies Lipschitz estimates: for $s_1+2\tau+2\leq \alpha\leq s$,
\begin{equation}\label{Liph&eta}
\begin{aligned}
|\Delta_{12}h(\omega,\varepsilon Y)|
&\leq C_{s_1}\varepsilon_1|\Delta_{12}\omega|+C_{s_1}\varepsilon\gamma^{-1}\|\Delta_{12}u\|_{H^{s_1+2\tau+1}};\\
\|\Delta_{12}\eta(\omega,\varepsilon Y)\|_{H^{\alpha-(2\tau+1)}}
&\leq C_{s}\varepsilon\gamma^{-2}\big(|\Delta_{12}\omega|+\|\Delta_{12}u\|_{H^{\alpha}}\big).
\end{aligned}
\end{equation}
\end{proposition}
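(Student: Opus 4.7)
The plan is to view Proposition~\ref{Findeta} as a direct corollary of Theorem~\ref{6VectRed} applied with the vector field $\varepsilon Y(x) = \varepsilon X(x, u(x))$, together with the tame composition estimates that translate bounds on $Y$ into bounds on $u$. Since $X(x,z)$ is smooth in both arguments and $\|u\|_{H^s} \leq 1$, Theorem~\ref{2ParaLin} (or standard Moser-type composition estimates) gives, for every $\alpha \in [s_0,s]$,
$$\|X(\cdot,u)\|_{H^\alpha} \leq C_\alpha, \qquad \|X(\cdot,u_1) - X(\cdot,u_2)\|_{H^\alpha} \leq C_\alpha \|u_1 - u_2\|_{H^\alpha}.$$

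First I would verify that the hypothesis (\ref{EtaSmall}) is precisely the smallness requirement $\|\varepsilon Y\|_{H^{s_1 + 2\tau + 1}} \leq \varepsilon_1 \gamma^2$ of Theorem~\ref{6VectRed}, so all its conclusions apply with input $(\omega, X_{\mathrm{thm}}) = (\omega, \varepsilon Y)$. For the additional regularity (\ref{Addieta}), I invoke Item~\ref{Vect1} at regularity level $s' := s - (2\tau + 1)$, which is admissible by the standing assumption (\ref{7_s_size}): since $\varepsilon Y \in H^{s' + 2\tau + 1} = H^s$ with norm bounded by $C_s \varepsilon$, Item~\ref{Vect1} yields $\eta - \Id \in H^{s' + \tau + 1} = H^{s - \tau}$ with $\|\eta - \Id\|_{H^{s - \tau}} \leq C_s \gamma^{-1} \|\varepsilon Y\|_{H^s} \leq C_s \varepsilon \gamma^{-1}$, as claimed.

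The Lipschitz estimate for $h$ in (\ref{Liph&eta}) follows from the low-norm Lipschitz property of $h$ stated in (\ref{VectLip}) combined with the composition bound $\|\Delta_{12} Y\|_{H^{s_1 + 2\tau + 1}} \leq C_{s_1} \|\Delta_{12} u\|_{H^{s_1 + 2\tau + 1}}$. For $\eta$, I would apply Item~\ref{Vect2} with parameter $s_{\mathrm{thm}} = \alpha - (2\tau + 1)$, obtaining
$$\|\Delta_{12} \eta\|_{H^{\alpha - 2\tau - 1}} \leq C_\alpha \varepsilon \gamma^{-1} \|\Delta_{12} Y\|_{H^{\alpha - \tau - 1}} + C_\alpha \varepsilon \gamma^{-2}\bigl(|\Delta_{12} \omega| + \varepsilon \|\Delta_{12} Y\|_{H^{s_1 + 2\tau + 1}}\bigr),$$
where I have used $\|\varepsilon Y_i\|_{H^\alpha} \leq C_\alpha \varepsilon$ to bound the tame prefactor. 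Converting $Y$-differences back to $u$-differences via the composition Lipschitz bound, using $\alpha \geq s_1 + 2\tau + 2$ to absorb $\|\Delta_{12} u\|_{H^{\alpha - \tau - 1}}$ and $\|\Delta_{12} u\|_{H^{s_1 + 2\tau + 1}}$ into $\|\Delta_{12} u\|_{H^\alpha}$, and using $\gamma \leq 1$ to absorb $\gamma^{-1}$ into $\gamma^{-2}$, delivers exactly (\ref{Liph&eta}).

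There is no substantial analytic obstacle in this proof; the deep work is packaged inside Theorem~\ref{6VectRed}. The subtle point is index bookkeeping: the $\tau$-derivative loss in (\ref{Addieta}) reflects the gain of only $\tau + 1$ derivatives through Item~\ref{Vect1}, while the additional $\tau + 1$ derivatives lost in (\ref{Liph&eta}) reflect the Lipschitz bound of Item~\ref{Vect2}, which in turn is why the threshold $\alpha \geq s_1 + 2\tau + 2$ must be imposed in (\ref{Liph&eta}).
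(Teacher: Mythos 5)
Your proof is correct and takes essentially the same route the paper intends: the paper states Proposition~\ref{Findeta} as an immediate consequence of Theorem~\ref{6VectRed} applied to the vector field $\varepsilon Y=\varepsilon X(x,u(x))$, together with smooth-composition tameness for $X$ and the index bookkeeping you carry out. Your choices $s_{\mathrm{thm}}=s-(2\tau+1)$ for the additional regularity and $s_{\mathrm{thm}}=\alpha-(2\tau+1)$ for the Lipschitz bound, the use of $\|Y_i\|_{H^\alpha}\leq C_\alpha$ (from $\|u_i\|_{H^s}\leq 1$ and $\alpha\leq s$) to control the tame prefactor in Item~\ref{Vect2}, and the absorption of the lower-order norms via $\alpha\geq s_1+2\tau+2$ and $\gamma\leq 1$ all reproduce exactly the stated estimates.
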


Therefore, by (\ref{Addieta}), $\chi=\eta^{-1}$ satisfies
\begin{equation}\label{Addichi}
\|\chi-\Id\|_{H^{s-\tau}}
\leq C_s\varepsilon\gamma^{-1}.
\end{equation}
By (\ref{Liph&eta}), $\chi$ has Lipschitz dependence on $(\omega,u)\in\xR^n\times\big(\bar{\mathcal{B}}^{s_1+2\tau+1}_{\varepsilon_1\gamma^2}\cap\bar{\mathcal{B}}^s_1\big)$ at the price of losing more derivatives: for $s_1+2\tau+2\leq \alpha\leq s$,
\begin{equation}\label{Lipchi}
\|\Delta_{12}\chi\|_{H^{\alpha-(2\tau+2)}}\leq C_{s}\varepsilon\gamma^{-2}\big(|\Delta_{12}\omega|+\|\Delta_{12}u\|_{H^{\alpha}}\big).
\end{equation}
Note that the loss is $2\tau+2$ instead of $2\tau+1$ because inverting a diffeomorphism loses one more derivative. By (\ref{Lipchi}), $A\circ\chi$ has Lipschitz dependence on $(\omega,u)$ at the price of losing more derivatives: for $s_1+2\tau+2\leq \alpha\leq s$,
\begin{equation}\label{AchiLip}
\|\Delta_{12}A\circ\chi\|_{H^{\alpha-(2\tau+2)}}
\leq C_s\big(\varepsilon\gamma^{-2}|\Delta_{12}\omega|+\|\Delta_{12}u\|_{H^{\alpha}}\big).
\end{equation}
Using the Lipschitz dependence part of Theorem \ref{MatRed}, we conclude the following:
\begin{proposition}\label{FindU}
Fix $u \in H^s_\even(\xT^n,\xR^N)$ with $\lA u\rA_{H^s}\leq 1$. Still write $Y(x)=X(x,u(x))$, an even $H^s$ vector field on $\xT^n$. Let $\chi=\chi(\omega,\varepsilon Y)=\eta(\omega,\varepsilon Y)^{-1}$, with $\eta$ as in Proposition \ref{Findeta}. In addition to (\ref{EtaSmall}), assume the following smallness in low norm:
\begin{equation}\label{SmallU}
\varepsilon\|A\circ\chi\|_{H^{s_0+\tau}}\leq\varepsilon_0\gamma^2
\quad \forall (\omega,u)\in\xR^n\times\bar{\mathcal{B}}_{\varepsilon_0\gamma^2,\even}^{s_1+2\tau+1},
\end{equation}
where $\varepsilon_0$ is as in Theorem \ref{MatRed}. Then the matrix-valued function $U=U(\omega,\varepsilon A\circ\chi)\in H^{s_0}_\even$, with $(\omega,u)\in\xR^n\times \bar{\mathcal{B}}_{\varepsilon_0\gamma^2,\even}^{s_1+2\tau+1}$, is of regularity $H^{s-2\tau}$:
\begin{equation}\label{7UHs}
\|U(\omega,\varepsilon A\circ\chi)\|_{H^{s-2\tau}}
\leq C_s\varepsilon\gamma^{-1},
\end{equation}
Furthermore, for $s_0+(3\tau+2)\leq \alpha\leq s$, there holds the Lipschitz estimate
\begin{equation}\label{7UHsLip}
\begin{aligned}
\|\Delta_{12}U\|_{H^{\alpha-(3\tau+2)}}
&\leq C_s\varepsilon\gamma^{-2}\big(|\Delta_{12}\omega|+\|\Delta_{12}u\|_{H^{\alpha}}\big).
\end{aligned}
\end{equation}
\end{proposition}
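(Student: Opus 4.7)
\emph{Proof proposal.} The plan is to treat $U(\omega,\varepsilon A\circ\chi)$ as the composition of the Lipschitz mapping supplied by Theorem \ref{MatRed} with the data $(\omega + h(\omega,\varepsilon Y),\, \varepsilon A\circ\chi)$, which is already controlled in terms of $(\omega,u)$ by Proposition \ref{Findeta} and by the bound (\ref{AchiLip}). The preliminary check is that the hypotheses of Theorem \ref{MatRed} hold: oddness of $\varepsilon A\circ\chi$ follows from oddness of $A$ in (\ref{7A(x)}) together with oddness of $\chi$ (Corollary \ref{6VectRed'} combined with inversion), while the low-norm smallness $\|\varepsilon A\circ\chi\|_{H^{s_0+2\tau+1}} \leq \varepsilon_0\gamma^2$ is precisely the assumption (\ref{SmallU}).

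For the regularity estimate (\ref{7UHs}) I would apply Item \ref{Mat1} of Theorem \ref{MatRed} at index $s-2\tau$, so that the data regularity required is $H^{(s-2\tau)+\tau} = H^{s-\tau}$. A standard Sobolev composition estimate (Proposition \ref{2esti:C^scomp}), combined with $\|A\|_{H^{s-1}} \leq C_s$ coming from (\ref{7A(x)}) and the a priori bound $\|u\|_{H^s} \leq 1$, and $\|\chi-\Id\|_{H^{s-\tau}} \leq C_s\varepsilon\gamma^{-1}$ from (\ref{Addichi}), yields $\|A\circ\chi\|_{H^{s-\tau}} \leq C_s$. Item \ref{Mat1} then gives $\|U\|_{H^{s-2\tau}} \leq C_s\gamma^{-1}\|\varepsilon A\circ\chi\|_{H^{s-\tau}} \leq C_s\varepsilon\gamma^{-1}$, which is (\ref{7UHs}).

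For the Lipschitz bound (\ref{7UHsLip}) I would apply Item \ref{Mat2} of Theorem \ref{MatRed} at the index $\alpha - (3\tau+2)$. The three Sobolev levels appearing in that statement become, respectively, $\alpha - 2\tau - 2$ (at which (\ref{AchiLip}) controls $\|\Delta_{12}(A\circ\chi)\|$ by $C_s(\varepsilon\gamma^{-2}|\Delta_{12}\omega|+\|\Delta_{12}u\|_{H^\alpha})$), $\alpha - \tau - 1 \leq s - \tau$ (where $\|A_i\circ\chi_i\|$ is uniformly bounded), and $s_0+\tau$ (the required condition $s_0 + \tau \leq \alpha - 2\tau - 2$ being equivalent to $\alpha \geq s_0 + 3\tau + 2$, the lower bound imposed in the hypothesis). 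The statement of Item \ref{Mat2} also contains the term $|\Delta_{12}(\omega + h(\omega,\varepsilon Y))|$, which is controlled via (\ref{Liph&eta}) by $C(|\Delta_{12}\omega| + \varepsilon\gamma^{-1}\|\Delta_{12}u\|_{H^{s_1+2\tau+1}})$. Combining these inputs and using $\gamma \leq 1$ together with $\varepsilon \leq \min(\varepsilon_0,\varepsilon_1)\gamma^2$ to absorb all higher-order cross terms into the single prefactor $C_s \varepsilon\gamma^{-2}$ delivers (\ref{7UHsLip}).

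The main obstacle is purely bookkeeping: the derivative losses along the chain — $2\tau+1$ from Lipschitz of $\eta$ in Proposition \ref{Findeta}, plus one more from inverting to obtain $\chi$ (whence the exponent $2\tau+2$ in (\ref{Lipchi}) and (\ref{AchiLip})), plus a further $\tau$ from the $H^{\cdot+\tau}$-norm appearing in Item \ref{Mat2} of Theorem \ref{MatRed} — add up to exactly $3\tau+2$, which is the exponent in (\ref{7UHsLip}). One must also verify that all the Sobolev indices that appear on the right-hand side of Item \ref{Mat2} are dominated by $\alpha$; this is precisely why the hypotheses (\ref{7_s_size}) on $s$ and the lower bound $\alpha \geq s_0 + 3\tau + 2$ are imposed.
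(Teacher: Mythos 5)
Your proposal is correct and follows exactly the route the paper intends: the paper's ``proof'' is the single sentence preceding the statement (``Using the Lipschitz dependence part of Theorem \ref{MatRed}, we conclude the following''), and your write-up supplies precisely the bookkeeping that sentence elides—applying Item \ref{Mat1} at level $s-2\tau$ with data $\varepsilon A\circ\chi\in H^{s-\tau}$ from (\ref{Addichi}), and applying Item \ref{Mat2} at level $\alpha-(3\tau+2)$ with inputs (\ref{AchiLip}) and (\ref{Liph&eta}), then absorbing cross terms using $\varepsilon\gamma^{-2}\ll 1$. The derivative count $2\tau+1$ (from $\eta$) plus $1$ (from inverting to $\chi$) plus $\tau$ (from Item \ref{Mat2}) $= 3\tau+2$ is the right accounting, and the threshold $\alpha\geq s_0+3\tau+2$ is indeed what ensures $s_0+\tau\leq\alpha-2\tau-2$ so that (\ref{AchiLip}) is applicable where needed.
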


With the dependence of $\chi,U$ on $\omega,u$ clarified, we can now define a new unknown $y:=T_{I_N+U}^{-1}\chi^\star u$. We show that this is a local homeomorphic correspondence.

\begin{lemma}\label{7NewVar}
Suppose 
$$
\begin{aligned}
s\geq \max(s_1+2\tau+2,s_0+3\tau+2)+\delta
=:s_2+\delta.
\end{aligned}
$$
Let $\varepsilon$ satisfy the smallness assumptions of Proposition \ref{Findeta} and \ref{FindU}, and in addition $\varepsilon\ll_s\gamma^2$. Let $\chi=\eta^{-1}$ and $U$ be as in Proposition \ref{Findeta} and \ref{FindU}. Then the mapping 
$$
\Psi:(\omega,u)\to T_{I_N+U}^{-1}\chi^\star u,\quad \xR^n\times\big(\bar{\mathcal{B}}_{\varepsilon_0\gamma^2,\even}^{s_1+2\tau+1}
\cap\bar{\mathcal{B}}^s_{1}\big)
\to H^s_{\even}
$$
takes an even function $u$ to an even function. Furthermore, for any fixed $\omega\in\xR^n$, the mapping $\bar{\mathcal{B}}_{\varepsilon_0\gamma^2,\even}^{s_1+2\tau+1}
\cap\bar{\mathcal{B}}^s_{1}\ni u\to\Psi(\omega,u)$ is a homeomorphism with image covering $\bar{\mathcal{B}}^{s_1+2\tau+1}_{\varepsilon,\even}\cap\bar{\mathcal{B}}^{s}_{1/2}$, satisfying
\begin{equation}\label{LipNewVar}
\begin{aligned}
\|\Delta_{12}\Psi(\omega,u)\|_{H^{s-1}}
&\lesssim_{s}|\Delta_{12}\omega|+\|\Delta_{12}u\|_{H^s},\\
\|\Delta_{12}u\|_{H^{s-1}}
&\lesssim_{s}\|\Delta_{12}\Psi(\omega,u)\|_{H^s}+|\Delta_{12}\omega|.
\end{aligned}
\end{equation}
\end{lemma}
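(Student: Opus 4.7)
The plan is to establish the three claims of the lemma in order: parity preservation, the Lipschitz estimates \eqref{LipNewVar}, and the homeomorphism property via a Banach fixed point argument for the inverse. Parity is immediate from the construction: since $Y(x)=X(x,u(x))$ is even when $u$ is even (as $X$ is even in $x$), Corollary \ref{6VectRed'} gives that $\eta(\omega,\varepsilon Y)$, hence $\chi=\eta^{-1}$, is odd; the homotopy and deformation vector field of Definition \ref{Deformation} associated to an odd diffeomorphism are respectively odd and even, so the paratransport equation defining $\chi^\star$ sends even data to even data. Similarly $A$ is odd, $A\circ\chi$ is odd, and by Theorem \ref{MatRed} $U$ is even, so the paraproduct $T_{I_N+U}$ preserves parity. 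Composing gives $\Psi(\omega,u)\in H^s_\even$.

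For the first inequality of \eqref{LipNewVar} I would use the decomposition
\begin{equation*}
\Delta_{12}\Psi=(T_{I_N+U_1}^{-1}-T_{I_N+U_2}^{-1})\chi_1^\star u_1+T_{I_N+U_2}^{-1}(\chi_1^\star-\chi_2^\star)u_1+T_{I_N+U_2}^{-1}\chi_2^\star(u_1-u_2).
\end{equation*}
The third term is bounded in $H^s$ directly by Theorem \ref{2theo:sc0} and Proposition \ref{PCNorm}. The first, rewritten as $T_{I_N+U_1}^{-1}T_{U_2-U_1}T_{I_N+U_2}^{-1}\chi_1^\star u_1$, is controlled in $H^s$ by $|U_1-U_2|_{L^\infty}\|u_1\|_{H^s}$; the Sobolev embedding $H^{s-3\tau-2}\hookrightarrow L^\infty$ (valid because $s\geq s_2+\delta$ yields $s-3\tau-2>n/2$) combined with \eqref{7UHsLip} at $\alpha=s$ produces the bound $C_s\varepsilon\gamma^{-2}(|\Delta_{12}\omega|+\|\Delta_{12}u\|_{H^s})$. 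For the second term, Proposition \ref{PCNormCont} gives $\|(\chi_1^\star-\chi_2^\star)u_1\|_{H^{s-1}}\lesssim|\chi_1-\chi_2|_{C^r}\|u_1\|_{H^s}$ for any non-integer $r\in(1,s-2\tau-2-n/2)$, and \eqref{Lipchi} with the embedding $H^{s-2\tau-2}\hookrightarrow C^r$ supplies the same bound on $|\chi_1-\chi_2|_{C^r}$. Summing yields the first inequality.

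For surjectivity onto $\bar{\mathcal{B}}^{s_1+2\tau+1}_{\varepsilon,\even}\cap\bar{\mathcal{B}}^s_{1/2}$, given such a target $y$ I would seek a fixed point of
\begin{equation*}
\Phi_y(u)=T_{I_N+U(\omega,\varepsilon A(u)\circ\chi(\omega,\varepsilon Y(u)))}\bigl(\chi(\omega,\varepsilon Y(u))^\star\bigr)^{-1}y
\end{equation*}
inside $K:=\bar{\mathcal{B}}^{s_1+2\tau+1}_{\varepsilon_0\gamma^2,\even}\cap\bar{\mathcal{B}}^s_{1,\even}$; by construction any such fixed point satisfies $\Psi(\omega,u)=y$. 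Operator boundedness combined with Propositions \ref{Findeta} and \ref{FindU} shows $\Phi_y:K\to K$ at both regularity levels when $\varepsilon$ is small. The same telescoping as before shows $\Phi_y$ is Lipschitz from $(K,H^s)$ to $H^{s-1}$ with constant $\lesssim_s\varepsilon\gamma^{-2}\|y\|_{H^s}\ll 1$. Since $K$ is bounded in $H^s$ hence relatively compact in $H^{s-1}$ by Rellich, iterating $\Phi_y$ produces an $H^{s-1}$-Cauchy sequence whose weak-$H^s$ cluster point $u$ lies in $K$, agrees with the $H^{s-1}$-limit, and by $H^{s-1}$-continuity of $\Phi_y$ satisfies $\Phi_y(u)=u$. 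Injectivity on $K$ and the second inequality of \eqref{LipNewVar} then follow from the identity $u_1-u_2=(\Psi_1-\Psi_2)-(R(\omega,u_1)-R(\omega,u_2))$ with $R=\Psi-\mathrm{Id}$, bounding the $R$-difference by the previous step's telescoping and absorbing the residual $\varepsilon\gamma^{-2}\|\Delta_{12}u\|_{H^s}$ factor via smallness of $\varepsilon$ and the a priori bound $\|\Delta_{12}u\|_{H^s}\leq 2$ on $K$.

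The principal obstacle is the loss of one derivative in the Lipschitz dependence of $\chi^\star$ on $\chi$ (Proposition \ref{PCNormCont}): the contraction for $\Phi_y$ lives in a norm weaker than the one controlling the iterates, so the fixed point must be identified in $K\subset H^s$ by a Rellich compactness argument rather than a single-norm Banach fixed point. This mixed-norm feature is exactly what forces the derivative gap between the two sides of \eqref{LipNewVar} and dictates the regularity threshold $s\geq s_2+\delta$ through the Sobolev embeddings used above.
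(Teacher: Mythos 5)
Your parity argument and the first inequality of \eqref{LipNewVar} match the paper's reasoning and are fine. There are, however, two genuine gaps in the remaining two claims.

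\textbf{Surjectivity.} You propose to iterate $\Phi_y(u)=(\chi(u)^\star)^{-1}T_{I_N+U(u)}y$ and claim the iterates form an $H^{s-1}$-Cauchy sequence because $\Phi_y$ is Lipschitz from $(K,\|\cdot\|_{H^s})$ to $H^{s-1}$ with small constant. This does not iterate: the increment estimate reads $\|u_{k+1}-u_k\|_{H^{s-1}}\le q\,\|u_k-u_{k-1}\|_{H^{s}}$, and $\|u_k-u_{k-1}\|_{H^s}$ is only controlled by the $H^s$-diameter of $K$, not by the previous $H^{s-1}$-increment. So $\|u_{k+1}-u_k\|_{H^{s-1}}$ need not tend to zero, and the sequence need not be Cauchy. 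Extracting an $H^{s-1}$-convergent subsequence $u_{k_j}\to u$ by Rellich is fine, and $H^{s-1}$-continuity of $\Phi_y$ gives $u_{k_j+1}\to\Phi_y(u)$; but you then need $u_{k_j+1}\to u$ as well, which does not follow from $u_{k_j}\to u$ without the very Cauchy property you have not established. The paper avoids this entirely by invoking the Schauder fixed point theorem in the ball $\bar{\mathcal{B}}^{s_1+2\tau+1}_{\varepsilon_1\gamma^2,\even}$: the map is continuous in $H^{s_1+2\tau+1}$ (using Proposition \ref{PCNormCont} strong-operator continuity), its image is bounded in $H^s$ and thus has compact closure in $H^{s_1+2\tau+1}$ by Rellich, and Schauder gives a fixed point with no contraction required. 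Uniqueness and the upgrade of the solution to $H^s$ are then handled separately from the operator norm bounds.

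\textbf{Second inequality in \eqref{LipNewVar}.} You bound the residual $R$-difference by a term of the form $\varepsilon\gamma^{-2}\|\Delta_{12}u\|_{H^{s}}$ and then "absorb" this using the a priori bound $\|\Delta_{12}u\|_{H^s}\le 2$. Replacing $\|\Delta_{12}u\|_{H^s}$ by a constant leaves an additive error $\sim\varepsilon\gamma^{-2}$ that does not vanish when $\Delta_{12}\Psi=0$ and $\Delta_{12}\omega=0$, so the resulting inequality is not Lipschitz and does not give injectivity. The paper bounds the perturbation differently: it controls $|\Delta_{12}U|_{L^\infty}+|\Delta_{12}\chi|_{C^2}$ by $\varepsilon\gamma^{-2}\big(\|\Delta_{12}u\|_{H^{s-1}}+|\Delta_{12}\omega|\big)$ (applying \eqref{Lipchi}, \eqref{7UHsLip} at an intermediate regularity level $\alpha\le s-1$, which is admissible since $s\ge s_2+\delta$ and the needed H\"older norms embed comfortably below $s-1$). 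Since the right-hand side then features the same norm $\|\Delta_{12}u\|_{H^{s-1}}$ as the quantity being estimated, one can genuinely absorb the perturbation under $\varepsilon\ll\gamma^2$, yielding the stated reverse inequality with no additive error.
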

\begin{proof}
The parity of $\Psi(\omega,u)$ is easy to check since all the operations involved preserve parity. It is also easy to check $\Psi(\omega,u)\in H^s$ by the boundedness of paraproduct and paracomposition.

Given that $\varepsilon\ll_s\gamma^2$, the Lipschitz dependence of paracomposition gives the Lipschitz estimate
$$
\|\Delta_{12}\Psi(\omega,u)\|_{H^{s-1}}
\overset{(\ref{Lipchi})}{\lesssim_{s}}|\Delta_{12}\omega|+\|\Delta_{12}u\|_{H^s}.
$$
As for the reverse inequality, the Lipschitz dependence of paracomposition together with Proposition \ref{Findeta}-\ref{FindU} gives
\begin{equation}\label{y1-y2(s)}
\begin{aligned}
\|\Delta_{12}\Psi(\omega,u)\|_{H^{s}}
&\geq \|\Delta_{12}\Psi(\omega,u)\|_{H^{s-1}}\\
&\geq -C_{s}\big(|\Delta_{12}U|_{L^\infty}+|\Delta_{12}\chi|_{C^2}\big)\|u_2\|_{H^s}+
C_{s}\|\Delta_{12}u\|_{H^{s-1}}\\
&\overset{(\ref{Lipchi})(\ref{7UHsLip})}{\geq} C_{s}\big(1-C_{s}\varepsilon\gamma^{-2}\big)\|\Delta_{12}u\|_{H^{s-1}}
-C_s\varepsilon\gamma^{-2}|\Delta_{12}\omega|
\end{aligned}
\end{equation}
Provided $C_{s}\varepsilon\ll\gamma^{2}$, this implies the desired Lipschitz estimate. 

Let us then prove that for fixed $\omega\in\xR^n$, the mapping $\bar{\mathcal{B}}_{\varepsilon_0\gamma^2,\even}^{s_1+2\tau+1}
\cap\bar{\mathcal{B}}^s_{1}\ni u\to\Psi(\omega,u)$ is a homeomorphism with image covering the set $\bar{\mathcal{B}}^{s_1+2\tau+1}_{\varepsilon,\even}\cap\bar{\mathcal{B}}^{s}_{1/2}$. In fact, by Proposition \ref{PCNormCont}, the mapping $u\to\Psi(\omega,u)$ is continuous under the $H^s$ norm topology, and by (\ref{LipNewVar}) it is injective. So it suffices to show that it is surjective and the inverse is continuous. This amounts to solving the equation $\Psi(\omega,u)=y$ for $y\in\bar{\mathcal{B}}^{s_1+2\tau+1}_{\varepsilon,\even}\cap\bar{\mathcal{B}}^{s}_{1/2}$.

However, the equation $\Psi(\omega,u)=y$ can be written in a fixed point form
\begin{equation}\label{Schauder}
u=(\chi^\star)^{-1}T_{I_N+U}y.
\end{equation}
By (\ref{Addichi})(\ref{7UHs}), if $\varepsilon\ll_s\gamma^2$ suitably, the operator norms of $\chi^\star$ and $T_{I_N+U}$ could be estimated as $\leq 1+C_s\varepsilon\gamma^{-1}$ (and do not depend on $\omega$). Therefore, for $y\in\bar{\mathcal{B}}^{s_1+2\tau+1}_{\varepsilon,\even}\cap\bar{\mathcal{B}}^{s}_{1/2}$, we find that the right-hand side of (\ref{Schauder}) defines a continuous mapping from the ball $\bar{\mathcal{B}}_{\varepsilon_1\gamma^2,\even}^{s_1+2\tau+1}$ to itself, and the image is in $H^s$, hence has compact closure within $\bar{\mathcal{B}}_{\varepsilon_1\gamma^2,\even}^{s_1+2\tau+1}$. By the Schauder fixed point theorem, the equation (\ref{Schauder}) has a solution $u\in\bar{\mathcal{B}}_{\varepsilon_1\gamma^2,\even}^{s_1+2\tau+1}$. By (\ref{LipNewVar}) the solution is unique, and since $\|y\|_{H^s}\leq 1/2$, we conclude by norm estimates of $\chi^\star$ and $T_{I_N+U}$ that $\|u\|_{H^s}\leq1$. As for continuity of the solution $u$ in terms of $y$, we find by (\ref{LipNewVar}) that when $y$ varies slightly in $H^s$, then $u$ varies slightly in $H^{s-1}$. By the continuous dependence of $\chi$ and $U$ on $u$, we conclude that $u$ in fact varies slightly in $H^s$. This finishes the proof that $u\to\Psi(\omega,u)$ is a homeomorphism.
\end{proof}

\subsection{Paradifferential Reducibility}
We are now at the place to state the core result of Part \ref{Part2}, namely the \emph{paradifferential reducibility} of the equation (\ref{7ParalinEQ}). 

Let us suppose $u \in H^s_\even(\xT^n,\xR^N)$ with $\lA u\rA_{H^s}\leq 1$ solves (\ref{7ParalinEQ}). We rewrite this equation as
\begin{equation}\label{7ParalinEQ1}
T_pu+\varepsilon\R(X,F;u)=\varepsilon f,
\quad
p(x,\xi)=i\big(\omega+\varepsilon Y(x)\big)\cdot\xi+\varepsilon A(x),
\end{equation}
where $Y(x)=X(x,u(x))$ and $A$ is as in (\ref{7A(x)}). Applying $\chi^\star$, using Proposition \ref{7_Lin_Red}, with the new unknown $y=T_{I_N+U}^{-1}\chi^\star u$ given by Proposition \ref{7NewVar}, we obtain \emph{paradifferential reducibility}: suppose the shifted frequency $\omega+h(\omega,\varepsilon Y)\in\Dio[\gamma,\tau]$, then $(\omega,u)$ satisfies (\ref{7ParalinEQ1}) if and only if
\begin{equation}\label{7Paray}
\begin{aligned}
\left[\big(\omega+h(\omega,\varepsilon Y)\big)\cdot\nabla_x\right]y
&=\varepsilon T_{I_N+U}^{-1}\chi^\star f+\mathfrak{F}(\omega,y),
\end{aligned}
\end{equation}
with the remainder
\begin{equation}\label{F(w,y)}
\begin{aligned}
\mathfrak{F}(\omega,y)
&:=T_{I_N+U}^{-1}\big(-\varepsilon\chi^\star\R(X,F;u)-\R_\Con(p,\chi)u\big)
-\varepsilon T_{I_N+U}^{-1}\R_\CM^{\gau}(A\circ\chi,U)y.
\end{aligned}
\end{equation}
In the above equalities, the quantities $Y,A,p,\chi,U$ are all evaluated at $(\omega,u)$. Thus $\mathfrak{F}(\omega,y)$ is defined in terms of an implicit mapping of $y$. 

By Lemma \ref{Dio_Ext}, if $\omega+h(\omega,\varepsilon Y)\in\Dio[\gamma,\tau]$, then 
$\bm{L}_{\gamma,\tau}\big(\omega+h(\omega,\varepsilon Y)\big)=\big((\omega+h(\omega,\varepsilon Y))\cdot\nabla_x\big)^{-1}$. The right-hand-side of (\ref{7Paray}) is obviously odd. Therefore, we obtain the following:
\begin{proposition}\label{ParaRedu}
Let $X(x,z),F(x,z)$ be given as in Theorem \ref{1NLPHyp}. Fix some $\tau>n-1$, $\gamma\in(0,1]$, $\delta>0$, still write
$$
s_1=2\tau+2+n+\delta,\quad s_0=2\tau+1+n/2+\delta.
$$ 
Fix an index 
$$
\begin{aligned}
s&\geq \max(s_1+2\tau+2,s_0+3\tau+2)+\delta.
\end{aligned}
$$
Let the Fourier multiplier $\bm{L}_{\gamma,\tau}$ be as in Lemma \ref{Dio_Ext}. Suppose $\omega\in\xR^n$, $y\in\bar{\mathcal{B}}^{s_1+2\tau+1}_{\varepsilon,\even}\cap\bar{\mathcal{B}}^{s}_{1/2}$ solve
\begin{equation}\label{7ParalinEQ2}
y=\bm{L}_{\gamma,\tau}\big(\omega+h(\omega,\varepsilon Y)\big)\left[\varepsilon T_{I_N+U}^{-1}\chi^\star f+\mathfrak{F}(\omega,y)\right].
\end{equation}
Here $\varepsilon$ satisfies the smallness conditions in Proposition \ref{Findeta}-\ref{7NewVar}; the vector field $Y(x)=X(x,u(x))$; the shift $h$ is as in Theorem \ref{6VectRed}; $u$ and $y$ are related by $y=\Psi(\omega,u)$; $\chi=\eta^{-1},U$ as in Proposition \ref{Findeta}-\ref{FindU} are evaluated at $(\omega,u)$.

If $\omega+h(\omega,\varepsilon Y)\in\Dio[\gamma,\tau]$, then in fact $\omega,u$ solve the paradifferential equation (\ref{7ParalinEQ}).
\end{proposition}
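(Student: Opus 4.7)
The plan is to verify that the fixed point equation (\ref{7ParalinEQ2}) is nothing but a rewriting of the constant coefficient equation (\ref{7Paray}), and then to read the chain of equivalences established in the discussion preceding the statement (paralinearization \eqref{7ParalinEQ1}, conjugation via Proposition \ref{7_Lin_Red}, substitution $y=T_{I_N+U}^{-1}\chi^\star u$) in the reverse direction. The argument is almost entirely bookkeeping once everything is in place, so no genuinely new estimate is needed here.

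First, I would use Lemma \ref{7NewVar} to recover $u$. By hypothesis $y\in\bar{\mathcal{B}}^{s_1+2\tau+1}_{\varepsilon,\even}\cap\bar{\mathcal{B}}^{s}_{1/2}$, and the lemma provides a unique $u\in\bar{\mathcal{B}}_{\varepsilon_0\gamma^2,\even}^{s_1+2\tau+1}\cap\bar{\mathcal{B}}^s_{1}$ with $y=T_{I_N+U(\omega,u)}^{-1}\chi(\omega,u)^\star u$. With this $u$ we now have well-defined $Y=X(\cdot,u)$, $A$ as in (\ref{7A(x)}), $\chi=\eta(\omega,\varepsilon Y)^{-1}$ and $U=U(\omega+h(\omega,\varepsilon Y),\varepsilon A\circ\chi)$, and in particular the nonlinear expression $\mathfrak{F}(\omega,y)$ in (\ref{F(w,y)}) is unambiguous.

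Second, I would apply the differential operator $(\omega+h(\omega,\varepsilon Y))\cdot\nabla_x$ to both sides of (\ref{7ParalinEQ2}). By Lemma \ref{Dio_Ext}, the Diophantine hypothesis $\omega+h(\omega,\varepsilon Y)\in\Dio[\gamma,\tau]$ forces $\bm{L}_{\gamma,\tau}(\omega+h(\omega,\varepsilon Y))=\big((\omega+h(\omega,\varepsilon Y))\cdot\nabla_x\big)^{-1}$ on zero-average functions. A short parity/average check is needed: $f$ is odd by assumption, $\chi$ is odd (Corollary \ref{6VectRed'}), so $\chi^\star$ preserves parity; $U$ is even by Theorem \ref{MatRed}, so $T_{I_N+U}^{-1}$ preserves parity; and in $\mathfrak{F}$ each summand involves the odd factors $\R(X,F;u)$, $\R_\Con(p,\chi)u$ or $\R_\CM^{\gau}(A\circ\chi,U)y$ (combinations of odd and even symbols/functions), hence is odd. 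So the bracketed right-hand side of (\ref{7ParalinEQ2}) is odd, and in particular has zero mean. Applying the differential operator therefore yields exactly (\ref{7Paray}).

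Third and finally, I would undo the conjugation. Substituting $u=(\chi^\star)^{-1}T_{I_N+U}y$, the identity (\ref{7Paray}) can be read, using the definition (\ref{F(w,y)}) of $\mathfrak{F}$, as
\begin{equation*}
T_{I_N+U}\big[(\omega+h(\omega,\varepsilon Y))\cdot\nabla_x\big]T_{I_N+U}^{-1}\chi^\star u
+\varepsilon \R_\CM^{\gau}(A\circ\chi,U)T_{I_N+U}^{-1}\chi^\star u
+\R_\Con(p,\chi)u
=\chi^\star(\varepsilon f-\varepsilon\R(X,F;u)).
\end{equation*}
By Proposition \ref{7_Lin_Red} the left-hand side equals $\chi^\star T_p u$, where $p$ is the symbol (\ref{7p(x,xi)}). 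Since $\chi^\star$ is invertible on every $H^s$ (Proposition \ref{PCNorm}), applying $(\chi^\star)^{-1}$ gives $T_p u+\varepsilon\R(X,F;u)=\varepsilon f$, which is exactly (\ref{7ParalinEQ1}), i.e.\ (\ref{7ParalinEQ}). The main subtlety to watch is not a hard estimate but the bookkeeping of parities and the fact that all the operator identities of Proposition \ref{7_Lin_Red} are valid precisely under the Diophantine condition that also makes $\bm{L}_{\gamma,\tau}$ an honest inverse; everything else reduces to reading Proposition \ref{7_Lin_Red} backwards.
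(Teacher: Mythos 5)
Your proof is correct and follows essentially the same chain of reasoning the paper uses in the discussion leading up to Proposition \ref{ParaRedu}: recover $u$ from $y$ via Lemma \ref{7NewVar}, use the Diophantine hypothesis and Lemma \ref{Dio_Ext} together with a parity (hence zero-mean) check to pass from \eqref{7ParalinEQ2} to \eqref{7Paray}, then read the conjugation identity of Proposition \ref{7_Lin_Red} backwards to land on \eqref{7ParalinEQ1}. The paper presents the equivalences in the forward direction and lets the reverse implication be implicit; you have simply made the reverse direction explicit, which is the version actually needed.
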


It remains to justify that the remainder $\mathfrak{F}(\omega,y)$ in (\ref{F(w,y)}) is indeed very smoothing.

\begin{proposition}\label{Lip_F(w,y)}
Fix some $\tau>n-1$, $\gamma\in(0,1]$, $\delta>0$, and still write
$$
s_1=2\tau+2+n+\delta,\quad s_0=2\tau+1+n/2+\delta.
$$ 
Fix an index 
$$
\begin{aligned}
s&\geq \max(s_1+2\tau+2,s_0+3\tau+2)+\delta.
\end{aligned}
$$
Assume the smallness conditions for $\varepsilon$ in Proposition \ref{Findeta}-\ref{7NewVar}. The nonlinear mapping $\mathfrak{F}$ enjoys the following regularity gain: for $\omega_{1,2}\in\xR^n$ and  $y_{1,2}\in\bar{\mathcal{B}}^{s_1+2\tau+1}_{\varepsilon,\even}\cap\bar{\mathcal{B}}^{s}_{1/2}$, there holds
\begin{equation}\label{F(y)gain}
\begin{aligned}
\big\|\mathfrak{F}(\omega,y);H^{2s-(2\tau+2)-n/2-\delta}\big\|
&\leq C_s\varepsilon\gamma^{-1}\\
\big\|\Delta_{12}\mathfrak{F}(\omega,y);H^{2s-(3\tau+6)-n/2-\delta}\big\|
&\leq C_s\varepsilon\gamma^{-2}\big(\|\Delta_{12}y\|_{H^s}
+|\Delta_{12}\omega|\big).
\end{aligned}
\end{equation}
Here $\Delta_{12}$ stands for the difference of the quantity under concern when evaluated at $(\omega_i,u_i)$.
\end{proposition}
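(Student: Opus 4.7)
The plan is to estimate each of the three terms appearing in the definition (\ref{F(w,y)}) of $\mathfrak{F}(\omega,y)$ separately. The common feature across all three is that the operators $T_{I_N+U}^{-1}$ and $\chi^\star$ are bounded on \emph{every} Sobolev space by Theorem \ref{2theo:sc0} and Proposition \ref{PCNorm} (with operator norm independent of $s$ under the smallness assumption $\varepsilon \ll_s \gamma^2$), so they do not affect the derivative count in the norm estimates; all smoothing comes from the paradifferential remainders themselves.

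For the norm estimate, I would treat the three terms as follows. First, $\varepsilon T_{I_N+U}^{-1}\chi^\star \R(X,F;u)$ inherits the regularity $H^{2s-n/2-1}$ of $\R(X,F;u)$ from (\ref{7R(X,F,u)}), which exceeds the target space. Second, $T_{I_N+U}^{-1}\R_\Con(p,\chi)u$ is controlled by Theorem \ref{4PCConj} applied to the symbol $p \in \Gamma^1_{s-n/2-1}$ from (\ref{7p(x,xi)}) and the diffeomorphism $\chi \in C^{s-\tau-n/2}$ (by Sobolev embedding and (\ref{Addichi})): choosing parameters so that $\mu$ is close to $s-\tau-n/2-1$ yields a gain of $\mu-2$ derivatives and places the term in roughly $H^{2s-\tau-n/2-3-\delta}$. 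Third, $\varepsilon T_{I_N+U}^{-1}\R^\gau_\CM(A\circ\chi,U)y$ is handled by Remark \ref{2MatSymbol}: with $A\circ\chi \in C^{s-\tau-n/2}$ and $U \in C^{s-2\tau-n/2}$, the remainder gains roughly $s-2\tau-n/2-\delta$ derivatives and sends $y \in H^s$ into $H^{2s-2\tau-n/2-\delta}$. The worst of the three indices yields the announced regularity $H^{2s-(2\tau+2)-n/2-\delta}$, with the $\varepsilon\gamma^{-1}$ factor appearing because $Y, A, U$ and $(\chi-\Id)$ all carry a factor of $\varepsilon\gamma^{-1}$ via (\ref{Addichi}) and (\ref{7UHs}).

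For the Lipschitz estimate, I would decompose $\Delta_{12}\mathfrak{F}(\omega,y)$ telescopically into contributions obtained by varying each factor ($\chi$, $U$, the paradifferential remainder operators, $u$) one at a time. Each subcontribution is controlled by a known Lipschitz result: Proposition \ref{PCNormCont} for $\chi^\star$ (costing one derivative), the Lipschitz part of Theorem \ref{4PCConj} for $\R_\Con(p,\chi)$ (costing one more derivative), the tame bound in Remark \ref{2MatSymbol} combined with (\ref{AchiLip}) and (\ref{7UHsLip}) for $\R^\gau_\CM(A\circ\chi,U)$, and the Lipschitz dependence of $\R_\PL, R_\PM$ entering $\R(X,F;u)$. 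The dependence on $(\omega,u)$ of $\chi$ and $U$ is supplied by Propositions \ref{Findeta}--\ref{FindU}, losing $2\tau+2$ and $3\tau+2$ derivatives respectively. Finally the factor $\|\Delta_{12}u\|$ that appears in these bounds is converted into $\|\Delta_{12}y\|+|\Delta_{12}\omega|$ by the inverse Lipschitz estimate (\ref{LipNewVar}) of Lemma \ref{7NewVar}.

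The main obstacle is the bookkeeping of Sobolev exponents, namely verifying that the accumulated loss beyond the norm estimate is exactly $\tau+4$ derivatives (matching $(3\tau+6)-(2\tau+2)$). The dominant contribution is the $3\tau+2$-derivative loss in the Lipschitz bound (\ref{7UHsLip}) for $U$, which exceeds the norm regularity $H^{s-2\tau}$ of $U$ by $\tau+2$ derivatives; combined with the $+1$ loss from Proposition \ref{PCNormCont} and the $+1$ loss from the Lipschitz part of Theorem \ref{4PCConj}, this gives precisely the stated index $2s-(3\tau+6)-n/2-\delta$. Once the indices are tracked, the Lipschitz estimate follows by collecting terms and absorbing all implicit dependence on $u$ through Lemma \ref{7NewVar}.
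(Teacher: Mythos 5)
Your strategy---separately estimating the three summands of $\mathfrak F$ in (\ref{F(w,y)}) via the smoothing of $\R(X,F;u)$, $\R_\Con(p,\chi)$ and $\R^{\gau}_\CM(A\circ\chi,U)$, then telescoping for the Lipschitz bound and converting $\|\Delta_{12}u\|$ into $\|\Delta_{12}y\|+|\Delta_{12}\omega|$ via (\ref{LipNewVar})---is precisely what the paper does, and every tool you cite is the one invoked there.

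One correction to your exponent accounting in the last paragraph: the $\tau+4$ gap between the two indices is \emph{not} obtained by stacking the $U$-Lipschitz overcost from the $\R^{\gau}_\CM$ term together with the $\chi^\star$ and $\R_\Con$ Lipschitz losses, since those contributions sit in \emph{different} summands of $\mathfrak F$ and cannot be added. The Lipschitz index is the worst of three separate indices, and the dominant one is actually the $\R_\Con$ term: there one must evaluate Theorem~\ref{4PCConj} with the diffeomorphism difference $\Delta_{12}\chi$ controlled only in $H^{\alpha-(2\tau+2)}$ (by (\ref{Lipchi})) rather than $\chi-\Id\in H^{s-\tau}$, and take both the operand index $\sigma$ and the diffeomorphism index $\alpha$ at $s-1$ (because $\Delta_{12}u$ is only controlled in $H^{s-1}$); this yields $2s-(2\tau+7)-n/2-\delta$, exactly $\tau+4$ worse than the $\R_\Con$ norm estimate. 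The $\R^{\gau}_\CM$ term, by contrast, lands at $2s-(3\tau+3)-n/2$, only $\tau+3$ worse than its norm estimate, and the stated target $2s-(3\tau+6)-n/2-\delta$ simply accommodates both with some slack (using $\tau\geq1$). So your bookkeeping arrives at the right number by an incorrect route; once tracked per term as above, the rest of your argument carries through.
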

\begin{proof}
We first estimate the norm of $\mathfrak{F}(\omega,y)$. Recalling $\R(X,F;u)\in H^{2s-n/2-1}$ from the beginning of this section, we obtain (see Theorem \ref{2ParaLin})
$$
\|\varepsilon T_{I_N+U}^{-1}\chi^\star\R(X,F;u)\|_{H^{2s-n/2-1}}\leq C_s\varepsilon.
$$
As for the remainder $\R_\Con(p,\chi)$, by Theorem \ref{4PCConj} and the estimate (\ref{Addichi}), the remainder $\R_\Con(p,\chi)$ gains back $s-(\tau+3)-n/2-\delta$ derivatives: for all $\sigma\in\xR,\,s\geq s_1+2\tau+2$,
$$
\|\R_\Con(p,\chi)g;H^{\sigma+s-(\tau+3)-n/2-\delta}\|
\lesssim_{s,\sigma} \varepsilon\gamma^{-1}\|g\|_{H^\sigma}.
$$
Therefore,
$$
\big\|T_{I_N+U}^{-1}\R_\Con(p,\chi)u;H^{2s-(\tau+3)-n/2-\delta}\big\|
\leq C_s\varepsilon\gamma^{-1}.
$$
Finally, recall that $A\circ\chi\in H^{s-\tau}$, $U\in H^{s-2\tau}$, so
$$
\big\|\varepsilon T_{I_N+U}^{-1}\R_\CM^{\gau}(A\circ\chi,U)y;H^{2s-2\tau-n/2}\big\|
\overset{(\ref{7UHs})}{\leq} C_s\varepsilon^2\gamma^{-1}.
$$
Summing up, we find the regularity gain of $\mathfrak{F}(\omega,y)$ is at least $s-(2\tau+2)-n/2-\delta$, hence the first inequality of (\ref{F(y)gain}). 

As for the difference $\mathfrak{F}(\omega_1,y_1)-\mathfrak{F}(\omega_2,y_2)$ in (\ref{F(y)gain}), we should try to control the regularity gain of it in terms of $\|u_1-u_2\|_{H^{s-1}}$, since by (\ref{LipNewVar}), this is the best that we can control using $\|y_1-y_2\|_{H^s}$. But this is not a difficulty at all since the paradifferential remainders are nearly twice as regular as $u$. In fact, with $u_1,u_2\in H^{s-1}$, we have (see Theorem \ref{2ParaLin})
$$
\|\Delta_{12}\R(X,F;u)\|_{H^{2s-n/2-2}}\leq C_s\|\Delta_{12}u\|_{H^{s-1}}.
$$
Since $\Delta_{12}\chi^\star$ loses one derivative while $\Delta_{12}T_{I_N+U}$ does not, we obtain
$$
\|\varepsilon \Delta_{12}T_{I_N+U}\chi^\star\R(X,F;u)\|_{H^{2s-n/2-3}}
\overset{(\ref{Lipchi})(\ref{7UHsLip})}{\leq} C_s\varepsilon\|\Delta_{12}u\|_{H^{s-1}}
+C_s\varepsilon^2\gamma^{-2}|\Delta_{12}\omega|.
$$
Furthermore, by Theorem \ref{4PCConj} and (\ref{Lipchi}), $\R_\Con(p,\chi)$ is Lipschitz in $(\omega,u)\in\xR^n\times\bar{\mathcal{B}}^s_1$ at the price of losing some smoothing, again since the coefficients of $p(x,\xi)$ are much smoother than $\chi$: for all $\sigma\in\xR,\,s\geq \alpha\geq s_1+2\tau+2$,
$$
\begin{aligned}
\big\|\Delta_{12}\R_\Con(p,\chi)g;H^{\sigma+\alpha-(2\tau+5)-n/2-\delta}\big\|
\lesssim_{s,\sigma} \varepsilon\gamma^{-2}\big(|\Delta_{12}\omega|+\|\Delta_{12}u\|_{H^{\alpha}}\big)\|g\|_{H^\sigma}.
\end{aligned}
$$
Setting $g=u$, $\alpha=\sigma=s-1$ in the inequality above, we obtain
$$
\big\|\Delta_{12}T_{I_N+U}^{-1}\R_\Con(p,\chi)u;H^{2s-(2\tau+7)-n/2-\delta}\big\|
\overset{\text{Theorem }\ref{4PCConj}}{\leq} C_s\varepsilon\gamma^{-2}\big(|\Delta_{12}\omega|+\|\Delta_{12}u\|_{H^{s-1}}\big).
$$
Setting $\alpha=s-1$ in (\ref{AchiLip})(\ref{7UHsLip}), considering $y_i$ as in $H^{s-1}$, we obtain
$$
\big\|\varepsilon \Delta_{12}T_{I_N+U}^{-1}\R_\CM^{\gau}(A\circ\chi,U)y;H^{2s-(3\tau+3)-n/2}\big\|
\overset{(\ref{esti:quant2})}{\leq} C_s\varepsilon\gamma^{-1}\big(|\Delta_{12}\omega|+\|\Delta_{12}u\|_{H^{s-1}}\big).
$$
Summing these up, we obtain the second inequality in (\ref{F(y)gain}):
$$
\begin{aligned}
\big\|\Delta_{12}\mathfrak{F}(\omega,y);H^{2s-(3\tau+6)-n/2-\delta}\big\|
&\leq C_s\varepsilon\gamma^{-2}\big(|\Delta_{12}\omega|+\|\Delta_{12}u\|_{H^{s-1}}\big)\\
&\overset{(\ref{LipNewVar})}{\leq} C_s\varepsilon\gamma^{-2}\big(|\Delta_{12}\omega|+\|\Delta_{12}y\|_{H^{s}}\big).
\end{aligned}
$$
\end{proof}

\subsection{Banach Fixed Point Argument: Proof of Theorem \ref{1NLPEQ}.}\label{sec:7.3}

Proposition \ref{ParaRedu} suggests a method to solve the paradifferential equation (\ref{7ParalinEQ}): it suffices to solve the Banach fixed point equation (\ref{7ParalinEQ2}), and then pick out those $\omega$ making $\omega+h(\omega,\varepsilon Y)$ Diophantine.

Given $X(x,z),F(x,z)$ as in Theorem \ref{1NLPEQ}, corresponding to the equation (\ref{7ParalinEQ}), let us define a mapping
\begin{equation}\label{EQ_Fix}
\mathscr{P}(\omega,y)=\bm{L}_{\gamma,\tau}\big(\omega+h(\omega,\varepsilon Y)\big)\left[\varepsilon T_{I_N+U}^{-1}\chi^\star f+\mathfrak{F}(\omega,y)\right]
\quad\text{for }
\omega\in\xR^n,\,y\in\bar{\mathcal{B}}^{s_1+2\tau+1}_{\varepsilon,\even}\cap\bar{\mathcal{B}}^{s}_{1/2},
\end{equation}
where the Fourier multiplier $\bm{L}_{\gamma,\tau}$ is as in Lemma \ref{Dio_Ext}, and
\begin{itemize}
    \item The vector field $Y(x)=X(x,u(x))$;
    \item $\chi,h,U$, as in Proposition \ref{7_Lin_Red}, are all evaluated at $(\omega,u)$;
    \item $u$ and $y$ are related by $y=\Psi(\omega,u)$ as in Proposition \ref{7NewVar};
    \item $\mathfrak{F}(\omega,y)$ is as in (\ref{F(w,y)}) and estimated by Proposition \ref{Lip_F(w,y)}.
\end{itemize}
We assume all the smallness assumptions in Proposition \ref{Findeta}-\ref{7NewVar}, so that $\Psi(\omega,u)$ gives a homeomorphic correspondence between $u\in\bar{\mathcal{B}}_{\varepsilon_0\gamma^2,\even}^{s_1+2\tau+1}
\cap\bar{\mathcal{B}}^s_{1}$ and a range containing all $y\in\bar{\mathcal{B}}^{s_1+2\tau+1}_{\varepsilon,\even}\cap\bar{\mathcal{B}}^{s}_{1/2}$, as indicated by Proposition \ref{7NewVar}. 

\noindent
\textbf{Step 1: Contraction $y\to\mathscr{P}(\omega,y)$.}

We first fix the frequency $\omega\in\xR^n$ and show that $y\to\mathscr{P}(\omega,y)$ is a contraction with a uniform ratio. This amounts to combining all the Lipschitz estimates obtained above.

So let us estimate $\mathscr{P}(\omega,y_1)-\mathscr{P}(\omega,y_2)$. Let us use subscripts 1,2 to denote quantities evaluated at $y_1,y_2$ with fixed $\omega$ in the sequel. Let us split 
$\mathscr{P}(\omega,y_1)-\mathscr{P}(\omega,y_2)=\mathrm{I}+\mathrm{II}$, where
$$
\begin{aligned}
\mathrm{I}&=\left[\bm{L}_{\gamma,\tau}\big(\omega+h(\omega,\varepsilon Y_1)\big)-\bm{L}_{\gamma,\tau}\big(\omega+h(\omega,\varepsilon Y_2)\big)\right]
\big(\varepsilon T_{I_N+U_{2}}^{-1}\chi_{2}^\star f+\mathfrak{F}(\omega,y_{2})\big), \\
\mathrm{II}&=\bm{L}_{\gamma,\tau}\big(\omega+h(\omega,\varepsilon Y_2)\big)\big(\varepsilon T_{I_N+U_{1}}^{-1}\chi_{1}^\star f-\varepsilon T_{I_N+U_{2}}^{-1}\chi_{2}^\star f
+\mathfrak{F}(\omega,y_{1})-\mathfrak{F}(\omega,y_{2})\big).
\end{aligned}
$$
It follows from the assumption on the index $s$ in Proposition \ref{ParaRedu} that
\begin{equation}\label{Size(s)}
s-(3\tau+2)-n/2-\delta\geq2\tau+1,
\end{equation}
so that by Proposition \ref{Lip_F(w,y)}, the regularity gain of $\mathfrak{F}(\omega,y)$ will always be sufficient to balance the loss caused by small denominators.

To estimate I, we notice that $\bm{L}_{\gamma,\tau}\big(\omega+h(\omega,\varepsilon Y_1)\big)-\bm{L}_{\gamma,\tau}\big(\omega+h(\omega,\varepsilon Y_2)\big)$ is a Fourier multiplier losing $2\tau+1$ derivatives: by Lemma \ref{Dio_Ext},
$$
\begin{aligned}
\left\|\left[\bm{L}_{\gamma,\tau}\big(\omega+h(\omega,\varepsilon Y_1)\big)
-\bm{L}_{\gamma,\tau}\big(\omega+h(\omega,\varepsilon Y_2)\big)\right]g\right\|_{H^{\sigma}}
&\leq \gamma^{-2}|h(\omega,\varepsilon Y_1)-h(\omega,\varepsilon Y_2)|\|g\|_{H^{\sigma+2\tau+1}}\\
&\overset{(\ref{Liph&eta})}{\leq} C_{s_1,\sigma}\varepsilon\gamma^{-3}\|u_{1}-u_2\|_{H^{s_1+2\tau+1}}\|g\|_{H^{\sigma+2\tau+1}}\\
&\overset{\eqref{LipNewVar}}{\leq} C_{s_1,\sigma}\varepsilon\gamma^{-3}\|y_{1}-y_2\|_{H^{s}}\|g\|_{H^{\sigma+2\tau+1}}.
\end{aligned}
$$
Substituting this estimate into the expression of I, we obtain 
\begin{equation}\label{I}
\begin{aligned}
\|\,\mathrm{I}\,\|_{H^s}
&\leq C_s\varepsilon\gamma^{-3}\|y_{1}-y_2\|_{H^{s}}\left(\varepsilon\|f\|_{H^{s+2\tau+1}}+\big\|\mathfrak{F}(\omega,y_{2});H^{s+2\tau+1}\big\|\right)\\
&\overset{(\ref{Size(s)})}{\leq}
C_s\varepsilon\gamma^{-3}\|y_{1}-y_2\|_{H^{s}}\left(\varepsilon\|f\|_{H^{s+2\tau+1}}+\big\|\mathfrak{F}(\omega,y_{2});H^{2s-(2\tau+2)-n/2-\delta}\big\|\right)\\
&\overset{(\ref{F(y)gain})}{\leq} C_s\varepsilon^2\gamma^{-4}\big(1+\|f\|_{H^{s+2\tau+1}}\big)\|y_{1}-y_2\|_{H^{s}}.
\end{aligned}
\end{equation}

The estimate for II employs the second inequality of (\ref{F(y)gain}). Recalling that $\chi_{1}^\star-\chi^\star_2$ loses one derivative while $T_{U_{1}}-T_{U_2}$ does not (and the operator norms of these are all controlled by $\varepsilon\gamma^{-2}\|u_{1}-u_2\|_{H^{s-1}}\ll\|u_{1}-u_2\|_{H^{s-1}}$ by (\ref{Lipchi}) and (\ref{7UHsLip})), we obtain 
\begin{equation}\label{II}
\begin{aligned}
\|\mathrm{II}\|_{H^s}
&\leq C_s\gamma^{-1}\big(\varepsilon\|u_{1}-u_2\|_{H^{s-1}}\|f\|_{H^{s+\tau+1}}+\|\mathfrak{F}(\omega,y_{1})-\mathfrak{F}(\omega,y_{2})\|_{H^{s+\tau}}\big)\\
&\overset{(\ref{Size(s)})}{\leq} 
C_s\gamma^{-1}\Big(\varepsilon\|u_{1}-u_2\|_{H^{s-1}}\|f\|_{H^{s+\tau+1}}\\
&\quad+\big\|\mathfrak{F}(\omega,y_{1})-\mathfrak{F}(\omega,y_2);H^{2s-(3\tau+6)-n/2-\delta}\big\|\Big)\\
&\overset{(\ref{LipNewVar})(\ref{F(y)gain})}{\leq}
C_s\varepsilon\gamma^{-3}\big(1+\|f\|_{H^{s+\tau+1}}\big)\|y_{1}-y_2\|_{H^{s}}.
\end{aligned}
\end{equation}

Adding (\ref{I})(\ref{II}), we end up with (recalling $\varepsilon\gamma^{-2}\ll_s1$)
$$
\|\mathscr{P}(\omega,y_1)-\mathscr{P}(\omega,y_2)\|_{H^s}
\leq C_s\varepsilon\gamma^{-3}\big(1+\|f\|_{H^{s+2\tau+2}}\big)\|y_{1}-y_2\|_{H^{s}}.
$$
We can thus choose $\varepsilon\ll_{s,f}\gamma^3$ to make the constant $C_s\varepsilon\gamma^{-3}\big(1+\|f\|_{H^{s+2\tau+2}}\big)\leq1/2$, so that $y\to\mathscr{P}(\omega,y)$ is a contraction with ratio 1/2. The reasoning also shows that $y\to\mathscr{P}(\omega,y)$ takes the domain $\bar{\mathcal{B}}^{s_1+2\tau+1}_{\varepsilon,\even}\cap\bar{\mathcal{B}}^{s}_{1/2}$ into itself since $\mathscr{P}(\omega,0)=\bm{L}_{\gamma,\tau}(\omega)(\varepsilon f)$ has size $\simeq \varepsilon\gamma^{-1}\ll1$.

\noindent
\textbf{Step 2: Lipschitz dependence $\omega\to\mathscr{P}(\omega,y)$.}

This time we use subscripts 1,2 to denote quantities evaluated at $\omega_1,\omega_2$ with fixed $y$ in the sequel. We split $\mathscr{P}(\omega_1,y)-\mathscr{P}(\omega_2,y)=\mathrm{III}+\mathrm{IV}$, where
$$
\begin{aligned}
\mathrm{III}&=
\left[\bm{L}_{\gamma,\tau}\big(\omega_1+h(\omega_1,\varepsilon Y)\big)-\bm{L}_{\gamma,\tau}\big(\omega_2+h(\omega_2,\varepsilon Y)\big)\right]
\big(\varepsilon T_{I_N+U_1}^{-1}\chi_1^\star f+\mathfrak{F}(\omega_1,y_1)\big),\\
\mathrm{IV}&=\bm{L}_{\gamma,\tau}\big(\omega_2+h(\omega_2,\varepsilon Y)\big)
\big(\varepsilon T_{I_N+U_1}^{-1}\chi_1^\star f-\varepsilon T_{I_N+U_2}^{-1}\chi_2^\star f+\mathfrak{F}(\omega_1,y)-\mathfrak{F}(\omega_2,y)\big).
\end{aligned}
$$

Similarly as in estimating I, we use Lemma \ref{Dio_Ext} to compute
$$
\begin{aligned}
\big\|\big[\bm{L}_{\gamma,\tau}\big(\omega_1+h(\omega_1,\varepsilon Y)\big)
&-\bm{L}_{\gamma,\tau}\big(\omega_2+h(\omega_2,\varepsilon Y)\big)\big]g\big\|_{H^{\sigma}}\\
&\leq C_{s_1,\sigma}\gamma^{-2}\big(|\omega_1-\omega_2|
+|h(\omega_1,\varepsilon Y)-h(\omega_2,\varepsilon Y)|\big)\|g\|_{H^{\sigma+2\tau+1}}\\
&\overset{(\ref{Liph&eta})}{\leq} C_{s_1,\sigma}\gamma^{-2}|\omega_1-\omega_2|\cdot\|g\|_{H^{\sigma+2\tau+1}}.
\end{aligned}
$$
Just as in estimating (\ref{I}) in the previous step, using the first inequality of (\ref{F(y)gain}) again, we find
\begin{equation}\label{III}
\|\,\mathrm{III}\,\|_{H^s}\leq C_s\varepsilon\gamma^{-3}\big(1+\|f\|_{H^{s+2\tau+1}}\big)|\omega_1-\omega_2|.
\end{equation}

To estimate IV, we use the second inequality of (\ref{F(y)gain}) again. Just as in estimating term II (see (\ref{II})) in the previous step, we know that $\chi^\star_1-\chi^\star_2$ loses one derivative while $T_{U_1}-T_{U_2}$ does not. So using the second inequality in (\ref{F(y)gain}), we obtain
\begin{equation}\label{IV}
\|\mathrm{IV}\|_{H^s}\leq C_s\varepsilon\gamma^{-3}\big(1+\|f\|_{H^{s+\tau+1}}\big)
|\omega_1-\omega_2|.
\end{equation}

Adding (\ref{III})(\ref{IV}), we find that
$$
\|\mathscr{P}(\omega_1,y)-\mathscr{P}(\omega_2,y)\|_{H^{s}}
\leq C_s\varepsilon\gamma^{-3}\big(1+\|f\|_{H^{s+\tau+1}}\big)|\omega_1-\omega_2|.
$$
We can still choose $\varepsilon$ as in the last step to make  $C_s\varepsilon\gamma^{-3}\big(1+\|f\|_{H^{s+\tau+1}}\big)\leq1/2$, so that 
$$
\|\mathscr{P}(\omega_1,y)-\mathscr{P}(\omega_2,y)\|_{H^{s}}
\leq \frac{1}{2}|\omega_1-\omega_2|.
$$

In summary, we showed that the mapping $\mathscr{P}(\omega,y)$ is a contraction for $y\in\bar{\mathcal{B}}^{s_1+2\tau+1}_{\varepsilon,\even}\cap\bar{\mathcal{B}}^{s}_{1/2}$ with ratio $1/2$, and is Lipschitz in $\omega$ with constant $1/2$. By Theorem \ref{Depend}, we conclude that $\mathscr{P}(\omega,y)$ has a unique fixed point $y=y(\omega)\in\bar{\mathcal{B}}^{s_1+2\tau+1}_{\varepsilon,\even}\cap\bar{\mathcal{B}}^{s}_{1/2}$, which is 1-Lipschitz continuous in $\omega\in\xR^n$. Inverting the change-of-unknown $y=\Psi(\omega,u)$, we obtain a mapping $u=u(\omega)$ from $\xR^n$ to $H^s$. Using the second inequality of (\ref{LipNewVar}), we obtain
$$
\big\|u(\omega_1)-u(\omega_2)\big\|_{H^{s-1}}\leq C_s|\omega_1-\omega_2|.
$$

\noindent
\textbf{Step 3: Measure estimate; concluding the proof.}

With the mapping $u(\omega)$ defined as in the last paragraph of Step 2, We then make sure that there are ``plenty of" $\omega$ to make $\omega+h(\omega,\varepsilon Y)\in\Dio[\gamma,\tau]$, so that by Proposition \ref{ParaRedu}, the solution $y=y(\omega)$ of (\ref{7ParalinEQ2}) indeed yields a solution $u=u(\omega)$ of (\ref{7ParalinEQ}). With a little abuse of notation, we shall just write this shifted frequency as $\omega+h(\omega,u(\omega))$.

The frequency shift $h(\omega,u(\omega))$, as seen in Proposition \ref{Findeta}, relies only on the low $H^{s_1+2\tau+1}$ norm of $u(\omega)$, so the mapping $\xR^n\ni\omega\to h(\omega,u(\omega))$ is in fact Lipschitz. By Proposition \ref{Findeta} again and the last paragraph of Step 2, we know the Lipschitz constant is bounded by $C_{s_1}(\varepsilon_1+\varepsilon\gamma^{-1})$. Note that $\varepsilon\ll_s\gamma^{3}$, and since $s$ is a fixed index, it is of no harm to further shrink the magnitude of $\varepsilon_1$, yielding that the Lipschitz constant of $\omega\to h(\omega,u(\omega))$ is $<1/2$. Therefore, the set
\begin{equation}\label{O_Set}
\mathfrak{O}:=\big\{\omega\in\xR^n:\, \omega+h(\omega,u(\omega))\in\Dio[\gamma,\tau]\big\}
\end{equation}
is a Lipschitz distortion of $\Dio[\gamma,\tau]$. It is well-known that for $\tau>n-1$, the measure of $\bar B(0,R)\setminus\Dio[\gamma,\tau]$ is $\leq C_n\gamma R^n$ (see for example, Subsection 2.1 of Chapter III of \cite{AG}). Therefore, by classical results of Lebesgue measure under Lipschitz distortion, we conclude
$$
\Big|\bar B(0,R)\setminus\big\{\omega\in\xR^n:\, \omega+h(\omega,u(\omega))\in\Dio[\gamma,\tau]\big\}\big|
\leq C_n\gamma R^n.
$$

To summarize, in solving the fixed point equation (\ref{7ParalinEQ2}), we assumed $\varepsilon\ll\gamma^3$, in particular
$$
\varepsilon\gamma^{-3}\big(1+\|f\|_{H^{s+2\tau+2}}\big)\ll_s1.
$$
Note that the index $s$ is any real number greater than $\max(s_1+2\tau+2,s_0+3\tau+2)+\delta\simeq 5\tau$, and we did not explicitly indicate the dependence of these constants on $X,F$. Consequently, if we choose $\gamma=\varepsilon^{a}$ with $0<a<1/3$, the smallness conditions are all satisfied when $\varepsilon<\varepsilon_*(f,X,F)$. Thus, the set $\mathfrak{O}$ depends only on $\varepsilon$. The measure $\big|\bar{B}(0,R)\setminus\mathfrak{O}\big|\leq C_{n,a}R^n\varepsilon^a$ as $\varepsilon\to0$. For $\omega\in\mathfrak{O}$, the function $u=u(\omega)\in H^s$ really satisfies (\ref{7ParalinEQ}), hence (\ref{1NLPEQ}), ensured by Proposition \ref{ParaRedu}.

\appendix
\section{Banach Fixed Point Theorem}\label{App_Lip}
In this appendix, we state a refined version of the classical Banach fixed point theorem, concerning parameter dependence of the fixed point. The argument is elementary but does not seem to be recorded explicitly in any textbook to the authors' knowledge.
\begin{theorem}\label{Depend}
Let $(X,d)$ and $(\Omega,\rho)$ be complete metric spaces. Let $0<q<1$ and $L>0$ be fixed constants. Let $\mathscr{G}:X\times\Omega\to X$ be a mapping such that
$$
\begin{aligned}
d\big(\mathscr{G}(x,\mu),\mathscr{G}(y,\mu)\big)&\leq qd(x,y),\quad x,y\in X,\,\mu\in\Omega\\
d\big(\mathscr{G}(x,\lambda),\mathscr{G}(x,\mu)\big)&\leq L\rho(\lambda,\mu),\quad x\in X,\,\lambda,\mu\in\Omega.
\end{aligned}
$$
Then for every $\mu\in\Omega$, the mapping $x\to\mathscr{G}(x,\mu)$ has a unique fixed point $f(\mu)\in X$, which is Lipschitz continuous in $\mu$:
$$
d\big(f(\lambda),f(\mu)\big)\leq \frac{L}{1-q}\rho(\lambda,\mu).
$$
\end{theorem}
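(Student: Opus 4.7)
The plan is to split the statement into two well-separated claims and dispatch them in the natural order: first establish existence and uniqueness of the fixed point $f(\mu)$ for each $\mu\in\Omega$, then obtain the Lipschitz estimate for the assignment $\mu\mapsto f(\mu)$.

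For the first step, I would fix $\mu\in\Omega$ and consider the mapping $X\ni x\mapsto \mathscr{G}(x,\mu)$. The first hypothesis of the theorem states exactly that this mapping is a $q$-contraction on the complete metric space $(X,d)$ with $0<q<1$, so the classical Banach fixed-point theorem furnishes a unique fixed point, which I denote by $f(\mu)$.

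For the Lipschitz estimate, given $\lambda,\mu\in\Omega$, I would insert an intermediate term and use the defining property $f(\lambda)=\mathscr{G}(f(\lambda),\lambda)$, $f(\mu)=\mathscr{G}(f(\mu),\mu)$ together with the triangle inequality:
\begin{align*}
d\bigl(f(\lambda),f(\mu)\bigr)
&=d\bigl(\mathscr{G}(f(\lambda),\lambda),\mathscr{G}(f(\mu),\mu)\bigr)\\
&\leq d\bigl(\mathscr{G}(f(\lambda),\lambda),\mathscr{G}(f(\mu),\lambda)\bigr)+d\bigl(\mathscr{G}(f(\mu),\lambda),\mathscr{G}(f(\mu),\mu)\bigr)\\
&\leq q\, d\bigl(f(\lambda),f(\mu)\bigr)+L\,\rho(\lambda,\mu).
\end{align*}
Rearranging and using $0<q<1$ yields the desired bound $d(f(\lambda),f(\mu))\le \tfrac{L}{1-q}\rho(\lambda,\mu)$.

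There is no genuine obstacle here: the argument is entirely elementary, and the only point requiring minor care is verifying the finiteness of $d(f(\lambda),f(\mu))$ so that the above subtraction is legitimate. Since both $f(\lambda)$ and $f(\mu)$ are elements of the same metric space $X$, this quantity is automatically finite, and the rearrangement is justified. The whole proof therefore amounts to two short lines after invoking the classical Banach fixed-point theorem.
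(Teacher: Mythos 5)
Your proof is correct and follows essentially the same two-step argument as the paper: invoke the classical Banach fixed-point theorem for existence and uniqueness, then insert the intermediate term $\mathscr{G}(f(\mu),\lambda)$, apply the triangle inequality, and rearrange. The brief remark on finiteness is a minor addition but does not change the route.
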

\begin{proof}
The existence and uniqueness of $f(\mu)$ is the content of the standard Banach fixed point theorem. To obtain Lipschitz continuity, we simply compute, with $x=f(\lambda)$, $y=f(\mu)$, 
$$
\begin{aligned}
d(x,y)
&= d\big(\mathscr{G}(x,\lambda),\mathscr{G}(y,\mu)\big)\\
&\leq d\big(\mathscr{G}(x,\lambda),\mathscr{G}(y,\lambda)\big)
+d\big(\mathscr{G}(y,\lambda),\mathscr{G}(y,\mu)\big)\\
&\leq qd(x,y)+L\rho(\lambda,\mu).
\end{aligned}
$$
We thus solve $d(x,y)$ from this inequality and obtain the desired inequality.
\end{proof}

\end{spacing}

\bibliographystyle{abbrv}
\bibliography{References}

\begin{thebibliography}{10}

\bibitem{AB2015}
T.~Alazard and P.~Baldi.
\newblock Gravity capillary standing water waves.
\newblock {\em Arch. Ration. Mech. Anal.}, 217:741--830, 2015.

\bibitem{ABHK}
T.~Alazard, P.~Baldi, and D.~Han-Kwan.
\newblock Control of water waves.
\newblock {\em J. Eur. Math. Soc. (JEMS)}, 20(3):657--745, 2018.

\bibitem{ABITZ2024}
T.~Alazard, N.~Burq, M.~Ifrim, D.~Tataru, and C.~Zuily.
\newblock Nonlinear interpolation and the flow map for quasilinear equations.
\newblock {\em arXiv preprint arXiv:2410.06909}, 2024.

\bibitem{AM2009}
T.~Alazard and G.~M{\'e}tivier.
\newblock Paralinearization of the {D}irichlet to {N}eumann operator, and regularity of three-dimensional water waves.
\newblock {\em Comm. Partial Differential Equations}, 34(10-12):1632--1704, 2009.

\bibitem{AS2023}
T.~Alazard and C.~Shao.
\newblock {KAM} via standard fixed point theorems.
\newblock {\em arXiv preprint arXiv:2312.13971}, 2023.

\bibitem{Alinhac1986}
S.~Alinhac.
\newblock Paracomposition et op{\'e}rateurs paradiff{\'e}rentiels.
\newblock {\em Comm. Partial Differential Equations}, 11(1):87--121, 1986.

\bibitem{Alinhac1988}
S.~Alinhac.
\newblock Ondes de rar{\'e}faction pour des syst{\`e}mes quasilin{\'e}aires hyperboliques multidimensionnels.
\newblock {\em Journ{\'e}es {\'e}quations aux d{\'e}riv{\'e}es partielles}, pages 1--7, 1988.

\bibitem{Alinhac1989}
S.~Alinhac.
\newblock Existence d'ondes de rarefaction pour des systems quasi-lineaires hyperboliques multidimensionnels.
\newblock {\em Comm. Partial Differential Equations}, 14(2):173--230, 1989.

\bibitem{AG}
S.~Alinhac and P.~G{\'e}rard.
\newblock {\em Op{\'e}rateurs pseudo-diff{\'e}rentiels et th{\'e}oreme de {Nash-Moser}}.
\newblock EDP Sciences, 1991.

\bibitem{BCD}
H.~Bahouri, J.-Y. Chemin, and R.~Danchin.
\newblock {\em Fourier analysis and nonlinear partial differential equations}, volume 343 of {\em Grundlehren der Mathematischen Wissenschaften [Fundamental Principles of Mathematical Sciences]}.
\newblock Springer, Heidelberg, 2011.

\bibitem{BBHM-2018}
P.~Baldi, M.~Berti, E.~Haus, and R.~Montalto.
\newblock Time quasi-periodic gravity water waves in finite depth.
\newblock {\em Invent. Math.}, 214(2):739--911, 2018.

\bibitem{BH2017}
P.~Baldi and E.~Haus.
\newblock A {Nash-Moser-H{\"o}rmander} implicit function theorem with applications to control and {Cauchy} problems for pdes.
\newblock {\em Journal of Functional Analysis}, 273(12):3875--3900, 2017.

\bibitem{BalMon2021}
P.~Baldi and R.~Montalto.
\newblock Quasi-periodic incompressible {Euler flows in 3D}.
\newblock {\em Adv. Math.}, 384:107730, 2021.

\bibitem{Bambusi2018}
D.~Bambusi.
\newblock Reducibility of 1-d {Schr{\"o}dinger} equation with time quasiperiodic unbounded perturbations. i.
\newblock {\em Trans. Amer. Math. Soc}, 370(3):1823--1865, 2018.

\bibitem{BG2001}
D.~Bambusi and S.~Graffi.
\newblock Time quasi-periodic unbounded perturbations of {Schr{\"o}dinger} operators and {KAM} methods.
\newblock {\em Comm. Math. Phys.}, 219:465--480, 2001.

\bibitem{BGMR2017}
D.~Bambusi, B.~Gr{\'e}bert, A.~Maspero, and D.~Robert.
\newblock Reducibility of the quantum harmonic oscillator in d-dimensions with polynomial time-dependent perturbation.
\newblock {\em Anal. PDE}, 11(3):775--799, 2017.

\bibitem{BLM2019}
D.~Bambusi, B.~Langella, and R.~Montalto.
\newblock Reducibility of non-resonant transport equation on $\mathbb{T}^d$ with unbounded perturbations.
\newblock In {\em Ann. Henri Poincar\'{e}}, volume~20, pages 1893--1929. Springer, 2019.

\bibitem{BMM2016}
J.~Bedrossian, N.~Masmoudi, and C.~Mouhot.
\newblock Landau damping: paraproducts and {Gevrey} regularity.
\newblock {\em Ann. PDE}, 2:1--71, 2016.

\bibitem{BD2018}
M.~Berti and J.-M. Delort.
\newblock {\em Almost global solutions of capillary-gravity water waves equations on the circle}.
\newblock Springer, 2018.

\bibitem{BFF2021}
M.~Berti, R.~Feola, and L.~Franzoi.
\newblock Quadratic life span of periodic gravity-capillary water waves.
\newblock {\em Water Waves}, 3(1):85--115, 2021.

\bibitem{BFPT2024}
M.~Berti, R.~Feola, M.~Procesi, and S.~Terracina.
\newblock Reducibility of {Klein-Gordon} equations with maximal order perturbations.
\newblock {\em arXiv preprint arXiv:2402.11377}, 2024.

\bibitem{BFP2023}
M.~Berti, R.~Feola, and F.~Pusateri.
\newblock Birkhoff normal form and long time existence for periodic gravity water waves.
\newblock {\em Comm. Pure Appl. Math.}, 76(7):1416--1494, 2023.

\bibitem{BMM2024}
M.~Berti, A.~Maspero, and F.~Murgante.
\newblock Hamiltonian {Birkhoff} normal form for gravity-capillary water waves with constant vorticity: almost global existence.
\newblock {\em Ann. PDE}, 10(2):22, 2024.

\bibitem{BM2020}
M.~Berti and R.~Montalto.
\newblock {\em Quasi-periodic standing wave solutions of gravity-capillary water waves}, volume 263.
\newblock American mathematical society, 2020.

\bibitem{Bony1981}
J.-M. Bony.
\newblock Calcul symbolique et propagation des singularit{\'e}s pour les {\'e}quations aux d{\'e}riv{\'e}es partielles non lin{\'e}aires.
\newblock In {\em Ann. Sci. \'{E}c. Norm. Sup\'{e}r.}, volume~14, pages 209--246, 1981.

\bibitem{MR1218524}
A.~Boulkhemair.
\newblock On canonical transformations of paradifferential operators.
\newblock {\em Comm. Partial Differential Equations}, 18(5-6):917--964, 1993.

\bibitem{BV2017}
M.~Bruveris and F.-X. Vialard.
\newblock On completeness of groups of diffeomorphisms.
\newblock {\em Journal of the European Mathematical Society}, 19(5):1507--1544, 2017.

\bibitem{Chemin}
J.-Y. Chemin.
\newblock {\em Perfect incompressible fluids}, volume~14 of {\em Oxford Lecture Series in Mathematics and its Applications}.
\newblock The Clarendon Press Oxford University Press, New York, 1998.
\newblock Translated from the 1995 French original by Isabelle Gallagher and Dragos Iftimie.

\bibitem{Delort2012}
J.-M. Delort.
\newblock Periodic solutions of nonlinear {Schr{\"o}dinger} equations: a paradifferential approach.
\newblock {\em Anal. PDE}, 4(5):639--676, 2012.

\bibitem{Eliasson1992}
L.-H. Eliasson.
\newblock Floquet solutions for the 1-dimensional quasi-periodic {Schr{\"o}dinger} equation.
\newblock {\em Comm. Math. Phys.}, 146:447--482, 1992.

\bibitem{FGMP2019}
R.~Feola, F.~Giuliani, R.~Montalto, and M.~Procesi.
\newblock Reducibility of first order linear operators on tori via {Moser}'s theorem.
\newblock {\em J. Funct. Anal.}, 276(3):932--970, 2019.

\bibitem{FGP2019}
R.~Feola, F.~Giuliani, and M.~Procesi.
\newblock Reducibility for a class of weakly dispersive linear operators arising from the {Degasperis-Procesi} equation.
\newblock {\em Dyn. Partial Differ. Equ.}, 16(1):25--94, 2019.

\bibitem{FP2015}
R.~Feola and M.~Procesi.
\newblock Quasi-periodic solutions for fully nonlinear forced reversible {Schr{\"o}dinger} equations.
\newblock {\em J. Differential Equations}, 259(7):3389--3447, 2015.

\bibitem{GR1987}
P.~G{\'e}rard and J.~Rauch.
\newblock Propagation de la r{\'e}gularit{\'e} locale de solutions d'{\'e}quations hyperboliques non lin{\'e}aires.
\newblock In {\em Ann. Inst. Fourier}, volume~37, pages 65--84, 1987.

\bibitem{HHM}
Z.~Hassainia, T.~Hmidi, and N.~Masmoudi.
\newblock Rigorous derivation of the leapfrogging motion for planar euler equations.
\newblock {\em arXiv preprint arXiv:2311.15765}, 2023.

\bibitem{Hormander1971}
L.~H{\"o}rmander.
\newblock Fourier integral operators. i.
\newblock {\em Acta Math.}, 127(1):79--183, 1971.

\bibitem{Hormander1977}
L.~H{\"o}rmander.
\newblock Implicit function theorems.
\newblock {\em Lectures at Stanford University, summer quarter}, 1977.

\bibitem{Hormander1990}
L.~H{\"o}rmander.
\newblock The {Nash-Moser} theorem and paradifferential operators.
\newblock In {\em Analysis, et cetera}, pages 429--449. Elsevier, 1990.

\bibitem{Hormander1997}
L.~H{\"o}rmander.
\newblock {\em Lectures on nonlinear hyperbolic differential equations}, volume~26 of {\em Math\'ematiques \& Applications (Berlin) [Mathematics \& Applications]}.
\newblock Springer-Verlag, Berlin, 1997.

\bibitem{Hormander1988}
L.~Hörmander.
\newblock Pseudo-differential operaors of type 1,1.
\newblock {\em Comm. Partial Differential Equations}, 13(9):1085--1111, 1988.

\bibitem{IP2009}
G.~Iooss and P.~I. Plotnikov.
\newblock Small divisor problem in the theory of three-dimensional water gravity waves.
\newblock {\em Mem. Amer. Math. Soc.}, 200(940):viii+128, 2009.

\bibitem{IPT2005}
G.~Iooss, P.~I. Plotnikov, and J.~F. Toland.
\newblock Standing waves on an infinitely deep perfect fluid under gravity.
\newblock {\em Arch. Ration. Mech. Anal.}, 177:367--478, 2005.

\bibitem{JKM1998}
H.~Jauslin, H.~Kreiss, and J.~Moser.
\newblock On the forced burgers equation with periodic boundary conditions.
\newblock {\em Differential Equations: La Pietra 1996}, 65:133--153, 1998.

\bibitem{JRV1997}
{\`A}.~Jorba, R.~Ramirez-Ros, and J.~Villanueva.
\newblock Effective reducibility of quasi-periodic linear equations close to constant coefficients.
\newblock {\em SIAM J. Math. Anal.}, 28(1):178--188, 1997.

\bibitem{JS1992}
{\`A}.~Jorba and C.~Sim{\'o}.
\newblock On the reducibility of linear differential equations with quasiperiodic coefficients.
\newblock {\em J. Differential Equations}, 98(1):111--124, 1992.

\bibitem{Kappeler-Poeschel}
T.~Kappeler and J.~P{\"o}schel.
\newblock {\em Kd{V} \& {KAM}}, volume~45 of {\em Ergebnisse der Mathematik und ihrer Grenzgebiete. 3. Folge. A Series of Modern Surveys in Mathematics [Results in Mathematics and Related Areas. 3rd Series. A Series of Modern Surveys in Mathematics]}.
\newblock Springer-Verlag, Berlin, 2003.

\bibitem{Kato1995}
T.~Kato.
\newblock {On nonlinear Schr{\"o}dinger equations, II. $H^s$-solutions and unconditional well-posedness}.
\newblock {\em J. Anal. Math.}, 67:281--306, 1995.

\bibitem{KPV1993}
C.~E. Kenig, G.~Ponce, and L.~Vega.
\newblock Well-posedness and scattering results for the generalized {Korteweg-de Vries} equation via the contraction principle.
\newblock {\em Comm. pure appl. math.}, 46(4):527--620, 1993.

\bibitem{KM1997}
A.~Kriegl and P.~W. Michor.
\newblock {\em The convenient setting of global analysis}, volume~53.
\newblock American Mathematical Soc., 1997.

\bibitem{Krikorian1999}
R.~Krikorian.
\newblock R{\'e}ductibilit{\'e} presque partout des flots fibr{\'e}s quasi-p{\'e}riodiques {\`a} valeurs dans des groupes compacts.
\newblock In {\em Ann. Sci. \'{E}c. Norm. Sup\'{e}r.}, volume~32, pages 187--240, 1999.

\bibitem{Kuksin1998}
S.~B. Kuksin.
\newblock {\em A KAM-theorem for equations of the Korteweg-de Vries type}.
\newblock Harwood Academic Publishers, 1998.

\bibitem{Lannes2005}
D.~Lannes.
\newblock Well-posedness of the water-waves equations.
\newblock {\em J. Amer. Math. Soc.}, 18(3):605--654, 2005.

\bibitem{MePise}
G.~M{\'e}tivier.
\newblock {\em Para-differential calculus and applications to the {C}auchy problem for nonlinear systems}, volume~5 of {\em Centro di Ricerca Matematica Ennio De Giorgi (CRM) Series}.
\newblock Edizioni della Normale, Pisa, 2008.

\bibitem{Meyer}
Y.~Meyer.
\newblock Remarques sur un th\'eor\`eme de {J}.-{M}. {B}ony.
\newblock In {\em Proceedings of the {S}eminar on {H}armonic {A}nalysis ({P}isa, 1980)}, pages 1--20, 1981.

\bibitem{Moser19662}
J.~Moser.
\newblock A rapidly convergent iteration method and non-linear partial differential equations-{II}.
\newblock {\em Ann. Sc. Norm. Super. Pisa Cl. Sci.}, 20(2):499--535, 1966.

\bibitem{PT2001}
P.~I. Plotnikov and J.~F. Toland.
\newblock {Nash-Moser} theory for standing water waves.
\newblock {\em Arch. Ration. Mech. Anal.}, 159:1--83, 2001.

\bibitem{Poschel1996}
J.~P{\"o}schel.
\newblock A {KAM}-theorem for some nonlinear partial differential equations.
\newblock {\em Ann. Sc. Norm. Super. Pisa Cl. Sci.}, 23(1):119--148, 1996.

\bibitem{Poschel2011}
J.~P{\"o}schel.
\newblock {KAM} {\`a} la {R}.
\newblock {\em Regul. Chaotic Dyn.}, 16:17--23, 2011.

\bibitem{Said2023}
A.~R. Said.
\newblock On paracomposition and change of variables in paradifferential operators.
\newblock {\em J. Pseudo-Differ. Oper. Appl.}, 14(2):25, 2023.

\bibitem{SautTemam1976}
J.-C. Saut and R.~Temam.
\newblock Remarks on the korteweg-de vries equation.
\newblock {\em Israel J. Math.}, 24:78--87, 1976.

\bibitem{Simon2017}
L.~Simon.
\newblock {\em Introduction to Geometric Measure Theory}.
\newblock 2017.

\bibitem{Staffilani1995}
G.~Staffilani.
\newblock {\em The initial value problem for some dispersive differential equations}.
\newblock PhD thesis, University of Chicago, Department of Mathematics, 1995.

\bibitem{SteMur1993}
E.~Stein and T.~Murphy.
\newblock {\em Harmonic analysis: real-variable methods, orthogonality, and oscillatory integrals}, volume~3.
\newblock Princeton University Press, 1993.

\bibitem{Tao}
T.~Tao.
\newblock {\em Nonlinear dispersive equations}, volume 106 of {\em CBMS Regional Conference Series in Mathematics}.
\newblock Published for the Conference Board of the Mathematical Sciences, Washington, DC, 2006.
\newblock Local and global analysis.

\bibitem{Taylor2000}
M.~Taylor.
\newblock {\em {Tools for PDE: pseudodifferential operators, paradifferential operators, and layer potentials}}.
\newblock American Mathematical Soc., 2000.

\bibitem{Taylor2013}
M.~Taylor.
\newblock {\em Partial differential equations II: Qualitative studies of linear equations}, volume 116.
\newblock Springer Science \& Business Media, 2013.

\bibitem{Wayne1994}
C.~Wayne.
\newblock Periodic and quasi-periodic solutions of nonlinear wave equations via {KAM} theory.
\newblock {\em Comm. Math. Phys.}, 127:479--528, 1990.

\bibitem{Weinan1999}
E.~Weinan.
\newblock Aubry-mather theory and periodic solutions of the forced burgers equation.
\newblock {\em Comm. Pure Appl. Math}, 52(7):811--828, 1999.

\bibitem{Zehnder1975}
E.~Zehnder.
\newblock Generalized implicit function theorem with applications to some small divisor problems, {I}.
\newblock {\em Comm. Pure Appl. Math.}, 28:91--140, 1975.

\end{thebibliography}

\end{document}